\newcommand{\MyHess}{{\nabla^2 \mathstrut}}
\def\RR{\mathbb{R}}
\DeclareMathOperator{\var}{Var}
\DeclareMathOperator{\cov}{Cov}
\DeclareMathOperator{\Hess}{Hess}
\DeclareMathOperator{\Sym}{Sym}
\DeclareMathOperator{\leb}{Leb}
\newcounter{assumI} \renewcommand\theassumI {\text{I.\arabic{assumI}}}
\newcounter{assumD} \renewcommand\theassumD {\text{D.\arabic{assumD}}}
\newtheorem{theorem}{Theorem}[section]
\newtheorem{corollary}[theorem]{Corollary}
\newtheorem{definition}[theorem]{Definition}
\newtheorem{example}[theorem]{Example}
\newtheorem{lemma}[theorem]{Lemma}
\newtheorem{proposition}[theorem]{Proposition}
\newtheorem{remark}[theorem]{Remark}
\begin{document}

\begin{frontmatter}
\title{Log-Concavity and Strong Log-Concavity: \\ a review} 
\runtitle{Log-Concavity and Strong Log-Concavity}

\begin{aug}
    \author{\fnms{Adrien} \snm{Saumard}\thanksref{t1}\ead[label=e1]{asaumard@gmail.com}} 
	\and 
    \author{\fnms{Jon A.} \snm{Wellner}\thanksref{t2}\ead[label=e2]{jaw@stat.washington.edu}}
	
	\thankstext{t1}{Research supported in part by NI-AID grant 2R01 AI29168-04, a PIMS post-doctoral
                               fellowship and post-doctoral Fondecyt Grant 3140600}
	\thankstext{t2}{Research supported in part by NSF Grant DMS-1104832, NI-AID
                                grant 2R01 AI291968-04, and the Alexander von Humboldt Foundation}
	    
	\runauthor{Adrien Saumard and Jon A. Wellner}

	\affiliation{University of Washington}

	\address{Departamento de Estad\'{i}stica, CIMFAV\\Universidad de Valpara\'{i}so,  Chile\\
	     %Department of Statistics, Box 354322\\University of Washington\\Seattle, WA  98195-4322\\
	\printead{e1}}
	\address{Department of Statistics, Box 354322\\University of Washington\\Seattle, WA  98195-4322\\
	\printead{e2}}     
\end{aug}

\begin{abstract}
We review and formulate results concerning log-concavity and strong-log-concavity in both discrete 
and continuous settings.  
We show how preservation of log-concavity and strongly log-concavity 
on $\RR$ under convolution follows from a fundamental monotonicity result of Efron (1969).
%We put forward that a monotonicity property for log-concave measures may be used
%to prove stability under convolution for (strongly) log-concave measures. 
We provide a new proof of Efron's
theorem using the recent asymmetric Brascamp-Lieb inequality due to Otto and Menz (2013).
Along the way we review connections between log-concavity and other areas of mathematics 
and statistics, including concentration of measure, log-Sobolev inequalities, convex geometry,
MCMC algorithms, Laplace approximations, and machine learning.
\end{abstract}

\begin{keyword}[class=AMS]
\kwd[Primary ]{60E15}
\kwd{62E10}
\kwd[; secondary ]{62H05}
\end{keyword}

\begin{keyword}
\kwd{concave}
\kwd{convex}
\kwd{convolution}
\kwd{inequalities}
\kwd{log-concave}
\kwd{monotone}
\kwd{preservation}
\kwd{strong log-concave}
%\kwd{}
%\kwd{}
%\kwd{}
%\kwd{}
\end{keyword}

\end{frontmatter}

\tableofcontents
\vspace{1cm}

%\tableofcontents
%\vspace{1cm}

%----------------------------------------------------------
\section{Introduction: log-concavity}
\label{sec:intro}

%----------------------------------------------------------

Log-concave distributions and various properties related to log-concavity
play an increasingly important role in probability, statistics, optimization
theory, econometrics and other areas of applied mathematics. In view of
these developments, the basic properties and facts concerning log-concavity
deserve to be more widely known in both the probability and statistics
communities. Our goal in this survey is to review and summarize the basic
preservation properties which make the classes of log-concave densities,
measures, and functions so important and useful. In particular we review
preservation of log-concavity and ``strong log-concavity'' (to be defined
carefully in section~\ref{sec:basicDefinitions}) under marginalization,
convolution, formation of products, and limits in distribution. The
corresponding notions for discrete distributions (log-concavity and ultra
log-concavity) are also reviewed in section~\ref{Sec:DiscreteLogConc}.

A second goal is to acquaint our readers with a useful monotonicity theorem
for log-concave distributions on $\mathbb{R}$ due to \cite{MR0171335}, 
%Efron
and to briefly discuss connections with recent progress concerning
``asymmetric'' Brascamp-Lieb inequalities. Efron's theorem is reviewed in
Section~\ref{subsec:EfronMonoThm}, and further applications are given in the
rest of Section~\ref{sec:EfronsTheoremOneDimension}.

There have been several reviews of developments connected to log-concavity
in the mathematics literature, most notably \cite{MR588074} and \cite%
{MR1898210}. We are not aware of any comprehensive review of log-concavity
in the statistics literature, although there have been some review type
papers in econometrics, in particular \cite{An:98} %  An (1998)
and \cite{Bagnoli:95}. %Bagnoli and Bergstrom (2005)
Given the pace of recent advances, it seems that a review from a statistical
perspective is warranted.

Several books deal with various aspects of log-concavity: the classic books
by \cite{MR552278} (see also \cite{MR2759813}) and \cite{MR954608} both
cover aspects of log-concavity theory, but from the perspective of
majorization in the first case, and a perspective dominated by unimodality
in the second case. Neither treats the important notion of strong
log-concavity. The recent book by \cite{MR2814377} perhaps comes closest to
our current perspective with interesting previously unpublished material
from the papers of Brascamp and Lieb in the 1970's and a proof of the
Brascamp and Lieb result to the effect that strong log-concavity is
preserved by marginalization. Unfortunately Simon does not connect with
recent terminology and other developments in this regard and focuses on
convexity theory more broadly. \cite{MR1964483} % Villani (2003)
 (chapter 6) gives a nice treatment of the Brunn-Minkowski inequality and
related results for log-concave distributions and densities with interesting
connections to optimal transportation theory. His chapter 9 also gives a
nice treatment of the connections between log-Sobolev inequalities and
strong log-concavity, albeit with somewhat different terminology. \cite%
{MR1849347} % Ledoux (2001) 
is, of course, a prime source for material on log-Sobolev inequalities and
strong log concavity. The nice book on stochastic programming by \cite%
{MR1375234} % Prekopa (1995)
has its chapter 4 devoted to log-concavity and $s-$concavity, but has no
treatment of strong log-concavity or inequalities related to log-concavity
and strong log-concavity. % Boucheron Lugosi and Massart (2013)
%\cite{MR1964483},   % Villani (2003)
%\cite{MR2459454},   % Villani (2009)
%\cite{MR954608},     % Dharmadhikari, Sudhakar and Joag-Dev, Kumar (1988)
%\cite{MR1849347},    % Ledoux (2001)
%\cite{An:98},             %  An (1998)
%\cite{Bagnoli:95},    %Bagnoli and Bergstrom (2005)
%Marshall, Olkin, and Arnold 2011
%Marshall and Olkin 1979 .
%\cite{MR2814377} ,  % Simon   
%Important review papers:  
%\cite{MR588074},  %DasGupta (1980)
%\cite{MR1898210},   %Gardner 
In this review we will give proofs some key results in the body of the review, 
while proofs of supporting results are postponed to Section~\ref{sec:Proofs} (Appendix B).

%----------------------------------------------------------
\section{Log-concavity and strong log-concavity: definitions and basic results}
\label{sec:basicDefinitions} 
%----------------------------------------------------------

We begin with some basic definitions of
log-concave densities and measures on $\mathbb{R}^d$.

\begin{definition}
(0-d): A density function $p$ with respect to Lebesgue measure $\lambda$ on $%
(\mathbb{R}^d, \mathcal{B}^d)$ is log-concave if $p = e^{-\varphi}$ where $%
\varphi$ is a convex function from $\mathbb{R}^d$ to $(-\infty, \infty]$.
Equivalently, $p$ is log-concave if $p = \exp (\tilde{\varphi} )$ where $%
\tilde{\varphi} = - \varphi $ is a concave function from $\mathbb{R}^d$ to $%
[-\infty, \infty)$.
\end{definition}

We will usually adopt the convention that $p$ is lower semi-continuous and $%
\varphi = - \log p$ is upper semi-continuous. Thus $\{x \in \mathbb{R}^d : \
p(x) > t \}$ is open, while $\{ x \in \mathbb{R}^d : \ \varphi(x) \le t \}$
is closed. We will also say that a non-negative and integrable function $f $ from 
$\mathbb{R}^d$ to $[0,\infty)$ is log-concave if $f = e^{-\varphi}$ where $\varphi $
is convex even though $f$ may not be a density; that is 
$\int_{\mathbb{R}^d} f d \lambda \in (0,\infty)$.

Many common densities are log-concave; in particular all Gaussian densities 
\begin{eqnarray*}
p_{\mu, \Sigma} (x) = (2 \pi | \Sigma |)^{-d/2} \exp \left ( - \frac{1}{2}
(x - \mu)^T \Sigma^{-1} (x - \mu ) \right )
\end{eqnarray*}
with $\mu \in \mathbb{R}^d$ and $\Sigma$ positive definite are log-concave,
and 
\begin{eqnarray*}
p_C(x) = 1_C (x) / \lambda (C)
\end{eqnarray*}
is log-concave for any non-empty, open and bounded convex subset $C \subset 
\mathbb{R}^d$. % with $0 < \lambda (C) < \infty$.
With $C$ open, $p$ is lower semi-continuous in agreement with our convention
noted above; of course taking $C$ closed leads to upper semi-continuity of $%
p $.

In the case $d=1$, log-concave functions and densities are related to
several other important classes. The following definition goes back to the
work of P\'olya and Schoenberg.

\begin{definition}
\label{PolyaFrequencyOfOrderK} Let $p$ be a function on $\mathbb{R}$ (or
some subset of $\mathbb{R}$), and let $x_1 < \cdots < x_k$, $y_1 < \cdots <
y_k$. Then $p$ is said to be a P\'olya frequency function of order $k$ (or $%
p \in PF_k$) if $\mbox{det} ( p(x_i - y_j ) ) \ge 0$ for all such choices of
the $x$'s and $y$'s. If $p$ is $PF_k$ for every $k$, then $p \in PF_{\infty}$%
, the class of P\'olya frequency functions of order $\infty$.
\end{definition}

A connecting link to P\'olya frequency functions and to the notion of
monotone likelihood ratios, which is of some importance in statistics, is
given by the following proposition:

\begin{proposition}
\label{LogConcaveEqualsPF2} $\phantom{blab}$\newline
(a) The class of log-concave functions on $\mathbb{R}$ coincides with the
class of P\'olya frequency functions of order $2$.\newline
(b) A density function $p$ on $\mathbb{R}$ is log-concave if and only if the
translation family $\{ p( \cdot - \theta ) : \ \theta \in \mathbb{R} \}$ has
monotone likelihood ratio: i.e. for every $\theta_1 < \theta_2 $ the ratio $%
p(x-\theta_2)/p(x-\theta_1)$ is a monotone nondecreasing function of $x$.
\end{proposition}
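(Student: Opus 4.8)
The plan is to prove (a) and (b) simultaneously by observing that the $PF_2$ determinant condition, the ``four-point inequality'' characterising log-concavity, and the monotone-likelihood-ratio property are one and the same inequality written in different coordinates. The key preliminary reduction: $p\in PF_2$ means exactly that
\begin{equation}
p(x_1-y_1)\,p(x_2-y_2)\ \ge\ p(x_1-y_2)\,p(x_2-y_1),\qquad x_1<x_2,\ y_1<y_2 .
\tag{$\dagger$}
\end{equation}
Putting $a=x_1-y_2$ and $b=x_2-y_1$, I would check that the two left-hand arguments $x_1-y_1$ and $x_2-y_2$ always lie strictly between $a$ and $b$, have sum $a+b$, and — writing $x_1-y_1=\lambda a+(1-\lambda)b$ — force $x_2-y_2=(1-\lambda)a+\lambda b$; conversely every such pair of interior points arises from an admissible quadruple. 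Hence $(\dagger)$ is equivalent to
\begin{equation}
p\bigl(\lambda a+(1-\lambda)b\bigr)\,p\bigl((1-\lambda)a+\lambda b\bigr)\ \ge\ p(a)\,p(b),\qquad a<b,\ \lambda\in(0,1).
\tag{$\ast$}
\end{equation}

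For part (a): the implication ``$p=e^{-\varphi}$ with $\varphi\colon\RR\to(-\infty,\infty]$ convex $\Rightarrow(\ast)$'' is then a one-liner — if $p(a)p(b)=0$ there is nothing to prove, and otherwise $\varphi(a),\varphi(b)<\infty$, so two applications of convexity (with weights $\lambda$ and $1-\lambda$) followed by addition and exponentiation give $(\ast)$. For the converse I would specialise $(\ast)$ to $\lambda=1/2$: this shows $\{p>0\}$ is midpoint-convex and that $\log p$ is midpoint-concave there; invoking measurability of $p$ (it is a density, or is semicontinuous by the convention fixed above), the classical fact that a measurable midpoint-concave function is concave then upgrades this to convexity of $-\log p$ on all of $\RR$. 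I expect this upgrade to be the only real obstacle: it is where a regularity assumption is silently used, and without it $PF_2$ does not imply genuine log-concavity (non-linear additive functions furnish counterexamples). The remaining work is just bookkeeping about the null set of $p$.

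For part (b): I would write out that $\{p(\cdot-\theta):\theta\in\RR\}$ has monotone likelihood ratio iff, for all $\theta_1<\theta_2$ and $x_1<x_2$,
\[
\frac{p(x_1-\theta_2)}{p(x_1-\theta_1)}\ \le\ \frac{p(x_2-\theta_2)}{p(x_2-\theta_1)},
\]
and then clear denominators: the result is precisely $(\dagger)$ with $(y_1,y_2)=(\theta_1,\theta_2)$. So by part (a) the translation family has monotone likelihood ratio iff $p\in PF_2$ iff $p$ is log-concave. The one caveat to handle is the convention for ratios with vanishing denominator: since a log-concave density is positive exactly on an interval, the ratio is $0$ to the left of that interval, finite and nondecreasing on it, and $+\infty$ to its right, hence monotone as an extended-real-valued function; conversely, running the monotonicity statement on the support recovers $(\ast)$. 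I would state this boundary discussion carefully but anticipate no real difficulty there.
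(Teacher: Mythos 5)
Your proposal is correct and is essentially the paper's own proof: the change of variables turning the $PF_2$ determinant condition into the symmetric four-point inequality $(\ast)$, the two applications of convexity (with weights $\lambda$ and $1-\lambda$) for the forward direction, and the midpoint-plus-measurability upgrade for the converse are exactly the steps in the paper, which invokes measurability at precisely the same point. The only difference is cosmetic ordering: you prove (a) directly and obtain (b) by clearing denominators in the likelihood ratio, whereas the paper proves (b) first via the same interpolation identity (its $t=(x'-x)/(x'-x+\theta'-\theta)$ is your $\lambda$) and then reduces (a) to (b).
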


\begin{proof}  
See Section~\ref{sec:Proofs}.
\end{proof}

\smallskip

\begin{definition}
(0-m): A probability measure $P$ on $(\mathbb{R}^d, \mathcal{B}^d)$ is
log-concave if for all non-empty sets $A,B \in \mathcal{B}^d$ and for all $0
< \theta < 1$ we have 
\begin{eqnarray*}
P( \theta A + (1-\theta) B) \ge P(A)^{\theta} P(B)^{1-\theta} .
\end{eqnarray*}
\end{definition}

It is well-known that log-concave measures have sub-exponential tails, see 
\cite{MR733944} and Section \ref{section_reg} below. To accommodate
densities having tails heavier than exponential, the classes of $s-$concave
densities and measures are of interest.

\begin{definition}
\label{defn:Sconcave} (s-d): A density function $p$ with respect to Lebesgue
measure $\lambda $ on an convex set $C \subset \mathbb{R}^d$ is $s-$concave
if 
\begin{eqnarray*}
p( \theta x + (1-\theta) y) \ge M_s (p(x), p(y); \theta )
\end{eqnarray*}
where the generalized mean $M_s (u,v; \theta)$ is defined for $u,v\ge 0$ by 
\begin{eqnarray*}
M_s (u,v; \theta) \equiv \left \{ 
\begin{array}{ll}
(\theta u^s + (1-\theta) v^s )^{1/s}, & s \not= 0, \\ 
u^{\theta} v^{1-\theta}, & s = 0, \\ 
\mbox{min} \{ u, v \}, & s = -\infty, \\ 
\mbox{max} \{ u,v\} , & s = + \infty.%
\end{array}
\right .
\end{eqnarray*}
\end{definition}

\begin{definition}
(s-m): A probability measure $P $ on $(\mathbb{R}^d, \mathcal{B}^d)$ is $s-$%
concave if for all non-empty sets $A,B $ in $\mathcal{B}^d$ and for all $%
\theta \in (0,1)$, 
\begin{eqnarray*}
P( \theta A + (1-\theta ) B) \ge M_s (P(A), P(B); \theta )
\end{eqnarray*}
where $M_s (u,v; \theta)$ is as defined above.
\end{definition}

These classes of measures and densities were studied by \cite{MR0404557} 
%Prekopa (1973)
in the case $s=0$ and for all $s \in \mathbb{R}$ by \cite{MR0450480}, 
%Brascamp and Lieb (1975), 
\cite{MR0404559}, \cite{MR0388475}, %Borell (1975)
and \cite{MR0428540}. %Rinott (1976). MR0428540
The main results concerning these classes are nicely summarized by \cite%
{MR954608}; see especially sections 2.3-2.8 (pages 46-66) and section 3.3
(pages 84-99). In particular we will review some of the key results for
these classes in the next section. For bounds on densities of $s-$%
concave distributions on $\mathbb{R}$ see \cite{Doss-Wellner:2013}; 
% Doss & W (2013);
for probability tail bounds for $s-$concave measures on $\mathbb{R}^d$, see 
\cite{MR2510011}. For moment bounds and concentration inequalities for $s-$%
concave distributions with $s<0$ see \cite{MR3005719} 
%Adamczak et al (2012)
and \cite{Guedon:12}, section 3.

A key theorem connecting probability measures to densities is as follows:

\begin{theorem}
\label{PrekopaRinott} Suppose that $P$ is a probability measure on $(\mathbb{%
R}^d, \mathcal{B}^d)$ such that the affine hull of $\mbox{supp} (P)$ has
dimension $d$. Then $P$ is a log-concave measure if and only if it has a
log-concave density function $p$ on $\mathbb{R}^d$; that is $p = e^{\varphi}$
with $\varphi$ concave satisfies 
\begin{equation*}
P(A) = \int_A p d \lambda \ \ \ \mbox{for} \ \ \ A \in \mathcal{B}^d .
\end{equation*}
\end{theorem}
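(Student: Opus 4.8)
The plan is to prove both directions of the equivalence, with the substantive content concentrated in the "only if" direction, which is essentially the Prékopa--Borell theorem.

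\medskip

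\textbf{The easy direction.} Suppose $P$ has a log-concave density $p = e^{-\varphi}$ with $\varphi$ convex. I would verify that $P$ satisfies the measure inequality $P(\theta A + (1-\theta)B) \ge P(A)^\theta P(B)^{1-\theta}$ directly from the Prékopa--Leindler inequality. The standard move: given measurable $A, B$ with $P(A), P(B) > 0$, set $f = p\,1_A$, $g = p\,1_B$, and $h = p\,1_{\theta A + (1-\theta)B}$; one checks the pointwise hypothesis $h(\theta x + (1-\theta)y) \ge f(x)^\theta g(y)^{1-\theta}$ using log-concavity of $p$ together with the set inclusion, and then Prékopa--Leindler gives $\int h \ge (\int f)^\theta (\int g)^{1-\theta}$, which is exactly the claim. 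If Prékopa--Leindler has not been introduced by this point in the paper, I would instead cite it (it is the functional form of Brunn--Minkowski and surely appears earlier in the review or is quoted as background). The measurability of $\theta A + (1-\theta)B$ is a minor point: it can be taken care of by inner-regularity, reducing to compact $A, B$.

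\medskip

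\textbf{The hard direction.} Suppose $P$ is a log-concave measure whose support has full-dimensional affine hull. The first step is to show $P$ is absolutely continuous with respect to Lebesgue measure $\lambda$: a log-concave measure cannot charge any hyperplane (a set of measure zero would have to have $P$-measure zero by the convexity inequality applied with a full-dimensional $B$ inside the support, since $\theta A + (1-\theta)B$ would contain a full-dimensional set while $P(A) = 0$ forces the right-hand side to vanish — one needs to run a dimension-reduction / Lebesgue-density argument here, and this is where the full-dimensionality hypothesis is used). So a density $p = dP/d\lambda$ exists; the task is to show a version of $p$ is log-concave. The natural approach: let $S = \supp(P)$, which is convex (again from the defining inequality), and on the interior of $S$ one shows $p$ agrees a.e.\ with an upper semi-continuous log-concave function. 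I would argue by contradiction: if $p$ were not log-concave, there would be points $x_0, y_0$ in the interior of $S$ and $\theta \in (0,1)$ with $p(\theta x_0 + (1-\theta)y_0) < p(x_0)^\theta p(y_0)^{1-\theta}$ on a set of positive measure; by continuity (after passing to the u.s.c.\ regularization, using that $p$ is locally bounded on the interior of $S$ — itself a consequence of log-concavity-type control) one gets small balls $A \ni x_0$, $B \ni y_0$ on which $p$ exceeds $(1+\delta)$ times its value near $\theta x_0 + (1-\theta)y_0$, and then estimating $P(\theta A + (1-\theta)B)$ from above by $\sup_{\theta A + (1-\theta)B} p \cdot \lambda(\theta A + (1-\theta)B)$ and using the Brunn--Minkowski lower bound $\lambda(\theta A + (1-\theta)B)^{1/d} \ge \theta \lambda(A)^{1/d} + (1-\theta)\lambda(B)^{1/d}$ contradicts $P(\theta A + (1-\theta)B) \ge P(A)^\theta P(B)^{1-\theta}$ once the balls are small enough.

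\medskip

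\textbf{Main obstacle.} The genuinely delicate part is the a priori regularity of $p$ needed to run the contradiction argument — establishing that $P \ll \lambda$, that $\supp(P)$ is convex, and that $p$ has a locally bounded, u.s.c.\ version on the interior of the support — rather than the final inequality manipulation. This is precisely the content of Borell's and Prékopa's original work, and it relies essentially on the full-dimensionality of the affine hull of the support (without it one only gets log-concavity relative to the affine hull). Since this is a review, I expect the cleanest exposition is to prove the easy direction in full via Prékopa--Leindler and to present the hard direction's key steps while citing \cite{MR0404557} (Borell) and \cite{MR0388475} (Prékopa) for the regularity lemmas, or to defer the whole proof to Section~\ref{sec:Proofs} as the authors do for other supporting results.
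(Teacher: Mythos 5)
The paper does not actually prove Theorem~\ref{PrekopaRinott}: it is stated as a known result and the reader is referred to \cite{MR0404559}, \cite{MR0450480}, \cite{MR0428540} and \cite{MR954608}, so your closing suggestion (prove the easy direction, cite the literature for the hard one, or defer entirely) is in fact what the authors do, only more radically. Your treatment of the easy direction via Pr\'ekopa--Leindler, with inner regularity to handle measurability of $\theta A+(1-\theta)B$, is the standard argument and is fine.

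The hard direction as sketched, however, has a concrete gap at exactly the step you flag as delicate. Your parenthetical argument for absolute continuity does not work as written: if $\lambda(A)=0$ and you apply the defining inequality with $P(A)=0$ appearing on the right-hand side, the inequality $P(\theta A+(1-\theta)B)\ge P(A)^{\theta}P(B)^{1-\theta}$ becomes vacuous (right side $=0$), so nothing forces $P(A)=0$; and starting instead from the assumption $P(A)>0$ with $\lambda(A)=0$ does not yield a contradiction by any one-line use of the inequality. What is actually needed is Borell's dichotomy: a convex (in particular log-concave) measure on $\mathbb{R}^d$ is either carried by a proper affine subspace or absolutely continuous with respect to $\lambda$, and its proof is genuinely nontrivial (it is the main content of \cite{MR0388475} and \cite{MR0404559}, not a routine Lebesgue-density remark). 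Once $P\ll\lambda$, convexity of the support and the Brunn--Minkowski contradiction argument you outline do recover log-concavity of a suitable (say, u.s.c.) version of the density, so the rest of your sketch is sound modulo that cited input. One small correction on attribution: in the paper's bibliography \cite{MR0404557} is Pr\'ekopa and \cite{MR0388475} is Borell (you have them swapped), and the measure--density equivalence itself is due to Borell, with Pr\'ekopa's theorem being the marginalization/integration result.
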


For the correspondence between $s-$concave measures and $t-$concave
densities, see \cite{MR0404559}, \cite{MR0450480} section 3, \cite{MR0428540}%
, %Borell 
and \cite{MR954608}. %Dharmadkiri & Joag-dev section 3.3 .
\medskip

One of our main goals here is to review and summarize what is known
concerning the (smaller) classes of (what we call) \textsl{strongly
log-concave} densities. This terminology is not completely standard. Other
terms used for the same or essentially the same notion include:

\begin{itemize}
\item \textsl{Log-concave perturbation of Gaussian}; \cite{MR1964483}, 
% Villani 
\cite{MR1800860}. %Caffarelli
pages 290-291.

\item \textsl{Gaussian weighted log-concave}; \cite{MR0450480} pages 379,
381.

\item \textsl{Uniformly convex potential}: 
\cite{MR1800062}, abstract and page 1034, \cite{MR2895086}, Section 7. %Bobkov and Ledoux

\item \textsl{Strongly convex potential}: \cite{MR1800860}. 
%Cafarelli (2000)
%\item
%{\sl Semi log-concave probability measure}:   \cite{CattiauxGuillin:13}.    %Cattiaux and Guillin (2013)  
\end{itemize}

In the case of real-valued discrete variables the comparable notion is
called \textsl{ultra log-concavity}; see e.g. \cite{MR1462561}, % Liggett
\cite{MR3030616}, and \cite{MR2327839}. We will re-visit the notion of ultra
log-concavity in Section~\ref{Sec:DiscreteLogConc}. \smallskip

Our choice of terminology is motivated in part by the following definition
from convexity theory: following \cite{MR1491362}, page 565, we say that a
proper convex function $h:\mathbb{R}^{d}\rightarrow \overline{\mathbb{R}}$
is \textsl{strongly convex} if there exists a positive number $c$ such that 
\begin{equation*}
h(\theta x+(1-\theta )y)\leq \theta h(x)+(1-\theta )h(y)-\frac{1}{2}c\theta
(1-\theta )\Vert x-y\Vert ^{2}
\end{equation*}
for all $x,y\in \mathbb{R}^{d}$and $\theta \in (0,1)$. It is easily seen
that this is equivalent to convexity of $h(x)-(1/2)c\Vert x\Vert ^{2}$ (see 
\cite{MR1491362}, Exercise12.59, page 565).

Thus our first definition of strong log-concavity of a density function $p$
on $\mathbb{R}^d$ is as follows:

\begin{definition}
\label{strongLogConcaveDefn1} For any $\sigma ^{2}>0$ define the class of
strongly log-concave densities with variance parameter $\sigma ^{2}$, or $%
SLC_{1}(\sigma ^{2},d)$ to be the collection of density functions $p$ of the
form %Now $f$ is strongly log-concave if and only 
\begin{equation*}
p(x)=g(x)\phi _{\sigma ^{2}I}(x)
\end{equation*}%
for some log-concave function $g$ where, for a positive definite matrix $%
\Sigma $ and $\mu \in \mathbb{R}^{d}$, $\phi _{\Sigma }(\cdot -\mu )$
denotes the $N_{d}(\mu ,\Sigma )$ density given by 
\begin{equation}
\phi _{\Sigma }(x-\mu )=(2\pi |\Sigma |)^{-d/2}\exp \left( -\frac{1}{2}%
(x-\mu )^{T}\Sigma ^{-1}(x-\mu )\right) .  \label{def_Gaussian}
\end{equation}

If a random vector $X$ has a density $p$ of this form, then we also say that 
$X$ is strongly log-concave.
\end{definition}

Note that this agrees with the definition of strong convexity given above
since, 
\begin{equation*}
h(x)\equiv -\log p(x)=-\log g(x)+d\log (\sigma \sqrt{2\pi })+\frac{|x|^{2}}{%
2\sigma ^{2}},
\end{equation*}%
so that 
\begin{equation*}
-\log p(x)-\frac{|x|^{2}}{2\sigma ^{2}}=-\log g(x)+d\log (\sigma \sqrt{2\pi }%
)
\end{equation*}%
is convex; i.e. $-\log p(x)$ is strongly convex with $c=1/\sigma ^{2}$.
Notice however
that if $p \in SLC_{1}(\sigma ^{2},d)$ then larger values of $\sigma^2$ corresp to smaller 
values of $c = 1/\sigma^2$, and hence $p$ becomes {\sl less} strongly log-concave 
as $\sigma^2$ increases.  Thus in our definition of strong log-concavity the 
coefficient $\sigma^2$ measures the ``flatness'' of the convex potential
%the larger is $\sigma ^{2}$ the lower is $c=1/\sigma ^{2}$
%and by consequence, the less strongly log-concave is $p$. 
%In our definition of strong log-concavity,
%the coefficient $\sigma ^{2}$ thus measures the flatness of the convex potential.

It will be useful to relax this definition in two directions: by allowing
the Gaussian distribution to have a non-singular covariance matrix $\Sigma$
other than the identity matrix and perhaps a non-zero mean vector $\mu$.
Thus our second definition is as follows.

\begin{definition}
\label{strongLogConcaveDefn2} Let $\Sigma $ be a $d\times d$ positive
definite matrix and let $\mu \in \mathbb{R}^{d}$. We say that a random
vector $X$ and its density function $p$ are strongly log-concave and write $%
p\in SLC_{2}(\mu ,\Sigma ,d)$ if 
\begin{equation*}
p(x)=g(x)\phi _{\Sigma }(x-\mu )\ \ \ \mbox{for}\ \ \ x\in \mathbb{R}^{d}
\end{equation*}%
for some log-concave function $g$ where $\phi _{\Sigma }(\cdot -\mu )$
denotes the $N_{d}(\mu ,\Sigma )$ density given by (\ref{def_Gaussian}). 
%\begin{eqnarray*}
%\phi_{\Sigma} (x- \mu) = (2 \pi | \Sigma |)^{-d/2} \exp \left ( - \frac{1}{2} (x - \mu)^T \Sigma^{-1} (x- \mu) \right ) .
%\end{eqnarray*}
\end{definition}

Note that $SLC_{2}(0,\sigma ^{2}I,d)=SLC_{1}(\sigma ^{2},d)$ as in
Definition~\ref{strongLogConcaveDefn1}. Furthermore, if $p\in SLC_{2}(\mu
,\Sigma ,d)$ with $\Sigma $ non-singular, then we can write 
\begin{eqnarray*}
p(x) &=&g(x)\frac{\phi _{\Sigma }(x-\mu )}{\phi _{\Sigma }(x)}\cdot \frac{%
\phi _{\Sigma }(x)}{\phi _{\sigma ^{2}I}(x)}\phi _{\sigma ^{2}I}(x) \\
&=&g(x)\exp (\mu ^{T}\Sigma ^{-1}x-(1/2)\mu ^{T}\Sigma ^{-1}\mu ^{T}) \\
&&\ \ \ \cdot \exp \left( -\frac{1}{2}x^{T}(\Sigma ^{-1}-\frac{1}{\sigma ^{2}%
}I)x\right) \cdot \phi _{\sigma ^{2}I}(x) \\
&\equiv &h(x)\phi _{\sigma ^{2}I}(x)\text{ ,}
\end{eqnarray*}%
where $\Sigma ^{-1}-I/\sigma ^{2}$ is positive definite if $1/\sigma ^{2}$
is smaller than$\ $the smallest eigenvalue of $\Sigma ^{-1}$. In this case, $%
h$ is log-concave, so $p\in SLC_{1}(\sigma ^{2},d)$.\medskip

\begin{example}
\label{NormalDensities} (Gaussian densities) If $X \sim p $ where $p$ is the 
$N_d(0, \Sigma)$ density with $\Sigma$ positive definite, then $X $ (and $p)$
is strongly log-concave $SLC_2 ( 0 , \Sigma, d)$ and hence also log-concave.
In particular for $d=1$, if $X \sim p$ where $p $ is the $N_1 (0, \sigma^2)$
density, then $X$ (and $p$) is $SLC_1 ( \sigma^2 ,1) = SLC_2 (0, \sigma^2 ,
1)$ and hence is also log-concave. Note that $\varphi_X^{\prime\prime} (x)
\equiv (-\log p)^{\prime \prime} (x) = 1/\sigma^2$ is constant in this
latter case.
\end{example}

\begin{example}
\label{LogisticDensity} (Logistic density) If $X \sim p$ where $p(x) =
e^{-x} /(1+e^{-x} )^2 = (1/4)/(\cosh(x/2))^2$, then $X $ (and $p$) is
log-concave and even \textsl{strictly log-concave} since $\varphi_X^{\prime
\prime} (x) = (-\log p)^{\prime \prime} (x) = 2p(x) > 0 $ for all $x \in 
\mathbb{R}$, but $X$ is \textsl{not} strongly log-concave.
\end{example}

\begin{example}
\label{BridgeDensity} (Bridge densities) If $X \sim p_\theta$ where, for $%
\theta \in (0,1)$, 
\begin{equation*}
p_{\theta} (x) = \frac{\sin (\pi\theta)}{2 \pi ( \cosh(\theta x) + \cos (\pi
\theta))},
\end{equation*}
then $X$ (and $p_{\theta}$) is log-concave for $\theta \in (0,1/2]$, but
fails to be log-concave for $\theta \in (1/2,1)$. For $\theta \in (1/2,1)$, $%
\varphi_{\theta}^{\prime \prime} (x) = (-\log p_{\theta})^{\prime \prime}
(x) $ is bounded below, by some negative value depending on $\theta$, and
hence these densities are \textsl{semi-log-concave} in the terminology of 
\cite{CattiauxGuillin:13} who introduce this further generalization of
log-concave densities by allowing the constant in the definition of a class
of strongly log-concave densities to be negative as well as positive. This particular 
family of densities on $\mathbb{R}$ was introduced in the context of binary
mixed effects models by \cite{MR2024756}.
\end{example}

\begin{example}
\label{SubbotinDensity} (Subbotin density) If $X \sim p_r$ where $p_r(x) =
C_r \exp( - |x|^r/r )$ for $x \in \mathbb{R}$ and $r>0$ where $C_r = 1/[2
\Gamma (1/r) r^{1/r-1}]$, then $X$ (and $p_r$) is log-concave for all $r\ge1$%
. Note that this family includes the Laplace (or double exponential) density
for $r=1$ and the Gaussian (or standard normal) density for $r=2$. The only
member of this family that is strongly log-concave is $p_2$, the standard
Gaussian density, since $(-\log p)^{\prime \prime} (x) = (r-1) |x|^{r-2}$
for $x\not= 0$.
\end{example}

\begin{example}
\label{SupremumBrownianBridge} (Supremum of Brownian bridge) If $\mathbb{U}$
is a standard Brownian bridge process on $[0,1]$, Then $P(\sup_{0\leq t\leq
1}\mathbb{U}(t)>x)=\exp (-2x^{2})$ for $x>0$, so the density is $f(x)=4x\exp
(-2x^{2})1_{(0,\infty )}(x)$, which is strongly log concave since $(-\log
f)^{\prime \prime }(x)=4+x^{-2}\geq 4$. This is a special case of the
Weibull densities $f_{\beta }(x)=\beta x^{\beta -1}\exp (-x^{\beta })$ which
are log-concave if $\beta \geq 1$ and strongly log-concave for $\beta \geq 2$%
. For more about suprema of Gaussian processes, see Section \ref%
{ssec_suprema}\ below.
\end{example}

For further interesting examples, see \cite{MR954608} 
%Dharmadhikari & Joag-Dev
and \cite{MR1375234} . %Prekopa (1995)

There exist a priori many ways to strengthen the property of log-concavity.
An very interesting notion is for instance the log-concavity of order $p$.
This is a one-dimensional notion, and even if it can be easily stated for
one-dimensional measures on $\mathbb{R}^{d}$, see \cite{MR2857249}  %BobkovMadiman:11a}\
Section 4, we state it in its classical way on $\mathbb{R}$.

\begin{definition}
\label{def_LC_order_alpha}A random variable $\xi >0$ is said to have a
log-concave distribution of order $p\geq 1$, if it has a density of the form 
$f(x)=x^{p-1}g(x),$ $x>0$, where the function $g$ is log-concave on 
$\left(0, \infty \right) $.
\end{definition}

Notice that the notion of log-concavity of order $1$ coincides with the
notion of log-concavity for positive random variables. Furthermore, it is
easily seen that log-concave variables of order $p>1$ are more concentrated
than log-concave variables. Indeed, with the notations of 
Definition~\ref{def_LC_order_alpha} 
and setting moreover $f=\exp \left( -\varphi_{f}\right) $ 
and $g=\exp \left( -\varphi _{g}\right) $, assuming that $f$
is $\mathcal{C}^{2}$ we get,
\begin{equation*}
\Hess \varphi _{f}=
\Hess \varphi _{g}+\frac{p-1}{x^{2}}\text{ .}
\end{equation*}%
As a matter of fact, the exponent $p$ strengthens the Hessian of the
potential of $g$, which is already a log-concave density. Here are some
example of log-concave variables of order $p$.

\begin{example}
The Gamma distribution with $\alpha \geq 1$ degrees of freedom, which has
the density $f(x)=\Gamma \left( \alpha \right) ^{-1}x^{\alpha -1}e^{-x} 1_{(0,\infty)} (x) $ 
%\mathbf{1}\left\{ x>0\right\} $ 
is log-concave of order $\alpha $.
\end{example}

\begin{example}
The Beta distribution $B_{\alpha ,\beta }$ with parameters $\alpha \geq 1$
and $\beta \geq 1$ is log-concave of order $\alpha $. We recall that its
density $g$ is given by 
$g(x)=B\left( \alpha ,\beta \right) ^{-1}x^{\alpha-1}\left( 1-x\right) ^{\beta -1} 1_{(0,1)}(x)$.   
%\mathbf{1}\left\{ 0<x<1\right\} $.
\end{example}

\begin{example}
The Weibull density of parameter $\beta \geq 1$, given by $h_{\beta }\left(
x\right) =\beta x^{\beta -1}\exp \left( -x^{\beta }\right) 1_{(0, \infty)} (x) $
%\mathbf{1}\left\{x>0\right\} $ 
is log-concave of order $\beta $.
\end{example}

It is worth noticing that when $X$ is a log-concave vector in 
$\mathbb{R}^{d} $ with spherically invariant distribution, then the Euclidian norm of $X$, 
denoted $\left\Vert X\right\Vert $, follows a log-concave distribution
of order $d-1$ (this is easily seen by transforming to polar coordinates; see 
\cite{MR2083386} for instance). The notion of log-concavity of
order $p$ is also of interest when dealing with problems in greater
dimension. Indeed, a general way to reduce a problem defined by $d$
-dimensional integrals to a problem involving one-dimensional integrals is given by the
``localization lemma'' of \cite{MR1238906}; 
see also \cite{MR1608200}. We will not further review this notion and we refer to 
\cite{MR2083386}, \cite{MR2839024}\ and \cite{MR2857249}
for nice results related in particular to concentration of log-concave
variables of order $p$.

The following sets of equivalences for log-concavity and strong
log-concavity will be useful and important. To state these equivalences we
need the following definitions from \cite{MR2814377}, page 199. %Simon
First, a subset $A $ of $\RR^d$ is \emph{balanced} (\cite{MR2814377}) 
or \emph{centrally symmetric} (\cite{MR954608})  if $x \in A$ implies $-x \in A$.  

\begin{definition}
\label{ConvexlyLayered-EvenRadialMonotone} A nonnegative function $f$ on $%
\mathbb{R}^d$ is \emph{convexly layered} if $\{ x : \ f(x) > \alpha \}$ is a
balanced convex set for all $\alpha>0$. It is called \emph{even, radial
monotone} if (i) $f(-x) = f(x)$ and (ii) $f(rx) \ge f(x)$ for all $0 \le r
\le 1$ and all $x \in \mathbb{R}^d$.
\end{definition}

\begin{proposition}
\label{EquivalencesLogCon} (Equivalences for log-concavity). Let $p =
e^{-\varphi}$ be a density function with respect to Lebesgue measure $%
\lambda $ on $\mathbb{R}^d$; that is, $p \ge 0$ and $\int_{\mathbb{R}^d} p d
\lambda =1$. Suppose that $\varphi \in C^2 $. Then the following are
equivalent:\newline
(a) $\varphi = - \log p$ is convex; i.e. $p$ is log-concave.\newline
(b) $\nabla \varphi = - \nabla p / p : \mathbb{R}^d \rightarrow \mathbb{R}^d$
is monotone:\newline
\begin{equation*}
\langle \nabla \varphi (x_2) - \nabla \varphi (x_1) , x_2 - x_1 \rangle \ge
0 \ \ \ \mbox{for all} \ \ x_1, x_2 \in \mathbb{R}^d .
\end{equation*}
(c) $\nabla^2 \varphi = {\nabla^2 \mathstrut} (\varphi ) \ge 0$.\newline
(d) $J_a (x; p) = p(a+x) p(a-x) $ is convexly layered for each $a \in 
\mathbb{R}^d$.\newline
(e) $J_a (x; p)$ is even and radially monotone.\newline
(f) $p$ is mid-point log-concave: for all $x_1,x_2 \in \mathbb{R}^d$, 
\begin{equation*}
p\left ( \frac{1}{2} x_1 + \frac{1}{2} x_2\right ) \ge p(x_1)^{1/2}
p(x_2)^{1/2} .
\end{equation*}
\end{proposition}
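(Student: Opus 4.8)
The plan is to establish two overlapping cycles of implications: the purely differential-calculus loop (a)$\Rightarrow$(b)$\Rightarrow$(c)$\Rightarrow$(a), and the ``geometric'' loop (a)$\Rightarrow$(d)$\Rightarrow$(e)$\Rightarrow$(f)$\Rightarrow$(a), the latter run entirely through the auxiliary function $\psi_a(x) := \varphi(a+x)+\varphi(a-x) = -\log J_a(x;p)$. Since $\varphi\in C^2$ is in particular finite-valued, $p=e^{-\varphi}>0$ everywhere, a fact used freely below.

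For the calculus loop: (a)$\Rightarrow$(b) is the first-order characterization of convexity of the $C^1$ map $\varphi$ as monotonicity of $\nabla\varphi$; (b)$\Rightarrow$(c) follows by putting $x_2=x_1+tu$ in the monotonicity inequality, dividing by $t^2>0$ and letting $t\to 0^+$ to get $\langle \nabla^2\varphi(x_1)u,u\rangle\ge 0$; and (c)$\Rightarrow$(a) is Taylor's theorem with integral remainder, which gives the first-order convexity inequality $\varphi(x_2)\ge \varphi(x_1)+\langle\nabla\varphi(x_1),x_2-x_1\rangle$. These are all standard (e.g. Rockafellar--Wets \cite{MR1491362}).

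For the geometric loop, first note that $\psi_a$ is always even, so $J_a(\cdot;p)$ is automatically even and no hypothesis is needed for that half of (e). If (a) holds, then $\psi_a$, being a sum of the convex function $\varphi$ composed with the affine maps $x\mapsto a\pm x$, is convex; its sublevel sets $\{\psi_a<c\}$ are thus convex, and they are balanced because $\psi_a$ is even, so $J_a=e^{-\psi_a}$ is convexly layered, giving (d). For (d)$\Rightarrow$(e): if $\{J_a>\alpha\}$ is balanced and convex and contains $x$, it contains $-x$, hence the convex combination $rx=\tfrac{1+r}{2}x+\tfrac{1-r}{2}(-x)$ for every $r\in[0,1]$; letting $\alpha\uparrow J_a(x)$ yields $J_a(rx)\ge J_a(x)$, which is radial monotonicity, so (e) holds. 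For (e)$\Rightarrow$(f): radial monotonicity at $r=0$ gives $J_a(0)=p(a)^2\ge p(a+x)p(a-x)=J_a(x)$ for all $a,x$; substituting $a=\tfrac12(x_1+x_2)$ and $x=\tfrac12(x_2-x_1)$ turns this into the midpoint log-concavity inequality, which is exactly (f).

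Finally, (f)$\Rightarrow$(a): midpoint log-concavity of $p$ says precisely that $\varphi$ is midpoint-convex, $\varphi(\tfrac12 x_1+\tfrac12 x_2)\le\tfrac12\varphi(x_1)+\tfrac12\varphi(x_2)$; since $\varphi$ is continuous (being $C^2$), the classical fact that a continuous midpoint-convex function is convex gives (a), closing both loops. The only step requiring genuine care is the passage \emph{out of} the geometric conditions (d) and (e): one must notice that imposing the balanced-plus-convex (respectively radially monotone) structure of the level sets of $J_a$ for \emph{every} center $a$ is already enough to force the one-dimensional midpoint inequality, the mechanism being to evaluate at the origin (equivalently, to use the trivial convex combination), where the symmetry of $J_a$ about $0$ does the work. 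Everything else is bookkeeping with the identity $J_a=e^{-\psi_a}$, $\psi_a(x)=\varphi(a+x)+\varphi(a-x)$.
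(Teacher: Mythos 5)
Your proof is correct and follows essentially the same route the paper relies on: the (a)--(c) equivalences are the classical first/second-order characterizations of convexity, and your geometric chain through $J_a=e^{-\psi_a}$ with $\psi_a(x)=\varphi(a+x)+\varphi(a-x)$, the substitution $a=\tfrac12(x_1+x_2)$, $x=\tfrac12(x_2-x_1)$, and midpoint convexity plus continuity is exactly the argument of \cite{MR2814377}, page 199, which the paper cites for (a), (d), (e), (f) and reproduces in its proofs of the strong log-concavity analogues. The only cosmetic differences are that you close the loop via (f)$\Rightarrow$(a) using continuity (available since $\varphi\in C^2$) where the cited argument uses lower semicontinuity to cover non-smooth $p$, and the paper's smooth-case variant also records a direct (e)$\Rightarrow$(c) Hessian computation at $x=0$; neither affects correctness.
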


The equivalence of (a), (d), (e), and (f) is proved by \cite{MR2814377},
page 199, without assuming that $p\in C^{2}$. The equivalence of (a), (b),
and (c) under the assumption $\varphi \in C^{2}$ is classical and
well-known. This set of equivalences generalizes naturally to handle $%
\varphi \notin C^{2}$, but $\varphi $ proper and upper semicontinuous so
that $p$ is lower semicontinuous; see Section \ref{section_ap} below for the
adequate tools of convex regularization.

In dimension 1, \cite{Bobkov:96a} proved the following further 
characterizations of log-concavity on $\RR$.

\begin{proposition}[\protect\cite{Bobkov:96a}]
\label{prop_bobkov}Let $\mu $ be a nonatomic probability measure with
distribution function $F=\mu \left( \left( -\infty ,x\right] \right) $, 
$x\in \mathbb{R}$. 
Set $a=\inf \left\{ x\in \mathbb{R}: \ F\left( x\right)>0\right\} $ and 
$b=\sup \left\{ x\in \mathbb{R}: \ F\left( x\right) <1\right\} $. 
Assume that $F$ strictly increases on $\left( a,b\right) $, 
and let $F^{-1}:(0,1)\rightarrow \left( a,b\right) $ denote the inverse of $F$
restricted to $\left( a,b\right) $. Then the following properties are
equivalent:\\
%\begin{description}
%\item[(a)] 
(a) \ $\mu $ is log-concave;\\
%\item[(b)] 
(b) for all $h>0$, the function 
$R_{h}\left( p\right) =F\left( F^{-1}\left( p\right) +h\right) $ is concave on $\left( a,b\right) $;\\
%\item[(c)] 
(c ) $\mu $ has a continuous, positive density $f$ on $\left(a,b\right) $ and, moreover, the function 
$I\left( p\right) =f\left( F^{-1}\left( p\right) \right) $ is concave on $(0,1)$.
%\end{description}
\end{proposition}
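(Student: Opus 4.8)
The plan is to establish the two equivalences (a)$\Leftrightarrow$(c) and (a)$\Leftrightarrow$(b) by reducing both (b) and (c) to log-concavity of a density of $\mu$. First I would record two preliminaries. Under the hypotheses, $F$ restricts to a strictly increasing homeomorphism of $(a,b)$ onto $(0,1)$ with inverse $F^{-1}$; and whenever $\mu$ has a density $f$ that is continuous and strictly positive on $(a,b)$, one has $F\in C^{1}(a,b)$ with $F'=f$, hence $F^{-1}\in C^{1}(0,1)$ with $(F^{-1})'=1/(f\circ F^{-1})$ and $f=I\circ F$. Moreover, if $\mu$ is log-concave then, the affine hull of its support being one-dimensional, Theorem~\ref{PrekopaRinott} provides a log-concave density $p=e^{-\varphi}$; one must have $p>0$ on $(a,b)$ (else $F$ would be constant on a subinterval), so $\varphi$ is finite, hence continuous, there, and $f:=p|_{(a,b)}$ is continuous and strictly positive --- so (a) already supplies the regularity demanded in (c).

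For (a)$\Leftrightarrow$(c), assume $f$ is continuous and positive on $(a,b)$. A short computation with the difference quotients of $f=I\circ F$, namely
\[
\frac{f(x+\delta)-f(x)}{\delta}=\frac{I\bigl(F(x+\delta)\bigr)-I\bigl(F(x)\bigr)}{F(x+\delta)-F(x)}\cdot\frac{F(x+\delta)-F(x)}{\delta},
\]
gives the identity $(\log f)'_{+}(x)=f'_{+}(x)/f(x)=I'_{+}(F(x))$ for every $x\in(a,b)$: as $\delta\downarrow 0$ the last factor tends to $F'(x)=f(x)$, and the first is a forward difference quotient of $I$ at $F(x)$ whose step tends to $0$, so one factor has a limit by the hypothesis in force (concavity of $I$ under (c), or of $\log f=-\varphi$ under (a)) and then so does the other. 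Since $F$ is increasing, $x\mapsto I'_{+}(F(x))$ is nonincreasing on $(a,b)$ exactly when $I'_{+}$ is nonincreasing on $(0,1)$, i.e.\ exactly when $I$ is concave; combined with the elementary fact that a continuous function on an interval is concave iff its right derivative exists everywhere and is nonincreasing, this shows that ``$I$ concave'', ``$\log f$ concave'', ``$f$ log-concave'' and (via Theorem~\ref{PrekopaRinott}) ``$\mu$ log-concave'' are all equivalent.

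For (a)$\Rightarrow$(b): with $f=e^{-\varphi}$ continuous and positive on $(a,b)$, $F$ is $C^{1}$ there and constant on $[b,\infty)$, so for $p$ with $x:=F^{-1}(p)$ and $x+h<b$,
\[
R_{h}'(p)=f(x+h)\,(F^{-1})'(p)=\frac{f(x+h)}{f(x)}=\exp\!\Bigl(-\int_{x}^{x+h}\varphi'_{+}(t)\,dt\Bigr),
\]
which is nonincreasing in $p$ because $\varphi$ is convex, while $R_{h}\equiv 1$ once $x+h\ge b$ and the two pieces join concavely (left slope $\ge 0$, right slope $0$); hence $R_{h}$ is concave on $(0,1)$ for each $h>0$. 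Conversely, the same identity read backwards shows that, once $\mu$ has a continuous positive density $f$ on $(a,b)$, concavity of all the $R_{h}$ forces $x\mapsto f(x+h)/f(x)$ to be nonincreasing for every $h>0$ --- precisely the monotone-likelihood-ratio property of $\{f(\cdot-\theta):\theta\in\RR\}$, equivalent by Proposition~\ref{LogConcaveEqualsPF2}(b) to log-concavity of $f$, hence to (a).

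The step I expect to be the main obstacle is producing such a density from (b) alone, where one is given only that each $R_{h}$ is concave and increasing on $(0,1)$ with $R_{h}(1^{-})=1$. The idea is that this forces $\mu$ to be absolutely continuous: if $F$ had a nontrivial singular part, the set $\{\,\overline{D}^{+}F=+\infty\,\}$ would carry the singular mass (hence be nonempty) while remaining Lebesgue-null; picking $x_{0}$ in it and then $h>0$ small enough that $x_{0}+h<b$ and $\overline{D}^{+}F(x_{0}+h)<\infty$, one gets $R_{h}'(F(x_{0})^{+})=0$, so by concavity and monotonicity $R_{h}$ is constant on $[F(x_{0}),1)$, contradicting $R_{h}(1^{-})=1$. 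It then remains to promote the a.e.-defined density $F'$ to a genuine log-concave one: the nonincreasing-ratio condition above, together with strict monotonicity of $F$, rules out $F'$ vanishing on a set of positive measure, after which Proposition~\ref{LogConcaveEqualsPF2} applies. This measure-theoretic bookkeeping is routine but delicate; alternatively, one may invoke the regularity argument of \cite{Bobkov:96a} directly.
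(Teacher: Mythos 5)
First, a point of reference: the paper itself contains no proof of Proposition~\ref{prop_bobkov} --- it is quoted from \cite{Bobkov:96a} --- so your argument has to stand on its own rather than be compared with an internal proof. On its own terms, most of it does stand: the preliminary extraction of a continuous positive density on $(a,b)$ from (a) via Theorem~\ref{PrekopaRinott}, the difference-quotient identity $(\log f)'_{+}(x)=I'_{+}(F(x))$ (valid in both directions because $F$ maps a neighbourhood of $x$ homeomorphically onto a neighbourhood of $F(x)$, so convergence of one factor forces convergence of the other), the criterion ``continuous with everywhere-existing nonincreasing right derivative $\Rightarrow$ concave'', and the computation $R_h'(p)=f(x+h)/f(x)$ with the junction argument at $p=F(b-h)$ are all correct. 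So (a)$\Leftrightarrow$(c) and (a)$\Rightarrow$(b) are in order.

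The genuine gap is in (b)$\Rightarrow$(a), and it is exactly the step you flag and then defer. Your Dini-derivative argument does rule out a singular part: the one-sided fact that $\overline{D}^{+}F=+\infty$ a.e.\ with respect to the singular part is true, although it requires a one-sided interval/covering argument (the symmetric differentiation theorem only gives that the larger of the two one-sided upper derivates is infinite), and the concavity-plus-monotonicity contradiction with $R_h(1^-)=1$ is fine. But after that you possess only an a.e.-defined density $F'$, whereas your reduction of (b) to the monotone-likelihood-ratio property of Proposition~\ref{LogConcaveEqualsPF2}(b) was derived under the assumption that the density is continuous and positive --- which (b) does not supply. What remains is the real work: pass from concavity of every $R_h$ to the chord-slope inequality $\frac{F(x_2+h)-F(x_1+h)}{F(x_2)-F(x_1)}\ge\frac{F(x_4+h)-F(x_3+h)}{F(x_4)-F(x_3)}$ for $x_1<x_2\le x_3<x_4$, shrink intervals at Lebesgue points to get the MLR inequality for $F'$ a.e., rule out $F'=0$ on a set of positive measure, and replace $F'$ by a genuinely log-concave version. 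None of this is done, and one supporting claim is wrong as stated: strict monotonicity of $F$ does \emph{not} prevent $F'$ from vanishing on a set of positive measure (take $F'$ proportional to the indicator of the complement of a fat Cantor set $C\subset(a,b)$: $F$ is strictly increasing, yet $F'=0$ on $C$). Ruling this out genuinely needs the ratio condition, applied at a density point of $\{F'=0\}$ against an interval of positive mass to its left. Since you ultimately propose to ``invoke the regularity argument of \cite{Bobkov:96a} directly'' for precisely this portion, the direction (b)$\Rightarrow$(a) is not complete as written, even though the overall skeleton is sound.
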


Properties (b) and (c ) 
%\textbf{(b)} and \textbf{(c)} 
of Proposition \ref{prop_bobkov}
were first used in \cite{Bobkov:96a} along the proofs of his description of
the extremal properties of half-planes for the isoperimetric problem for
log-concave product measures on $\mathbb{R}^{d}$. 
In \cite{MR2857249}
the concavity of the function 
$I\left( p\right) =f\left( F^{-1}\left(p\right) \right) $ defined in point (c ) %\textbf{(c)} 
of Proposition \ref{prop_bobkov}, 
plays a role in the proof of concentration and moment
inequalities for the following information quantity: $-\log f\left( X\right)$ 
where $X$ is a random vector with log-concave density $f$.  Recently, 
\cite{BobkovLedoux:14} used the concavity of $I$ to prove upper and lower
bounds on the variance of the order statistics associated to an i.i.d.
sample drawn from a log-concave measure on $\mathbb{R}$. The latter results
allow then the authors to prove refined bounds on some Kantorovich transport
distances between the empirical measure associated to the i.i.d. sample and
the log-concave measure on $\mathbb{R}$. For more facts about the function $I $ 
for general measures on $\mathbb{R}$ and in particular, its relationship
to isoperimetric profiles, see Appendix A.4-6 of \cite{BobkovLedoux:14}.

\begin{example}
If $\mu $ is the standard Gaussian measure on the real line, then $I$ is
symmetric around $1/2$ and there exist constants $0< c_{0} \le c_{1}< \infty$ such that 
\begin{equation*}
c_{0}t\sqrt{\log \left( 1/t\right) }\leq I\left( t\right) \leq c_{1}t\sqrt{%
\log \left( 1/t\right) }\text{ ,}
\end{equation*}%
for $t\in \left( 0,1/2\right] $ (see \cite{BobkovLedoux:14}\ p.73).
\end{example}

We turn now to similar characterizations of strong log-concavity.

\begin{proposition}
\label{EquivalencesStrongLogConTry3} (Equivalences for strong log-concavity, 
$SLC_{1}$). Let $p=e^{-\varphi }$ be a density function with respect to
Lebesgue measure $\lambda $ on $\mathbb{R}^{d}$; that is, $p\geq 0$ and $%
\int_{\mathbb{R}^{d}}pd\lambda =1$. Suppose that $\varphi \in C^{2}$. Then
the following are equivalent:\newline
(a) $p$ is strongly log-concave; $p\in SLC_{1}(\sigma ^{2},d)$.\newline
(b) $\rho (x)\equiv \nabla \varphi (x)-x/\sigma ^{2}:\mathbb{R}^{d}\rightarrow \mathbb{R}^{d}$ 
is monotone:\newline
\begin{equation*}
\langle \rho (x_{2})-\rho (x_{1}),x_{2}-x_{1}\rangle \geq 0\ \ \ 
\mbox{for
all}\ \ x_{1},x_{2}\in \mathbb{R}^{d}.
\end{equation*}%
(c) $\nabla \rho (x)=\nabla ^{2}\varphi -I/\sigma ^{2}\geq 0$.\newline
(d) For each $a\in \mathbb{R}^{d}$ the function 
\begin{equation*}
J_{a}^{\phi }(x;p)\equiv \frac{p(a+x)p(a-x)}{\phi _{\sigma ^{2}I/2}(x)}
\end{equation*}%
is convexly layered. \newline
(e) The function $J_{a}^{\phi }(x;p)$ in (d) is even and radially monotone
for all $a\in \mathbb{R}^{d}$. \newline
(f) For all $x,y\in \mathbb{R}^{d}$, 
\begin{equation*}
p\left( \frac{1}{2}x+\frac{1}{2}y\right) \geq p(x)^{1/2}p(y)^{1/2}\exp
\left( \frac{1}{8}|x-y|^{2}\right) .
\end{equation*}
\end{proposition}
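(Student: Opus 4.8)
The plan is to reduce every statement to Proposition~\ref{EquivalencesLogCon}, applied not to $p$ itself but to the ``de-Gaussianized'' function
\begin{equation*}
g(x) \;:=\; \frac{p(x)}{\phi_{\sigma^2 I}(x)} \;=\; (2\pi\sigma^2)^{d/2}\,\exp\!\Big(\tfrac{|x|^2}{2\sigma^2}\Big)\,p(x), \qquad \psi := -\log g .
\end{equation*}
Since $\varphi=-\log p\in C^2$ and $\phi_{\sigma^2 I}>0$ everywhere, $g$ is a strictly positive $C^2$ function and $\psi = \varphi - |\cdot|^2/(2\sigma^2) - \tfrac d2\log(2\pi\sigma^2)\in C^2$. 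By Definition~\ref{strongLogConcaveDefn1}, statement (a) is \emph{by definition} equivalent to ``$g$ is a log-concave function'', i.e.\ to ``$\psi$ is convex''. So it suffices to show that each of (b)--(f), after the substitution $p=g\,\phi_{\sigma^2 I}$, becomes exactly one of the equivalent conditions of Proposition~\ref{EquivalencesLogCon} for $g$. (One small point: $g$ need not be integrable --- e.g.\ when $p$ is itself Gaussian with variance $>\sigma^2$ --- so we use the versions of those characterizations valid for arbitrary nonnegative log-concave \emph{functions}, which is how (a), (d), (e), (f) are established in \cite{MR2814377} and how (a)--(c) are classical.)

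Conditions (b) and (c) are immediate: $\nabla\psi(x) = \nabla\varphi(x) - x/\sigma^2 = \rho(x)$ and $\nabla^2\psi(x) = \nabla^2\varphi(x) - I/\sigma^2 = \nabla\rho(x)$, so ``$\rho$ monotone'' and ``$\nabla\rho\geq 0$'' are literally conditions (b) and (c) of Proposition~\ref{EquivalencesLogCon} for $g$; equivalently, they are the classical characterizations of convexity of the $C^2$ function $\psi$. Hence (a)$\Leftrightarrow$(b)$\Leftrightarrow$(c).

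For (d) and (e) the mechanism is a Gaussian cancellation that is built into the normalization $\phi_{\sigma^2 I/2}$: using $|a+x|^2+|a-x|^2 = 2|a|^2+2|x|^2$ one checks that, for each fixed $a$,
\begin{equation*}
\frac{\phi_{\sigma^2 I}(a+x)\,\phi_{\sigma^2 I}(a-x)}{\phi_{\sigma^2 I/2}(x)} \;=\; c(a) \quad\text{(a positive constant independent of } x\text{).}
\end{equation*}
Therefore
\begin{equation*}
J_a^\phi(x;p) \;=\; \frac{g(a+x)g(a-x)\,\phi_{\sigma^2 I}(a+x)\phi_{\sigma^2 I}(a-x)}{\phi_{\sigma^2 I/2}(x)} \;=\; c(a)\,g(a+x)g(a-x) \;=\; c(a)\,J_a(x;g).
\end{equation*}
Since multiplying a nonnegative function by a positive constant merely rescales the levels of its superlevel sets, it preserves the properties ``convexly layered'' and ``even and radially monotone''. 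Hence (d) (resp.\ (e)) for $p$ holds iff $J_a(x;g)$ is convexly layered (resp.\ even and radially monotone) for all $a$, which by Proposition~\ref{EquivalencesLogCon}(d) (resp.\ (e)) applied to $g$ is equivalent to log-concavity of $g$, i.e.\ to (a).

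Finally, for (f) one computes the midpoint correction using $|x|^2+|y|^2 - 2\big|\tfrac12 x+\tfrac12 y\big|^2 = \tfrac12|x-y|^2$, namely
\begin{equation*}
\frac{\phi_{\sigma^2 I}(x)^{1/2}\,\phi_{\sigma^2 I}(y)^{1/2}}{\phi_{\sigma^2 I}\!\big(\tfrac12 x+\tfrac12 y\big)} \;=\; \exp\!\Big(-\frac{|x-y|^2}{8\sigma^2}\Big).
\end{equation*}
Substituting $p=g\,\phi_{\sigma^2 I}$ into the inequality of (f) and cancelling this factor shows that (f) is equivalent to $g(\tfrac12 x+\tfrac12 y)\geq g(x)^{1/2}g(y)^{1/2}$ for all $x,y$, i.e.\ to midpoint log-concavity of $g$; as $g$ is continuous this is in turn equivalent to log-concavity of $g$ (Proposition~\ref{EquivalencesLogCon}(f) for $g$), hence to (a). (The Gaussian correction factor carries the $1/\sigma^2$, so the exponent appearing on the right-hand side of (f) should be read as $|x-y|^2/(8\sigma^2)$, consistent with strong-convexity constant $c = 1/\sigma^2$.) No individual step is a genuine obstacle; the only thing requiring care is the possible non-integrability of $g$, which is why one invokes the function (rather than density) form of Proposition~\ref{EquivalencesLogCon}.
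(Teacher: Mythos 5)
Your proof is correct and takes essentially the same route as the paper's: factor $p = g\,\phi_{\sigma^2 I}$ and transfer each of (b)--(f) to the corresponding characterization of log-concavity of the \emph{function} $g$ in Proposition~\ref{EquivalencesLogCon}, which is exactly what the paper's appendix proof does (your explicit Gaussian cancellation identities are the ``basic calculations'' it leaves implicit). Your side remark is also well taken: as printed, (f) matches only $\sigma^2=1$, and the exponent should be read as $|x-y|^2/(8\sigma^2)$, consistent with the $SLC_2$ version in Proposition~\ref{EquivalencesStrongLogConDef2Try2}.
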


%\begin{proposition} 
%\label{EquivalencesStrongLogConTry2}
%(Equivalences for strong log-concavity, second try).
%Let $p = e^{-\varphi}$ be a density function with respect to Lebesgue measure 
%$\lambda$ on $\RR^d$; that is, $p \ge 0$ and $\int_{\RR^d} p d \lambda =1$.
%Suppose that $\varphi \in C^2 $.  Then the following are equivalent:\\
%(a)   $p$ is strongly log-concave; $p \in SLC_1 (\sigma^2, d)$.\\
%(b)  $\rho (x) \equiv \nabla \varphi (x) - x/\sigma^2  : \RR^d \rightarrow \RR^d$ is monotone:\\
%$$
%\langle  \rho (x_2) - \rho (x_1) , x_2 - x_1 \rangle \ge 0  \ \ \ \mbox{for all} \ \ x_1, x_2 \in \RR^d .
%$$
%(c) $\nabla \rho (x) = \nabla^2 \varphi - I/\sigma^2  \ge 0$.\\
%(d) The function 
%\begin{eqnarray*}
%J_a (x; p) 
%& = & p(a+x) p(a-x) \\
%& = & g(a+x)g(a-x) \prod_{i=1}^d \frac{1}{\sigma} \phi \left (\frac{a_i + x_i }{\sigma} \right ) 
%              \prod_{i=1}^d \frac{1}{\sigma} \phi \left (\frac{a_i - x_i }{\sigma} \right ) \\
%& = & J_a (x; g) C(a, \sigma ) \exp (- | x |^2 /\sigma^2)
%\end{eqnarray*}
%where $C(a,\sigma) = (2 \pi \sigma^2)^{-d} \exp( - | a |^2/\sigma^2)$ 
%and $J_a (x; g)$ is convexly layered for each $a \in \RR^d$.\\
%(e) The function $J_a (x; g)$ in (d) is even and radially  monotone 
%for all $a \in \RR^d$.\\ 
%\end{proposition}

\begin{proof}
See Section~\ref{sec:Proofs}.
\end{proof}

We investigate the extension of Proposition \ref{prop_bobkov} concerning
log-concavity on $\mathbb{R}$,\ to the case of strong log-concavity. 
(The following result is apparently new.)  Recall that a function $h$ is
strongly concave on $\left( a,b\right) $ with parameter $c>0$ %we also say 
(or $c $-strongly concave), if for any $x,y\in \left( a,b\right) $, any 
$\theta \in \left( 0,1\right) $, 
\begin{equation*}
h(\theta x+(1-\theta )y)\geq \theta h(x)+(1-\theta )h(y)+\frac{1}{2}c\theta
(1-\theta )\Vert x-y\Vert ^{2}\text{ .}
\end{equation*}

\begin{proposition}
\label{prop_I_SLC}
Let $\mu $ be a nonatomic probability measure with
distribution function $F=\mu \left( \left( -\infty ,x\right] \right) $, 
$x\in \mathbb{R}$. Set 
$a=\inf \left\{ x\in \mathbb{R}: F\left( x\right) >0\right\} $ and 
$b=\sup \left\{ x\in \mathbb{R}: F\left( x\right) <1\right\} $, possibly infinite. 
Assume that $F$ strictly increases on 
$\left( a,b\right)$, and let $F^{-1}:(0,1)\rightarrow \left( a,b\right) $ denote
the inverse of $F$ restricted to $\left( a,b\right) $. 
Suppose that $X$ is a random variable with distribution $\mu $. 
Then the following properties hold:   

\begin{description}
\item[(i)] If $X\in SLC_{1}\left( c,1\right) $, $c>0$, then 
$I\left( p\right) =f\left( F^{-1}\left( p\right) \right) $ is 
$\left( c\left\Vert f\right\Vert _{\infty }\right) ^{-1}$-strongly concave and 
$\left( c^{-1} \sqrt{\var \left( X\right) }\right) $-strongly concave on $(0,1)$.

\item[(ii)] The converse of point \textbf{(i)} is false: there exists a
log-concave variable $X$ which is not strongly concave (for any parameter 
$c>0$) such that the associated $I$ function is strongly log-concave on 
$\left( 0,1\right) $.

\item[(iii)] There exist a strongly log-concave random variable 
$X\in SLC\left( c,1\right) $ and $h_{0}>0$ such that the function 
$R_{h_{0}}\left( p\right) =F\left( F^{-1}\left( p\right) +h_{0}\right) $ is concave but not
strongly concave on $\left( a,b\right) $.

\item[(iv)] There exists a log-concave random variable $X$ which is not
strongly log-concave (for any positive parameter), such that for all $h>0$,
the function 
$R_{h_{0}}\left( p\right) =F\left( F^{-1}\left( p\right) +h\right) $ is strongly concave on $\left( a,b\right) $.
\end{description}
\end{proposition}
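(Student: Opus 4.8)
The plan is to treat the four assertions separately, since they are of quite different character: (i) is a quantitative computation building on the log-concave case of Proposition~\ref{prop_bobkov}, while (ii)--(iv) are each a matter of exhibiting an explicit example. For (i), I would start from the characterization in Proposition~\ref{EquivalencesStrongLogConTry3}: $X \in SLC_1(c,1)$ means $\varphi = -\log f \in C^2$ with $\varphi'' \ge 1/c$ everywhere (note the convention: the variance parameter here is $c$, so the ``$c$'' in $SLC_1(c,1)$ plays the role of $\sigma^2$, and being $SLC_1$ forces $\varphi'' \ge 1/c$). The function $I(p) = f(F^{-1}(p))$ satisfies, by the usual chain-rule computation that already appears implicitly in the proof of Proposition~\ref{prop_bobkov}(c), the identity $I'(p) = f'(F^{-1}(p))/f(F^{-1}(p)) = -\varphi'(F^{-1}(p))$ and hence $I''(p) = -\varphi''(F^{-1}(p)) \cdot (F^{-1})'(p) = -\varphi''(F^{-1}(p))/f(F^{-1}(p)) = -\varphi''(F^{-1}(p))/I(p)$. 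Therefore $I''(p) \le -(1/c)\,(1/I(p)) \le -(1/c)\,(1/\|f\|_\infty)$, which is exactly $(c\|f\|_\infty)^{-1}$-strong concavity of $I$. For the second bound I would invoke the known fact (for instance via the Cramér--Rao--type or Poincaré-type inequality for strongly log-concave measures, or simply by $\Var(X) \le c$ for $X \in SLC_1(c,1)$ — a consequence of the Brascamp--Lieb inequality, $\Var(X) \le \E[1/\varphi''(X)] \le c$) to get $\sqrt{\Var(X)} \le \sqrt{c}$ and hence... wait, that gives $(c^{-1}\sqrt{\Var(X)}) \le c^{-1/2}$, not obviously comparable; the right route is instead the reverse bound $\|f\|_\infty \le 1/\sqrt{\Var(X)}$ up to a universal constant for log-concave $f$, or more simply the elementary inequality that for any density $\|f\|_\infty \cdot (\text{something}) \ge \dots$; I would look up or reprove the clean inequality $\|f\|_\infty \sqrt{\Var(X)} \ge c_0$ and combine. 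The main obstacle in (i) is getting the second constant $c^{-1}\sqrt{\Var(X)}$ sharp rather than merely up to a universal factor, so I would be careful about which direction of the $\|f\|_\infty$ vs.\ $\sqrt{\Var(X)}$ comparison is actually an equality for Gaussians.

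For (ii), I need a log-concave $X$ with $\varphi''$ not bounded away from $0$ (so $X \notin SLC_1$ for any $c$) but with $I$ strongly concave. The computation $I''(p) = -\varphi''(F^{-1}(p))/I(p)$ shows $I$ is $\kappa$-strongly concave iff $\varphi''(x) \ge \kappa f(x)$ for all $x$; so I want a density whose potential's curvature decays but never faster than $f$ itself decays. The logistic density of Example~\ref{LogisticDensity} is the natural candidate: there $\varphi''(x) = 2f(x)$ exactly, so $I''(p) = -2$ identically and $I$ is $2$-strongly concave, while $\varphi''(x) = 2f(x) \to 0$ as $|x| \to \infty$, so the logistic law is strictly but not strongly log-concave. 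I would present this as the example, with the one-line verification $\varphi(x) = x + 2\log(1+e^{-x})$, $\varphi''(x) = 2e^{-x}/(1+e^{-x})^2 = 2f(x)$.

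For (iii) I want a strongly log-concave $X$ for which some horizontal-translate-difference function $R_{h_0}(p) = F(F^{-1}(p)+h_0)$ is concave (which Proposition~\ref{prop_bobkov} guarantees, since strong log-concavity implies log-concavity) but fails strong concavity; the natural guess is the standard Gaussian, where as $h_0 \to 0$ one expects $R_{h_0}$ to approach the identity, and for fixed $h_0$ the second derivative $R_{h_0}''(p)$ should vanish in the limit $p \to 0$ or $p \to 1$ because the Gaussian hazard behaves like $x$ at infinity and the relevant curvature washes out — I would compute $R_h'(p) = f(F^{-1}(p)+h)/f(F^{-1}(p))$ and then $R_h''(p)$, and check its infimum over $p\in(0,1)$ is $0$ (attained in the limit), which gives concavity but not strong concavity. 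For (iv) I want the opposite pathology: a log-concave but not strongly log-concave $X$ for which every $R_h$, $h>0$, is strongly concave. Here the exponential (or one-sided Laplace) distribution is the candidate: for $F(x) = 1-e^{-x}$ on $(0,\infty)$ one has $F^{-1}(p) = -\log(1-p)$ and $R_h(p) = 1-e^{-(F^{-1}(p)+h)} = 1-e^{-h}(1-p)$, which is affine in $p$, hence concave but with zero curvature — that is the wrong direction, so I would instead modify to a density that is exactly log-linear in the tail but with a strictly convex ``bump'' near the origin, or use a Gamma/Weibull with shape in $(1,2)$, and verify by the same $R_h$-computation that the curvature stays uniformly negative. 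The hard part of (iv) will be making the single example simultaneously log-concave-but-not-strongly-so and uniformly-in-$h$ strongly concave in the $R_h$ sense; I expect the cleanest route is a density $f = e^{-\varphi}$ with $\varphi$ piecewise defined — linear outside a compact set, strictly convex (with curvature tending to $0$ at the endpoints of that set so that $\varphi \in C^1$ and $\varphi'' \ge 0$ but $\inf \varphi'' = 0$) inside — and then a direct estimate of $R_h''$. I would double-check each example against the clean criterion that $R_h$ is $\kappa$-strongly concave iff $F(F^{-1}(\cdot)+h)$ has second derivative $\le -\kappa$, translating back to a statement comparing $\varphi'$ at $x$ and at $x+h$.

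One caveat I would flag while writing: the statement of (iii) writes ``$X \in SLC(c,1)$'' without a subscript and (iii)--(iv) write ``$R_{h_0}(p) = F(F^{-1}(p)+h)$'' with a mismatched index; I would silently correct these to $SLC_1(c,1)$ and $R_h$ respectively. The overall structure of the write-up is thus: one short lemma recording $I''(p) = -\varphi''(F^{-1}(p))/I(p)$ and the translated identity $R_h''(p) = \big(f'(F^{-1}(p)+h) f(F^{-1}(p)) - f'(F^{-1}(p)) f(F^{-1}(p)+h)\big)/f(F^{-1}(p))^3$, then four short paragraphs applying it, with the logistic law powering (ii), the Gaussian powering (iii), and a tail-linear log-concave density powering (iv).
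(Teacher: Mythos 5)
Your treatment of (i) and (ii) is essentially sound. For (i) you reproduce the paper's computation $I''(p)=-\varphi''(F^{-1}(p))/I(p)\le -(c\Vert f\Vert_\infty)^{-1}$; the second constant is not a sharpness problem at all, and your hesitation should be resolved in favor of the bound $\Vert f\Vert_\infty^2\,\var(X)\le 1$ (exact constant $1$ for log-concave densities), which is precisely inequality (\ref{line2_prop_pointwise}) of Proposition~\ref{prop_pointwise_bounds}; the opposite inequality $\Vert f\Vert_\infty\sqrt{\var(X)}\ge c_0$ that you also floated is true but points the wrong way and is useless here. (The paper also first smooths $f$ by Gaussian convolution, Proposition~\ref{Lemma_convol_Gauss_multidim}, to justify differentiating; you should say a word about regularity too.) For (ii) your logistic example is correct and arguably cleaner than the paper's choice $f(x)=xe^{-x}1_{(0,\infty)}(x)$: since $\varphi''=2f$ one gets $I(p)=p(1-p)$, $I''\equiv-2$, while $\inf\varphi''=0$.

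The genuine gaps are in (iii) and (iv). Your Gaussian candidate for (iii) fails: with $x=F^{-1}(p)$ one has $R_h''(p)=\frac{f(x+h)}{f(x)^2}\bigl(\varphi'(x)-\varphi'(x+h)\bigr)$, and for the standard normal $\varphi'(x)=x$ gives $R_h''(p)=-h\sqrt{2\pi}\,\exp\!\bigl(\tfrac{(x-h)^2}{2}-h^2\bigr)\le -h\sqrt{2\pi}\,e^{-h^2}<0$ uniformly in $p$, so every $R_h$ is strongly concave --- the curvature does not wash out in the tails, it blows up, because $f(x+h)/f(x)^2\to\infty$. To make $\sup_p R_{h_0}''(p)=0$ you need $f(x+h_0)\ll f(x)^2$ along a sequence, i.e.\ tails much lighter than Gaussian; the paper takes $f(x)=\alpha^{-1}e^{-e^{x}}1_{(0,\infty)}(x)$ (strongly log-concave since $\varphi''(x)=e^x\ge 1$ on the support), for which $R_h''(F(x))\propto \exp\bigl(e^x(2-e^h)\bigr)e^x(1-e^h)\to 0$ as $x\to\infty$ once $h>\log 2$, so $R_{h_0}$ with $h_0=1$ is concave but not strongly concave. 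In (iv), your preferred "cleanest route" (a potential $\varphi$ affine outside a compact set) also fails: wherever $x$ and $x+h$ both lie in an affine piece, $\varphi'(x)=\varphi'(x+h)$ and hence $R_h''=0$, so $R_h$ is affine near an endpoint of $(0,1)$ and cannot be strongly concave. Your fallback (Gamma or Weibull with shape in $(1,2)$) does work, and the paper uses exactly the Gamma case $f(x)=xe^{-x}1_{(0,\infty)}(x)$: there $R_h''(F(x))=-h\,e^{x-h}/x^3\le -h\,e^{3-h}/27<0$ for all $x>0$ and all $h>0$, while $\varphi''(x)=x^{-2}$ is not bounded below by a positive constant.
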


From  \textbf{(i) }and \textbf{(ii)} in Proposition \ref{prop_I_SLC},
we see that the strong concavity of the function $I$ is a necessary but not
sufficient condition for the strong log-concavity of $X$. Points \textbf{%
(iii)} and \textbf{(iv)} state that no relations exist in general between
the strong log-concavity of $X$ and strong concavity of its associated
function $R_{h}$.

\begin{proof}  
See Section~\ref{sec:Proofs}.
\end{proof}

The following proposition gives a similar set of equivalences for our second
definition of strong log-concavity, Definition~\ref{strongLogConcaveDefn2}.
\smallskip

\begin{proposition}
\label{EquivalencesStrongLogConDef2Try2} (Equivalences for strong
log-concavity, $SLC_2$). Let $p = e^{-\varphi}$ be a density function with
respect to Lebesgue measure $\lambda$ on $\mathbb{R}^d$; that is, $p \ge 0$
and $\int_{\mathbb{R}^d} p d \lambda =1$. Suppose that $\varphi \in C^2 $.
Then the following are equivalent:\newline
(a) $p$ is strongly log-concave; $p \in SLC_2 (\mu, \Sigma, d)$ with $\Sigma
> 0$, $\mu \in \mathbb{R}^d$.\newline
(b) $\rho (x) \equiv \nabla \varphi (x) - \Sigma^{-1}(x - \mu) : \mathbb{R}%
^d \rightarrow \mathbb{R}^d$ is monotone:\newline
\begin{equation*}
\langle \rho (x_2) - \rho (x_1) , x_2 - x_1 \rangle \ge 0 \ \ \ 
\mbox{for
all} \ \ x_1, x_2 \in \mathbb{R}^d .
\end{equation*}
(c) $\nabla \rho (x) = \nabla^2 \varphi - \Sigma^{-1} \ge 0$.\newline
(d) For each $a \in \mathbb{R}^d$, the function 
% $J_a (x; p) = p(a+x) p(a-x) $ satisfies
\begin{eqnarray*}
J_a^{\phi} (x; p) = p(a+x)p(a-x)/ \phi_{\Sigma/2} (x)
\end{eqnarray*}
is convexly layered. (e) For each $a \in \mathbb{R}^d$ the function $%
J_a^{\phi} (x; p)$ in (d) is even and radially monotone.\newline
(f) For all $x , y \in \mathbb{R}^d$, 
\begin{eqnarray*}
p\left ( \frac{1}{2} x + \frac{1}{2} y\right ) \ge p(x)^{1/2} p(y)^{1/2}
\exp \left ( \frac{1}{8} (x-y)^T \Sigma^{-1} (x-y) \right ) .
\end{eqnarray*}
\end{proposition}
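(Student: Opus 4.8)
The plan is to reduce Proposition~\ref{EquivalencesStrongLogConDef2Try2} to the already-established Proposition~\ref{EquivalencesStrongLogConTry3} by a linear change of variables, handling the case of a general positive definite $\Sigma$ and mean $\mu$ via the identity-covariance case. Write $\Sigma = A A^T$ with $A$ the symmetric positive definite square root (or any invertible factor), and set $Y = A^{-1}(X - \mu)$, so that if $X$ has density $p$ then $Y$ has density $q(y) = |A| \, p(\mu + Ay)$. A direct computation shows $p \in SLC_2(\mu, \Sigma, d)$ if and only if $q \in SLC_1(1, d)$: indeed $p(x) = g(x)\phi_\Sigma(x-\mu)$ with $g$ log-concave translates into $q(y) = \tilde g(y) \phi_I(y)$ with $\tilde g(y) = |A| g(\mu + Ay)$ log-concave, since log-concavity is preserved under affine reparametrization and multiplication by the positive constant $|A|$. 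This establishes the equivalence of statement (a) for $p$ with statement (a) of Proposition~\ref{EquivalencesStrongLogConTry3} for $q$ (with $\sigma^2 = 1$).

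Next I would track how each of the conditions (b)--(f) transforms under $x = \mu + Ay$. For (b) and (c), if $\varphi = -\log p$ and $\psi = -\log q$, the chain rule gives $\nabla \psi(y) = A^T \nabla\varphi(\mu + Ay)$ and $\nabla^2\psi(y) = A^T \nabla^2\varphi(\mu+Ay) A$; hence $\nabla^2\varphi(x) - \Sigma^{-1} \ge 0$ for all $x$ is equivalent to $A^T(\nabla^2\varphi - \Sigma^{-1})A = \nabla^2\psi(y) - I \ge 0$ for all $y$, which is condition (c) of Proposition~\ref{EquivalencesStrongLogConTry3}. The monotonicity statement (b) transforms similarly: $\rho_p(x) = \nabla\varphi(x) - \Sigma^{-1}(x-\mu)$ pulls back, after multiplying by $A^T$ on the left and substituting, to $\rho_q(y) = \nabla\psi(y) - y$, and the inner product $\langle \rho_p(x_2) - \rho_p(x_1), x_2 - x_1\rangle$ equals $\langle \rho_q(y_2) - \rho_q(y_1), y_2 - y_1\rangle$ because $\langle A^{-T}u, A(y_2-y_1)\rangle = \langle u, y_2 - y_1\rangle$. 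For (d) and (e), note $J_a^\phi(x;p) = p(a+x)p(a-x)/\phi_{\Sigma/2}(x)$; substituting $x = Az$ and $a = \mu + Ab$, a short calculation using $\phi_{\Sigma/2}(Az) = |A|^{-1}\phi_{I/2}(z)$ shows $J_a^\phi(Az; p)$ is a positive constant multiple of $J_b^\phi(z; q) = q(b+z)q(b-z)/\phi_{I/2}(z)$; since $z \mapsto Az$ is a linear bijection carrying balanced convex sets to balanced convex sets and preserving the even/radially-monotone properties, (d) and (e) for $p$ are equivalent to (d) and (e) for $q$. Finally (f) is the midpoint inequality $p(\tfrac12 x + \tfrac12 y) \ge p(x)^{1/2}p(y)^{1/2}\exp(\tfrac18(x-y)^T\Sigma^{-1}(x-y))$; writing $x = \mu + A\xi$, $y = \mu + A\eta$ and dividing through by $|A|$ turns this into the corresponding inequality for $q$ with quadratic form $|\xi - \eta|^2$, since $(x-y)^T\Sigma^{-1}(x-y) = (A\xi - A\eta)^T (AA^T)^{-1}(A\xi - A\eta) = |\xi-\eta|^2$.

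Putting these together: each of (a)--(f) for $p$ is equivalent to the corresponding statement for $q$ with $\sigma^2 = 1$, and by Proposition~\ref{EquivalencesStrongLogConTry3} all six statements for $q$ are equivalent; hence all six for $p$ are equivalent, which is the claim. The reduction to $SLC_1(1,d)$ rather than $SLC_1(\sigma^2,d)$ is harmless since rescaling $y$ by $\sigma$ is just another instance of the same affine-change-of-variables argument (or one can absorb $\sigma^2$ into $\Sigma$ from the start).

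The main obstacle — really a bookkeeping obstacle rather than a conceptual one — is keeping the Jacobian factors and the Gaussian normalizing constants straight in conditions (d)--(f), and confirming that the positive multiplicative constants that appear (powers of $|A|$) genuinely do not affect ``convexly layered'', ``even and radially monotone'', or the midpoint inequality; each of those properties is invariant under multiplication of $f$ by a positive constant, so this goes through, but it should be stated explicitly. A secondary point to check is that the hypotheses of Proposition~\ref{EquivalencesStrongLogConTry3} are met by $q$ — namely $q = e^{-\psi}$ with $\psi \in C^2$ — which follows immediately from $\varphi \in C^2$ and the smoothness of the affine map. One could alternatively prove Proposition~\ref{EquivalencesStrongLogConDef2Try2} from scratch by mimicking the proof of Proposition~\ref{EquivalencesStrongLogConTry3}, but the change-of-variables route is shorter and makes transparent why the two definitions of strong log-concavity give parallel equivalence lists.
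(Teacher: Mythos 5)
Your proof is correct, but it follows a genuinely different route from the paper's. The paper proves Proposition~\ref{EquivalencesStrongLogConDef2Try2} directly, in parallel with its proof of Proposition~\ref{EquivalencesStrongLogConTry3}: it notes the log-concavity of the ratio of $p$ to the Gaussian factor, invokes Proposition~\ref{EquivalencesLogCon} for that ratio, and then verifies (a)$\Leftrightarrow$(b) via the Rockafellar--Wets characterization of strong convexity, (a)$\Leftrightarrow$(c) via the Hessian, and the chain (a)$\Rightarrow$(d)$\Rightarrow$(e)$\Rightarrow$(f)$\Rightarrow$(a) by explicit computations mimicking the $\sigma^2 I$ case. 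You instead whiten: with $\Sigma=AA^{T}$ and $y=A^{-1}(x-\mu)$ you transport each of the six conditions to the corresponding condition for the transformed density $q$ and then cite Proposition~\ref{EquivalencesStrongLogConTry3} with $\sigma^{2}=1$. Your key verifications are all sound: $A^{T}\Sigma^{-1}A=I$ gives the Hessian and relative-score identities $\nabla^{2}\psi-I=A^{T}(\nabla^{2}\varphi-\Sigma^{-1})A$ and $\rho_{q}(y)=A^{T}\rho_{p}(x)$, whence $\langle\rho_{q}(y_{2})-\rho_{q}(y_{1}),y_{2}-y_{1}\rangle=\langle\rho_{p}(x_{2})-\rho_{p}(x_{1}),x_{2}-x_{1}\rangle$; the $J_{a}^{\phi}$ functions match up to a positive constant under $x=Az$, $a=\mu+Ab$, and the properties ``convexly layered'' and ``even, radially monotone'' are invariant under invertible linear reparametrization and positive scalar multiples; and $(x-y)^{T}\Sigma^{-1}(x-y)=|\xi-\eta|^{2}$ converts (f) exactly. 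What your approach buys is economy and transparency: no computation from the identity-covariance case needs to be repeated, and it makes explicit that the general $(\mu,\Sigma)$ statement is literally the whitened form of the $SLC_{1}$ statement; what the paper's direct approach buys is independence from the earlier proposition and a template that also covers the non-smooth case via the Simon-style arguments (e)$\Rightarrow$(f)$\Rightarrow$(a). One cosmetic point: the exact constant relating $\phi_{\Sigma/2}(Az)$ to $\phi_{I/2}(z)$ depends on the Gaussian normalization convention, but as you note only its positivity matters, so this does not affect the argument.
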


\begin{proof}
To prove Proposition~\ref{EquivalencesStrongLogConDef2Try2} it
suffices to note the log-concavity of $g(x)=p(x)/\phi _{\Sigma /2}(x)$ and
to apply Proposition~\ref{EquivalencesLogCon} (which holds as well for
log-concave functions). The claims then follow by straightforward calculations; 
see Section~\ref{sec:Proofs} for more details.
\end{proof}

%----------------------------------------------------------
\section{Log-concavity and strong log-concavity: preservation theorems}

%----------------------------------------------------------

Both log-concavity and strong log-concavity are preserved by a number of
operations. Our purpose in this section is to review these preservation
results and the methods used to prove such results, with primary emphasis
on: (a) affine transformations, (b) marginalization, (c) convolution. The
main tools used in the proofs will be: (i) the Brunn-Minkowski inequality;
(ii) the Brascamp-Lieb Poincar\'{e} type inequality; (iii) Pr\'{e}kopa's
theorem; (iv) Efron's monotonicity theorem.

\subsection{Preservation of log-concavity}

\label{subsec:PreservationLogConcave}

%\par\noindent
%{\bf Affine transformations:}    Suppose that $P$ is a log-concave measure on $(\RR^d, {\cal B}^d)$

\subsubsection{Preservation by affine transformations}

Suppose that $X$ has a log-concave distribution $P$ on $(\mathbb{R}^{d},%
\mathcal{B}^{d})$, and let $A$ be a non-zero real matrix of order $m\times d$%
. Then consider the distribution $Q$ of $Y=AX$ on $\mathbb{R}^{m}$.

\begin{proposition}
\label{prop:AffinePreservLogCon} (log-concavity is preserved by affine
transformations). The probability measure $Q $ on $\mathbb{R}^m$ defined by $%
Q(B) = P( AX \in B )$ for $B \in \mathcal{B}^m$ is a log-concave probability
measure. If $P$ is non-degenerate log-concave on $\mathbb{R}^d$ with density 
$p$ and $m=d$ with $A $ of rank $d$, then $Q$ is non-degenerate with
log-concave density $q$.
\end{proposition}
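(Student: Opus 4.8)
The plan is to handle the two assertions in turn, working first at the level of measures via Definition (0-m) and then passing to densities. For the first assertion, write $A^{-1}(S)=\{x\in\mathbb{R}^d:Ax\in S\}$ for the preimage of a Borel set $S\subseteq\mathbb{R}^m$, so that $Q(S)=P(A^{-1}(S))$ and $A^{-1}(S)\in\mathcal{B}^d$ by continuity of $A$. The one structural fact I would isolate is that preimages under the linear map $A$ respect Minkowski combinations: for all $S,T\subseteq\mathbb{R}^m$ and $\theta\in(0,1)$,
\begin{equation*}
\theta\,A^{-1}(S)+(1-\theta)\,A^{-1}(T)\subseteq A^{-1}\bigl(\theta S+(1-\theta)T\bigr),
\end{equation*}
since $x=\theta u+(1-\theta)v$ with $Au\in S$, $Av\in T$ forces $Ax=\theta(Au)+(1-\theta)(Av)\in\theta S+(1-\theta)T$.

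Then I would fix non-empty $S,T\in\mathcal{B}^m$ and argue as follows. If $A^{-1}(S)$ or $A^{-1}(T)$ is empty, then $Q(S)Q(T)=0$ and the target inequality $Q(\theta S+(1-\theta)T)\ge Q(S)^{\theta}Q(T)^{1-\theta}$ holds trivially. Otherwise, chaining the displayed inclusion, monotonicity of $P$, and log-concavity of $P$ applied to the non-empty Borel sets $A^{-1}(S),A^{-1}(T)$ gives
\begin{equation*}
Q\bigl(\theta S+(1-\theta)T\bigr)=P\Bigl(A^{-1}\bigl(\theta S+(1-\theta)T\bigr)\Bigr)\ge P\bigl(\theta A^{-1}(S)+(1-\theta)A^{-1}(T)\bigr)\ge P(A^{-1}(S))^{\theta}P(A^{-1}(T))^{1-\theta},
\end{equation*}
whose right-hand side is $Q(S)^{\theta}Q(T)^{1-\theta}$; hence $Q$ is a log-concave measure.

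For the second assertion, take $m=d$, $A$ of rank $d$, and $P$ with log-concave density $p=e^{-\varphi}$, $\varphi$ convex. Since $A$ is a linear homeomorphism of $\mathbb{R}^d$, $\supp Q=A(\supp P)$ again has $d$-dimensional affine hull, so the first assertion together with Theorem~\ref{PrekopaRinott} already yields that $Q$ is non-degenerate with a log-concave density. I would also record the explicit form: by change of variables $q(y)=p(A^{-1}y)/|\det A|=e^{-\psi(y)}$ with $\psi(y)=\varphi(A^{-1}y)+\log|\det A|$, which is convex, being a constant plus the composition of the convex function $\varphi$ with the linear map $y\mapsto A^{-1}y$.

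The main obstacle is purely measure-theoretic: the Minkowski combination $\theta A^{-1}(S)+(1-\theta)A^{-1}(T)$ of two Borel sets, and likewise $\theta S+(1-\theta)T$, need not be Borel, so the middle term of the displayed chain must be interpreted with care. This is the standard wrinkle attached to Definition (0-m), and I would dispose of it once in the usual way: such Minkowski sums are analytic, hence universally measurable, and a log-concave probability measure extends to the universally measurable $\sigma$-algebra; alternatively one inner-approximates $S$ and $T$ by compact sets, uses that sums of compact sets are compact, and invokes inner regularity of $P$. With that caveat absorbed, every remaining step is routine.
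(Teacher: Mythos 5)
Your proposal is correct. Note, however, that the paper does not actually prove this proposition: its ``proof'' is a one-line citation to Dharmadhikari and Joag-Dev (1988), Lemma 2.1, page 47, so your argument supplies the self-contained proof that the paper outsources, and it is the standard one. The key inclusion $\theta A^{-1}(S)+(1-\theta)A^{-1}(T)\subseteq A^{-1}\bigl(\theta S+(1-\theta)T\bigr)$ together with monotonicity and the definition (0-m) applied to the Borel preimages is exactly the right mechanism, and you correctly dispose of the case of an empty preimage (which can genuinely occur when $m>d$, since the range of $A$ is then a proper subspace) by noting the inequality is trivial there. The measurability caveat you raise is real but is already latent in Definition (0-m) itself, and your two standard fixes (universal measurability of analytic Minkowski combinations, or inner approximation by compact sets plus regularity of $P$) are both adequate. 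For the second assertion, the change of variables $q(y)=p(A^{-1}y)\,|\det A^{-1}|$ and the observation that $\varphi\circ A^{-1}$ plus a constant is convex give the log-concave density directly; invoking Theorem~\ref{PrekopaRinott} via $\supp Q=A(\supp P)$ is then redundant but harmless. Compared with the paper's approach, yours trades brevity for transparency: the citation buys economy, while your argument makes explicit why linearity of $A$ is what drives the preservation, and it is the argument one would in any case find behind the cited lemma.
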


\begin{proof}
See \cite{MR954608}, Lemma 2.1, page 47.
\end{proof}

\subsubsection{Preservation by products}

Now let $P_1 $ and $P_2$ be log-concave probability measures on $(\mathbb{R}%
^{d_1} , \mathcal{B}^{d_1})$ and $(\mathbb{R}^{d_2} , \mathcal{B}^{d_2})$
respectively. Then we have the following preservation result for the product
measure $P_1 \times P_2$ on $(\mathbb{R}^{d_1} \times \mathbb{R}^{d_2} , 
\mathcal{B}^{d_1} \times \mathcal{B}^{d_2} )$:

\begin{proposition}
\label{prop:ProdMeasPreservLogConc} (log-concavity is preserved by products)
If $P_1$ and $P_2$ are log-concave probability measures then the product
measure $P_1 \times P_2$ is a log-concave probability measure.
\end{proposition}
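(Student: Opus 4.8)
The plan is to reduce the statement for product measures to the known characterization of log-concave measures via log-concave densities (Theorem~\ref{PrekopaRinott}) together with the affine-transformation result (Proposition~\ref{prop:AffinePreservLogCon}), handling the degenerate case separately. First I would dispose of the case where one of the factors, say $P_1$, is supported on a proper affine subspace of $\mathbb{R}^{d_1}$: by Proposition~\ref{prop:AffinePreservLogCon} we may apply an affine map carrying $\mathrm{supp}(P_1)$ into $\mathbb{R}^{d_1'}$ with $d_1' = \dim(\mathrm{aff}(\mathrm{supp}(P_1)))$, so that the pushed-forward measure is non-degenerate log-concave on $\mathbb{R}^{d_1'}$; since log-concavity of a measure is an intrinsic property of the measure (the defining Brunn--Minkowski-type inequality $P(\theta A + (1-\theta)B) \ge P(A)^\theta P(B)^{1-\theta}$ is preserved under affine bijections of the ambient space and is unaffected by the choice of ambient space containing the support), it suffices to prove the result when both $P_1$ and $P_2$ are non-degenerate.

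In the non-degenerate case, by Theorem~\ref{PrekopaRinott} each $P_i$ has a log-concave density $p_i = e^{-\varphi_i}$ on $\mathbb{R}^{d_i}$ with $\varphi_i : \mathbb{R}^{d_i} \to (-\infty,\infty]$ convex. The product measure $P_1 \times P_2$ then has density $p(x_1,x_2) = p_1(x_1) p_2(x_2) = \exp(-\varphi_1(x_1) - \varphi_2(x_2))$ on $\mathbb{R}^{d_1} \times \mathbb{R}^{d_2} = \mathbb{R}^{d_1+d_2}$. The function $(x_1,x_2) \mapsto \varphi_1(x_1) + \varphi_2(x_2)$ is convex on $\mathbb{R}^{d_1+d_2}$, since it is a sum of two functions each of which is convex as a function of $(x_1,x_2)$ (each depending on only one block of coordinates, hence convex under the projection, which is affine). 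Therefore $p$ is a log-concave density. Moreover $\mathrm{supp}(P_1 \times P_2) = \mathrm{supp}(P_1) \times \mathrm{supp}(P_2)$ has affine hull of full dimension $d_1 + d_2$, so Theorem~\ref{PrekopaRinott} applies in the other direction to conclude that $P_1 \times P_2$ is a log-concave measure.

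The only point requiring genuine care — and the main (minor) obstacle — is the reduction of the degenerate case: one must check that the notion of ``log-concave measure'' in the sense of the definition (0-m) is indeed independent of the choice of Euclidean space in which the measure is realized, i.e.\ that if $T : \mathbb{R}^{d} \to \mathbb{R}^{d}$ is an affine bijection then $P$ is log-concave iff $T_\# P$ is, and that embedding $\mathbb{R}^{d'}$ as an affine subspace of $\mathbb{R}^{d}$ preserves the property. The first is immediate from $T(\theta A + (1-\theta)B) = \theta T(A) + (1-\theta) T(B)$ for affine $T$; the second follows because for sets $A, B$ in the subspace the Minkowski combination $\theta A + (1-\theta)B$ again lies in the subspace, and conversely any Borel set of the ambient space meets the support only in its trace on the subspace. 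Alternatively, one can avoid the degenerate case entirely by invoking Proposition~\ref{prop:AffinePreservLogCon} after noting that $P_1 \times P_2$ on $\mathbb{R}^{d_1+d_2}$ is a pushforward under a block-diagonal affine map of a non-degenerate product, but the direct argument above is cleaner. I would write it in the short form: reduce to the non-degenerate case, apply Theorem~\ref{PrekopaRinott} to get densities $e^{-\varphi_i}$, observe $e^{-\varphi_1(x_1)-\varphi_2(x_2)}$ is a log-concave density, and apply Theorem~\ref{PrekopaRinott} again.
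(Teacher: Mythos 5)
Your proof is correct, but it takes a genuinely different route from the paper. The paper disposes of this proposition by citing \cite{MR954608}, Theorem 2.7, whose argument works directly from Definition (0-m): the key ingredient (their Theorem 2.6, quoted in the paper) is that for a measure assigning zero mass to every hyperplane it suffices to verify $P(\theta A+(1-\theta)B)\ge P(A)^{\theta}P(B)^{1-\theta}$ on rectangles with sides parallel to the coordinate axes, and for a product measure and such rectangles the inequality factors into the two marginal inequalities. You instead route everything through the Borell--Pr\'ekopa characterization (Theorem~\ref{PrekopaRinott}): in the non-degenerate case the product density $e^{-\varphi_1(x_1)-\varphi_2(x_2)}$ is trivially log-concave, and the degenerate case is transferred by affine maps. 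What your approach buys is brevity, since Theorem~\ref{PrekopaRinott} is available in the survey before Section 3; what it costs is reliance on that much deeper theorem (the rectangle argument is elementary and self-contained from the definition, and one should at least note that the characterization theorem is being taken as given here, not derived from product preservation, so no circularity arises) together with the extra bookkeeping for degenerate factors. Your treatment of that case is right in substance, but the ``intrinsic property'' sentence is the one informal spot: the clean way to close it is exactly your alternative remark, namely write $P_1\times P_2$ as the image of the non-degenerate product $Q_1\times Q_2$ under a block affine injection $T$ and use the containment $\theta\,T^{-1}(A)+(1-\theta)\,T^{-1}(B)\subseteq T^{-1}\bigl(\theta A+(1-\theta)B\bigr)$, which is precisely the argument behind Proposition~\ref{prop:AffinePreservLogCon} (whose statement does not require $m\le d$; adding a translation is harmless), rather than an appeal to invariance of the notion under change of ambient space.
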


\begin{proof}
See \cite{MR954608}, Theorem 2.7, page 50. A key fact used in this proof is
that if a probability measure $P$ on $(\mathbb{R}^d, \mathcal{B}^d)$ assigns
zero mass to every hyperplane in $\mathbb{R}^d$, then log-concavity of $P$
holds if and only if $P( \theta A + (1-\theta) B) \ge P(A)^{\theta}
P(B)^{1-\theta}$ for all rectangles $A,B$ with sides parallel to the
coordinate axes; see \cite{MR954608}, Theorem 2.6, page 49.
\end{proof}

\subsubsection{Preservation by marginalization}

Now suppose that $p$ is a log-concave density on $\mathbb{R}^{m+n}$ and
consider the marginal density $q(y) = \int_{\mathbb{R}^m} p(x,y) dx $. The
following result due to \cite{MR0404557} concerning preservation of
log-concavity was given a simple proof by \cite{MR0450480} (Corollary 3.5,
page 374). In fact they also proved the whole family of such results for $s-$%
concave densities.

\begin{theorem}
\label{LCpreservedByMargin} (log-concavity is preserved by marginalization;
Pr\'ekopa's theorem). Suppose that $p$ is log-concave on $\mathbb{R}^{m+n}$
and let $q(y) = \int_{\mathbb{R}^m} p(x,y) dx$. Then $q$ is log-concave.
\end{theorem}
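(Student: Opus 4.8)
The plan is to deduce the result from the Pr\'ekopa--Leindler inequality, which is the functional form of the Brunn--Minkowski inequality (listed as tool (i) in the section introduction). First I would recall the statement: if $f,g,m:\mathbb{R}^n\to[0,\infty)$ satisfy $m(\theta u+(1-\theta)v)\ge f(u)^\theta g(v)^{1-\theta}$ for all $u,v$ and some fixed $\theta\in(0,1)$, then $\int m\,d\lambda\ge(\int f\,d\lambda)^\theta(\int g\,d\lambda)^{1-\theta}$. Given the log-concave density $p$ on $\mathbb{R}^{m+n}$, fix $y_1,y_2\in\mathbb{R}^n$ and $\theta\in(0,1)$, and set $y=\theta y_1+(1-\theta)y_2$. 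Define $f(x)=p(x,y_1)$, $g(x)=p(x,y_2)$, and $h(x)=p(x,y)$, all as functions of $x\in\mathbb{R}^m$. The key pointwise inequality to check is that for all $x_1,x_2\in\mathbb{R}^m$,
\begin{equation*}
h(\theta x_1+(1-\theta)x_2)=p\bigl(\theta(x_1,y_1)+(1-\theta)(x_2,y_2)\bigr)\ge p(x_1,y_1)^\theta p(x_2,y_2)^{1-\theta}=f(x_1)^\theta g(x_2)^{1-\theta},
\end{equation*}
which is exactly the log-concavity of $p$ applied along the segment joining $(x_1,y_1)$ to $(x_2,y_2)$ in $\mathbb{R}^{m+n}$.

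With this hypothesis verified, the Pr\'ekopa--Leindler inequality in dimension $m$ yields
\begin{equation*}
q(y)=\int_{\mathbb{R}^m}h\,d\lambda\ge\Bigl(\int_{\mathbb{R}^m}f\,d\lambda\Bigr)^\theta\Bigl(\int_{\mathbb{R}^m}g\,d\lambda\Bigr)^{1-\theta}=q(y_1)^\theta q(y_2)^{1-\theta},
\end{equation*}
so $q$ is log-concave on $\mathbb{R}^n$. I would then note measurability and finiteness of $q$: since $p$ is log-concave it is in particular measurable and integrable on $\mathbb{R}^{m+n}$, so by Tonelli the marginal $q$ is measurable and $\int q\,d\lambda<\infty$; the log-concavity inequality just established shows $q$ is finite and positive on (a convex set containing) the relative interior of its support, so $q=e^{-\psi}$ with $\psi$ convex in the appropriate sense, matching the paper's conventions on lower semicontinuity (here one may invoke the convex-regularization remarks following Proposition~\ref{EquivalencesLogCon}).

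The main obstacle is really the Pr\'ekopa--Leindler inequality itself: if the paper treats it as already available (it is implicit in tool (i)), the argument above is essentially complete, and the remaining work is the routine bookkeeping about null sets and semicontinuity. If instead one wants a self-contained proof, the hard part becomes establishing Pr\'ekopa--Leindler, which is typically done by induction on dimension, with the one-dimensional case reducing to a clever reparametrization and the one-dimensional Brunn--Minkowski statement $\lambda(\theta A+(1-\theta)B)\ge\theta\lambda(A)+(1-\theta)\lambda(B)$ for the superlevel sets. Since the authors explicitly cite \cite{MR0450480} for a simple proof and list Brunn--Minkowski among the tools, I would present the short deduction above and refer to that source (or to a standard reference such as \cite{MR1898210}) for the inequality, rather than reproving it; alternatively one can iterate the one-variable case $m=1$ and induct on $m$, which avoids citing Pr\'ekopa--Leindler in full generality but repeats its proof structure.
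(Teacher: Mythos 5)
Your argument is correct: the slicing $f(x)=p(x,y_1)$, $g(x)=p(x,y_2)$, $h(x)=p(x,\theta y_1+(1-\theta)y_2)$, the pointwise hypothesis coming directly from log-concavity of $p$ along segments in $\mathbb{R}^{m+n}$, and the application of Pr\'ekopa--Leindler in dimension $m$ give exactly $q(\theta y_1+(1-\theta)y_2)\ge q(y_1)^{\theta}q(y_2)^{1-\theta}$, and your remarks on measurability, finiteness and the semicontinuity convention are the right routine additions (there is no circularity, since Pr\'ekopa--Leindler rests only on one-dimensional Brunn--Minkowski). The difference from the paper lies in how the key integral inequality is handled. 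The paper's first proof uses the same slicing skeleton, but instead of citing Pr\'ekopa--Leindler it first reduces to $m=n=1$ and to the midpoint case $\theta=1/2$ (via the equivalence (a)$\Leftrightarrow$(f) in Proposition~\ref{EquivalencesLogCon}), and then \emph{proves} the needed one-dimensional inequality $\int h\ge(\int f)^{1/2}(\int g)^{1/2}$ by a transportation argument: normalizing $f,g$, defining monotone maps $S,T$ by equating distribution functions with a reference density $k$, and combining the pointwise log-concavity bound with the arithmetic--geometric mean inequality applied to $S'$ and $T'$. The paper's second proof is genuinely different in structure: it symmetrizes, writes $J_a(y;q)$ as an integral of the convexly layered functions $J_{u,a}(\cdot\,;p)$, and invokes the marginalization lemma for convexly layered functions (Lemma~\ref{convexlyLayeredPreservedByMarginalization}) together with the equivalences (d)/(e) of Proposition~\ref{EquivalencesLogCon}. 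What your route buys is brevity and generality (no reduction to $m=n=1$, no restriction to midpoints), at the price of importing Pr\'ekopa--Leindler \cite{MR0315079, MR0404557, MR2199372} as a black box; what the paper's proofs buy is self-containedness -- the transportation proof in effect reproves the one-dimensional midpoint Pr\'ekopa--Leindler inequality, and the symmetrization proof avoids it altogether. As you note, if you insisted on proving Pr\'ekopa--Leindler yourself by induction on dimension, your argument would essentially collapse back onto the same work the paper does in its first proof.
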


This theorem is a center piece of the entire theory. It was proved
independently by a number of mathematicians at about the same time: these
include \cite{MR0404557}, building on \cite{MR0096173}, % Dinghas 1957 
\cite{MR0315079}, %Prekopa 1971 & 1973
\cite{Brascamp-Lieb:74}, %Brascamp and Lieb '74
\cite{Brascamp-Lieb:75}, %Brascamp and Lieb '75
\cite{MR0450480}, %Brascamp and Lieb '76,
\cite{MR0404559}, \cite{MR0388475}, % Borell,
and \cite{MR0428540}. % Rinott.
\cite{MR2814377}, page 310, %Simon 
gives a brief discussion of the history, including an unpublished proof of
Theorem~\ref{LCpreservedByMargin} given in \cite{Brascamp-Lieb:74}. Many of
the proofs (including the proofs in \cite{Brascamp-Lieb:75}, \cite{MR0404559}%
, and \cite{MR0428540}) are based fundamentally on the Brunn-Minkowski
inequality; see \cite{MR588074}, \cite{MR1898210}, and \cite{MR2167203} for
useful surveys.

We give two proofs here. The first proof is a \textsl{transportation}
argument from \cite{MR1991646}; the second is a proof from \cite%
{Brascamp-Lieb:74} which has recently appeared in \cite{MR2814377}.

\begin{proof}
(Via \textsl{transportation}). We can reduce to the case $n=1$ since it
suffices to show that the restriction of $q$ to a line is log-concave. Next
note that an inductive argument shows that the claimed log-concavity holds
for $m+1$ if it holds for $m$, and hence it suffices to prove the claim for $%
m=n=1$.

Since log-concavity is equivalent to mid-point log concavity (by the
equivalence of (a) and (e) in Proposition~\ref{EquivalencesLogCon}), we only
need to show that 
\begin{eqnarray}
q \left (\frac{u+v}{2} \right ) \ge q(u)^{1/2} q(v)^{1/2}
\label{MidPointLogConcavityofQ}
\end{eqnarray}
for all $u,v \in \mathbb{R}$. Now define 
\begin{eqnarray*}
f(x) = p(x,u), \ \ \ g(x) = p(x,v), \ \ \ h(x) = p(x, (u+v)/2) .
\end{eqnarray*}
Then (\ref{MidPointLogConcavityofQ}) can be rewritten as 
\begin{eqnarray*}
\int h(x) dx \ge \left ( \int f(x) dx) \right )^{1/2} \left ( \int g(x) dx
\right )^{1/2} .
\end{eqnarray*}
From log-concavity of $p$ we know that 
\begin{eqnarray}
h\left ( \frac{z+w}{2} \right ) = p \left ( \frac{z+w}{2} , \frac{u+v}{2}
\right ) \ge p(z,u)^{1/2} p(w,v)^{1/2} = f(z)^{1/2} g(w)^{1/2} .
\label{ConsequenceLogConcavityOfP}
\end{eqnarray}
By homogeneity we can arrange $f, g$, and $h$ so that $\int f(x) dx = \int
g(x) dx = 1$; if not, replace $f$ and $g$ with $\tilde{f}$ and $\tilde{g}$
defined by $\tilde{f} (x) = f(x) / \int f(x^{\prime})dx^{\prime}= f(x) /
q(u) $ and $\tilde{g} (x) = g(x) / \int g(x^{\prime})dx^{\prime}= g(x) /
q(v) $.

Now for the transportation part of the argument: let $Z$ be a real-valued
random variable with distribution function $K$ having smooth density $k$.
Then define maps $S$ and $T$ by $K(z) = F(S(z))$ and $K(z) = G(T(z))$ where $%
F$ and $G$ are the distribution functions corresponding to $f$ and $g$. Then 
\begin{eqnarray*}
k(z) = f(S(z))S^{\prime}(z) = g(T(z))T^{\prime}(z)
\end{eqnarray*}
where $S^{\prime}, T^{\prime}\ge0$ since the same is true for $k$, $f$, and $%
g$, and it follows that 
\begin{eqnarray*}
1 = \int k(z) dz & = & \int f(S(z))^{1/2} g(T(z))^{1/2}
(S^{\prime}(z))^{1/2} (T^{\prime}(z))^{1/2} dz \\
& \le & \int h\left ( \frac{S(z) + T(z)}{2} \right ) (S^{\prime}(z))^{1/2}
(T^{\prime}(z))^{1/2} dz \\
& \le & \int h\left ( \frac{S(z) + T(z)}{2} \right ) \cdot \frac{%
S^{\prime}(z) + T^{\prime}(z)}{2} dz \\
& = & \int h(x) dx
\end{eqnarray*}
by the inequality (\ref{ConsequenceLogConcavityOfP}) in the first inequality
and by the arithmetic - geometric mean inequality in the second inequality.
\end{proof}

\begin{proof}
(Via \textsl{symmetrization}). By the same induction argument as in the
first proof we can suppose that $m=1$. By an approximation argument we may
assume, without loss of generality that $p$ has compact support and is
bounded.

Now let $a \in \mathbb{R}^n$ and note that 
\begin{eqnarray*}
J_a (y; q) & = & q(a+y) q(a-y) \\
& = & \int \! \! \int p(x, a+y) p(z, a-y) dx dz \\
& = & 2 \int \! \! \int p(u+v, a+y) p(u-v, a-y) du dv \\
& = & 2 \int \! \! \int J_{u,a} (v,y; p ) du dv
\end{eqnarray*}
where, for $(u,a)$ fixed, the integrand is convexly layered by Proposition~%
\ref{EquivalencesLogCon} (d). Thus by the following Lemma~\ref%
{convexlyLayeredPreservedByMarginalization}, the integral over $v$ is an
even lower semi-continuous function of $y$ for each fixed $u,a$. Since this
class of functions is closed under integration over an indexing parameter
(such as $u$), the integration over $u$ also yields an even radially
monotone function, and by Fatou's lemma $J_a (y; g) $ is also lower
semicontinuous. It then follows from Proposition~\ref{EquivalencesLogCon}
again that $g$ is log-concave.
\end{proof}

\begin{lemma}
\label{convexlyLayeredPreservedByMarginalization} Let $f$ be a lower
semicontinuous convexly layered function on $\mathbb{R}^{n+1}$ written as $%
f(x,t)$, $x \in \mathbb{R}^n$, $t\in \mathbb{R}$. Suppose that $f$ is
bounded and has compact support. Let 
\begin{eqnarray*}
g(x) = \int_{\mathbb{R}} f(x,t) dt .
\end{eqnarray*}
Then $g$ is an even, radially monotone, lower semicontinuous function.
\end{lemma}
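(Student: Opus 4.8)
The plan is to prove the three asserted properties of $g$ separately, using the hypotheses that $f$ is lower semicontinuous, convexly layered, bounded, and has compact support.

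\textbf{Evenness.} This is the easy part. Since $f$ is convexly layered, each super-level set $L_\alpha = \{(x,t): f(x,t) > \alpha\}$ is a balanced convex set, hence $(x,t)\in L_\alpha$ iff $(-x,-t)\in L_\alpha$, which gives $f(x,t) = f(-x,-t)$ for all $(x,t)$. Therefore $g(-x) = \int_{\mathbb{R}} f(-x,t)\,dt = \int_{\mathbb{R}} f(-x,-s)\,ds = \int_{\mathbb{R}} f(x,s)\,ds = g(x)$ after the change of variable $t = -s$. So $g$ is even.

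\textbf{Radial monotonicity.} I want to show $g(rx) \ge g(x)$ for all $0 \le r \le 1$ and all $x \in \mathbb{R}^n$. The key is that for fixed $x$, the cross-section $t \mapsto f(x,t)$ has super-level sets that are the sections of the balanced convex sets $L_\alpha$; in particular each section $\{t: f(x,t) > \alpha\}$ is a symmetric interval (possibly empty). Moreover, by convexity of $L_\alpha$ and the fact that $0$ is its center of symmetry, if $(x,t) \in L_\alpha$ then $(rx, rt) \in L_\alpha$ for $0 \le r \le 1$; more is true — writing $(rx, rt) = r(x,t) + (1-r)(0,0)$ and noting that the section of $L_\alpha$ over $0$ contains $0$ if it is nonempty — one gets that the symmetric interval $\{t: f(rx, t) > \alpha\}$ contains the scaled-and-then-recentered interval, and in fact contains $\{rt: f(x,t) > \alpha\}$ rescaled appropriately. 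The cleanest route: show that for each $\alpha$, the Lebesgue measure of the section $\{t: f(rx,t) > \alpha\}$ is at least that of $\{t: f(x,t) > \alpha\}$, and then integrate over $\alpha$ using the layer-cake formula $g(x) = \int_0^\infty \leb(\{t: f(x,t) > \alpha\})\,d\alpha$. To get the measure comparison, fix $\alpha$ and let $I_x = \{t : f(x,t) > \alpha\}$, a symmetric interval $(-s, s)$. For any $t \in I_x$, the point $(rx, rt)$ lies in $L_\alpha$ (convex combination of $(x,t) \in L_\alpha$ and $(0,0)$, the latter being in $L_\alpha$ whenever $I_x \ne \emptyset$ since $(0,0)$ is the center of the nonempty symmetric interval $I_0 \supseteq$ ... — actually one needs $(0,0) \in L_\alpha$, which follows because $I_0$ is a symmetric interval and, if $I_x$ is nonempty, convexity of $L_\alpha$ forces $0 \in $ projection, hence by symmetry $(0,0) \in L_\alpha$). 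Thus $I_{rx} \supseteq r\cdot I_x$, giving $\leb(I_{rx}) \ge r\,\leb(I_x)$ — but this is the wrong direction when $r < 1$! So the naive scaling argument fails; one must instead exploit that $I_{rx}$ is itself a \emph{symmetric interval} and that its \emph{half-length} is a concave function of the parameter along the segment from $x$ to $0$, pinned at a positive value at $0$. Concretely, the half-length $\ell(y) = \sup\{t : (y,t) \in L_\alpha\}$ is concave in $y$ on its domain (being the sup-section of a convex set), and $\ell(0) \ge \ell(x) \ge 0$... no, concavity alone does not give $\ell(rx) \ge \ell(x)$. The correct statement needs $\ell(0) \ge \ell(x)$, which is exactly what evenness plus convexity of $L_\alpha$ gives: $\ell$ is concave and even-ish... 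Let me restate: $L_\alpha$ balanced convex means $\ell(-y) $ relates to the section over $-y$, and the section over $-y$ is the reflection of the section over $y$, so $\ell(-y) = \ell(y)$ (half-lengths of reflected symmetric intervals are equal). A concave even function of $y$ along the line $\RR x$ is maximized at $0$ and is nonincreasing in $|y|$; hence $\ell(rx) \ge \ell(x)$ for $0\le r\le 1$. That gives $\leb(I_{rx}) = 2\ell(rx) \ge 2\ell(x) = \leb(I_x)$, and integrating over $\alpha$ yields $g(rx) \ge g(x)$.

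\textbf{Lower semicontinuity.} Finally, I would argue that $g$ is lower semicontinuous. Since $f \ge 0$ is lower semicontinuous, bounded, and compactly supported, by Fatou's lemma applied along any sequence $x_k \to x$ we get $\liminf_k g(x_k) = \liminf_k \int f(x_k,t)\,dt \ge \int \liminf_k f(x_k,t)\,dt \ge \int f(x,t)\,dt = g(x)$, using lower semicontinuity of $f$ in the integrand (so $\liminf_k f(x_k,t) \ge f(x,t)$ pointwise) and the uniform bound plus compact support to justify Fatou (all functions dominated by $\|f\|_\infty$ times the indicator of a fixed compact set, so even dominated convergence-type control is available). Hence $g$ is lower semicontinuous, completing the proof.

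The main obstacle is the radial monotonicity step: the first instinct — scaling the super-level interval by $r$ — pushes the wrong way, and one must instead recognize that each section $I_y$ is a symmetric interval whose half-length $\ell(y)$ is a concave, reflection-symmetric function of $y$ (as the support function of the convex section of the balanced convex set $L_\alpha$), so that it is automatically nonincreasing along rays from the origin; then the layer-cake representation $g(y) = \int_0^\infty 2\ell_\alpha(y)\,d\alpha$ transfers this monotonicity to $g$. Getting the concavity and reflection-symmetry of $\ell_\alpha$ cleanly from the definition of ``balanced convex set'' is the crux; everything else is routine.
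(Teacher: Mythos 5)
Your evenness and lower-semicontinuity arguments are fine, but the radial-monotonicity step --- which you yourself flag as the crux --- rests on a false claim: that for fixed $x$ the section $I_x=\{t:\,f(x,t)>\alpha\}$ is an interval symmetric about $0$, so that $\leb(I_x)=2\ell(x)$ with $\ell(y)=\sup\{t:(y,t)\in L_\alpha\}$ and $\ell(-y)=\ell(y)$. Balancedness of $L_\alpha$ is central symmetry about the origin of $\mathbb{R}^{n+1}$; it only gives $I_{-x}=-I_x$, not symmetry of each individual section about $t=0$. For instance, the tilted ellipse $L_\alpha=\{(x,t)\in\mathbb{R}^2:\,x^2+(t-x)^2\le 1\}$ is balanced and convex, yet its section over $x$ is $[\,x-\sqrt{1-x^2},\,x+\sqrt{1-x^2}\,]$, so $\ell(x)=x+\sqrt{1-x^2}\neq\ell(-x)$ and $\leb(I_x)=2\sqrt{1-x^2}\neq 2\ell(x)$. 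Thus both identities your argument leans on fail, and the "concave even half-length" does not exist in general.

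The conclusion is nevertheless true, and the repair is exactly what the paper does: write the section over $y$ as the interval $(c(y),d(y))$; convexity of $L_\alpha$ makes $d$ concave and $c$ convex on the (convex, balanced) projection of $L_\alpha$, so the \emph{full} length $d(y)-c(y)$ is concave there, while balancedness gives $c(-y)=-d(y)$, hence the length is even; an even concave function restricted to any line through $0$ is nonincreasing in $|y|$, which yields $\leb(I_{rx})\ge\leb(I_x)$ for $0\le r\le 1$ (extending by $0$ off the projection causes no harm, since if $rx$ lies outside the projection so does $x$). Your layer-cake integration over $\alpha$ then finishes, just as the paper's reduction to indicators of open balanced convex sets does. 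In short: the monotone quantity is the full section length $d-c$ of a generally asymmetric interval, not the half-length of a purportedly symmetric one; with that substitution your outline coincides with the paper's proof.
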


\begin{proof}
First note that sums and integrals of even radially monotone functions are
again even and radially monotone. By the wedding cake representation 
\begin{eqnarray*}
f(x) = \int_0^\infty 1\{ f(x) > t\} dt,
\end{eqnarray*}
it suffices to prove the result when $f$ is the indicator function of an
open balanced convex set $K$. Thus we define 
\begin{eqnarray*}
K(x) = \{ t \in \mathbb{R} : \ (x,t) \in K \}, \ \ \mbox{for} \ \ x \in 
\mathbb{R}^n .
\end{eqnarray*}
Thus $K(x) = (c(x), d(x))$, an open interval in $\mathbb{R}$ and we see that 
\begin{eqnarray*}
g(x) = d(x) - c(x).
\end{eqnarray*}
But convexity of $K$ implies that $c(x)$ is convex and $d(x)$ is concave,and
hence $g(x)$ is concave. Since $K$ is balanced, it follows that $c(-x) = -
d(x)$, or $d(-x) = - c(x)$, so $g$ is even. Since an even concave function
is even radially monotone, and lower semicontinuity of $g$ holds by Fatou's
lemma, the conclusion follows.
\end{proof}

%\subsection{Preservation by Weak Limits}
%\subsection{Preservation by Affine Transformations}

\subsubsection{Preservation under convolution}

Suppose that $X, Y$ are independent with log-concave distributions $P$ and $%
Q $ on $(\mathbb{R}^d, \mathcal{B}^d)$, and let $R$ denote the distribution
of $X+Y$. The following result asserts that $R$ is log-concave as a measure
on $\mathbb{R}^d$.

\begin{proposition}
\label{LCpreservedByConv} (log-concavity is preserved by convolution). Let $%
P $ and $Q$ be two log-concave distributions on $(\mathbb{R}^d , \mathcal{B}%
^d) $ and let $R$ be the convolution defined by $R(B) = \int_{\mathbb{R}^d}
P( B- y) dQ(y)$ for $B \in \mathcal{B}^d$. Then $R$ is log-concave.
\end{proposition}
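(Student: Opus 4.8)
The plan is to reduce the statement about measures to the already-proven statement about densities (Theorem~\ref{LCpreservedByMargin}, Pr\'ekopa's theorem), using the correspondence between log-concave measures and log-concave densities. The subtlety is that $P$ and $Q$ need not have densities on $\mathbb{R}^d$: their supports may live on lower-dimensional affine subspaces. So first I would dispose of the degenerate cases. If $P$ is supported on an affine subspace $V_1$ and $Q$ on $V_2$, then $R$ is supported on the affine subspace spanned by $V_1 + V_2$ (a translate of the linear span of the directions of $V_1$ and $V_2$); restricting attention to that subspace $W$, the measures $P$, $Q$, $R$ all become measures on $W \cong \mathbb{R}^k$, and log-concavity of $P$, $Q$ as measures on $\mathbb{R}^d$ passes to log-concavity as measures on $W$ (the defining Brunn--Minkowski-type inequality only involves sets, Minkowski combinations, and the measure values, all of which are intrinsic to $W$). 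Conversely, log-concavity on $W$ implies log-concavity on $\mathbb{R}^d$ since any Borel set $A \subset \mathbb{R}^d$ has $P(A) = P(A \cap W)$ and $\theta A + (1-\theta)B$ intersected with $W$ contains $\theta(A\cap W) + (1-\theta)(B\cap W)$. Hence we may assume without loss of generality that the affine hull of $\mathrm{supp}(R)$ is all of $\mathbb{R}^d$, and (after a further reduction, or simply by the same argument applied to $P$ and $Q$ individually) that $P$ and $Q$ each have full-dimensional support, hence by Theorem~\ref{PrekopaRinott} admit log-concave densities $p$ and $q$ on $\mathbb{R}^d$.

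With densities in hand, the second step is the standard convolution-as-marginalization trick. Define $F$ on $\mathbb{R}^d \times \mathbb{R}^d$ by $F(x,y) = p(x) q(y-x)$. Since $p$ is log-concave in $x$ and $q$ is log-concave in its argument, and $(x,y) \mapsto (x, y-x)$ is a linear (hence affine) map, the composition $y - x$ has $q(y-x)$ log-concave jointly in $(x,y)$; similarly $p(x)$ is log-concave jointly in $(x,y)$ (it ignores $y$). A product of two log-concave functions is log-concave (the sum of two convex functions is convex), so $F$ is log-concave on $\mathbb{R}^{2d}$. Now the density of $R = $ law of $X+Y$ is exactly $r(y) = \int_{\mathbb{R}^d} F(x,y)\, dx = \int_{\mathbb{R}^d} p(x) q(y-x)\, dx$, the usual convolution formula. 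By Theorem~\ref{LCpreservedByMargin} (Pr\'ekopa's theorem, with $m = n = d$), the marginal $r$ is log-concave on $\mathbb{R}^d$.

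The third step closes the loop: $r$ is a log-concave density on $\mathbb{R}^d$, and since we arranged $\mathrm{supp}(R)$ to have full-dimensional affine hull, Theorem~\ref{PrekopaRinott} (the ``if'' direction) gives that $R$ is a log-concave \emph{measure}. Unwinding the initial reduction, $R$ is log-concave as a measure on the original $\mathbb{R}^d$ as well.

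I expect the only real obstacle to be the bookkeeping in the degenerate-support reduction: one must check carefully that ``log-concave measure'' is preserved both ways under restricting to the affine hull of the support, and that the affine hull of $\mathrm{supp}(R)$ is indeed the (affine) sum of the affine hulls of $\mathrm{supp}(P)$ and $\mathrm{supp}(Q)$ — the latter because $\mathrm{supp}(R) = \overline{\mathrm{supp}(P) + \mathrm{supp}(Q)}$ for independent summands. Once that is settled, everything else is a direct appeal to Theorem~\ref{PrekopaRinott}, Theorem~\ref{LCpreservedByMargin}, and the trivial fact that products and affine pullbacks of log-concave functions are log-concave. An alternative route that avoids densities entirely is to verify the defining inequality $R(\theta A + (1-\theta)B) \ge R(A)^\theta R(B)^{1-\theta}$ directly from the Pr\'ekopa--Leindler inequality applied to indicator-type functions built from $P$ and $Q$, but the marginalization argument above is cleaner given what has already been established.
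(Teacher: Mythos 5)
Your core argument is exactly the paper's: reduce to the absolutely continuous case and observe that the convolution density is the marginal of the jointly log-concave function $p(x)q(y-x)$ on $\mathbb{R}^{2d}$, so Pr\'ekopa's Theorem~\ref{LCpreservedByMargin} applies; the paper's proof is these same two lines (it simply asserts that one may assume $P$ and $Q$ have densities). The place where your write-up has a genuine problem is the degenerate-support reduction that you add. A small slip first: $\mathrm{supp}(P)$ and $\mathrm{supp}(Q)$ need not lie in $W=V_1+V_2$ (the $V_i$ are affine, not linear), so $P$, $Q$ do not literally become measures on $W$; that is repaired by translating. The serious issue is the parenthetical claim that, after restricting to the affine hull of $\mathrm{supp}(R)$, one may assume ``by the same argument applied to $P$ and $Q$ individually'' that $P$ and $Q$ have full-dimensional support. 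That cannot be arranged: take $d=2$, $P$ uniform on $[0,1]\times\{0\}$ and $Q$ uniform on $\{0\}\times[0,1]$. Then $R$ is uniform on $[0,1]^2$, so its support is already full-dimensional, yet $P$ and $Q$ are singular and admit no densities on $\mathbb{R}^2$, and the formula $r(y)=\int p(x)q(y-x)\,dx$ on $\mathbb{R}^d$ is simply unavailable. This is precisely the case that the blanket ``WLOG absolutely continuous'' hides (the paper hides it too, without comment).

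Two clean repairs. (i) Stay at the level of measures: $R$ is the image of the product measure $P\times Q$ on $\mathbb{R}^{2d}$, which is log-concave by Proposition~\ref{prop:ProdMeasPreservLogConc}, under the linear map $(x,y)\mapsto x+y$ given by the $d\times 2d$ matrix $(I\ \ I)$, hence log-concave by Proposition~\ref{prop:AffinePreservLogCon}; both of those results precede the convolution statement and do not use it, so there is no circularity, and degenerate supports are handled automatically. (ii) Keep your route but take densities $p$ on $V_1\cong\mathbb{R}^{k_1}$ and $q$ on $V_2\cong\mathbb{R}^{k_2}$ (via Theorem~\ref{PrekopaRinott} applied on the respective affine hulls), form the jointly log-concave function $p(u)q(v)$ on $\mathbb{R}^{k_1+k_2}$, and realize the density of $R$ on the affine hull of its support as a marginal of this function after a linear change of coordinates, then invoke Theorem~\ref{LCpreservedByMargin}. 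With either repair, the rest of your argument, including the final appeal to Theorem~\ref{PrekopaRinott} to pass from the log-concave density back to the log-concave measure, is correct.
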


\begin{proof}
It suffices to prove the proposition when $P$ and $Q$ are absolutely
continuous with densities $p$ and $q$ on $\mathbb{R}^d$. Now $h(x,y) =
p(x-y)q(y)$ is log-concave on $\mathbb{R}^{2d}$, and hence by Proposition~%
\ref{LCpreservedByMargin} it follows that 
\begin{eqnarray*}
r(y) = \int_{\mathbb{R}^d} h(x,y) dy = \int_{\mathbb{R}^d} p(x-y) q(y) dy
\end{eqnarray*}
is log-concave.
\end{proof}

Proposition~\ref{LCpreservedByConv} was proved when $d=1$ by \cite{MR0047732}
who used the $PF_2$ terminology of P\'olya frequency functions. In fact all
the P\'olya frequency classes $PF_k$, $k\ge 2$, are closed under convolution
as shown by \cite{MR0230102}; see \cite{MR2759813}, Lemma A.4 (page 758) and
Proposition B.1, page 763. The first proof of Proposition~\ref%
{LCpreservedByConv} when $d\ge 2$ is apparently due to \cite{MR0241584}.
While the proof given above using Pr\'ekopa's theorem is simple and quite
basic, there are at least two other proofs according as to whether we use:%
\newline
(a) the equivalence between log-concavity and monotonicity of the scores of $%
f$, or \newline
(b) the equivalence between log-concavity and non-negativity of the matrix
of second derivatives (or Hessian) of $-\log f$, assuming that the second
derivatives exist.

The proof in (a) relies on Efron's inequality when $d=1$, and was %first
noted by \cite{Wellner-2013} in parallel to the corresponding 
proof of ultra log-concavity in the discrete case given by \cite{MR2327839}; 
see Theorem~\ref{thm:UltraLogConvPreservByConv}.
We will return to this in Section~\ref{sec:EfronsTheoremOneDimension}. For $d>1$ this approach breaks down because
Efron's theorem does not extend to the multivariate setting without further
hypotheses. %relies on a (potential? or conjectured?) 
Possible generalizations of Efron's theorem will be discussed in Section~\ref%
{sec:StrongLogConcavityPreservMultivariateCase}. The proof in (b) relies on
a Poincar\'{e} type inequality of \cite{MR0450480}. %Brascamp and Lieb 
These three different methods are of some interest since they all have
analogues in the case of proving that strong log-concavity is preserved
under convolution.

It is also worth noting the following difference between the situation in
one dimension and the result for preservation of convolution in higher
dimensions: as we note following Theorems 29 and 33, \cite{Ibragimov:56} 
% Ibragimov 
and \cite{Keilson-Gerber:71} %Keilson proved 
showed that in the one-dimensional continuous and discrete settings
respectively that if $p\star q$ is unimodal for every unimodal $q$, then $p$
is log-concave. The analogue of this for $d>1$ is more complicated in part
because of the great variety of possible definitions of \textquotedblleft
unimodal\textquotedblright\ in this case; see \cite{MR954608} 
% Dhamadikari and Joag-Dev 
chapters 2 and 3 for a thorough discussion. In particular \cite{MR0074845} 
% Sherman 
provided the following counterexample when the notion of unimodality is
taken to be \textsl{centrally symmetric convex} unimodality; that is, the
sets $S_{c}(p)\equiv \{x\in \mathbb{R}^{d}:p(x)\geq c\}$ are symmetric and
convex for each $c\geq 0$. Let $p$ be the uniform density on $[-1,1]^{2}$
(so that $p(x)=(1/4)1_{[-1,1]^{2}}(x)$); then $p$ is log-concave. Let $q$ be
the density given by $1/12$ on $[-1,1]^{2}$ and $1/24$ on $([-1,1]\times
(1,5])\cup ([-1,1]\times \lbrack -5,-1))$. Thus $q$ is centrally symmetric
convex (and hence also \textsl{quasi-concave}, $q\in \mathcal{P}_{-\infty }$
as in Definition~\ref{defn:Sconcave}. But $h=p\star q$ is not centrally
symmetric convex (and also is not quasi-concave), since the sets $S_{c}(h)$
are not convex: see Figure~\ref{fig:ShermansExample}.

\begin{figure}[htb!]
\centering
\includegraphics[width=0.50\textwidth]{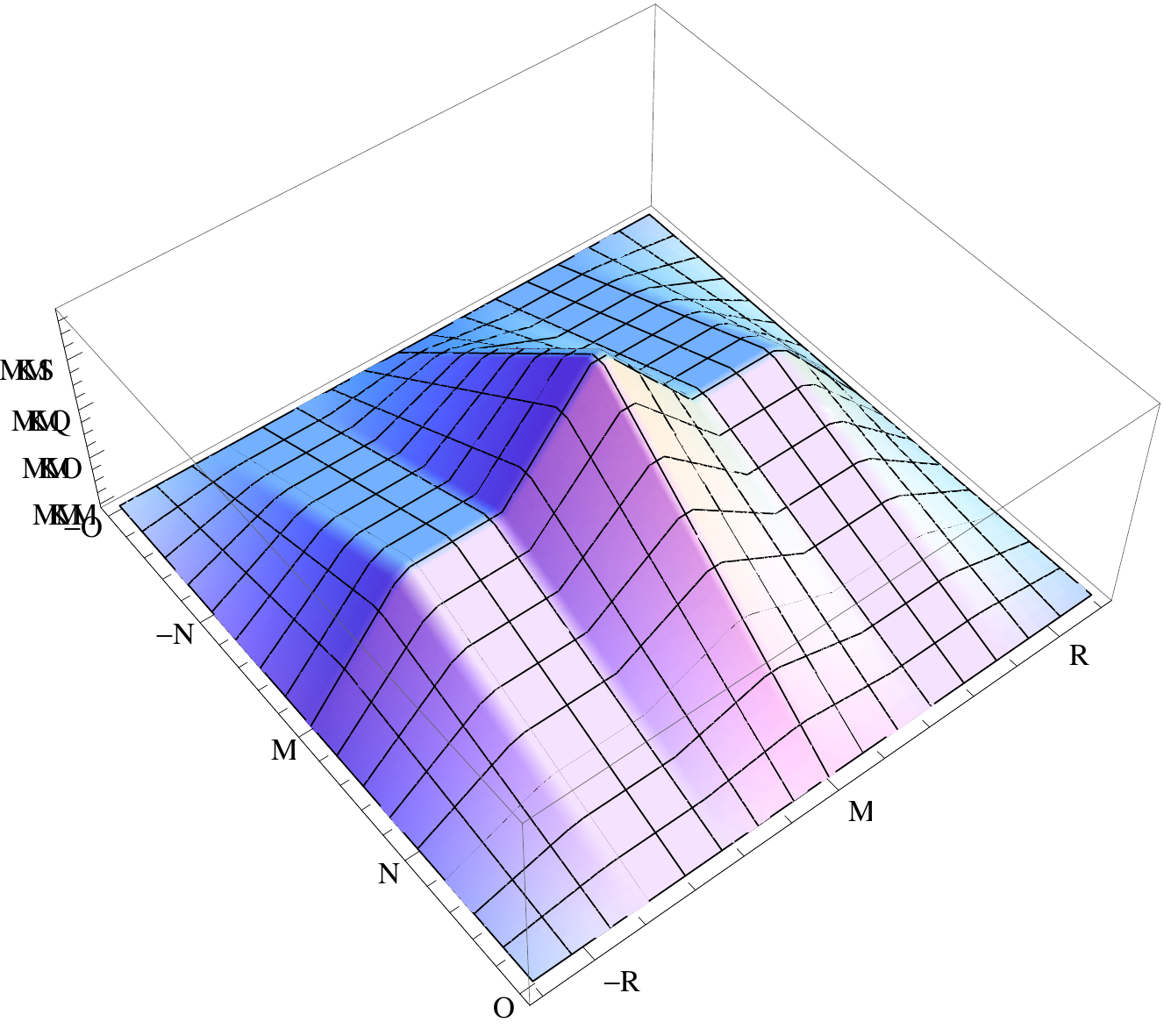} %AiryHadamardProdCheck3.pdf}
\caption{Sherman's example, $h=p\star q$}
\label{fig:ShermansExample}
% plot produced by ``Recent.11.21.11/ConvUnimodal-LogConc-try3.2d''
\end{figure}

\subsubsection{Preservation by (weak) limits}

Now we consider preservation of log-concavity under convergence in
distribution.

\begin{proposition}
\label{prop:logconcPreservConvD} (log-concavity is preserved under
convergence in distribution). Suppose that $\{ P_n \}$ is a sequence of
log-concave probability measures on $\mathbb{R}^d$, and suppose that $P_n
\rightarrow_d P_0$. Then $P_0$ is a log-concave probability measure.
\end{proposition}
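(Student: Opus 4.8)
The plan is to characterize log-concave probability measures on $\mathbb{R}^d$ in a way that passes to weak limits directly. The natural vehicle is the defining inequality itself: $P$ is log-concave iff $P(\theta A + (1-\theta)B) \ge P(A)^\theta P(B)^{1-\theta}$ for all non-empty Borel $A,B$ and $\theta \in (0,1)$. First I would reduce to testing this inequality on a convenient class of sets — open sets, or even open rectangles/balls — using inner and outer regularity of Borel probability measures on $\mathbb{R}^d$, so that the Portmanteau theorem becomes applicable. Specifically, for open $U,V$ the set $\theta U + (1-\theta)V$ is open, and $\liminf_n P_n(W) \ge P_0(W)$ fails in the wrong direction, so I will instead work with the complementary formulation or pass through closed sets carefully.

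The cleanest route is: fix open sets $U, V$ and $\theta \in (0,1)$, and note $W := \theta U + (1-\theta) V$ is open. By Portmanteau, $P_0(U) \le \liminf_n P_n(U)$ and similarly for $V$, while we want a lower bound on $P_0(W)$ — which we also get from Portmanteau since $W$ is open: $P_0(W) \le \liminf_n P_n(W)$ goes the wrong way. So instead I would fix compact $K_1 \subset U$, $K_2 \subset V$; then $\theta K_1 + (1-\theta) K_2$ is compact, hence closed, so $P_0(\theta K_1 + (1-\theta) K_2) \ge \limsup_n P_n(\theta K_1 + (1-\theta) K_2) \ge \limsup_n P_n(K_1)^\theta P_n(K_2)^{1-\theta}$ is still the wrong direction for the $P_n$ factors. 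The resolution is to use that $K_1 \subset U$ open gives $\liminf_n P_n(U) \ge \liminf_n P_n(K_1)$ is automatic, but I actually want $P_n(K_1)$ large; so I would instead choose $U, V$ to be the sets and use: for closed $F$, $\limsup_n P_n(F) \le P_0(F)$, and for open $G$, $\liminf_n P_n(G) \ge P_0(G)$. Thus take $A, B$ open, so $P_0(A) \le \liminf_n P_n(A)$, $P_0(B) \le \liminf_n P_n(B)$; take an arbitrary compact $K \subset \theta A + (1-\theta) B$, and since $K$ is covered by $\theta A + (1-\theta) B$ one can extract compact $K_A \subset A$, $K_B \subset B$ with $K \subset \theta K_A + (1-\theta) K_B$ (this is the geometric heart: compactness lets us trim the open sets down). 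Then $P_n(\theta K_A + (1-\theta) K_B) \ge P_n(K_A)^\theta P_n(K_B)^{1-\theta}$, and passing to the limit using $\limsup$ on the closed left side and being careful — here I would instead apply log-concavity of $P_n$ to the open sets $A,B$ directly: $P_n(\theta A + (1-\theta) B) \ge P_n(A)^\theta P_n(B)^{1-\theta} \ge P_0(A_\epsilon)^\theta \cdots$ after passing through a subsequence — and conclude $\liminf_n P_n(\theta A + (1-\theta)B) \ge (\liminf_n P_n(A))^\theta (\liminf_n P_n(B))^{1-\theta} \ge P_0(A)^\theta P_0(B)^{1-\theta}$. Finally, since $\theta A + (1-\theta) B$ is open, one does \emph{not} directly get $\liminf_n P_n(\text{open}) \le$ anything; but $P_0(\theta A + (1-\theta)B) \ge \sup_{K} P_0(K)$ over compact $K$ inside it, and for each such $K$ (closed), $P_0(K) \ge \limsup_n P_n(K)$ — still wrong.

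Because of this sign tension, the step I expect to be the real obstacle is matching the direction of Portmanteau on both sides of the inequality simultaneously. The correct fix, which I would carry out, is a two-sided approximation: for the \emph{left} side use that $\theta A + (1-\theta) B$ contains the open set $\theta A^\circ + (1-\theta) B^\circ$ and Portmanteau's liminf bound gives $\liminf_n P_n(\theta A + (1-\theta)B) \ge P_0(\theta A + (1-\theta) B)$ — no, again reversed. So the genuinely clean argument avoids Portmanteau's asymmetry by working with \emph{bounded continuous test functions} or, most simply, by invoking that for log-concave $P_n$ one has the density representation (Theorem~\ref{PrekopaRinott}) and proving convergence of a subsequence of densities. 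Concretely: if all $P_n$ are non-degenerate, renormalize/center so they are tight (log-concave measures with bounded covariance are tight), extract a subsequence whose one-dimensional projections converge; use Cramér–Wold; then on each line the densities are log-concave and a classical Hellinger/pointwise compactness result for log-concave densities (Cule–Samworth type, or the elementary fact that pointwise limits of log-concave functions are log-concave) gives a log-concave limiting density, forcing $P_0$ log-concave. If some $P_n$ are degenerate (supported on a lower-dimensional affine subspace), handle that by reducing to the subspace.

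I would therefore organize the proof as: (1) reduce to testing the defining inequality on open sets by regularity; (2) show that for open $A,B$ the set $\theta A + (1-\theta)B$ is open and, via compact exhaustion, that $P_0(\theta A + (1-\theta)B) = \sup\{P_0(\theta K_A + (1-\theta)K_B) : K_A \subset A,\ K_B \subset B \text{ compact}\}$; (3) for fixed compact $K_A, K_B$, the set $\theta K_A + (1-\theta) K_B$ is compact hence closed, $K_A, K_B$ have open neighborhoods $A, B$, apply log-concavity of $P_n$ together with $\liminf_n P_n(\text{open } G) \ge P_0(G)$ to the right-hand factors and $\limsup_n P_n(\text{closed } F) \le P_0(F)$ — and since log-concavity of $P_n$ gives $P_n(\theta A + (1-\theta)B) \ge P_n(K_A)^\theta P_n(K_B)^{1-\theta}$ with $P_n(\theta A+(1-\theta)B) \le$ measure of a slightly larger \emph{closed} set $\overline{\theta A + (1-\theta)B}$, take $\limsup$ throughout; (4) combine and let the approximating compacts exhaust $A,B$. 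The one delicate bookkeeping point — that the left-hand side of the limit inequality must be an upper limit of $P_n$ of a closed set while the right-hand factors must be lower limits of $P_n$ of open sets — is reconciled precisely because $\theta A + (1-\theta)B$ can be sandwiched between an open set containing it (for the liminf) and a slightly enlarged closed set; taking $A, B$ slightly larger than the compacts $K_A, K_B$ and then shrinking absorbs the gap. This is the step I would write out most carefully; everything else is routine.
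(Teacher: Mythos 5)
The paper gives no argument of its own for this proposition (it simply cites Dharmadhikari and Joag-Dev, Theorem 2.10), so your attempt must stand on its own, and the route you finally settle on in steps (1)--(4) is indeed the standard proof of that cited theorem; but as written it has two concrete directional errors plus the key step left unexecuted. First, the reduction in step (1) goes the wrong way: outer regularity is useless here, because enlarging $A$ and $B$ also enlarges $\theta A+(1-\theta)B$, so proving the inequality for open sets does not by itself give it for Borel sets. The correct reduction is to \emph{compact} sets via inner regularity: if $P_0(\theta K_1+(1-\theta)K_2)\ge P_0(K_1)^{\theta}P_0(K_2)^{1-\theta}$ for all compact $K_1,K_2$, then for Borel $A,B$ one uses $\theta K_1+(1-\theta)K_2\subset \theta A+(1-\theta)B$ and takes suprema over $K_1\subset A$, $K_2\subset B$. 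Second, step (3) assigns the two halves of the portmanteau theorem to the wrong sets: you write the $P_n$-inequality with the compacts $K_A,K_B$ as the right-hand factors, but for closed sets portmanteau gives $\limsup_n P_n(K_A)\le P_0(K_A)$, which is the useless direction; the right-hand factors must be open. The chain that works is: fix compact $K_1,K_2$, let $K_i^{\epsilon}=K_i+B(0,\epsilon)$ (open), note $\theta K_1^{\epsilon}+(1-\theta)K_2^{\epsilon}=C+B(0,\epsilon)$ where $C=\theta K_1+(1-\theta)K_2$ is compact, and apply log-concavity of $P_n$ to $K_1^{\epsilon},K_2^{\epsilon}$; bound the left side by $P_n(\{x:d(x,C)\le\epsilon\})$ (closed) and use the limsup half there, use the liminf half on the open sets $K_i^{\epsilon}$ so that $\liminf_n P_n(K_i^{\epsilon})\ge P_0(K_i^{\epsilon})\ge P_0(K_i)$, and conclude $P_0(\{x:d(x,C)\le\epsilon\})\ge P_0(K_1)^{\theta}P_0(K_2)^{1-\theta}$ for every $\epsilon>0$; finally let $\epsilon\downarrow 0$ and use continuity from above along the closed neighborhoods decreasing to $C$. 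This is exactly the ``delicate bookkeeping'' you flag at the end but never actually carry out; without it the argument is incomplete.

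The detour through densities should simply be deleted: a weakly convergent sequence is automatically tight, ``renormalizing/centering'' would change the limit and is not allowed, the limit $P_0$ may be degenerate (as the paper notes immediately after the proposition) and then has no density on $\mathbb{R}^d$ at all, and Cram\'er--Wold only identifies the limit law from its one-dimensional projections --- it does not transfer log-concavity of those projections to log-concavity of $P_0$, and you offer no argument for that transfer. Stick with the compact-set/portmanteau argument above, written out in the order indicated.
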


\begin{proof}
See \cite{MR954608}, Theorem 2.10, page 53.
\end{proof}

Note that the limit measure in Proposition~\ref{prop:logconcPreservConvD}
might be concentrated on a proper subspace of $\mathbb{R}^d$. If we have a
sequence of log-concave densities $p_n$ which converge pointwise to a
density function $p_0$, then by Scheff\'e's theorem we have $p_n \rightarrow
p_0 $ in $L_1 (\lambda)$ and hence $d_{TV} (P_n , P_0) \rightarrow 0$. Since
convergence in total variation implies convergence in distribution we
conclude that $P_0$ is a log-concave measure where the affine hull of $%
\mbox{supp} (P_0)$ has dimension $d$ and hence $P_0$ is the measure
corresponding to $p_0$ which is necessarily log-concave by Theorem~\ref%
{PrekopaRinott}. 
%{\bf Q:}  Can the limit measure be concentrated on a proper subspace of $\RR^d$ (even 
%when the $P_n$'s all have $\mbox{supp}(P_n) = \RR^d$)? 

Recall that the class of normal distributions on $\mathbb{R}^d$ is closed
under all the operations discussed above: affine transformation, formation
of products, marginalization, convolution, and weak limits. Since the larger
class of log-concave distributions on $\mathbb{R}^d$ is also preserved under
these operations, the preservation results of this section suggest that the
class of log-concave distributions is a very natural nonparametric class
which can be viewed naturally as an enlargement of the class of all normal
distributions. This has stimulated much recent work on nonparametric
estimation for the class of log-concave distributions on $\mathbb{R}$ and $%
\mathbb{R}^d$: for example, see \cite{MR2546798}, \cite{MR2645484}, \cite%
{MR2758237}, \cite{MR2757433}, %??, ??, ?? , ?? 
\cite{MR2509075}, and \cite{hen+ast06}, and see Section~\ref%
{subsec:NonparametricStatistics} for further details.

\subsection{Preservation of strong log-concavity}

\label{subsec:PreservationStrongLogConcave}

Here is a theorem summarizing several preservation results for strong
log-concavity. Parts (a), (b), and (d) were obtained by \cite{hen+ast06}.

\begin{theorem}
\label{StrongLogConPreservOne} (Preservation of strong log-concavity)\newline
(a) (Linear transformations) Suppose that $X $ has density $p \in SLC_2
(0,\Sigma, d)$ and let $A$ be a $d\times d$ nonsingular matrix. Then $Y=AX$
has density $q \in SLC_2 (0, A \Sigma A^T, d)$ given by $q(y) = p(A^{-1} y) %
\mbox{det} (A^{-1} )$.\newline
(b) (Convolution) If $Z = X+Y$ where $X \sim p \in SLC_2 (0, \Sigma, d)$ and 
$Y \sim q \in SLC_2 (0, \Gamma , d)$ are independent, then $Z = X +Y \sim p
\star q \in SLC_2 (0 , \Sigma + \Gamma , d)$.\newline
(c) (Product distribution) If $X \sim p \in SLC_2 (0, \Sigma, m)$ and $Y
\sim q \in SLC_2 (0, \Gamma, n)$, then 
\begin{equation*}
(X,Y) \sim p\cdot q \in SLC_2 \left ( 0 , \left ( 
\begin{array}{cc}
\Sigma & 0 \\ 
0 & \Gamma%
\end{array}
\right ) , m+n \right ) .
\end{equation*}
(d) (Product function) If $p \in SLC_2 (0 , \Sigma, d)$ and $q \in SLC_2 (0,
\Gamma, d)$, then $h$ given by $h(x) = p(x) q(x)$ (which is typically not a
probability density function) satisfies $h \in SLC_2 (0, (\Sigma^{-1} +
\Gamma^{-1} )^{-1} )$.
\end{theorem}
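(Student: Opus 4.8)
\textbf{Proof plan for Theorem~\ref{StrongLogConPreservOne}.}
The unifying idea is that each part reduces to the single defining fact about $SLC_2$: a density $p$ lies in $SLC_2(0,\Sigma,d)$ precisely when $p(x)=g(x)\phi_\Sigma(x)$ for a log-concave $g$. So the plan is, in each case, to exhibit the appropriate Gaussian factor explicitly and check that what remains is log-concave, invoking only Propositions~\ref{prop:AffinePreservLogCon}, \ref{prop:ProdMeasPreservLogConc}, and \ref{LCpreservedByConv} (the latter stated for log-concave functions, not just densities) to handle the log-concave factor. For part (a), I would write $q(y)=p(A^{-1}y)|\det A^{-1}| = g(A^{-1}y)\,\phi_\Sigma(A^{-1}y)\,|\det A^{-1}|$; the composition $y\mapsto g(A^{-1}y)$ is log-concave since $g$ is and $A^{-1}$ is linear (Proposition~\ref{prop:AffinePreservLogCon}), and a direct computation with the Gaussian exponent shows $\phi_\Sigma(A^{-1}y)|\det A^{-1}| = \phi_{A\Sigma A^T}(y)$ up to the correct normalizing constant; collecting, $q\in SLC_2(0,A\Sigma A^T,d)$.

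For part (c), the product function $(x,y)\mapsto p(x)q(x)$ — here meaning $p(x)q(y)$ on $\RR^{m+n}$ — factors as $[g_1(x)g_2(y)]\cdot[\phi_\Sigma(x)\phi_\Gamma(y)]$, and $\phi_\Sigma(x)\phi_\Gamma(y) = \phi_{\mathrm{diag}(\Sigma,\Gamma)}(x,y)$ since the covariance of independent blocks is block-diagonal, while $g_1(x)g_2(y)$ is log-concave on $\RR^{m+n}$ as a product of log-concave functions in disjoint coordinates. Part (d) is the algebraic heart: with $p=g_1\phi_\Sigma$ and $q=g_2\phi_\Gamma$ on the same $\RR^d$, the product $h(x)=g_1(x)g_2(x)\phi_\Sigma(x)\phi_\Gamma(x)$ requires combining the two Gaussian exponents, $-\tfrac12 x^T\Sigma^{-1}x - \tfrac12 x^T\Gamma^{-1}x = -\tfrac12 x^T(\Sigma^{-1}+\Gamma^{-1})x$, so that $\phi_\Sigma(x)\phi_\Gamma(x)$ is a constant multiple of $\phi_{(\Sigma^{-1}+\Gamma^{-1})^{-1}}(x)$; since $g_1g_2$ is log-concave, $h\in SLC_2(0,(\Sigma^{-1}+\Gamma^{-1})^{-1},d)$ (noting this is a function, not necessarily a density). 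Part (b) is the one genuinely needing a preservation theorem rather than bare algebra: writing $p\star q(z)=\int p(z-y)q(y)\,dy = \int g_1(z-y)\phi_\Sigma(z-y)\,g_2(y)\phi_\Gamma(y)\,dy$, I would first use the standard Gaussian convolution identity $\phi_\Sigma\star\phi_\Gamma = \phi_{\Sigma+\Gamma}$, and more precisely the identity $\phi_\Sigma(z-y)\phi_\Gamma(y) = \phi_{\Sigma+\Gamma}(z)\,\phi_{\Sigma(\Sigma+\Gamma)^{-1}\Gamma}\bigl(y - \Sigma(\Sigma+\Gamma)^{-1}z\bigr)$ (completing the square), pull the $z$-only factor $\phi_{\Sigma+\Gamma}(z)$ out of the integral, and recognize what remains as a convolution-type integral of the log-concave function $g_1(z-\cdot)g_2(\cdot)$ against a Gaussian kernel in the remaining variable — this integral is log-concave in $z$ by Theorem~\ref{LCpreservedByMargin} (Prékopa) applied to the jointly log-concave integrand $g_1(z-y)g_2(y)\phi_{\Sigma(\Sigma+\Gamma)^{-1}\Gamma}(y-\Sigma(\Sigma+\Gamma)^{-1}z)$ on $\RR^{2d}$.

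The step I expect to be the main obstacle is verifying joint log-concavity of that integrand in part (b): one must check that $(z,y)\mapsto \phi_{\Sigma(\Sigma+\Gamma)^{-1}\Gamma}\bigl(y-\Sigma(\Sigma+\Gamma)^{-1}z\bigr)$ is log-concave on $\RR^{2d}$, which amounts to confirming that the quadratic form $\bigl(y-\Sigma(\Sigma+\Gamma)^{-1}z\bigr)^T\bigl(\Sigma(\Sigma+\Gamma)^{-1}\Gamma\bigr)^{-1}\bigl(y-\Sigma(\Sigma+\Gamma)^{-1}z\bigr)$ is positive semidefinite in $(z,y)$ jointly — true because it is a positive semidefinite form composed with the linear map $(z,y)\mapsto y-\Sigma(\Sigma+\Gamma)^{-1}z$ — together with the bookkeeping of normalizing constants to land exactly in $SLC_2(0,\Sigma+\Gamma,d)$ rather than merely a strong log-concavity with some smaller covariance. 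A cleaner alternative I would actually prefer is to bypass the completing-the-square identity: simply apply part (d) of this very theorem together with the observation that convolution of $SLC_2$ densities is the marginal (over one block) of the product measure of an $SLC_2$ density with the image of an $SLC_2$ density under an affine map $(x,y)\mapsto(x+y,y)$ — but that affine map is not measure-preserving on densities in the naive way, so some care is needed; whichever route is taken, the arithmetic with covariance matrices is the only real work, and the log-concavity is free from Prékopa's theorem.
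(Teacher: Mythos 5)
Your proposal is correct and follows essentially the same route as the paper's proof: parts (a), (c), (d) are the same direct factor-the-Gaussian-and-check-log-concavity computations, and part (b) is the same complete-the-square decomposition $\phi_\Sigma(z-y)\phi_\Gamma(y)=\phi_{\Sigma+\Gamma}(z)\cdot(\text{Gaussian in }y\text{ centered at a linear function of }z)$ followed by Pr\'ekopa's theorem applied to the jointly log-concave integrand. The only quibble is bookkeeping: with the ordering $p(z-y)q(y)$ the conditional center is $\Gamma(\Sigma+\Gamma)^{-1}z$ rather than $\Sigma(\Sigma+\Gamma)^{-1}z$ (the conditional covariance $\Sigma(\Sigma+\Gamma)^{-1}\Gamma=(\Sigma^{-1}+\Gamma^{-1})^{-1}$ is right), a swap that affects nothing in the argument since only linearity of the center in $(z,y)$ and the factored $\phi_{\Sigma+\Gamma}(z)$ matter.
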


Part (b) of Theorem~\ref{StrongLogConPreservOne} is closely related to the
following result which builds upon and strengthens Pr\'ekopa's Theorem~\ref%
{LCpreservedByMargin}. It is due to \cite{MR0450480} (Theorem 4.3, page
380); see also \cite{MR2814377}, Theorem 13.13, page 204.

\begin{theorem}
\label{StrongLogConPreservTwoTry2} (Preservation of strong log-concavity
under marginalization). Suppose that $p\in SLC_2 (0, \Sigma, m+n)$. Then the
marginal density $q$ on $\mathbb{R}^n$ given by 
\begin{eqnarray*}
q(x) = \int_{\mathbb{R}^m} p(x,y) dy
\end{eqnarray*}
is strongly log-concave: $q \in SLC_2 (0, \Sigma_{11}, m)$ where 
\begin{eqnarray}
\Sigma = \left ( 
\begin{array}{cc}
\Sigma_{11} & \Sigma_{12} \\ 
\Sigma_{21} & \Sigma_{22}%
\end{array}
\right ).  \label{PartitionOfCovMatrix}
\end{eqnarray}
%and 
%\begin{eqnarray}
%D \equiv C - B^T A^{-1} B .
%\label{DefnOfMatrixD}
%\end{eqnarray}
\end{theorem}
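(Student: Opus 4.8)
The plan is to reduce the statement to the case $n=1$ by an inductive argument --- just as in the proof of Pr\'ekopa's Theorem~\ref{LCpreservedByMargin} --- and then exploit the fact that strong log-concavity in the sense of $SLC_2(0,\Sigma,m+n)$ means exactly that
\[
g(x,y)=\frac{p(x,y)}{\phi_{\Sigma}\bigl((x,y)^T\bigr)}
\]
is log-concave on $\mathbb{R}^{m+n}$. The key observation is that the Gaussian density factors nicely under marginalization: if we integrate out the first block $y\in\mathbb{R}^m$ of the $\mathbb{R}^{m+n}$ vector (I will relabel so the integration variable is the one being removed), then $\int_{\mathbb{R}^m}\phi_\Sigma$ produces, up to a Gaussian factor in the remaining variable with covariance $\Sigma_{11}$ (the Schur complement structure), precisely the normalizing constant we want. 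So the natural route is: write $p(x,y)=g(x,y)\phi_\Sigma\bigl((x,y)^T\bigr)$ with $g$ log-concave, rescale the whole problem by a linear change of variables so that $\Sigma$ becomes the identity (this is legitimate by the affine-invariance Proposition~\ref{prop:AffinePreservLogCon} together with part (a) of Theorem~\ref{StrongLogConPreservOne}), and then handle the standard Gaussian case.

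In the standardized case $\Sigma=I_{m+n}$, we must show that $q(x)=\int_{\mathbb{R}^m}g(x,y)\phi_{I_m}(y)\,dy\cdot\phi_{I_n}(x)$, i.e.\ that $q(x)/\phi_{I_n}(x)=\int_{\mathbb{R}^m}g(x,y)\phi_{I_m}(y)\,dy$ is log-concave in $x\in\mathbb{R}^n$. But the integrand $(x,y)\mapsto g(x,y)\phi_{I_m}(y)$ is a product of a log-concave function and a log-concave function, hence log-concave on $\mathbb{R}^{m+n}$, and so its marginal over $y$ is log-concave by Pr\'ekopa's Theorem~\ref{LCpreservedByMargin}. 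This establishes $q\in SLC_1(1,n)=SLC_2(0,I_n,n)$, and undoing the change of variables (again via Theorem~\ref{StrongLogConPreservOne}(a), noting that the linear map on $x$ induced by the standardization carries $I_n$ back to $\Sigma_{11}$) gives $q\in SLC_2(0,\Sigma_{11},n)$.

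The step requiring care --- and the one I expect to be the main obstacle --- is the bookkeeping in the change of variables: one needs to check that simultaneously standardizing $\Sigma$ to the identity can be arranged by a block-compatible linear transformation, i.e.\ one whose action on the $x$-block alone is a genuine linear map, so that marginalization commutes with the transformation and the resulting covariance on the $x$-marginal is exactly $\Sigma_{11}$ rather than some other matrix. In fact this is not automatic for an arbitrary square root of $\Sigma^{-1}$; the clean choice is the block Cholesky / Schur-complement factorization
\[
\Sigma=\begin{pmatrix} \Sigma_{11} & \Sigma_{12}\\ \Sigma_{21} & \Sigma_{22}\end{pmatrix}
=\begin{pmatrix} I & 0\\ \Sigma_{21}\Sigma_{11}^{-1} & I\end{pmatrix}
\begin{pmatrix} \Sigma_{11} & 0\\ 0 & \Sigma_{22}-\Sigma_{21}\Sigma_{11}^{-1}\Sigma_{12}\end{pmatrix}
\begin{pmatrix} I & \Sigma_{11}^{-1}\Sigma_{12}\\ 0 & I\end{pmatrix},
\]
which shows that after the unimodular shear $y\mapsto y+\Sigma_{11}^{-1}\Sigma_{12}x$ (Jacobian $1$, preserves log-concavity and the marginal in $x$) the Gaussian factors as $\phi_{\Sigma_{11}}(x)\,\phi_{\Sigma_{22\cdot1}}(y)$. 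Then one only has to standardize each block separately, which is a linear map acting on $x$ alone and on $y$ alone. Once this factorization is in hand the argument above goes through verbatim, so the entire proof reduces to this elementary matrix identity plus two invocations of results already established in the excerpt. An alternative, if one prefers to avoid the matrix algebra, is to run the symmetrization argument used in the second proof of Theorem~\ref{LCpreservedByMargin} directly on the ratio $J_a^\phi(x;q)=q(a+x)q(a-x)/\phi_{\Sigma_{11}/2}(x)$, using the Gaussian identity $\phi_{\Sigma/2}\bigl((u+v,\,y)^T\text{-type decomposition}\bigr)$; but the change-of-variables route is cleaner and I would present that.
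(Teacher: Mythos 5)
Your proposal is correct and follows essentially the same route as the paper's proof: the paper writes $p = g\,\phi_{\Sigma}$ with $g$ log-concave, factors $\phi_{\Sigma}(x,y)=\phi_{\Sigma_{22\cdot 1}}\bigl(y-\Sigma_{21}\Sigma_{11}^{-1}x\bigr)\,\phi_{\Sigma_{11}}(x)$ (your block Cholesky/Schur-complement identity), applies the shear $\tilde y = y-\Sigma_{21}\Sigma_{11}^{-1}x$, and invokes Pr\'ekopa's Theorem~\ref{LCpreservedByMargin} on the jointly log-concave integrand to get $q=h\,\phi_{\Sigma_{11}}$ with $h$ log-concave. Your additional standardization of each block to the identity (and the reduction to $n=1$) is unnecessary, since the argument works directly with $\phi_{\Sigma_{22\cdot 1}}$; also note the shear should be by $\Sigma_{21}\Sigma_{11}^{-1}x$ rather than $\Sigma_{11}^{-1}\Sigma_{12}x$ --- a transpose slip that does not affect the argument.
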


\begin{proof}
Since $p \in SLC_2 (0, \Sigma, m+n)$ we can write 
\begin{eqnarray*}
p(x,y) & = & g(x,y) \phi_{\Sigma} (x,y) \\
& = & g(x,y) \frac{1}{(2 \pi | \Sigma |)^{(m+n)/2}} \exp \left ( - \frac{1}{2%
} (x^T, y^T) \left ( 
\begin{array}{cc}
\Sigma_{11} & \Sigma_{12} \\ 
\Sigma_{21} & \Sigma_{22}%
\end{array}
\right )^{-1} \left ( 
\begin{array}{c}
x \\ 
y%
\end{array}
\right ) \right )
\end{eqnarray*}
where $g$ is log-concave. Now the Gaussian term in the last display can be
written as 
\begin{eqnarray*}
\lefteqn{\phi_{Y|X} (y|x) \cdot \phi_X (x) } \\
& = & \frac{1}{(2 \pi | \Sigma_{22\cdot 1} | )^{n/2}} \exp \left ( - \frac{1%
}{2} (y- \Sigma_{21} \Sigma_{11}^{-1} x)^T \Sigma_{22\cdot 1}^{-1} ( y -
\Sigma_{21} \Sigma_{11}^{-1}x ) \right ) \\
&& \ \ \ \cdot \frac{1}{(2 \pi | \Sigma_{11} | )^{m/2}} \exp \left ( - \frac{%
1}{2} x^T \Sigma_{11}^{-1} x \right )
\end{eqnarray*}
where $\Sigma_{22\cdot 1} \equiv \Sigma_{22} - \Sigma_{21} \Sigma_{11}^{-1}
\Sigma_{12}$, 
%is given by (\ref{DefnOfMatrixD}), and hence  %$D \equiv C - B^T A^{-1} B$, 
and hence 
\begin{eqnarray*}
q(x) 
& = & \int_{\mathbb{R}^n} g(x,y) \frac{1}{(2 \pi | \Sigma_{22\cdot 1} |)^{n/2}} 
               \exp \left ( - \frac{1}{2} (y- \Sigma_{21} \Sigma_{11}^{-1} x)^T
               \Sigma_{22\cdot 1}^{-1} ( y - \Sigma_{21} \Sigma_{11}^{-1}x ) \right ) dy \\
&& \ \ \ \cdot \frac{1}{(2 \pi | \Sigma_{11} | )^{m/2}} \exp \left ( - \frac{1}{2} x^T \Sigma_{11}^{-1} x \right ) \\
& = & \int_{\mathbb{R}^n} g(x, \tilde{y} + \Sigma_{21} \Sigma_{11}^{-1} x )
             \cdot \frac{1}{(2 \pi | \Sigma_{22\cdot 1} | )^{n/2}} 
             \exp \left ( - \frac{1}{2} \tilde{y}^T \Sigma_{22\cdot 1}^{-1} \tilde{y} \right ) d \tilde{y} \\
&& \ \ \ \cdot \frac{1}{(2 \pi | \Sigma_{11} | )^{m/2}} \exp ( - (1/2) x^T \Sigma_{11}^{-1} x ) \\
& \equiv & h(x) \phi_{\Sigma_{11}} (x)
\end{eqnarray*}
where 
\begin{eqnarray*}
h(x) \equiv \int_{\mathbb{R}^n} g(x, \tilde{y} + \Sigma_{21}
\Sigma_{11}^{-1} x ) \cdot \frac{1}{(2 \pi | \Sigma_{22\cdot 1} | )^{n/2}}
\exp \left ( - \frac{1}{2} \tilde{y}^T \Sigma_{22\cdot 1}^{-1} \tilde{y}
\right ) d \tilde{y}
\end{eqnarray*}
is log-concave: $g$ is log-concave, and hence $\tilde{g} (x, \tilde{y})
\equiv g(x, \tilde{y} + \Sigma_{21} \Sigma_{11}^{-1} x )$ is log-concave;
the product $\tilde{g} (x, \tilde{y}) \cdot \exp ( - (1/2) \tilde{y}^T
\Sigma_{22\cdot 1}^{-1} \tilde{y} )$ is (jointly) log-concave; and hence $h$
is log-concave by Pr\'ekopa's Theorem~\ref{LCpreservedByMargin}.
\end{proof}

\begin{proof}
(Theorem~\ref{StrongLogConPreservOne}): (a) The density $q$ is given by $%
q(y) = p(A^{-1} y) \det(A^{-1} )$ by a standard computation. Then since $p
\in SLC_2 ( 0 , \Sigma,d)$ we can write 
\begin{eqnarray*}
q(y) = g(A^{-1} y) \det (A^{-1} ) \phi_{\Sigma} (A^{-1} y) = g(A^{-1} y)
\phi_{A \Sigma A^T} (y)
\end{eqnarray*}
where $g(A^{-1} y)$ is log-concave by Proposition~\ref%
{prop:AffinePreservLogCon}. \newline
(b) If $p \in SLC_2 (0, \Sigma, d)$ and $q \in SLC_2 (0, \Gamma,d)$, then
the function 
\begin{eqnarray*}
h(z,x) = p(x) q(z-x)
\end{eqnarray*}
is strongly log-concave jointly in $x$ and $z$: since 
\begin{eqnarray*}
\lefteqn{x^T \Sigma^{-1} x + (z-x)^T \Gamma^{-1} (z-x) } \\
& = & z^T (\Sigma + \Gamma)^{-1} z + (x - Cz)^T ( \Sigma^{-1} + \Gamma^{-1}
) (x - Cz)
\end{eqnarray*}
where $C \equiv (\Sigma^{-1} + \Gamma^{-1} )^{-1} \Gamma^{-1}$, it follows
that 
\begin{eqnarray*}
h(z,x) & = & g_p (x) g_q (z-x) \phi_{\Sigma}(x) \phi_{\Gamma} (z-x) \\
& = & g(z,x) \phi_{\Sigma + \Gamma} (z) \cdot \phi_{\Sigma^{-1} +
\Gamma^{-1} } (x - Cz)
\end{eqnarray*}
is jointly log-concave. Hence it follows 
%from Pr\'ekopa's theorem (Theorem~\ref{LCpreservedByMargin})  
that 
\begin{eqnarray*}
p\star q (z) = \int_{\mathbb{R}^d} h(z,x) dx & = & \phi_{\Sigma + \Gamma}
(z) \int_{\mathbb{R}^d} g(z,x) \phi_{\Sigma^{-1} + \Gamma^{-1} } (x-Cz) dx \\
& \equiv & \phi_{\Sigma + \Gamma} (z) g_0 (z)
\end{eqnarray*}
where $g_0 (z)$ is log-concave by Pr\'ekopa's theorem, Theorem~\ref%
{LCpreservedByMargin}. \newline
(c ) This is easy since 
\begin{eqnarray*}
p(x) q(y) & = & g_p (x) g_q (y) \phi_{\Sigma} (x) \phi_{\Gamma} (y) = g(x,y)
\phi_{\tilde{\Sigma}} (x,y)
\end{eqnarray*}
where $\tilde{\Sigma} $ is the given $2d\times 2d$ block diagonal matrix and 
$g$ is jointly log-concave (by Proposition~\ref{prop:ProdMeasPreservLogConc}%
). \newline
(d) Note that 
\begin{eqnarray*}
p(x) q(x) = g_p (x) g_q (x) \phi_{\Sigma} (x) \cdot \phi_{\Gamma} (x) \equiv
g_0 (x) \phi_{( \Sigma^{-1} + \Gamma^{-1} )^{-1}} (x)
\end{eqnarray*}
where $g_0 $ is log-concave.
\end{proof}

%\par\noindent
%{\bf Jon:}  now give the proof of Theorem~\ref{StrongLogConPreservOne}.  

%----------------------------------------------------------

\section{Log-concavity and ultra-log-concavity for discrete distributions}
\label{Sec:DiscreteLogConc}

%----------------------------------------------------------

We now consider %begin with a discussion of 
log-concavity and ultra-log-concavity in the setting of discrete random
variables. Some of this material is from \cite{MR3030616} 
%JohnsonKontoyiannisMadiman:2013 
and \cite{MR2327839}.  %\johnson:2007

An integer-valued random variable $X$ with probability mass function 
$\{ p_x : \ x \in \mathbb{Z}\}$ is \textsl{log-concave} if 
\begin{eqnarray}
p_x^2 \ge p_{x+1} p_{x-1} \ \ \mbox{for all} \ x \in \mathbb{Z} .
\end{eqnarray}
If we define the \textsl{score function} $\varphi$ by $\varphi (x) \equiv
p_{x+1}/p_x$, then log-concavity of $\{ p_x\}$ is equivalent to $\varphi$
being decreasing (nonincreasing).

A stronger notion, analogous to strong log-concavity in the case of
continuous random variables, is that of \textsl{ultra-log-concavity}: for
any $\lambda > 0$ define $\mathbf{U L C} (\lambda)$ to be the class of
integer-valued random variables $X$ with mean $E X = \lambda $ such that the
probability mass function $p_x$ satisfies 
\begin{eqnarray}
x p_x^2 \ge (x+1) p_{x+1} p_{x-1} \ \ \ \mbox{for all} \ \ x\ge 1.
\label{ULCversion1}
\end{eqnarray}
Then the class of ultra log-concave random variables is $\mathbf{ULC} =
\cup_{\lambda>0} \mathbf{ULC} (\lambda )$. Note that (\ref{ULCversion1}) is
equivalent to log-concavity of $x \mapsto p_x / \pi_{\lambda,x} $ where $%
\pi_{\lambda ,x} = e^{-\lambda} \lambda^x / x!$ is the Poisson distribution
on $\mathbb{N}$, and hence ultra-log-concavity corresponds to $p$ being 
\textsl{log-concave relative to $\pi_{\lambda}$} (or $p \le_{{\small %
\mbox{lc}}} \pi_{\lambda}$) in the sense defined by \cite{MR799285}.
Equivalently, $p_x= h_x \pi_{\lambda, x}$ where $h$ is log-concave. When we
want to emphasize that the mass function $\{ p_x \}$ corresponds to $X$, we
also write $p_X (x) $ instead of $p_x$.

If we define the \textsl{relative score function} $\rho$ by 
\begin{eqnarray*}
\rho (x) & \equiv & \frac{(x+1)p_{x+1}}{\lambda p_x} -1 ,
\end{eqnarray*}
then $X \sim p \in \mathbf{U L C} (\lambda)$ if and only if $\rho $ is
decreasing (nonincreasing). Note that 
\begin{equation*}
\rho (x) = \frac{(x+1)\varphi (x)}{\lambda} -1 = \frac{(x+1)\varphi (x)}{%
\lambda} - \frac{(x+1)\pi_{\lambda, x+1}}{\lambda \pi_{\lambda , x}}.
\end{equation*}
% then log-concavity of $\{ p_x\}$ is equivalent
%to $\varphi$ being decreasing (non increasing).

\smallskip

Our main interest here is the preservation of log-concavity and
ultra-log-concavity under convolution. \smallskip

\begin{theorem}
\label{thm:UltraLogConvPreservByConv} %\noindent \textbf{Theorem 1.} 
(a) (\cite{Keilson-Gerber:71}) The class of log-concave distributions on $%
\mathbb{Z}$ is closed under convolution. If $U \sim p$ and $V\sim q$ are
independent and $p$ and $q$ are log-concave, then $U+V \sim p\star q $ is
log-concave.\newline
(b) (\cite{MR0494391}, \cite{MR1462561}) %Walkup,  Liggett,
The class of ultra-log-concave distributions on $\mathbb{Z}$ is closed under
convolution. More precisely, these classes are closed under convolution in
the following sense: if $U \in \mathbf{U L C} (\lambda)$ and $V \in \mathbf{%
U L C} (\mu)$ are independent, then $U+V \in \mathbf{U L C} (\lambda + \mu)$.
\end{theorem}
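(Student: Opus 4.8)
The plan is to establish both parts by reducing to the continuous preservation results already proven, specifically Pr\'ekopa's Theorem~\ref{LCpreservedByMargin} and Proposition~\ref{LCpreservedByConv}, via a standard ``smoothing'' or ``interpolation'' device that converts a discrete log-concave mass function into a continuous log-concave density whose marginal/convolution structure mirrors the discrete convolution. For part (a), suppose $U\sim p$ and $V\sim q$ with $p,q$ log-concave on $\mathbb{Z}$. First I would recall the elementary characterization: $p$ is discrete log-concave iff the piecewise-linear interpolant $\tilde\varphi$ of $-\log p_x$ (on the integers where $p_x>0$, which form an interval by log-concavity) is convex on $\mathbb{R}$; equivalently the ``tent-function'' extension $\tilde p = e^{-\tilde\varphi}$ is a log-concave function on $\mathbb{R}$. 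The key combinatorial fact is then that the discrete convolution $(p\star q)_n = \sum_k p_k q_{n-k}$ can be read off from the continuous convolution of suitable log-concave densities: one classical route is to note $\sum_k p_k q_{n-k}$ is, up to normalization, the value at $n$ of the convolution of the ``staircase'' densities built from $p$ and $q$, but cleaner is to use the direct determinantal argument —showing $(p\star q)_n^2 \ge (p\star q)_{n+1}(p\star q)_{n-1}$ by expanding both sides as double sums $\sum_{i,j}$ and pairing terms $(i,j)\leftrightarrow(j,i)$ (and $(i\pm1,j\mp1)$), using the $PF_2$/log-concavity inequalities $p_ip_j\ge p_{i-1}p_{j+1}$ for $i\le j$ and likewise for $q$. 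This is exactly the discrete analogue of the $TP_2$ composition formula (Karlin), and it is the combinatorial heart of the argument.

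For part (b), I would use the reduction already signposted in the excerpt: $U\in\mathbf{ULC}(\lambda)$ means $p_x = h_x\,\pi_{\lambda,x}$ with $h$ log-concave, i.e.\ $p\le_{\mathrm{lc}}\pi_\lambda$, and similarly $q_x = g_x\,\pi_{\mu,x}$ with $g$ log-concave. The target is $(p\star q)_n = (\text{log-concave})_n \cdot \pi_{\lambda+\mu,n}$. The main computation is the ``Vandermonde convolution'' identity for Poisson: $\pi_{\lambda,k}\pi_{\mu,n-k} = \pi_{\lambda+\mu,n}\cdot \binom{n}{k}\big(\tfrac{\lambda}{\lambda+\mu}\big)^k\big(\tfrac{\mu}{\lambda+\mu}\big)^{n-k}$, so that
\begin{equation*}
(p\star q)_n = \pi_{\lambda+\mu,n}\sum_{k=0}^n \binom{n}{k}\Big(\tfrac{\lambda}{\lambda+\mu}\Big)^k\Big(\tfrac{\mu}{\lambda+\mu}\Big)^{n-k} h_k g_{n-k}.
\end{equation*}
It then remains to show $r_n := \sum_k \binom{n}{k}\alpha^k\beta^{n-k}h_k g_{n-k}$ (with $\alpha=\lambda/(\lambda+\mu)$, $\beta=\mu/(\lambda+\mu)$) is log-concave in $n$. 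I would verify $r_n^2\ge r_{n+1}r_{n-1}$ again by the double-sum / term-pairing method: expand, and for each pair of index-pairs summing to $(n-1,n+1)$ versus $(n,n)$ compare using log-concavity of $h$, log-concavity of $g$, and the elementary log-concavity of the binomial weights $n\mapsto \binom{n}{k}\alpha^k\beta^{n-k}$ in the appropriate two-variable sense. Part~(a) can in fact be quoted as the special case $\lambda,\mu\to\infty$ (or absorbed) once this machinery is in place, or proven independently as above.

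The step I expect to be the main obstacle is the bookkeeping in the term-pairing inequality for $r_n^2 \ge r_{n+1}r_{n-1}$: one must carefully match the quadruple sums, handle boundary indices where $h$ or $g$ vanishes (the supports being intervals), and chain together three separate log-concavity estimates (for $h$, for $g$, and for the multinomial weights) with the correct inequality directions. An alternative that sidesteps much of this is to invoke a continuous lift: replace $h,g$ by log-concave step-function densities and $\pi_\lambda,\pi_\mu$ by the corresponding gamma-type densities, apply Proposition~\ref{LCpreservedByConv} to get a log-concave density on $\mathbb{R}$, and then show its restriction to $\mathbb{Z}$ recovers $p\star q$ up to the Poisson normalization; but making the lift exact (rather than merely approximate) requires the same Vandermonde identity, so the combinatorial inequality is essentially unavoidable. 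I would therefore present the direct determinantal proof as the cleanest route, citing \cite{MR2327839} for the relative-score formulation as corroboration.
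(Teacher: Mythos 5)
Your part (a) is fine, though it takes a different road from the paper: you argue directly that $(p\star q)_n^2\ge (p\star q)_{n+1}(p\star q)_{n-1}$ by a $PF_2$/term-pairing (Cauchy--Binet) computation, whereas the paper deduces both (a) and (b) from the score projection formula of Lemma~\ref{ConvolutionAndScoresDiscreteCase} together with Efron's monotonicity theorem in its discrete form (\cite{MR0171335}), following \cite{MR2327839}. Your reduction in (b) is also correct as far as it goes: the Poisson--Vandermonde identity does show that $U+V\in\mathbf{ULC}(\lambda+\mu)$ is equivalent to log-concavity in $n$ of
\begin{equation*}
r_n=\sum_k \binom{n}{k}\alpha^k\beta^{n-k}h_k g_{n-k},
\end{equation*}
and since $\alpha^k\beta^{n-k}=\beta^n(\alpha/\beta)^k$ contributes only a log-linear factor, this is exactly the statement that the binomial convolution $n\mapsto\sum_k\binom{n}{k}h_kg_{n-k}$ of two log-concave sequences is log-concave.

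But that last statement \emph{is} the theorem of \cite{MR0494391} and \cite{MR1462561}; your proposal leaves it at ``verify $r_n^2\ge r_{n+1}r_{n-1}$ by the double-sum / term-pairing method, chaining log-concavity of $h$, of $g$, and of the binomial weights,'' and this is a genuine gap, not mere bookkeeping. The plan of combining three termwise inequalities with consistent directions cannot work as stated: comparing the weights appearing in $r_n^2$ and in $r_{n+1}r_{n-1}$ one finds
\begin{equation*}
\frac{\binom{n+1}{i}\binom{n-1}{j}}{\binom{n}{i}\binom{n}{j}}
=\frac{(n+1)(n-j)}{n(n+1-i)},
\end{equation*}
which is $\le 1$ precisely when $(n+1)j\ge ni$ and $>1$ otherwise, so on part of the index range the binomial factor moves in the wrong direction and must be compensated by the $h$ and $g$ factors; the needed cancellation across paired terms is exactly where Liggett's direct calculation (and Walkup's argument) does real work, and Gurvits' and Johnson's later proofs exist precisely because no routine pairing is known. (Your fallback via a continuous lift and Proposition~\ref{LCpreservedByConv} runs into the same identity, as you note, so it does not rescue the step.) The paper's route avoids this difficulty entirely: by Lemma~\ref{ConvolutionAndScoresDiscreteCase}(b), $\rho_{U+V}(z)=E\{\alpha\rho_U(U)+(1-\alpha)\rho_V(V)\mid U+V=z\}$ with $\alpha=\lambda/(\lambda+\mu)$, the integrand is coordinatewise nonincreasing because $U,V$ are ultra-log-concave, and Efron's theorem (valid for log-concave integer-valued summands, which $U,V$ are) gives monotonicity of $\rho_{U+V}$, i.e.\ $U+V\in\mathbf{ULC}(\lambda+\mu)$; part (a) is the same argument with $\Phi(x,y)=\varphi_U(x)$. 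To repair your write-up you would either have to supply Liggett's computation (or Gurvits' mixed-volume argument) for the binomial convolution inequality, or switch to the score/Efron argument.
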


\smallskip

Actually, \cite{Keilson-Gerber:71} proved more: analogously to \cite%
{Ibragimov:56} %Ibragimov (1956)
they showed that $p$ is strongly unimodal (i.e. $X+Y \sim p \star q$ with $%
X,Y$ independent is unimodal for every unimodal $q$ on $\mathbb{Z}$) if and
only if $X \sim p$ is log-concave. Liggett's proof of (b) proceeds by direct
calculation; see also \cite{MR0494391}. % Walkup 1976 
For recent alternative proofs of this property of ultra-log-concave
distributions, see \cite{MR2482101} and \cite{MR2793607}. A relatively
simple proof is given by \cite{MR2327839} using results from \cite{MR2236061}
and \cite{MR0171335}, and that is the proof we will summarize here. See \cite%
{MR2929095} for an application of ultra log-concavity and Theorem~\ref%
{thm:UltraLogConvPreservByConv} to finding optimal constants in Khinchine
inequalities. \smallskip

Before proving Theorem~\ref{thm:UltraLogConvPreservByConv} we need the
following lemma giving the score and the relative score of a sum of
independent integer-valued random variables.

\begin{lemma}
\label{ConvolutionAndScoresDiscreteCase} If $X, Y$ are independent
non-negative integer-valued random variables with mass functions $p = p_X$
and $q=p_Y$ then:\newline
(a) $\varphi_{X+Y} (z) = E \{ \varphi_X (X) | X+Y = z \}$. \newline
(b) If, moreover, $X$ and $Y$ have means $\mu$ and $\nu$ respectively, then
with $\alpha = \mu / (\mu + \nu)$, 
\begin{eqnarray*}
\rho_{X+Y} (z) = E \{ \alpha \rho_X (X) + (1-\alpha) \rho_Y (Y) \big | X + Y
= z \} .
\end{eqnarray*}
\end{lemma}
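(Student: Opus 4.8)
The plan is to prove both parts by a direct computation using the convolution structure of the mass functions, working with the elementary identity $p_{X+Y}(z) = \sum_{x} p_X(x)\, p_Y(z-x)$, where the sum runs over $0 \le x \le z$. For part (a), I would start from the definition $\varphi_{X+Y}(z) = p_{X+Y}(z+1)/p_{X+Y}(z)$ and rewrite the numerator $p_{X+Y}(z+1) = \sum_x p_X(x)\, p_Y(z+1-x)$. The key trick is to split this sum so that each term is matched against the corresponding term of the denominator: writing $p_Y(z+1-x)$ is not obviously helpful, so instead I would reindex one of the two sums. Specifically, in $\sum_x p_X(x) p_Y(z+1-x)$ shift $x \mapsto x+1$ in the part coming from increasing the $X$-coordinate, so that $p_X(x+1) = \varphi_X(x) p_X(x)$ appears; after combining, $p_{X+Y}(z+1)$ becomes $\sum_x \varphi_X(x)\, p_X(x)\, p_Y(z-x)$ (one has to check the boundary terms at $x=0$ and $x=z$ cancel correctly, using $p_Y(-1)=0$ etc.). Dividing by $p_{X+Y}(z) = \sum_x p_X(x) p_Y(z-x)$ then gives exactly the conditional expectation $E\{\varphi_X(X) \mid X+Y=z\}$, since $P(X=x \mid X+Y=z) = p_X(x) p_Y(z-x)/p_{X+Y}(z)$.

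For part (b), I would express $\rho_X$ in terms of $\varphi_X$ via the displayed identity $\rho_X(x) = (x+1)\varphi_X(x)/\mu - 1$, and similarly $\rho_Y(y) = (y+1)\varphi_Y(y)/\nu - 1$, and likewise $\rho_{X+Y}(z) = (z+1)\varphi_{X+Y}(z)/(\mu+\nu) - 1$. Substituting part (a) into the last expression, I need to show
\begin{equation*}
\frac{(z+1)}{\mu+\nu}\, E\{\varphi_X(X)\mid X+Y=z\} - 1 = E\Big\{ \alpha \rho_X(X) + (1-\alpha)\rho_Y(Y) \,\Big|\, X+Y=z\Big\}.
\end{equation*}
The right side, using $\alpha = \mu/(\mu+\nu)$ and the definitions of $\rho_X,\rho_Y$, equals $E\{ \tfrac{(X+1)}{\mu+\nu}\varphi_X(X) + \tfrac{(Y+1)}{\mu+\nu}\varphi_Y(Y) \mid X+Y=z\} - 1$ (the $\alpha$ and $1-\alpha$ combine with the $1/\mu$ and $1/\nu$ to give a common denominator $\mu+\nu$, and the constants $-\alpha, -(1-\alpha)$ sum to $-1$). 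So it suffices to show $E\{(X+1)\varphi_X(X) + (Y+1)\varphi_Y(Y)\mid X+Y=z\} = (z+1)\,E\{\varphi_X(X)\mid X+Y=z\}$. By symmetry of the convolution there is a companion identity $\varphi_{X+Y}(z) = E\{\varphi_Y(Y)\mid X+Y=z\}$ (same argument as (a) with the roles of $X$ and $Y$ swapped), so $E\{\varphi_Y(Y)\mid X+Y=z\} = E\{\varphi_X(X)\mid X+Y=z\}$; combined with $X+Y=z$ giving $(X+1)+(Y+1) = z+2$... here one must be slightly careful, and I would instead run the reindexing argument from (a) one notch further, extracting the weight $(x+1)$ directly, to land exactly on $(z+1)\varphi_{X+Y}(z)$.

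I expect the main obstacle to be the bookkeeping in the reindexing step for part (b): getting the weights $(X+1)$ and $(Y+1)$ to emerge with the correct coefficients and verifying that no boundary terms are lost when shifting summation indices near $x=0$ and $x=z$. The cleanest route is probably to prove a single lemma of the form $p_{X+Y}(z+1)\cdot(z+1) = \sum_{x} (x+1)\varphi_X(x)\,p_X(x)\,p_Y(z-x) + \sum_x (z-x+1)\varphi_Y(z-x)\, p_X(x)\, p_Y(z-x)$ — obtained by writing $(z+1)p_Y(z+1-x) = \big((x) + (z+1-x)\big)p_Y(z+1-x)$ isn't quite it either; rather, differentiate the two contributions to $p_{X+Y}(z+1)$ (incrementing $X$ vs. incrementing $Y$) and attach the combinatorial factors $(x+1)$ and $(z-x+1)$ respectively via the Poisson-normalization identity. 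Once that identity is in hand, dividing by $p_{X+Y}(z)$ and reorganizing yields (b) directly. Everything else is routine algebra with the definitions of $\varphi$ and $\rho$.
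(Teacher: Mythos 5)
Your overall route is the same as the paper's: part (a) by reindexing the convolution sum so that $\varphi_X$ appears against the conditional weights $p_X(x)p_Y(z-x)/p_{X+Y}(z)$, and part (b) by reducing, via the algebra with $\alpha=\mu/(\mu+\nu)$, to the weighted identity you call the ``single lemma''. The gap is that you stop just short of proving that lemma, and in fact you reject the step that does it. The split you dismissed is exactly the proof: write
$(z+1)\,p_{X+Y}(z+1)=\sum_x\bigl[x+(z+1-x)\bigr]p_X(x)\,p_Y(z+1-x)$;
in the first sum the factor $x$ kills the $x=0$ term, so the shift $x\mapsto x+1$ from part (a) gives $\sum_x (x+1)\varphi_X(x)\,p_X(x)\,p_Y(z-x)$, while in the second sum you simply write $p_Y(z+1-x)=\varphi_Y(z-x)\,p_Y(z-x)$. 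Dividing by $p_{X+Y}(z)$ is precisely your lemma, and your earlier bookkeeping then yields (b). This is the paper's computation, phrased there by multiplying and dividing by $p(x-1)$ and $q(z-x)$ with the weights $x/\mu$ and $(z-x+1)/\nu$; no ``Poisson-normalization identity'' is needed, and the symmetry detour you started (and rightly abandoned) cannot work, since a conditional expectation of a product such as $(X+1)\varphi_X(X)$ does not factor.

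One further point: the boundary worry you raise parenthetically in (a) is real, and the terms do not in fact cancel. For non-negative variables the contribution $p_X(0)\,p_Y(z+1)$ to $p_{X+Y}(z+1)$ is not reproduced by $\sum_x\varphi_X(x)p_X(x)p_Y(z-x)$: with $X,Y$ i.i.d.\ Poisson$(1)$ one has $\varphi_{X+Y}(z)=2/(z+1)$ while $E\{\varphi_X(X)\mid X+Y=z\}=(2-2^{-z})/(z+1)$, so the identity in (a) holds only up to the correction $p_X(0)p_Y(z+1)/p_{X+Y}(z)$ (or verbatim for variables charging all of $\mathbb{Z}$); the paper's displayed manipulation makes the same silent move at $x=0$ when it multiplies and divides by $p(x-1)$. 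By contrast, in (b) the weights $x$ and $z+1-x$ vanish exactly at the problematic endpoints, which is why the relative-score projection is clean — a good reason to organize your writeup around the ``single lemma'' and to flag the support caveat in (a).
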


\begin{proof}
For (a), note that with $F_z \equiv p_{X+Y}(z)$ we have 
\begin{eqnarray*}
\varphi_{X+Y} (z) & = & \frac{p_{X+Y} (z+1)}{p_{X+Y} (z)} = \sum_x \frac{p
(x) q (z+1-x)}{F_z} \\
& = & \sum_x \frac{p(x)}{p(x-1)} \cdot \frac{p(x-1) q(z+1-x)}{F_z} \\
& = & \sum_x \frac{p(x+1)}{p(x)} \cdot \frac{p(x) q(z-x)}{F_z} .
\end{eqnarray*}
To prove (b) we follow \cite{MR2236061}, page 471: using the same notation
as in (a), 
\begin{eqnarray*}
\rho_{X+Y} (z) & = & \frac{(z+1) p_{X+Y} (z+1)}{(\mu+\nu) p_{X+Y} (z)} -1 \\
& = & \sum_x \frac{(z+1)p(x) q(z+1-x)}{(\mu+\nu) F_z} -1 \\
& = & \sum_x \left \{ \frac{x p(x) q(z+1-x)}{(\mu+\nu) F_z} \ + \ \frac{%
(z-x+1)p(x) q(z+1-x)}{(\mu+\nu) F_z} \right \} -1 \\
& = & \alpha \left \{ \sum_x \frac{x p_X (x)}{\mu p(x-1)} \cdot \frac{p(x-1)
q(z-x+1)}{F_z} -1 \right \} \\
&& \ \ \ + \ (1-\alpha) \left \{ \sum_x \frac{z-x+1}{\nu} \frac{q(z-x+1)}{%
q(z-x)} \cdot \frac{ p(x) q(z-x)}{F_z} - 1 \right \} \\
& = & \sum_x \frac{p(x) q(z-x)}{F_z} \left \{ \alpha \rho_X (x) + (1-\alpha)
\rho_Y (z-x) \right \} .
\end{eqnarray*}
\end{proof}

\begin{proof}
~Theorem~\ref{thm:UltraLogConvPreservByConv}: (b) This follows from (b) of
Lemma~\ref{ConvolutionAndScoresDiscreteCase} and Theorem 1 of \cite%
{MR0171335}, upon noting Efron's remark 1, page 278, concerning the discrete
case of his theorem: for independent log-concave random variables $X$ and $Y$
and a measurable function $\Phi$ monotone (decreasing here) in each
argument, $E \{ \Phi (X,Y) | X+Y = z \}$ is a monotone decreasing function
of $z$: note that log-concavity of $X$ and $Y$ implies that 
\begin{equation*}
\Phi (x,y) = \frac{\mu}{\mu+ \nu} \rho_X (x) + \frac{\nu}{\mu + \nu} \rho_Y
(y)
\end{equation*}
is a monotone decreasing function of $x$ and $y$ (separately) by since the
relative scores $\rho_X$ and $\rho_Y$ are decreasing. Thus $\rho_{X+Y}$ is a
decreasing function of $z$, and hence $X+Y \in \mathbf{ULC} (\mu + \nu)$.

(a) Much as in part (b), this follows from (a) of Lemma~\ref%
{ConvolutionAndScoresDiscreteCase} and Theorem 1 of \cite{MR0171335}, upon
replacing the relative scores $\rho_X$ and $\rho_Y$ by scores $\varphi_X$
and $\varphi_Y$ and by taking $\Phi (x,y) = \varphi_X (x)$.
\end{proof}

For interesting results concerning the entropy of discrete random variables,
Bernoulli sums, log-concavity, and ultra-log-concavity, see \cite{MR3030616}%
, %  JohnsonKontoyiannisMadiman:2013 
\cite{MR1093412}, and \cite{MR2327839}. For recent results concerning
nonparametric estimation of a discrete log-concave distribution, see 
\cite{MR3091658} %Balabdaoui, Jankowski, Rufibach, and Pavlides
 and \cite{MR3174308}. %Balabdaoui (2013)  %\medskip
%\noindent \textbf{Fact 5.} 
It follows from \cite{MR1093412} that the hypergeometric distribution
(sampling without replacement count of \textquotedblleft
successes\textquotedblright ) is equal in distribution to a Bernoulli sum;
hence the hypergeometric distribution is ultra-log-concave. \medskip

%----------------------------------------------------------
\section{Regularity and approximations of log-concave functions}
\label{section_reg_and_ap}

%----------------------------------------------------------

\subsection{Regularity\label{section_reg}}

The regularity of a log-concave function $f=\exp \left( -\varphi \right) $
depends on the regularity of its convex potential $\varphi $. Consequently,
log-concave functions inherit the special regularity properties of convex
functions.

Any log-concave function is nonnegative. When the function $f$ is a
log-concave \textit{density} (with respect to the Lebesgue measure), which
means that $f$ integrates to $1$, then it is automatically bounded. More
precisely, it has exponentially decreasing tails and hence, it has finite $%
\Psi _{1}$ Orlicz norms; for example, see \cite{MR733944} and \cite%
{MR1849347}. The following lemma gives a pointwise estimate of the density.

\begin{theorem}[\protect\cite{MR2645484}, Lemma 1]
\label{theorem_exp_tails} Let $f$ be a log-concave density on $\mathbb{R}%
^{d} $. Then there exist $a_{f}=a>0$ and $b_{f}=b\in \mathbb{R}$ such that $%
f\left( x\right) \leq e^{-a\left\Vert x\right\Vert +b}$ for all $x\in 
\mathbb{R}^{d}$.
\end{theorem}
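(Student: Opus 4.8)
The plan is to exploit the fact that a log-concave density $f = e^{-\varphi}$ has a convex potential $\varphi$, and that convexity forces $\varphi$ to grow at least linearly away from any point where it is finite, unless $\varphi$ is bounded above on a set of infinite measure --- which is impossible for a density. First I would record that since $f$ integrates to $1$ and is not identically zero, there is a point $x_0$ and a value $t_0 \in \mathbb{R}$ with $f(x_0) = e^{-t_0} > 0$, i.e. $\varphi(x_0) = t_0 < \infty$; by translating we may take $x_0 = 0$. Next, the key structural input: the superlevel set $\{x : f(x) \ge e^{-s}\} = \{\varphi \le s\}$ is a closed convex set $K_s$ for each $s$, and by monotone convergence $\lambda(K_s) \to \int f^{0+} \, d\lambda$ is controlled --- more to the point, $\int_{\mathbb{R}^d} f \, d\lambda \ge \int_{K_s \setminus K_{s-1}} f \, d\lambda \ge e^{-s}\lambda(K_s \setminus K_{s-1})$, so $\lambda(K_s)$ cannot grow too fast; in particular each $K_s$ has finite Lebesgue measure and (being convex) is therefore bounded.

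The heart of the argument is then a geometric lemma about convex bodies: if $0 \in K_s$ for all large $s$ and each $K_s$ is bounded, then there is a linear rate at which $K_s$ can expand. Concretely, pick $s_1 > t_0$ large enough that $K_{s_1}$ has nonempty interior (possible since $\lambda(\bigcup_s K_s) = 1 > 0$), so $K_{s_1}$ contains a ball $B(y_0, r)$ for some $r > 0$. For any $x \notin K_{s_1}$, consider the point where the segment from a fixed interior point to $x$ crosses $\partial K_{s_1}$; convexity of $\varphi$ along this ray, together with the slope being at least $(s_1 - t_0)/(\text{diam }K_{s_1})$ at the crossing, gives $\varphi(x) \ge s_1 + c(\|x\| - R)$ for explicit constants $c > 0$ and $R$ depending only on $r$, $y_0$, $s_1$, $t_0$. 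Rearranging yields $\varphi(x) \ge a\|x\| - (b)$ for suitable $a > 0$, $b \in \mathbb{R}$, hence $f(x) \le e^{-a\|x\| + b}$; for $x \in K_{s_1}$ the bound holds trivially after enlarging $b$ since $K_{s_1}$ is bounded. Alternatively, one can phrase the same estimate via the support function: boundedness of the superlevel sets means $\varphi^*$ (the convex conjugate) is finite on a neighbourhood of $0$, and Legendre duality converts "finite near $0$" into "superlinear", which in the weakest form is the desired linear lower bound.

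The main obstacle is making the geometric step fully rigorous without circularity: one must first establish that \emph{some} superlevel set $K_s$ is both bounded and has nonempty interior, using only that $f$ is a probability density. Nonempty interior follows because $\lambda(\bigcup_s \{f > e^{-s}\}) = \lambda(\{f > 0\}) = 1$, so some $\{f > e^{-s}\}$ has positive measure and its convex hull (contained in a slightly larger $K$) then has nonempty interior; boundedness follows from the integrability estimate $e^{-s}\lambda(K_s) \le 1$ noted above. Once that single good superlevel set is in hand, the ray/convexity computation producing the constants $a$ and $b$ is routine, and I would not belabour it. A cleaner packaging is to cite Prékopa--Rinott (Theorem~\ref{PrekopaRinott}) to pass freely between measure and density, then invoke the standard fact that a bounded convex body with $0$ in its interior has a finite gauge functional, which furnishes the linear growth directly.
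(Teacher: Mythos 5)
Your proposal is correct, and it is essentially the standard argument; note that the paper itself offers no proof of this statement --- it is quoted directly from \cite{MR2645484}, Lemma 1 --- so your write-up supplies the self-contained argument that the paper delegates to the literature. A few small points to tighten. First, $\lambda(\{f>0\})$ need not equal $1$ (consider the uniform density on $[0,1/2]$, or a Gaussian, where it is $\infty$); all you need, and all that is true, is $\lambda(\{f>0\})>0$, which already forces some superlevel set $\{f>e^{-s}\}$ to have positive measure, and since that set is itself convex (no convex hull needed) it has nonempty interior. Second, ``convex with finite Lebesgue measure'' does not by itself imply bounded (a ray is convex with measure zero); boundedness requires the nonempty interior, via the observation that an unbounded convex set containing a ball and a ray contains cones of arbitrarily large volume --- so state the boundedness only for the one level set $K_{s_1}$ you actually use, where both ingredients are present. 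Third, in the ray step the base point must be the point $x_0$ with $\varphi(x_0)=t_0<s_1$, not an arbitrary interior point of $K_{s_1}$ (at a generic interior point you only know $\varphi\le s_1$, and the slope gap degenerates); with base $x_0$, taking $z=\lambda x$ with $\|z\|=R+1$ outside $K_{s_1}\subset B(0,R)$ and using $\varphi(z)\le(1-\lambda)\varphi(x_0)+\lambda\varphi(x)$ gives $\varphi(x)\ge t_0+(s_1-t_0)\|x\|/(R+1)$ uniformly in the direction, which is the required envelope. Finally, the ``trivial'' bound on the bounded region is not quite free: you need $\varphi$ bounded below on a ball, which follows because a proper convex function admits an affine minorant --- worth one sentence. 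The convex-conjugate packaging you sketch is fine as a reformulation, but establishing that $\varphi^{*}$ is finite near $0$ is equivalent to the linear growth you are proving, so it does not shortcut the ray argument.
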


Similarly, strong log-concavity implies a finite $\Psi _{2}$ Orlicz norm
(see \cite{MR1849347} Theorem 2.15, page 36, \cite{MR1964483}, Theorem 9.9,
page 280), \cite{MR1742893}, and \cite{MR1682772}.

For other pointwise bounds on log-concave densities themselves, see \cite%
{MR773927}, %Devroye 1984
\cite{MR2546798} and \cite{MR2309621}.

As noticed in \cite{MR2645484}, Theorem \ref{theorem_exp_tails} implies that
if a random vector $X$ has density $f$, then the moment generating function
of $X$ is finite in an open neighborhood of the origin. Bounds can also be
obtained for the supremum of a log-concave density as well as for its values
on some special points in the case where $d=1$.

\begin{proposition}
\label{prop_pointwise_bounds}Let $X$ be a log-concave random variable, with
density $f$ on $\mathbb{R}$ and median $m$. Then 
\begin{eqnarray}
\frac{1}{12%
%TCIMACRO{\TeXButton{var}{\var}}%
%BeginExpansion
\var%
%EndExpansion
\left( X\right) } &\leq &f\left( m\right) ^{2}\leq \frac{1}{2%
%TCIMACRO{\TeXButton{var}{\var}}%
%BeginExpansion
\var%
%EndExpansion
\left( X\right) }\text{ ,}  \label{line1_prop_pointwise} \\
\frac{1}{12%
%TCIMACRO{\TeXButton{var}{\var}}%
%BeginExpansion
\var%
%EndExpansion
\left( X\right) } &\leq &\sup_{x\in \mathbb{R}}f\left( x\right) ^{2}\leq 
\frac{1}{%
%TCIMACRO{\TeXButton{var}{\var}}%
%BeginExpansion
\var%
%EndExpansion
\left( X\right) }\text{ ,}  \label{line2_prop_pointwise} \\
\frac{1}{3e^{2}%
%TCIMACRO{\TeXButton{var}{\var}}%
%BeginExpansion
\var%
%EndExpansion
\left( X\right) } &\leq &f\left( \mathbb{E}\left[ X\right] \right) ^{2}\leq 
\frac{1}{%
%TCIMACRO{\TeXButton{var}{\var}}%
%BeginExpansion
\var%
%EndExpansion
\left( X\right) }\text{ .}  \label{line3_prop_pointwise}
\end{eqnarray}
\end{proposition}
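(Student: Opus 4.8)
The plan is to pass to the quantile function and apply Bobkov's characterization, Proposition~\ref{prop_bobkov}. Write $Q=F^{-1}$ and $I(p)=f\big(Q(p)\big)$, so that $Q'(p)=1/I(p)$, the function $I$ is concave on $(0,1)$ by Proposition~\ref{prop_bobkov}(c), $I(1/2)=f(m)$, and $\sup_{(0,1)}I=\sup_{\mathbb{R}}f$. All six inequalities are invariant under affine maps $X\mapsto aX+b$. Each will be read off the two equivalent expressions
\[
\var(X)=\int_0^1\!\!\int_0^1\frac{\min(p,q)-pq}{I(p)\,I(q)}\,dp\,dq
       =2\int_0^1\!\!\int_p^1\frac{p\,(1-q)}{I(p)\,I(q)}\,dq\,dp ,
\]
which come from $\var(X)=\int\!\!\int\big(F(x\wedge y)-F(x)F(y)\big)\,dx\,dy$ after the substitution $x=Q(p),\,y=Q(q)$, together with the elementary facts $\int_0^1\!\!\int_0^1(\min(p,q)-pq)=1/12$ and $\int_0^1(\min(p,q)-pq)\,dp=q(1-q)/2$.

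For the lower bounds, first observe $I\le M:=\sup f$, hence $1/I\ge 1/M$, so the first expression gives $\var(X)\ge M^{-2}/12$, i.e.\ $\sup_x f(x)^2\ge 1/\big(12\var(X)\big)$ (equivalently, this is the bathtub principle for any density bounded by $M$). For $f(m)^2\ge 1/\big(12\var(X)\big)$, put $\psi=1/I$, convex as the composition of $t\mapsto 1/t$ (convex, decreasing) with the concave $I$, and with $\psi(1/2)=1/f(m)$; writing $\psi=1/f(m)+\eta$ with $\eta$ convex and $\eta(1/2)=0$ and expanding the first expression, the cross term equals $f(m)^{-1}\int_0^1\eta(q)\,q(1-q)\,dq$, which is $\ge 0$ because $q(1-q)$ is symmetric about $1/2$ and $\eta(q)+\eta(1-q)\ge 2\eta(1/2)=0$ by midpoint convexity, while the remaining quadratic term $\int\!\!\int(\min(p,q)-pq)\,\eta(p)\eta(q)\,dp\,dq$ is $\ge 0$ since $(s,t)\mapsto\min(s,t)-st$ is the (positive semidefinite) covariance kernel of the uniform law on $(0,1)$. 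Thus $\var(X)\ge f(m)^{-2}/12$.

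For the upper bounds we exploit the chords of the concave $I$. With $c=f(m)=I(1/2)$, concavity together with $I\ge 0$ gives $I(r)\ge 2c\min(r,1-r)$; since $\var(X)\le\int_0^1\big(Q(p)-m\big)^2\,dp$ (the variance is the least $L^2$ deviation about a constant) and $|Q(p)-m|=\big|\int_{1/2}^{p}dr/I(r)\big|$, the substitution $t=2\min(p,1-p)$ turns this into $\var(X)\le(2c)^{-2}\int_0^1(\log t)^2\,dt=c^{-2}/2$, i.e.\ $f(m)^2\le 1/\big(2\var(X)\big)$. Similarly, if $M=\sup f$ is attained (or approached) at $p_\ast\in[0,1]$, concavity yields $I(r)\ge Mr/p_\ast$ on $[0,p_\ast]$ and $I(r)\ge M(1-r)/(1-p_\ast)$ on $[p_\ast,1]$; inserting these bounds into the second expression and integrating over the three regions $\{p\le q\le p_\ast\}$, $\{p\le p_\ast\le q\}$, $\{p_\ast\le p\le q\}$ produces $\var(X)\le M^{-2}\big(1-2p_\ast(1-p_\ast)\big)\le M^{-2}$, i.e.\ $\sup_x f(x)^2\le 1/\var(X)$; and since $f(\mathbb{E}[X])\le M$ the bound $f(\mathbb{E}[X])^2\le 1/\var(X)$ follows at once.

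It remains to prove $f(\mathbb{E}[X])^2\ge 1/\big(3e^2\var(X)\big)$. Here I use the classical fact that the mean of a log-concave random variable lies between its $1/e$- and $(1-1/e)$-quantiles, i.e.\ $p_0:=F(\mathbb{E}[X])\in[1/e,\,1-1/e]$; then the chord of $I$ through $\big(1/2,I(1/2)\big)$ gives $f(\mathbb{E}[X])=I(p_0)\ge 2\min(p_0,1-p_0)\,I(1/2)\ge (2/e)\,f(m)$, and combining with $f(m)^2\ge 1/\big(12\var(X)\big)$ completes the proof. The steps that require care are the ones pinning down the exact constants: the evaluation $\int_0^1(\log t)^2\,dt=2$ forces the constant $1/2$, with the Laplace density extremal; the polynomial identity $p_\ast^3(2-p_\ast)+(1-p_\ast)^3(1+p_\ast)+2p_\ast^2(1-p_\ast)^2=1-2p_\ast(1-p_\ast)$ forces the constant $1$, with the one-sided exponential extremal; and $1/12$ is sharp for the uniform density. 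One should also check that the quantile/variance identity and the chord estimates are legitimate under the stated (merely log-concave) regularity, which follows from the reductions behind Proposition~\ref{prop_bobkov} together with a routine approximation argument.
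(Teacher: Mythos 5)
Your argument is correct, and it is worth pointing out that the paper itself does not prove Proposition~\ref{prop_pointwise_bounds}: it only refers to \cite{BobkovLedoux:14}, Proposition B.2. What you have written is therefore a self-contained proof built on material already in the paper, namely Proposition~\ref{prop_bobkov}(c) (concavity of $I=f\circ F^{-1}$), combined with the Hoeffding-type identity $\var(X)=\int\!\!\int\left(F(x\wedge y)-F(x)F(y)\right)dx\,dy$ rewritten, via the quantile substitution, as a quadratic form in $1/I$ against the positive semidefinite kernel $\min(p,q)-pq$. Your bookkeeping of the constants is exact: $\int\!\!\int(\min(p,q)-pq)\,dp\,dq=1/12$ yields both lower bounds (for $f(m)$ via convexity of $1/I$, the cross term being nonnegative by the symmetry of $q(1-q)$ and the quadratic term by positive semidefiniteness of the kernel), while the chord bounds $I(r)\ge 2f(m)\min(r,1-r)$ and $I(r)\ge M\min\left(r/p_\ast,(1-r)/(1-p_\ast)\right)$ give the upper bounds; the evaluations $\int_0^1(\log t)^2\,dt=2$ and your polynomial identity summing to $1-2p_\ast(1-p_\ast)\le 1$ are correct, and the uniform, Laplace and one-sided exponential densities do exhibit sharpness of $1/12$, $1/2$ and $1$ respectively. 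Two points should be made explicit in a final write-up: (i) the fact $F(\mathbb{E}X)\in[1/e,1-1/e]$ for log-concave $X$, which carries the entire constant $1/(3e^2)$, is indeed classical (a one-dimensional Gr\"unbaum-type bound; see e.g.\ Lemma 5.4 of \cite{MR2309621}) but must be cited or given a short proof; (ii) the term-by-term expansion of the quadratic form around $1/f(m)$, and the chord estimate when the supremum of $I$ is only approached at an endpoint $p_\ast\in\{0,1\}$, need the brief integrability/limit remarks you allude to (the negative part of $1/I-1/f(m)$ is bounded by $1/f(m)$, so no $\infty-\infty$ arises, and the endpoint case follows by a limiting chord). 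Compared with the paper's bare citation, your route costs a page but shows exactly where each constant comes from and which densities are extremal.
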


Proposition \ref{prop_pointwise_bounds} can be found in \cite%
{BobkovLedoux:14}, Proposition B.2. See references therein for historical
remarks concerning these inequalities. Proposition \ref%
{prop_pointwise_bounds} can also be seen as providing bounds for the
variance of a log-concave variable.  
See \cite{Kim-Samworth:14}, section 3.2, for some further results of this type.

Notice that combining (\ref{line1_prop_pointwise}) and (\ref%
{line2_prop_pointwise}) we obtain the inequality $\sup_{x\in \mathbb{R}%
}f\left( x\right) \leq 2\sqrt{3}f\left( m\right) $. In fact, the concavity
of the function $I$ defined in Proposition \ref{prop_bobkov} allows to prove
the stronger inequality $\sup_{x\in \mathbb{R}}f\left( x\right) \leq
2f\left( m\right) $. Indeed, with the notations of Proposition \ref%
{prop_bobkov}, we have $I\left( 1/2\right) =f\left( m\right) $ and for any $%
x\in \left( a,b\right) $, there exists $t\in \left( 0,1\right) $ such that $%
x=F^{-1}\left( t\right) $. 
Hence,%
\begin{eqnarray*}
\lefteqn{2f\left( m\right) =2I\left( \frac{1}{2}\right) 
        = 2I\left( \frac{t}{2}+\frac{1-t}{2}\right)} \\
& \geq & 2\left( \frac{1}{2}I\left( t\right) 
                  +\frac{1}{2}I\left( 1-t\right) \right) \geq I\left( t\right) =f\left( x\right) \text{ .}
\end{eqnarray*}

A classical result on continuity of convex functions is that any real-valued
convex function $\varphi $ defined on an open set $U\subset \mathbb{R}^{d}$
is locally Lipschitz and in particular, $\varphi $ is continuous on $U$. For
more on continuity of convex functions see Section 3.5 of \cite{MR2178902}.
Of course, any continuity of $\varphi $ (local or global) corresponds to the
same continuity of $f$.

For an expos\'{e} on differentiability of convex functions, see 
\cite{MR2178902} (in particular sections 3.8 and 3.11; see also \cite{MR1676726}
section 7). A deep result of \cite{Alexandrov:39} is the following (we
reproduce here Theorem 3.11.2 of \cite{MR2178902}).

\begin{theorem}[\protect\cite{Alexandrov:39}]
\label{theorem_Alexandrov}Every convex function $\varphi $ on $\mathbb{R}%
^{d} $ is twice differentiable almost everywhere in the following sense: $f$
is twice differentiable at a, with Alexandrov Hessian $\nabla ^{2}f\left(
a\right) $ in $\Sym^{+}\left( d,\mathbb{R}\right) $ (the space of real
symmetric $d\times d$ matrices), if $\nabla f\left( a\right) $ exists, and
if for every $\varepsilon >0$ there exists $\delta >0$ such that%
\begin{equation*}
\left\Vert x-a\right\Vert <\delta \text{ \ \ implies \ \ }\sup_{y\in
\partial f\left( x\right) }\left\Vert y-\nabla f\left( a\right) -\nabla
^{2}f\left( a\right) \left( x-a\right) \right\Vert \leq \varepsilon
\left\Vert x-a\right\Vert \text{ .}
\end{equation*}%
Here $\partial f\left( x\right) $ is the subgradient of $f$ at $x$ (see
Definition 8.3 in \cite{MR1491362}). Moreover, if $a$ is such a point, then%
\begin{equation*}
\lim_{h\rightarrow 0}\frac{f\left( a+h\right) -f\left( a\right)
-\left\langle \nabla f\left( a\right) ,h\right\rangle -\frac{1}{2}%
\left\langle \nabla ^{2}f\left( a\right) h,h\right\rangle }{\left\Vert
h\right\Vert ^{2}}=0\text{ .}
\end{equation*}
\end{theorem}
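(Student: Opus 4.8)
The plan is to study the subdifferential $\partial\varphi$ as a maximal monotone operator, regularise it into a genuine single‑valued $1$‑Lipschitz map by passing to the resolvent, apply Rademacher's theorem there, and then transport the resulting a.e.\ differentiability back to $\partial\varphi$. Concretely, set $g:=\varphi+\tfrac12\|\cdot\|^{2}$; this is finite, $1$‑strongly convex on $\mathbb{R}^{d}$, hence coercive, so its conjugate $g^{*}$ is finite everywhere and $C^{1,1}$, the map $T:=\nabla g^{*}$ is $1$‑Lipschitz, and $T=(\partial g)^{-1}$ in the sense that $y\in\partial g(x)\Leftrightarrow T(y)=x$. In particular $T$ is onto $\mathbb{R}^{d}$, and $\partial\varphi(x)=\partial g(x)-x=T^{-1}(x)-x$.

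By Rademacher's theorem $T$ is classically differentiable at Lebesgue‑a.e.\ $y$, and since $g^{*}\in C^{1,1}$ its weak Hessian is a symmetric $L^{\infty}$ field with $0\preceq D^{2}g^{*}\preceq I$ which agrees a.e.\ with the classical derivative; so for a.e.\ $y$ the matrix $DT(y)$ is symmetric with $0\preceq DT(y)\preceq I$. Let $N$ be the null set of $y$ where $T$ is not differentiable or $\det DT(y)=0$. The crucial point is that $T(N)$ is again Lebesgue‑null: the image of the non‑differentiability part is null because $T$ is Lipschitz, while $|T(\{\det DT=0\})|\le\int_{\{\det DT=0\}}|\det DT|=0$ by the area formula. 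On the other hand $\varphi$ (hence $g$) is differentiable off a null set $Z$, and for $x\notin Z$ we have $\partial g(x)=\{\nabla g(x)\}$ with $T(\nabla g(x))=x$; thus the bad set $\{x\notin Z:\nabla g(x)\in N\}$ is contained in $T(N)$ and hence null. Consequently, for a.e.\ $x_{0}$ the point $y_{0}:=\nabla g(x_{0})$ is a differentiability point of $T$ whose derivative $B:=DT(y_{0})$ is symmetric, positive definite and $\preceq I$.

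Fix such an $x_{0}$. Because $\varphi$ is convex and differentiable at $x_{0}$, the operator $\partial g$ is locally bounded with closed graph, so $\sup_{w\in\partial g(z)}\|w-y_{0}\|\to0$ as $z\to x_{0}$. For $z$ near $x_{0}$ and any $w\in\partial g(z)$ we have $T(w)=z$, so differentiability of $T$ at $y_{0}$ gives $z-x_{0}=B(w-y_{0})+o(\|w-y_{0}\|)$; inverting and feeding the resulting estimate $\|w-y_{0}\|=O(\|z-x_{0}\|)$ back in yields
\begin{equation*}
w=y_{0}+B^{-1}(z-x_{0})+o(\|z-x_{0}\|)\qquad\text{uniformly over }w\in\partial g(z).
\end{equation*}
Subtracting the identity (i.e.\ $\partial\varphi=\partial g-\mathrm{id}$), this reads
\begin{equation*}
\sup_{v\in\partial\varphi(z)}\bigl\|\,v-\nabla\varphi(x_{0})-(B^{-1}-I)(z-x_{0})\,\bigr\|=o(\|z-x_{0}\|),
\end{equation*}
which is exactly twice‑differentiability of $\varphi$ at $x_{0}$ in the stated Alexandrov sense, with $\nabla^{2}\varphi(x_{0})=B^{-1}-I$; this matrix is symmetric and, since $B\preceq I$, lies in $\Sym^{+}(d,\mathbb{R})$. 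The last displayed limit of the theorem then follows by integrating the (now single‑valued) gradient along the segment $t\mapsto x_{0}+th$, which is absolutely continuous since $t\mapsto\varphi(x_{0}+th)$ is convex:
\begin{equation*}
\varphi(x_{0}+h)-\varphi(x_{0})-\langle\nabla\varphi(x_{0}),h\rangle=\int_{0}^{1}\langle\nabla\varphi(x_{0}+th)-\nabla\varphi(x_{0}),h\rangle\,dt=\tfrac12\langle\nabla^{2}\varphi(x_{0})h,h\rangle+o(\|h\|^{2}).
\end{equation*}

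The step I expect to be the main obstacle is the selection of good base points together with the negligibility of $T(N)$: one must ensure that the resolvent preimage $y_{0}=\nabla g(x_{0})$ can be taken, for a.e.\ $x_{0}$, to be simultaneously a Rademacher point of the $1$‑Lipschitz map $T$ \emph{and} a point where $DT$ is nonsingular — this is precisely where the area formula, used to kill the critical values of $T$, is indispensable — after which the transfer of differentiability through $T^{-1}$ is forced by the local boundedness and closed‑graph property of $\partial\varphi$ at points of differentiability. By contrast, the passage from the first‑order expansion of the gradient to the second‑order Taylor expansion of $\varphi$ is routine once the former is known to hold throughout a neighbourhood. (An alternative route, which I would keep in reserve, avoids the resolvent by noting that the distributional Hessian $D^{2}\varphi$ is a matrix of Radon measures with $D^{2}\varphi\succeq0$, decomposing it into absolutely continuous and singular parts, and arguing at Lebesgue points of the density — where the singular part is infinitesimal — via a comparison estimate bounding $\|\nabla\varphi(x)-\nabla\varphi(a)-A(x-a)\|$ by an average of the trace measure $\Delta\varphi$ over a small ball.)
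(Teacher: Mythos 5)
The paper itself offers no proof of this statement: Theorem~\ref{theorem_Alexandrov} is simply quoted from \cite{Alexandrov:39}, reproducing Theorem 3.11.2 of \cite{MR2178902}, so there is no in-paper argument to compare yours against. Your proposal, however, is a correct and essentially complete proof, and it follows the standard modern route to Alexandrov's theorem: regularize the maximal monotone map $\partial g$, $g=\varphi+\tfrac12\Vert\cdot\Vert^2$, through its resolvent $T=\nabla g^{*}=(\partial g)^{-1}$ (the proximal map, which is single-valued, onto and $1$-Lipschitz), apply Rademacher there, kill the critical values of $T$ with the Lipschitz area formula, and transfer the expansion back through $T^{-1}$ using local boundedness and closed graph of $\partial g$ at a point of differentiability; the alternative you keep in reserve (distributional Hessian as a nonnegative matrix measure, Lebesgue points of its absolutely continuous part) is the other classical proof. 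Only two pieces of bookkeeping deserve mention. First, the exceptional set $N$ should also absorb the null set of points where $T$ is differentiable but $DT$ fails to be symmetric or positive semidefinite: you use both properties of $B=DT(y_0)$, and they are known only almost everywhere; since $T$ is Lipschitz, the image of this null set is still null, so the selection of good base points $x_0$ is unaffected. Second, in the final integration step $\nabla\varphi(x_0+th)$ need not exist for every $t$; but $t\mapsto\varphi(x_0+th)$ is convex, hence absolutely continuous and differentiable off a countable set, with derivative $\langle v,h\rangle$ for \emph{any} $v\in\partial\varphi(x_0+th)$, and since your gradient expansion is uniform over all subgradients, any measurable selection yields the second-order Taylor identity with an $o(\Vert h\Vert^{2})$ remainder. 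With these trivial amendments, the identification $\nabla^{2}\varphi(x_0)=B^{-1}-I\in\Sym^{+}(d,\mathbb{R})$ and the stated Alexandrov-type differentiability are established exactly as claimed.
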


We immediately see by Theorem \ref{theorem_Alexandrov}, that since $\varphi $
is convex and $f=\exp \left( -\varphi \right) $, it follows that $f$ is
almost everywhere twice differentiable.  For further results in the direction of 
Alexandrov's theorem see \cite{MR0482164,MR585231}.

\subsection{Approximations\label{section_ap}}

Again, if one wants to approximate a non-smooth log-concave function $f=\exp
\left( -\varphi \right) $ by a sequence of smooth log-concave functions,
then convexity of the potential $\varphi $ can be used to advantage. For an
account about approximation of convex functions see \cite{MR2178902},
section 3.8.

On the one hand, if $\varphi \in L_{loc}^{1}\left( \mathbb{R}^{d}\right) $
the space of locally integrable functions, then the standard use of a
regularization kernel (i.e. a one-parameter family of functions associated
with a mollifier) to approximate $\varphi $ preserves the convexity as soon
as the mollifier is nonnegative. A classical result is that this gives in
particular approximations of $\varphi $ in $L^{p}$ spaces, $p\geq 1$, as
soon as $\varphi \in L^{p}\left( \mathbb{R}^{d}\right) $.

On the other hand, \textit{infimal convolution} (also called 
\textit{epi-addition}, see \cite{MR1491362}) is a nonlinear analogue of
mollification that gives a way to approximate a lower
semicontinuous proper convex function  from below (section 3.8, \cite{MR2178902}). 
More precisely, take two proper convex functions $f$ and $g$ from $\mathbb{R}^{d}$
to $\mathbb{R}\cup \left\{ \infty \right\} $, which means that the functions
are convex and finite for at least one point. The infimal convolution
between $f$ and $g$, possibly taking the value $-\infty $, is%
\begin{equation*}
\left( f\odot g\right) \left( x\right) =\inf_{y\in \mathbb{R}^{n}}\left\{
f\left( x-y\right) +g\left( y\right) \right\} \text{ .}
\end{equation*}%
Then, $f\odot g$ is a proper convex function as soon as 
$f\odot g\left( x\right) >-\infty $ for all $x\in \mathbb{R}^{d}$. Now, if $f$ is a lower
semicontinuous proper convex function on $\mathbb{R}^{d}$, the Moreau-Yosida
approximation $f_{\varepsilon }$ of $f$ is given by%
\begin{eqnarray*}
f_{\varepsilon }\left( x\right) &=&\left( f\odot \frac{1}{2\varepsilon }%
\left\Vert \cdot \right\Vert ^{2}\right) \left( x\right) \\
&=&\inf_{y\in \mathbb{R}^{n}}\left\{ f\left( y\right) +\frac{1}{2\varepsilon 
}\left\Vert x-y\right\Vert ^{2}\right\} \text{ .}
\end{eqnarray*}%
for any $x\in \mathbb{R}^{d}$ and $\varepsilon >0$. The following theorem
can be found in \cite{MR1676726} (Proposition 7.13), see also \cite%
{BarbuPrecupanu:86}, \cite{Brezis:73} or \cite{MR2178902}.

\begin{theorem}
\label{theorem_Moreau-Yosida}The Moreau-Yosida approximates 
$f_{\varepsilon } $ are $\mathcal{C}^{1,1}$ (i.e. differentiable with Lipschitz derivative)
convex functions on $\mathbb{R}^{d}$ and 
$f_{\varepsilon }\rightarrow f$ as $\varepsilon \rightarrow 0$. 
Moreover, 
$\partial f_{\varepsilon }=\left( \varepsilon I+\left( \partial f\right) ^{-1}\right) ^{-1}$ 
as set-valued maps.
\end{theorem}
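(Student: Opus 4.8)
The statement to prove is Theorem \ref{theorem_Moreau-Yosida}, which collects three well-known facts about the Moreau--Yosida regularization: that $f_{\varepsilon}$ is a $\mathcal{C}^{1,1}$ convex function, that $f_{\varepsilon}\to f$ as $\varepsilon\to 0$, and that $\partial f_{\varepsilon}=(\varepsilon I+(\partial f)^{-1})^{-1}$.

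The plan is to proceed in the classical order. First I would verify that $f_\varepsilon$ is everywhere finite and convex: finiteness follows because $f$ is proper, lower semicontinuous and (being convex) minorized by an affine function, so the infimum $\inf_y\{f(y)+\tfrac{1}{2\varepsilon}\|x-y\|^2\}$ is attained and finite for every $x$; convexity of $f_\varepsilon$ is immediate since it is an infimal convolution of the two convex functions $f$ and $\tfrac{1}{2\varepsilon}\|\cdot\|^2$, and I would cite the fact (stated in the excerpt preceding the theorem) that an infimal convolution of proper convex functions is proper convex whenever it is nowhere $-\infty$. Next I would establish uniqueness of the minimizer: the map $y\mapsto f(y)+\tfrac{1}{2\varepsilon}\|x-y\|^2$ is strongly convex (its quadratic part has Hessian $I/\varepsilon>0$), hence has a unique minimizer, which defines the \emph{proximal map} $P_\varepsilon(x)=\argmin_y\{f(y)+\tfrac{1}{2\varepsilon}\|x-y\|^2\}$.

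Then I would prove the smoothness statement. The key identity is that $u=P_\varepsilon(x)$ iff $0\in\partial f(u)+\tfrac1\varepsilon(u-x)$, i.e. $x\in u+\varepsilon\,\partial f(u)$, i.e. $x\in(I+\varepsilon\,\partial f)(u)$; since $\partial f$ is a maximal monotone operator, $(I+\varepsilon\,\partial f)^{-1}$ is a single-valued contraction (nonexpansive) defined on all of $\mathbb{R}^d$, so $P_\varepsilon=(I+\varepsilon\,\partial f)^{-1}$ is $1$-Lipschitz. A standard computation (the envelope theorem, using uniqueness of the minimizer) then gives $\nabla f_\varepsilon(x)=\tfrac1\varepsilon(x-P_\varepsilon(x))$, which is Lipschitz with constant $2/\varepsilon$ since $P_\varepsilon$ is nonexpansive; this proves $f_\varepsilon\in\mathcal{C}^{1,1}$. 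For the convergence $f_\varepsilon\to f$: monotonicity in $\varepsilon$ is clear, and $f_\varepsilon(x)\le f(x)$ by taking $y=x$; for the reverse, lower semicontinuity of $f$ together with the fact that $P_\varepsilon(x)\to x$ as $\varepsilon\to 0$ (which follows from $f_\varepsilon(x)\ge f(P_\varepsilon(x))$ and boundedness) gives $\liminf_{\varepsilon\to0}f_\varepsilon(x)\ge f(x)$, hence $f_\varepsilon(x)\uparrow f(x)$ pointwise.

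Finally, for the subdifferential formula, I would compute $\partial f_\varepsilon$. Since $f_\varepsilon$ is $\mathcal{C}^1$, $\partial f_\varepsilon(x)=\{\nabla f_\varepsilon(x)\}=\{\tfrac1\varepsilon(x-P_\varepsilon(x))\}$. Setting $v=\tfrac1\varepsilon(x-P_\varepsilon(x))$ one has $x=P_\varepsilon(x)+\varepsilon v$ and $v\in\partial f(P_\varepsilon(x))$, i.e. $P_\varepsilon(x)\in(\partial f)^{-1}(v)$, so $x\in(\partial f)^{-1}(v)+\varepsilon v=(\varepsilon I+(\partial f)^{-1})(v)$, which is exactly the assertion $v\in(\varepsilon I+(\partial f)^{-1})^{-1}(x)$, and conversely; this yields $\partial f_\varepsilon=(\varepsilon I+(\partial f)^{-1})^{-1}$ as set-valued maps. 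The main obstacle is that the cleanest route leans on the maximal monotonicity of $\partial f$ and the resolvent identity $P_\varepsilon=(I+\varepsilon\partial f)^{-1}$; if one wants a self-contained argument avoiding that machinery, one must instead extract the Lipschitz bound on $P_\varepsilon$ directly from the strong convexity (subtracting the two optimality inequalities at $x_1,x_2$ and using monotonicity of $\partial f$), which is the step requiring the most care. In the review we would simply cite \cite{MR1676726}, Proposition 7.13, for the full statement and the references \cite{BarbuPrecupanu:86}, \cite{Brezis:73}, \cite{MR2178902} for the monotone-operator background.
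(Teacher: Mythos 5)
The paper gives no proof of Theorem~\ref{theorem_Moreau-Yosida}: it simply cites \cite{MR1676726} (Proposition 7.13), together with \cite{BarbuPrecupanu:86}, \cite{Brezis:73} and \cite{MR2178902}, which is exactly what you propose to do in your closing sentence. Your sketch is the standard resolvent argument underlying those references (strong convexity giving the single-valued proximal map $P_\varepsilon=(I+\varepsilon\,\partial f)^{-1}$, the envelope identity $\nabla f_\varepsilon=\varepsilon^{-1}(I-P_\varepsilon)$, monotone pointwise convergence, and the inversion showing $\nabla f_\varepsilon(x)=v$ iff $x\in(\varepsilon I+(\partial f)^{-1})(v)$), and it is correct; the only minor remarks are that firm nonexpansiveness of $P_\varepsilon$ yields the sharper Lipschitz constant $1/\varepsilon$ for $\nabla f_\varepsilon$, and the $\liminf$ step should also cover points with $f(x)=+\infty$, where one argues via the affine minorant of $f$ that $f_\varepsilon(x)\to+\infty$.
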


An interesting consequence of Theorem \ref{theorem_Moreau-Yosida} is that if
two convex and proper lower semicontinuous functions agree on their
subgradients, then they are equal up to a constant (corollary 2.10 in \cite%
{Brezis:73}).

Approximation by a regularization kernel and Moreau-Yosida approximation
have different benefits. While a regularization kernel gives the most
differentiability, the Moreau-Yosida approximation provides an approximation
of a convex function from below (and so, a log-concave function from above).
It is thus possible to combine these two kinds of approximations and obtain
the advantages of both. For an example of such a combination in the context
of a (multivalued) stochastic differential equation and the study of the
so-called Kolmogorov operator, see \cite{MR2474491}.

When considering a log-concave random vector, the following simple
convolution by Gaussian vectors gives an approximation by %strongly
log-concave vectors that have $\mathcal{C}^{\infty }$ densities and finite
Fisher information matrices. In the context of Fisher information,
regularization by Gaussians was used for instance in \cite{SidneyStone:74}\
to study the Pitman estimator of a location parameter.

\begin{proposition}[convolution by Gaussians]
\label{Lemma_convol_Gauss_multidim}
Let $X$ be a random vector in 
$\mathbb{R}^{d}$ with density $p$ w.r.t. the Lebesgue measure and $G$ a $d$
-dimensional standard Gaussian variable, independent of $X$. Set 
$Z=X+\sigma G$, $\sigma >0$ and $p_{Z}=\exp \left( -\varphi _{Z}\right) $ the density of 
$Z$. Then:
\begin{description}
%\item 
\item[(i)] If $X$ is log-concave, then $Z$ is also log-concave.

\item[(ii)] If $X$ is strongly log-concave, 
$Z\in SLC_{1}\left( \tau^{2},d\right) $then $Z$ is also strongly log-concave; 
$Z\in SLC_{1}\left( \tau ^{2}+\sigma ^{2},d\right) $.

\item[(iii)] $Z$ has a positive density $p_{Z}$ on $\mathbb{R}^{d}$.
Furthermore, $\varphi _{Z}$ is $C^{\infty }$ on $\mathbb{R}^{d}$ and 
\begin{eqnarray}
\nabla \varphi _{Z}\left( z\right) 
&=&\sigma ^{-2}\mathbb{E}\left[ \sigma G\left\vert X+\sigma G=z\right. \right]  \notag \\
&=&\mathbb{E}\left[ \rho _{\sigma G}\left( \sigma G\right) \left\vert
          X+\sigma G=z\right. \right] \text{ ,}  \label{conv_gauss_score}
\end{eqnarray}%
where $\rho _{\sigma G}\left( x\right) =\sigma ^{-2}x$ is the score of 
$\sigma G$.

\item[(iv)] The Fisher information matrix for location 
$J(Z)=\mathbb{E}\left[ \nabla \varphi _{Z}\otimes \nabla \varphi _{Z}\left( Z\right) \right] $, is
finite and we have 
$J\left( Z\right) \leq J\left( \sigma G\right)  =\sigma ^{-4}I_{d}$ as symmetric matrices.
\end{description}
\end{proposition}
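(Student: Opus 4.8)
The plan is to dispose of the four assertions in turn, writing $p_{Z}=p\star\phi_{\sigma^{2}I}$ for the density of $Z=X+\sigma G$ (the standard convolution formula for a sum of independent random vectors, $\sigma G$ having the $N_{d}(0,\sigma^{2}I)$ density $\phi_{\sigma^{2}I}$). Parts (i) and (ii) are then immediate. For (i), $\phi_{\sigma^{2}I}$ is log-concave (Example~\ref{NormalDensities}), so log-concavity of $p_{Z}=p\star\phi_{\sigma^{2}I}$ is exactly Proposition~\ref{LCpreservedByConv}. For (ii), if $X\in SLC_{1}(\tau^{2},d)$ then $X\sim p\in SLC_{2}(0,\tau^{2}I,d)$ while $\sigma G\in SLC_{2}(0,\sigma^{2}I,d)$, so Theorem~\ref{StrongLogConPreservOne}(b) gives $Z=X+\sigma G\in SLC_{2}(0,(\tau^{2}+\sigma^{2})I,d)=SLC_{1}(\tau^{2}+\sigma^{2},d)$.

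For (iii), positivity of $p_{Z}(z)=\int_{\mathbb{R}^{d}}p(x)\,\phi_{\sigma^{2}I}(z-x)\,dx$ is clear since $\phi_{\sigma^{2}I}>0$ everywhere and $\int p=1$. Smoothness I would obtain by differentiating under the integral sign: for every multi-index $\alpha$, $\partial_{z}^{\alpha}\phi_{\sigma^{2}I}(z-x)$ is a polynomial in $z-x$ (with coefficients depending only on $\sigma$ and $\alpha$) times $\phi_{\sigma^{2}I}(z-x)$, hence bounded in absolute value by a constant $C(\alpha,\sigma)$ uniformly in $z$ and $x$; thus $p(x)\,|\partial_{z}^{\alpha}\phi_{\sigma^{2}I}(z-x)|\le C(\alpha,\sigma)\,p(x)$ provides an integrable dominating function, dominated convergence licenses differentiation of arbitrary order, so $p_{Z}\in C^{\infty}(\mathbb{R}^{d})$ and therefore $\varphi_{Z}=-\log p_{Z}\in C^{\infty}(\mathbb{R}^{d})$ since $p_{Z}>0$. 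For the score identity, $\nabla_{z}\phi_{\sigma^{2}I}(z-x)=-\sigma^{-2}(z-x)\phi_{\sigma^{2}I}(z-x)$ gives $\nabla p_{Z}(z)=-\sigma^{-2}\int p(x)(z-x)\phi_{\sigma^{2}I}(z-x)\,dx$, so
\[
\nabla\varphi_{Z}(z)=-\frac{\nabla p_{Z}(z)}{p_{Z}(z)}=\frac{1}{\sigma^{2}}\,\mathbb{E}\!\left[\,z-X\mid X+\sigma G=z\,\right],
\]
using that the conditional density of $X$ given $\{Z=z\}$ is $x\mapsto p(x)\phi_{\sigma^{2}I}(z-x)/p_{Z}(z)$; since $z-X=\sigma G$ on $\{Z=z\}$ and $\rho_{\sigma G}(u)=\sigma^{-2}u$, the right-hand side equals $\mathbb{E}[\rho_{\sigma G}(\sigma G)\mid Z=z]$, which is (iii).

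For (iv), evaluate the formula of (iii) at $Z$: the score of $Z$ satisfies $\nabla\varphi_{Z}(Z)=\mathbb{E}[\rho_{\sigma G}(\sigma G)\mid Z]$, i.e. it is the conditional expectation given $Z$ of the score of $\sigma G$, which lies in $L^{2}$ because $\rho_{\sigma G}(\sigma G)=\sigma^{-2}\,\sigma G$ is Gaussian. Then for an arbitrary $v\in\mathbb{R}^{d}$, applying the conditional Jensen inequality to the scalar $v^{T}\rho_{\sigma G}(\sigma G)$ yields $v^{T}J(Z)v=\mathbb{E}[(v^{T}\nabla\varphi_{Z}(Z))^{2}]\le\mathbb{E}[(v^{T}\rho_{\sigma G}(\sigma G))^{2}]=v^{T}J(\sigma G)v$; since $v$ is arbitrary this is precisely $J(Z)\le J(\sigma G)$ in the Loewner order, and in particular $J(Z)$ is finite. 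Finally $J(\sigma G)=\mathbb{E}[\rho_{\sigma G}(\sigma G)\otimes\rho_{\sigma G}(\sigma G)]$ is evaluated explicitly by a direct Gaussian second-moment computation.

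I expect the only genuine work to be the differentiation-under-the-integral step in (iii)---equivalently, the smoothing property of Gaussian convolution---together with the elementary but essential observation in (iv) that coordinatewise use of the $L^{2}$-contractivity of conditional expectation is exactly the Loewner-order bound $J(Z)\le J(\sigma G)$; parts (i) and (ii) are pure citations of Proposition~\ref{LCpreservedByConv} and Theorem~\ref{StrongLogConPreservOne}(b), and the remainder of (iii) and (iv) is the routine differentiation and second moment of a Gaussian.
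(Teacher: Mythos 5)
Your proposal is correct and follows essentially the same route as the paper: (i) and (ii) are cited to Proposition~\ref{LCpreservedByConv} and Theorem~\ref{StrongLogConPreservOne}(b) exactly as in the paper's proof, (iii) is the same differentiation under the integral sign (your explicit polynomial-times-Gaussian dominating bound just makes the paper's "successive differentiation inside the integral" more precise), and (iv) is the same conditional-Jensen/$L^{2}$-contraction argument, done coordinatewise via $v^{T}J(Z)v$ instead of the paper's matrix-level inequality followed by Fubini — a cosmetic difference only.
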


\begin{proof}  
See Section~\ref{sec:Proofs}.
\end{proof}

We now give a second approximation tool, that allows to approximate any log-concave density by
strongly log-concave densities.

\begin{proposition}
\label{prop_approx_SLC}Let $f$ be a log-concave density on $\mathbb{R}^{d}$.
Then for any $c>0$, the density%
\begin{equation*}
h_{c}\left( x\right) 
=\frac{f\left( x\right) e^{-c\left\Vert x\right\Vert ^{2}/2}}
         {\int_{\mathbb{R}^{d}}f\left( v\right) e^{-c\left\Vert v\right\Vert^{2}/2}dv},
         \text{ \ \ }x\in \mathbb{R}^{d},
\end{equation*}%
is $SLC_{1}\left( c^{-1},d\right) $ and $h_{c}\rightarrow f$ as 
$c\rightarrow 0$ in $L_{p}$, $p\in \left[ 1, \infty \right] $. More
precisely, there exists a constant $A_{f}>0$ depending only on $f$, such
that for any $\varepsilon >0$,%
\begin{equation*}
\sup \left\{ \sup_{x\in \mathbb{R}^{d}}\left\vert h_{c}\left( x\right)
-f\left( x\right) \right\vert ;\left( \int_{\mathbb{R}^{d}}\left\vert
h_{c}\left( x\right) -f\left( x\right) \right\vert ^{p}dx\right)
^{1/p}\right\} \leq A_{f}c^{1-\varepsilon }\text{ .}
\end{equation*}
\end{proposition}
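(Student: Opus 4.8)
The plan is to verify the three assertions in order: that $h_c \in SLC_1(c^{-1},d)$, that $h_c \to f$ in $L_p$ for every $p \in [1,\infty]$, and the quantitative rate $A_f c^{1-\varepsilon}$.

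For the first assertion, observe that $h_c(x)$ is, up to the positive normalizing constant $Z_c \equiv \int_{\mathbb{R}^d} f(v) e^{-c\|v\|^2/2} dv \in (0,\infty)$ (finite and positive because $f$ is a log-concave density, hence integrable, bounded, and with exponentially decaying tails by Theorem~\ref{theorem_exp_tails}), exactly of the form $g(x)\phi_{c^{-1}I}(x)$ where $g(x) = f(x) (2\pi/c)^{d/2}/Z_c$ is log-concave (a positive multiple of the log-concave function $f$). By Definition~\ref{strongLogConcaveDefn1} this means precisely $h_c \in SLC_1(c^{-1},d)$.

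For the convergence and the rate, write $h_c(x) - f(x) = f(x)\bigl( e^{-c\|x\|^2/2}/Z_c - 1\bigr)$. The key estimates are: (1) $Z_c \to 1$ as $c \to 0$ by dominated convergence, and in fact $0 \le 1 - Z_c = \int f(v)(1 - e^{-c\|v\|^2/2})dv \le \int f(v)\min\{1, c\|v\|^2/2\}dv$, which one bounds using $f(v) \le e^{-a\|v\|+b}$ from Theorem~\ref{theorem_exp_tails}: splitting the integral at $\|v\| \le c^{-\beta}$ for a suitable exponent $\beta$ gives a bound of order $c^{1-2\beta} + (\text{exponentially small in } c^{-\beta})$, and optimizing $\beta$ close to $0$ yields $|1 - Z_c| \le C_f c^{1-\varepsilon}$ for any $\varepsilon > 0$; (2) pointwise, $|e^{-c\|x\|^2/2}/Z_c - 1| \le |1 - Z_c|/Z_c + (1 - e^{-c\|x\|^2/2})$, so $|h_c(x) - f(x)| \le f(x)\bigl( |1-Z_c|/Z_c + \min\{1, c\|x\|^2/2\}\bigr)$. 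Taking the supremum over $x$, using boundedness of $f$ and that $t \mapsto t\min\{1, c t^2/2\} e^{-at/2}$ is bounded by $O(c^{1-\varepsilon})$ via the same split, gives the $L_\infty$ bound; integrating the $p$-th power and using the exponential tail gives the $L_p$ bound for finite $p$ (the $\min\{1,c\|x\|^2/2\}$ factor again being handled by the split at $\|x\| \sim c^{-\beta}$). Both are $\le A_f c^{1-\varepsilon}$ with $A_f$ depending only on $a_f, b_f$ and $\|f\|_\infty$.

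The main obstacle is the quantitative tail bookkeeping: the naive bound $1 - e^{-t} \le t$ loses too much near infinity (where $c\|x\|^2$ is large), and the naive bound $1 - e^{-t} \le 1$ loses too much near the origin, so one must interpolate via $\min\{1, t\}$ and then balance the two regimes against the exponential decay of $f$ — this is exactly where the exponent $1 - \varepsilon$ (rather than $1$) arises, since the crossover radius $\|x\| \sim c^{-1/2}$ contributes a polynomial-in-$c$ correction that cannot quite be absorbed. Everything else is routine dominated convergence together with the structural observation that multiplying a log-concave density by a Gaussian and renormalizing lands in $SLC_1$.
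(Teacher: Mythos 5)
Your proposal is correct and follows essentially the same route as the paper's proof: the same decomposition $h_c(x)-f(x)=f(x)\bigl(e^{-c\Vert x\Vert^2/2}/Z_c-1\bigr)$, the same use of the exponential tail bound of Theorem~\ref{theorem_exp_tails} to control both the normalizing constant and the sup/$L_p$ norms, and the same split of space at radius of order $c^{-\varepsilon/2}$ to balance the Gaussian-deficit term against the exponentially small tail (the paper splits at $\Vert x\Vert\le 2c^{-\varepsilon/2}$, i.e.\ your $\beta=\varepsilon/2$). The only quibble is cosmetic: the loss to $c^{1-\varepsilon}$ comes from the chosen split at $c^{-\varepsilon/2}$, not from the crossover $\Vert x\Vert\sim c^{-1/2}$ of $\min\{1,c\Vert x\Vert^2/2\}$, but this remark does not affect the validity of your estimates.
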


\begin{proof}  
See Section~\ref{sec:Proofs}.
\end{proof}

Finally, by combining Proposition \ref{prop_approx_SLC}\ and 
\ref{Lemma_convol_Gauss_multidim}, we obtain the following approximation
lemma.

\begin{proposition}
\label{lemma_approx}For any log-concave density on $\mathbb{R}^{d}$, there
exists a sequence of strongly log-concave densities that are 
$\mathcal{C}^{\infty }$, have finite Fisher information matrices and that converge to $f$
in $L_{p}\left( \leb \right) $, $p\in \left[ 1,\infty \right] $.
\end{proposition}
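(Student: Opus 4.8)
\textbf{Proof plan for Proposition~\ref{lemma_approx}.}
The plan is to chain the two approximation results already established, controlling errors in the uniform and $L_p$ norms simultaneously. First I would fix a log-concave density $f$ on $\mathbb{R}^d$ and apply Proposition~\ref{prop_approx_SLC} to obtain, for each $c>0$, the density $h_c(x) = f(x) e^{-c\|x\|^2/2} / \int f(v) e^{-c\|v\|^2/2}\,dv$, which lies in $SLC_1(c^{-1},d)$ and satisfies $\|h_c - f\|_\infty + \|h_c - f\|_p \le A_f c^{1-\varepsilon}$ for any fixed $\varepsilon > 0$. Then I would apply Proposition~\ref{Lemma_convol_Gauss_multidim}(ii)--(iv) with a small Gaussian bandwidth $\sigma$: convolving $h_c$ with $\sigma G$ produces a density $h_c \star \phi_{\sigma^2 I}$ which is $\mathcal{C}^\infty$, is strongly log-concave (it lies in $SLC_1(c^{-1}+\sigma^2, d)$ since $h_c \in SLC_1(c^{-1},d)$), and has finite Fisher information matrix (bounded by $\sigma^{-4}I_d$). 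Thus for each $(c,\sigma)$ the resulting density has all the desired structural properties; what remains is to choose $c_n \to 0$ and $\sigma_n \to 0$ so that the densities converge to $f$ in $L_p(\leb)$ for every $p \in [1,\infty]$.

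The convergence argument splits via the triangle inequality: $\|h_{c}\star\phi_{\sigma^2 I} - f\|_p \le \|h_c \star \phi_{\sigma^2 I} - h_c\|_p + \|h_c - f\|_p$. The second term is controlled by Proposition~\ref{prop_approx_SLC}. For the first term, convolution with the Gaussian mollifier $\phi_{\sigma^2 I}$ is a standard approximate-identity operation, so $\|h_c \star \phi_{\sigma^2 I} - h_c\|_p \to 0$ as $\sigma \to 0$ for each fixed $c$ when $1 \le p < \infty$; for $p = \infty$ one uses that $h_c$ is uniformly continuous (indeed, being log-concave and integrable it is bounded and continuous, cf. Section~\ref{section_reg}), so that $h_c \star \phi_{\sigma^2 I} \to h_c$ uniformly as $\sigma \to 0$. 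Hence for each $n$ I would first pick $c_n \to 0$ so that $\|h_{c_n} - f\|_p \le 1/n$ simultaneously for $p \in \{1,\infty\}$ (and hence, by interpolation, for all $p \in [1,\infty]$), and then pick $\sigma_n$ small enough (depending on $c_n$) that $\|h_{c_n}\star\phi_{\sigma_n^2 I} - h_{c_n}\|_p \le 1/n$ for $p \in \{1,\infty\}$ as well. The density $f_n \equiv h_{c_n}\star\phi_{\sigma_n^2 I}$ is then $\mathcal{C}^\infty$, strongly log-concave, has finite Fisher information, and satisfies $\|f_n - f\|_p \le 2/n$ for $p \in \{1,\infty\}$, whence in every $L_p(\leb)$, $p \in [1,\infty]$, by the log-convexity of $L_p$ norms on a space of fixed mass.

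The main obstacle, and the only point requiring genuine care, is the $L_\infty$ (uniform) convergence: pointwise or $L_1$ convergence of mollifications is routine, but uniform convergence of $h_c \star \phi_{\sigma^2 I}$ to $h_c$ requires uniform continuity of $h_c$, which in turn rests on the fact that a log-concave density on $\mathbb{R}^d$ is bounded and (on the interior of its support) locally Lipschitz — a fact recorded in Section~\ref{section_reg}. One must also check that the nested choice $\sigma_n = \sigma_n(c_n)$ causes no circularity; it does not, since $c_n$ is chosen first and the mollification error is estimated for that fixed $c_n$. A minor point is that the interpolation step uses $\|g\|_p^p \le \|g\|_1 \|g\|_\infty^{p-1}$, valid for any $g \in L^1 \cap L^\infty$, together with $\|g\|_\infty = \lim_{p\to\infty}\|g\|_p$, so that control at the endpoints $p=1$ and $p=\infty$ yields control for all intermediate $p$; this is entirely standard and needs no elaboration.
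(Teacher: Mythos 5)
Your route is the same as the paper's: first tilt $f$ by a Gaussian factor via Proposition~\ref{prop_approx_SLC}, then mollify by convolving with $\phi_{\sigma^2 I}$, invoking Proposition~\ref{Lemma_convol_Gauss_multidim} for smoothness, strong log-concavity and finite Fisher information, and finish with a triangle inequality and a diagonal choice of $(c_n,\sigma_n)$; the paper compresses the error control into a one-line appeal to classical convolution theorems, and your interpolation remark and the observation that choosing $c_n$ before $\sigma_n$ causes no circularity are fine. For $p\in[1,\infty)$ your argument is complete.

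The one step that does not hold as written is the $p=\infty$ endpoint. You assert that $h_c$ is uniformly continuous because a log-concave integrable density is ``bounded and continuous,'' but Section~\ref{section_reg} only gives continuity on the \emph{interior} of the support: if $f$ is, say, the uniform density on $[0,1]^d$, then $h_c$ inherits the jump at the boundary, and $\|h_c\star\phi_{\sigma^2 I}-h_c\|_\infty$ stays bounded away from $0$ (near a jump the mollified value is close to the average of the one-sided limits). In fact no sequence of continuous, let alone $\mathcal{C}^\infty$, densities can converge to such an $f$ in $L_\infty(\leb)$, since an essential-sup limit of continuous functions has a continuous version; so the uniform-convergence part of the statement genuinely requires continuity of $f$ (or a weaker reading). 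To be fair, the paper's own proof hides the same difficulty behind the citation of classical convolution results; your write-up merely makes the needed, and in general false, continuity hypothesis explicit. Note that the first step is not the problem: Proposition~\ref{prop_approx_SLC} does give sup-norm control of $h_c-f$, because $h_c$ is a multiplicative perturbation of $f$ with the same discontinuities. If you either assume $f$ continuous for the $p=\infty$ claim or restrict the mollification estimate to $p<\infty$, the rest of your proof stands.
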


\begin{proof}
Approximate $f$ by a strongly log-concave density $h$ as in 
Proposition \ref{prop_approx_SLC}. Then approximate $h$ by convolving with a Gaussian
density. In the two steps the approximations can be as tight as desired in 
$L_{p}$, for any $p\in \left[ 1, \infty \right] $. The fact that the
convolution with Gaussians for a (strongly) log-concave density (that thus
belongs to any $L_{p}\left( \leb \right) $, $p\in \left[ 1,\infty \right] $) gives approximations in 
$L_{p}$, $p\in \left[ 1, \infty \right] $ is a simple application of general
classical theorems about convolution in $L_{p}$ (see for instance \cite{MR924157}, p. 148).
\end{proof}

%----------------------------------------------------------
\section{Efron's theorem and more on preservation of log-concavity and
strong log-concavity under convolution in 1-dimension}
\label{sec:EfronsTheoremOneDimension}

%----------------------------------------------------------

Another way of proving that strong log-concavity is preserved by convolution
in the one-dimensional case is by use of a result of \cite{MR0171335}. This
has already been used by \cite{MR3030616} and \cite{MR2327839} to prove
preservation of ultra log-concavity under convolution (for discrete random
variables), and by \cite{Wellner-2013} to give a proof that strong
log-concavity is preserved by convolution in the one-dimensional continuous
setting. These proofs operate at the level of scores or relative scores and
hence rely on the equivalences between (a) and (b) in Propositions~\ref%
{EquivalencesLogCon} and \ref{EquivalencesStrongLogConTry3}. Our goal in
this section is to re-examine Efron's theorem, briefly revisit the results
of \cite{MR3030616} and \cite{Wellner-2013}, give alternative proofs using
second derivative methods via symmetrization arguments, and to provide a new
proof of Efron's theorem using some recent results concerning asymmetric
Brascamp-Lieb inequalities due to \cite{Menz-Otto:2013} and \cite%
{CarlenCordero-ErausquinLieb}.

\subsection{Efron's monotonicity theorem}

\label{subsec:EfronMonoThm}

The following monotonicity result is due to \cite{MR0171335}.

\begin{theorem}[Efron]
\label{EfronMonotonicityThm} Suppose that 
%Consider two functions $g:\mathbb{R\rightarrow R}$ and 
$\Phi :\mathbb{R} ^{m}\rightarrow \mathbb{R}$ where $\Phi $ is
coordinatewise non-decreasing and let 
\begin{equation*}
g(z)\equiv E\left\{ \Phi (X_1, \cdots , X_m)\bigg |\sum_{j=1}^m X_j
=z\right\},
\end{equation*}%
where $X_1, \ldots , X_m $ are independent and log-concave. Then $g$ is
non-decreasing.
\end{theorem}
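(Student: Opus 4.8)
The plan is to reduce to the two-variable case $m=2$ and then prove monotonicity of $z\mapsto g(z)=E\{\Phi(X_1,X_2)\mid X_1+X_2=z\}$ by a direct coupling/monotone-rearrangement argument at the level of conditional distributions. First I would observe that it suffices to treat $m=2$: writing $S_k=X_1+\cdots+X_k$, one conditions successively, using that $S_{m-1}$ and $X_m$ are independent, that $S_{m-1}$ is log-concave (by Proposition~\ref{LCpreservedByConv} in the absolutely continuous case, or Theorem~\ref{thm:UltraLogConvPreservByConv}(a) in the discrete case), and that the inner conditional expectation $E\{\Phi(X_1,\dots,X_m)\mid S_{m-1}=s, X_m=x_m\}$ is itself coordinatewise nondecreasing in $(s,x_m)$ — this last point being exactly the $m=2$ statement applied with the remaining coordinates frozen, plus an induction on $m$. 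So the whole theorem follows once the bivariate case is in hand.

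For $m=2$, assume first that $X_1,X_2$ have log-concave densities $p$ and $q$ (the general case follows by the approximation tools of Section~\ref{section_ap}, e.g. Proposition~\ref{Lemma_convol_Gauss_multidim} and a passage to the limit, since coordinatewise-monotone bounded $\Phi$ behaves well under weak convergence of the conditional laws). Given $S=X_1+X_2=z$, the conditional law of $X_1$ has density proportional to $x\mapsto p(x)q(z-x)$ on $\RR$. The key structural fact is that this family of conditional densities, indexed by $z$, is \emph{stochastically increasing} in $z$: for $z_1<z_2$ the density $x\mapsto p(x)q(z_2-x)/p(x)q(z_1-x)=q(z_2-x)/q(z_1-x)$ is, by log-concavity of $q$ (equivalently, by the monotone-likelihood-ratio characterization, Proposition~\ref{LogConcaveEqualsPF2}(b), applied to the translation family of $q$), a nondecreasing function of $x$; hence the conditional law of $X_1$ given $S=z_2$ dominates that given $S=z_1$ in the likelihood-ratio order, a fortiori in the usual stochastic order. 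Therefore there is a monotone coupling: one can realize $X_1^{(z_1)}\le X_1^{(z_2)}$ almost surely, and since then $X_2^{(z_i)}=z_i-X_1^{(z_i)}$ we also get $X_2^{(z_1)}=z_1-X_1^{(z_1)}\le z_2-X_1^{(z_2)}=X_2^{(z_2)}$ — wait, this needs the coupling to respect \emph{both} coordinates simultaneously, which it does exactly because the sum constraint forces $X_2^{(z_i)}$ to be an increasing function of $z_i$ once $X_1^{(z_i)}$ is. More carefully: using the quantile coupling $X_1^{(z)}=F_z^{-1}(U)$ with $U$ uniform and $F_z$ the conditional c.d.f., MLR-monotonicity gives $F_{z_2}^{-1}(u)\ge F_{z_1}^{-1}(u)$ for all $u$, and then $g(z_1)=E[\Phi(F_{z_1}^{-1}(U),z_1-F_{z_1}^{-1}(U))]\le E[\Phi(F_{z_2}^{-1}(U),z_2-F_{z_2}^{-1}(U))]=g(z_2)$ because $\Phi$ is nondecreasing in each argument and both arguments have increased.

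The main obstacle I anticipate is not the one-dimensional MLR step — that is clean — but rather the \emph{induction machinery for $m>2$}: one must check that conditioning out a single variable leaves a function that is still coordinatewise nondecreasing in the reduced set of variables (including the new "sum so far" variable), and that all the intermediate partial sums remain log-concave so that the hypothesis of the bivariate case is available at each stage; this is where Proposition~\ref{LCpreservedByConv} is used repeatedly and where one must be careful about the absolutely-continuous-versus-discrete dichotomy (Efron's Remark~1 handles the lattice case). A secondary technical point is justifying the reduction to densities and the interchange of limits in the approximation argument for general log-concave $X_i$ with possibly atomic or lower-dimensional support; but bounded $\Phi$ and weak convergence of the conditional laws (which follows from convergence of $p_n q_n(z-\cdot)$ after normalization) make this routine. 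Finally, one should note that monotonicity of $g$ in the multivariate sum setting with $\Phi$ depending on all coordinates is genuinely one-dimensional in character — it is the log-concavity of each \emph{univariate} summand that drives the MLR property — which is precisely why, as remarked after Theorem~\ref{thm:UltraLogConvPreservByConv}, the result does not extend to $\RR^d$ without extra hypotheses.
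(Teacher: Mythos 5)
Your reduction to $m=2$ and the approximation preliminaries are fine (and match the paper), but the heart of your bivariate argument has a genuine gap at the coupling step. With the quantile coupling $X_1^{(z)}=F_z^{-1}(U)$, the MLR computation you give — the ratio $q(z_2-x)/q(z_1-x)$ nondecreasing in $x$ — only uses log-concavity of $q$ and only yields that the \emph{first} coordinate increases: $F_{z_2}^{-1}(u)\ge F_{z_1}^{-1}(u)$. Your claim that the second coordinate $z-F_z^{-1}(U)$ also increases is not established: the assertion that "the sum constraint forces $X_2^{(z_i)}$ to be an increasing function of $z_i$ once $X_1^{(z_i)}$ is" is false as logic, since if the conditional quantile of $X_1$ jumps by more than $z_2-z_1$ then $X_2$ strictly decreases. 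What you need, and never prove, is the companion bound $F_{z_2}^{-1}(u)-F_{z_1}^{-1}(u)\le z_2-z_1$. That this cannot be free is clear from the special case $\Phi(x_1,x_2)=\psi(x_2)$: then $g(z)=E\{\psi(X_2)\mid X_1+X_2=z\}$, and its monotonicity is exactly stochastic monotonicity in $z$ of the conditional law of $X_2$, which hinges on log-concavity of $p$ — a hypothesis your argument as written never uses, so it would "prove" a false statement in which only $X_2$ is assumed log-concave.

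The fix is short and symmetric: the conditional density of $X_2$ given the sum is proportional to $q(y)p(z-y)$, and log-concavity of $p$ gives, by the same MLR computation, that this law is likelihood-ratio (hence stochastically) increasing in $z$; translated back through $X_2=z-X_1$ this is precisely the missing inequality $F_{z_2}^{-1}(u)-F_{z_1}^{-1}(u)\le z_2-z_1$, so under the single quantile coupling both arguments of $\Phi$ do increase and $g(z_1)\le g(z_2)$ follows. With that sentence added your proof is correct, and it is a genuinely different route from the one in Section~\ref{subsec:AlternativePrfOfEfronViaAsymmBrascampLieb}: the paper smooths and truncates, differentiates $g$, symmetrizes the resulting conditional covariance, and invokes the asymmetric Brascamp--Lieb inequality of Menz--Otto, whereas your (patched) argument is an elementary stochastic-order/monotone-coupling proof requiring no differentiability or Fisher-information hypotheses beyond what the approximation step already supplies.
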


\begin{remark}
As noted by \cite{MR0171335}, Theorem~\ref{EfronMonotonicityThm} continues
to hold for integer valued random variables which are log-concave in the
sense that $p_x \equiv P(X = x)$ for $x \in \mathbb{Z}$ satisfies $p_x^2 \ge
p_{x+1} p_{x-1}$ for all $x \in \mathbb{Z}$.
\end{remark}

In what follows, we will focus on Efron's theorem for $m=2$. As it is shown
in \cite{MR0171335}, the case of a pair of variables ($m=2$) indeed implies
the general case with $m \ge 2$. Let us recall the argument behind this
fact, which involves preservation of log-concavity under convolution. In
fact, stability under convolution for log-concave variables is not needed to
prove Efron's theorem for $m=2$ as will be seen from the new proof of
Efron's theorem given here in Section~\ref%
{subsec:AlternativePrfOfEfronViaAsymmBrascampLieb}, so it is consistent to
prove the preservation of log-concavity under convolution via Efron's
theorem for $m=2$.

\begin{proposition}
If Theorem \ref{EfronMonotonicityThm} is satisfied for $m=2$, then it is
satisfied for $m\geq 2$.
\end{proposition}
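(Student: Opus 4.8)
The plan is to reduce the case of $m\geq 2$ variables to the case $m=2$ by an induction on $m$, using the already-established fact that log-concavity is preserved under convolution (Proposition~\ref{LCpreservedByConv}, or in the one-dimensional case Theorem~\ref{thm:UltraLogConvPreservByConv}(a)). The base case $m=2$ is the hypothesis. For the inductive step, suppose the result holds for $m-1$ independent log-concave variables; I want to prove it for $m$.

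The key idea is to condition in two stages. Given independent log-concave $X_1,\dots,X_m$, write $S=\sum_{j=1}^m X_j$ and introduce the partial sum $S'=\sum_{j=1}^{m-1}X_j$, so that $S=S'+X_m$ with $S'$ and $X_m$ independent and, crucially, $S'$ log-concave by preservation of log-concavity under convolution. First I would use the tower property to write
\begin{equation*}
g(z)=E\{\Phi(X_1,\dots,X_m)\mid S=z\}
    =E\bigl[\,E\{\Phi(X_1,\dots,X_m)\mid S',X_m\}\;\big|\;S=z\bigr].
\end{equation*}
The inner conditional expectation, for fixed value $s'$ of $S'$ and $x_m$ of $X_m$, is $E\{\Phi(X_1,\dots,X_{m-1},x_m)\mid S'=s'\}$; here the coordinatewise-nondecreasing function $\Phi(\cdot,\dots,\cdot,x_m)$ of the first $m-1$ arguments, conditioned on $X_1,\dots,X_{m-1}$ being independent log-concave with sum $s'$, is handled by the inductive hypothesis: it is a nondecreasing function of $s'$. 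Moreover, since $\Phi$ is nondecreasing in its last argument as well, this quantity is also nondecreasing in $x_m$. Call it $\Psi(s',x_m)$; it is coordinatewise nondecreasing in $(s',x_m)$.

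Then $g(z)=E\{\Psi(S',X_m)\mid S'+X_m=z\}$, and since $S'$ and $X_m$ are independent and both log-concave, the case $m=2$ of Efron's theorem applies directly to the two-variable, coordinatewise-nondecreasing function $\Psi$, giving that $g$ is nondecreasing in $z$. This completes the induction.

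The main obstacle — really the only subtlety — is ensuring the measure-theoretic bookkeeping of the two-stage conditioning is legitimate: one must check that the map $(s',x_m)\mapsto\Psi(s',x_m)$ is jointly measurable (so that conditioning on $S'+X_m=z$ makes sense), and that $\Psi$ is genuinely coordinatewise nondecreasing rather than merely nondecreasing in each variable for almost every value of the other. Measurability follows from standard results on regular conditional distributions (the conditional law of $(X_1,\dots,X_{m-1})$ given $S'=s'$ can be chosen to depend measurably on $s'$), and monotonicity in $s'$ is the content of the inductive hypothesis applied pointwise in $x_m$, while monotonicity in $x_m$ is immediate from monotonicity of $\Phi$ in its last coordinate. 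Once these points are in place, the argument is purely formal. (It is worth noting, as the surrounding text emphasizes, that this reduction uses preservation of log-concavity under convolution, whereas the $m=2$ case itself does not; hence there is no circularity in later deducing that preservation from Efron's theorem for $m=2$.)
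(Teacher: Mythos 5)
Your proposal is correct and follows essentially the same route as the paper: the paper's proof also inducts on $m$, defines $\Lambda(t,u)=E[\Phi(X_1,\dots,X_m)\mid \sum_{i=1}^{m-1}X_i=t,\;X_m=u]$ (your $\Psi$), shows it is coordinatewise nondecreasing via the induction hypothesis and monotonicity of $\Phi$ in its last argument, and then applies the $m=2$ case to the pair $(T,X_m)$ with $T=\sum_{i=1}^{m-1}X_i$ log-concave by preservation under convolution. Your added remarks on measurability of the conditional expectation and on non-circularity are sound but not points the paper dwells on.
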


\begin{proof}
We proceed as in \cite{MR0171335} by induction on $m\geq 2$. Let $\left(
X_{1},\ldots ,X_{m}\right) $ be a $m-$tuple of log-concave variables, let $%
S=\sum_{i=1}^{m}X_{i}$ be their sum and set 
\begin{equation*}
\Lambda \left( t,u\right) =\mathbb{E}\left[ \Phi \left( X_{1},\ldots
,X_{m}\right) \left\vert \sum_{i=1}^{m-1}X_{i}=t\text{ },\text{ }%
X_{m}=u\right. \right] \text{ .}
\end{equation*}%
Then 
\begin{equation*}
\mathbb{E}\left[ \Phi \left( X_{1},\ldots ,X_{m}\right) \left\vert
S=s\right. \right] =\mathbb{E}\left[ \Lambda \left( T,X_{m}\right)
\left\vert T+X_{m}=s\right. \right] \text{ ,}
\end{equation*}%
where $T=\sum_{i=1}^{m-1}X_{i}$. Hence, by the induction hypothesis for
functions of two variables, it suffices to prove that $\Lambda $ is
coordinatewise non-decreasing. As $T$ is a log-concave variable (by
preservation of log-concavity by convolution), $\Lambda \left( t,u\right) $
is non-decreasing in $t$ by the induction hypothesis for functions of $m-1$
variables. Also $\Lambda \left( t,u\right) $ is non-decreasing in $u$ since $%
\Phi $ is non-decreasing in its last argument. %This concludes the proof.
\end{proof}

\cite{MR0171335} also gives the following corollaries of Theorem \ref%
{EfronMonotonicityThm}\ above.

\begin{corollary}
\label{CorOneOfEfron} Let $\{\Phi _{t}(x_{1},\ldots ,x_{m}):\ t\in T\}$ be a
family of measurable functions increasing in every argument for each fixed
value of $t$, and increasing in $t$ for each fixed value of $%
x_{1},x_{2},\ldots ,x_{m}$. Let $X_{1},\ldots ,X_{m}$ be independent and
log-concave and write $S\equiv \sum_{j=1}^{m}X_{j}$. Then 
\begin{equation*}
g(a,b)=E\left\{ \Phi _{a+b-S}(X_{1},\cdots ,X_{m})\bigg |a\leq S\leq
a+b\right\}
\end{equation*}%
is increasing in both $a$ and $b$.
\end{corollary}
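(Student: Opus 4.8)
The plan is to reduce Corollary~\ref{CorOneOfEfron} to Theorem~\ref{EfronMonotonicityThm} by the standard device of appending the random variable $S = \sum_{j=1}^m X_j$ as an extra conditioning coordinate. First I would introduce an auxiliary family of functions on $\mathbb{R}^{m+1}$: for $(x_1,\dots,x_m,t)$ set
\begin{equation*}
\Psi_{a,b}(x_1,\dots,x_m,t) \equiv \Phi_{a+b-(x_1+\cdots+x_m)}(x_1,\dots,x_m)\,\mathbf{1}\{a \le x_1+\cdots+x_m \le a+b\},
\end{equation*}
and observe that
\begin{equation*}
g(a,b) = \frac{E\left[\Psi_{a,b}(X_1,\dots,X_m)\right]}{P(a \le S \le a+b)}
\end{equation*}
(the conditional expectation being taken over the event $\{a\le S\le a+b\}$). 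The point is that I do \emph{not} want to apply Efron's theorem to $\Psi_{a,b}$ directly with $(a,b)$ fixed, since it is not coordinatewise monotone in the $x_i$'s (the indicator cuts it off). Instead the monotonicity in $a$ and $b$ should be extracted from the joint monotonicity hypothesis on the family $\{\Phi_t\}$ together with a reparametrization argument.

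The cleaner route, and the one I would pursue, is the following. Fix $b$ and consider $g(a,b)$ as a function of $a$; by symmetry the argument for $b$ with $a$ fixed is the same after relabeling. Write $c = a+b-S$, so that on the event $\{a \le S \le a+b\}$ we have $c \in [0,b]$, i.e. $c$ ranges over a fixed interval independent of $a$. The idea is to condition further on $S$: by the tower property,
\begin{equation*}
g(a,b) = E\Big\{ E\big[\Phi_{a+b-S}(X_1,\dots,X_m) \,\big|\, S\big] \,\Big|\, a \le S \le a+b\Big\}.
\end{equation*}
Now the inner conditional expectation is, by Theorem~\ref{EfronMonotonicityThm} applied with the fixed coordinatewise-nondecreasing function $\Phi_{c}$ (for each frozen value $c$ of the parameter $a+b-S$), a nondecreasing function of $S$; call it $\psi_c(s) \equiv E[\Phi_c(X_1,\dots,X_m)\mid S=s]$, which is nondecreasing in $s$ for each fixed $c$, and nondecreasing in $c$ for each fixed $s$ because $\Phi_c$ is increasing in $c$ pointwise and conditional expectation preserves this. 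Thus $h(s) \equiv \psi_{a+b-s}(s)$ — the composition that appears inside the outer expectation — is the object whose monotone-averaging behaviour in $a$ we must control.

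\textbf{The main obstacle.} The crux is that $h(s) = \psi_{a+b-s}(s)$ is a difference of two monotone effects: increasing $s$ increases the first (Efron) argument but decreases the parameter $a+b-s$, so $h$ need not be monotone in $s$, and one cannot simply invoke monotonicity of conditional expectations again. The resolution I would use is to increase $a$ to $a' > a$ and compare $g(a,b)$ with $g(a',b)$ by a coupling on the conditioning event. Condition $S$ to lie in $[a,a+b]$ versus $[a',a'+b]$; since $S \sim$ a log-concave law, these are translates of an interval of fixed length $b$, and the conditional law of $S$ given $S \in [a',a'+b]$ is stochastically larger than that given $S \in [a,a+b]$ (log-concavity of the density of $S$, which holds by Proposition~\ref{LCpreservedByConv}, gives monotone likelihood ratio for these truncations, hence stochastic ordering). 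Under this stochastic ordering, write $S' \stackrel{d}{=} S \mid S\in[a',a'+b]$ and $S \mid S\in[a,a+b]$ coupled so that $S' \ge S$ a.s.; then $a'+b-S' \le (a'+b) - (S + (a'-a)) \cdot(\cdot)$ — here one checks that $a'+b-S'$ and $a+b-S$ both range over $[0,b]$, and in the coupling the key inequality $\psi_{a'+b-S'}(S') \ge \psi_{a+b-S}(S)$ must be verified. This last step is where the joint monotonicity of $\Psi$ in the original hypothesis does its real work: increasing $a$ to $a'$ shifts the whole window up, $S'$ increases (good for the Efron argument), and the parameter $a'+b-S'$ vs $a+b-S$ — one shows these are ordered correctly because $S' - S$ is controlled by the common window shift $a'-a$, so $(a'+b-S') - (a+b-S) = (a'-a) - (S'-S)$, and after integrating one gets the monotonicity. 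I expect that making this coupling argument fully rigorous — in particular verifying that the shift in $S$ never overshoots the shift in the window, so that the parameter moves in the favorable direction on average — will be the technical heart; an alternative that sidesteps the coupling is to differentiate in $a$ under suitable smoothness and regroup terms using the product rule, recovering Efron's original inequality, but the coupling version is cleaner and avoids regularity assumptions.
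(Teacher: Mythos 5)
The paper itself gives no proof of this corollary (it is quoted from \cite{MR0171335}), so your plan has to stand on its own. Its skeleton --- tower property, Efron's theorem for $\psi_c(s)=E[\Phi_c(X_1,\dots,X_m)\mid S=s]$, then a coupling of the two window-conditioned laws of $S$ --- is sound and can be completed, but as written there are two genuine gaps. First, the step you call ``the technical heart'' is not merely technical, and your hedge that ``after integrating one gets the monotonicity'' points in the wrong direction: since $\psi$ is not linear, an on-average version of $(a'-a)-(S'-S)\ge 0$ is useless; you need a coupling in which, almost surely, \emph{both} $S'\ge S$ \emph{and} $S'-S\le a'-a$ hold. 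You also invoke log-concavity in the wrong place. The stochastic ordering $S'\succeq_{\mathrm{st}}S$ holds for \emph{any} density of $S$: the ratio of the two truncated densities is $0$ on $[a,a')$, constant on the overlap and $+\infty$ on $(a+b,a'+b]$, hence monotone. What genuinely requires log-concavity of $f_S$ (available by Proposition~\ref{LCpreservedByConv}) is the \emph{upper} constraint. Concretely, with $\delta=a'-a$ and $\mu,\mu'$ the conditional laws of $S$ on $[a,a+b]$ and $[a+\delta,a+b+\delta]$, the quantile coupling satisfies $S\le S'\le S+\delta$ as soon as $F_{\mu}(x)\ge F_{\mu'}(x)\ge F_{\mu}(x-\delta)$ for all $x$; the first inequality is the trivial ordering above, and the second holds because $\mathcal{L}(S+\delta\mid S\in[a,a+b])$ dominates $\mu'$ in likelihood ratio: on their common support the densities are proportional to $f_S(s-\delta)$ and $f_S(s)$, and $s\mapsto f_S(s-\delta)/f_S(s)$ is nondecreasing exactly because $\log f_S$ is concave. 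Given $S\le S'\le S+\delta$, monotonicity of $\psi_c(s)$ in each argument yields $\psi_{a'+b-S'}(S')\ge\psi_{a+b-S}(S)$ pointwise, hence $g(a',b)\ge g(a,b)$.

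Second, ``by symmetry the argument for $b$ with $a$ fixed is the same after relabeling'' is false as stated: increasing $a$ translates the window while increasing $b$ stretches it, and no relabeling exchanges the two operations. The parallel argument does go through, but it must be checked separately: for $b'=b+\delta$ you need a coupling with $S\le S'\le S+\delta$ where now $S'\sim\mathcal{L}(S\mid S\in[a,a+b+\delta])$; the lower constraint is again trivial (the larger window only adds mass above $a+b$), and the upper constraint follows from the same MLR comparison of $\mathcal{L}(S\mid S\in[a,a+b+\delta])$ with the $\delta$-shift of $\mathcal{L}(S\mid S\in[a,a+b])$, using log-concavity of $f_S$ once more; then $S'\ge S$ and $(a+b+\delta)-S'\ge(a+b)-S$ give the pointwise inequality exactly as before. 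Finally, note the classical shortcut for the $a$-monotonicity, which avoids the coupling entirely: adjoin $U\sim\mathrm{Unif}[0,b]$ independent of the $X_j$ (a log-concave variable); conditionally on $X_1+\cdots+X_m+U=a+b$ the law of $S$ is precisely its law given $a\le S\le a+b$ and $U=a+b-S$, so $g(a,b)=E\{\Phi_U(X_1,\dots,X_m)\mid S+U=a+b\}$, and Theorem~\ref{EfronMonotonicityThm} applied to the $m+1$ variables and the coordinatewise nondecreasing function $(x,u)\mapsto\Phi_u(x)$ gives monotonicity in $a$ at once; monotonicity in $b$ still needs a separate argument (the law of $U$ depends on $b$), for instance the coupling above.
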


\begin{corollary}
\label{CorTwoOfEfron} Suppose that the hypotheses of Theorem~\ref%
{EfronMonotonicityThm} hold and that $A=\{x=(x_{1},\ldots ,x_{m})\in \mathbb{%
R}^{m}:a_{j}\leq x_{j}\leq b_{j}\}$ with $-\infty \leq a_{j}<b_{j}\leq
\infty $ is a rectangle in $\mathbb{R}^{m}$. Then 
\begin{equation*}
g(z)\equiv E\left\{ \Phi (X_{1},\cdots ,X_{m})\bigg |%
\sum_{j=1}^{m}X_{j}=z,(X_{1},\ldots ,X_{m})\in A\right\}
\end{equation*}%
is a non-decreasing function of $z$.
\end{corollary}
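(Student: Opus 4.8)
The plan is to deduce Corollary~\ref{CorTwoOfEfron} directly from Efron's Theorem~\ref{EfronMonotonicityThm} by conditioning on the event $\{(X_{1},\ldots ,X_{m})\in A\}$ and checking that this conditioning leaves us precisely in the hypotheses of that theorem. Write $A=\prod_{j=1}^{m}[a_{j},b_{j}]$, so that $\{X\in A\}=\bigcap_{j=1}^{m}\{a_{j}\le X_{j}\le b_{j}\}$ is a product event. Since $X_{1},\ldots ,X_{m}$ are independent, the conditional law of $(X_{1},\ldots ,X_{m})$ given $\{X\in A\}$ factors as the product of the truncated marginal laws $\mathcal{L}(X_{j}\mid a_{j}\le X_{j}\le b_{j})$ (we assume, as is implicit for $g$ to be defined, that $P(a_{j}\le X_{j}\le b_{j})>0$ for each $j$, equivalently $P(X\in A)>0$). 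Letting $\tilde{X}_{1},\ldots ,\tilde{X}_{m}$ be independent with these truncated laws, the first point I would record is the identity
\begin{equation*}
g(z)=E\Big\{\Phi (\tilde{X}_{1},\ldots ,\tilde{X}_{m})\,\Big|\,\textstyle\sum_{j=1}^{m}\tilde{X}_{j}=z\Big\}.
\end{equation*}

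Next I would verify that each $\tilde{X}_{j}$ is again log-concave. If $X_{j}$ has log-concave density $p_{j}=e^{-\varphi _{j}}$ with $\varphi _{j}$ convex, then $\tilde{X}_{j}$ has density proportional to $p_{j}(x)\mathbf{1}_{[a_{j},b_{j}]}(x)=\exp (-(\varphi _{j}(x)+\psi _{j}(x)))$, where $\psi _{j}$ equals $0$ on $[a_{j},b_{j}]$ and $+\infty$ off it. Because $[a_{j},b_{j}]$ is convex — this also covers the rays arising when $a_{j}=-\infty$ or $b_{j}=+\infty$ — the function $\psi _{j}$ is convex, hence $\varphi _{j}+\psi _{j}$ is convex and $\tilde{X}_{j}$ is log-concave; this is just the elementary fact that a product of log-concave functions is log-concave. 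The same argument applies in the integer-valued case: truncating a log-concave mass function to $\{a_{j},\ldots ,b_{j}\}$ preserves $p_{x}^{2}\ge p_{x+1}p_{x-1}$, so the discrete version noted in the remark following Theorem~\ref{EfronMonotonicityThm} is covered as well.

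With these two observations in hand the conclusion is immediate: I would apply Theorem~\ref{EfronMonotonicityThm} to the independent log-concave variables $\tilde{X}_{1},\ldots ,\tilde{X}_{m}$ and the same coordinatewise non-decreasing function $\Phi$, which gives that $z\mapsto E\{\Phi (\tilde{X}_{1},\ldots ,\tilde{X}_{m})\mid \sum_{j}\tilde{X}_{j}=z\}$ is non-decreasing, i.e.\ $g$ is non-decreasing wherever it is defined. That is the assertion of Corollary~\ref{CorTwoOfEfron}.

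The only step requiring genuine care is the measure-theoretic bookkeeping: one must check that the disintegration implicitly defining $g$ — conditioning jointly on $\{X\in A\}$ and $\{\sum_{j}X_{j}=z\}$ — agrees with first passing to the truncated law $\mathcal{L}(X\mid X\in A)$ and then conditioning on $\{\sum_{j}\tilde{X}_{j}=z\}$. This holds because $A$ is a product set, so regular conditional probabilities factor across coordinates and the two orders of conditioning coincide up to null sets; no ideas beyond those already underlying Theorem~\ref{EfronMonotonicityThm} are needed. Integrability of $\Phi$ under the truncated law is inherited from integrability under the original law, since on $A$ the truncated density is dominated by a fixed multiple of the original density. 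I do not expect any substantive obstacle here: the whole content of the corollary is carried by the stability of log-concavity under truncation to an interval.
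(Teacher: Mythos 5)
Your proof is correct, and it is essentially the argument behind the corollary as Efron originally gave it (the paper itself states the corollary only by citation to \cite{MR0171335}, without reproducing a proof): since $A$ is a product of intervals, conditioning on $\{X\in A\}$ keeps the coordinates independent and merely truncates each marginal, truncation to an interval preserves log-concavity because the indicator of a convex set is log-concave, and Theorem~\ref{EfronMonotonicityThm} then applies verbatim to the truncated variables. No gap: the measure-theoretic identification of the two conditionings and the integrability of $\Phi$ under the truncated law are handled exactly as you indicate.
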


The following section will give applications of Efron's theorem to
preservation of log-concavity and strong log-concavity in the case of
real-valued variables.

\subsection{First use of Efron's theorem: strong log-concavity is preserved
by convolution via scores}

\begin{theorem}
\label{LCandSLCpreservedByConvOneDim} (log-concavity and strong
log-concavity preserved by convolution via scores)\newline
(a) (\cite{MR0087249}) If $X $ and $Y$ are independent and log-concave with
densities $p$ and $q$ respectively, then $X+Y \sim p\star q$ is also
log-concave.\newline
(b) If $X \in SLC_1( \sigma^2, 1)$ 
%\mathbf{S} \mathbf{L} \mathbf{C}(\sigma^2)$ 
and $Y \in SLC_1 (\tau^2 , 1)$ 
%\mathbf{S} \mathbf{L} \mathbf{C} (\tau^2)$ are
are independent, then $X + Y \in SLC_1 ( \sigma^2 + \tau^2 , 1) $ 
%\mathbf{S} \mathbf{L} \mathbf{C} (\sigma^2 +\tau^2)$. 
\end{theorem}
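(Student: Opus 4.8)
The plan is to prove both parts by working at the level of scores, using the characterization (a)$\Leftrightarrow$(b) in Proposition~\ref{EquivalencesLogCon} for part~(a) and in Proposition~\ref{EquivalencesStrongLogConTry3} for part~(b), together with Efron's monotonicity theorem (Theorem~\ref{EfronMonotonicityThm}) for the case $m=2$. The key computational input is the formula expressing the score (or relative score) of a sum of independent variables as a conditional expectation of the individual scores, which is the continuous analogue of Lemma~\ref{ConvolutionAndScoresDiscreteCase}. So the first step is to establish, for independent $X\sim p=e^{-\varphi}$ and $Y\sim q=e^{-\psi}$ with sum $Z=X+Y$ having density $r=e^{-\chi}$, the identity
\begin{equation*}
\chi'(z) = -\frac{r'(z)}{r(z)} = E\bigl\{\varphi'(X)\bigm| X+Y=z\bigr\} = E\bigl\{\psi'(Y)\bigm| X+Y=z\bigr\}.
\end{equation*}
This follows by differentiating $r(z)=\int p(x)q(z-x)\,dx$ under the integral sign, writing $q'(z-x)=-\psi'(z-x)q(z-x)$, and recognizing the resulting integral against the conditional density $p(x)q(z-x)/r(z)$; one may first assume $p,q$ are smooth and compactly supported and then remove this by the approximation results of Section~\ref{section_ap} (Propositions~\ref{prop_approx_SLC} and \ref{Lemma_convol_Gauss_multidim}), noting that pointwise convergence of densities plus Scheff\'e gives convergence of the relevant quantities.

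For part~(a): by the equivalence of (a) and (b) in Proposition~\ref{EquivalencesLogCon}, log-concavity of $p$ and $q$ means $\varphi'$ and $\psi'$ are nondecreasing. Applying Efron's theorem with $m=2$ to the coordinatewise-nondecreasing function $\Phi(x,y)=\varphi'(x)$ (nondecreasing in $x$, constant in $y$, hence coordinatewise nondecreasing) and the independent log-concave pair $(X,Y)$, we conclude that $z\mapsto E\{\varphi'(X)\mid X+Y=z\}=\chi'(z)$ is nondecreasing, i.e. $\chi=-\log r$ is convex, so $r=p\star q$ is log-concave.

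For part~(b): if $X\in SLC_1(\sigma^2,1)$ then by Proposition~\ref{EquivalencesStrongLogConTry3}(a)$\Leftrightarrow$(b) the map $\rho_X(x)=\varphi'(x)-x/\sigma^2$ is nondecreasing, and similarly $\rho_Y(y)=\psi'(y)-y/\tau^2$ is nondecreasing. The natural quantity to track is the relative score of $Z$ with respect to $\phi_{(\sigma^2+\tau^2)I}$, namely $\rho_Z(z)=\chi'(z)-z/(\sigma^2+\tau^2)$. Writing $z=E\{X\mid X+Y=z\}+E\{Y\mid X+Y=z\}$ and then taking a convex combination of the two expressions for $\chi'(z)$ with weights $\sigma^2/(\sigma^2+\tau^2)$ and $\tau^2/(\sigma^2+\tau^2)$, one obtains
\begin{equation*}
\rho_Z(z) = E\left\{\frac{\sigma^2}{\sigma^2+\tau^2}\,\rho_X(X) + \frac{\tau^2}{\sigma^2+\tau^2}\,\rho_Y(Y)\,\biggm|\, X+Y=z\right\},
\end{equation*}
exactly as in Lemma~\ref{ConvolutionAndScoresDiscreteCase}(b). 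Since $\Phi(x,y)=\frac{\sigma^2}{\sigma^2+\tau^2}\rho_X(x)+\frac{\tau^2}{\sigma^2+\tau^2}\rho_Y(y)$ is nondecreasing in each argument, Efron's theorem gives that $\rho_Z$ is nondecreasing, hence by Proposition~\ref{EquivalencesStrongLogConTry3} again $Z\in SLC_1(\sigma^2+\tau^2,1)$.

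\textbf{Main obstacle.} The genuine work is the weight-splitting algebraic identity for $\rho_Z$ in part~(b): one must verify that the convex combination of the two conditional-expectation expressions for $\chi'(z)$, minus $z/(\sigma^2+\tau^2)$, collapses precisely to the conditional expectation of the weighted sum of the individual relative scores — the cross terms involving $X/\sigma^2$ and $Y/\tau^2$ must combine with $-z/(\sigma^2+\tau^2)$ correctly, using $X+Y=z$ inside the conditioning. A secondary technical point is justifying differentiation under the integral sign and the passage to the general (non-smooth, non-compactly-supported) case; this is routine given the regularity and approximation machinery already developed, and one can alternatively invoke that log-concavity of $r$ from part~(a) already guarantees $\chi'$ exists a.e.\ and is monotone, so that the score identities hold in the appropriate generalized sense.
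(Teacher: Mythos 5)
Your proposal is correct and follows essentially the same route as the paper: both reduce to monotonicity of scores/relative scores via Propositions~\ref{EquivalencesLogCon} and \ref{EquivalencesStrongLogConTry3}, use the projection identity for the (relative) score of the sum (the paper cites Lemma 3.1 of \cite{MR2128239} and its Lemma~\ref{lem:ScoreProjectionMultivariate} with $d=1$, with the same weights $\sigma^2/(\sigma^2+\tau^2)$, $\tau^2/(\sigma^2+\tau^2)$, and the same Gaussian-convolution approximation step), and then apply Efron's theorem with $m=2$ to the coordinatewise monotone $\Phi$. The weight-splitting identity you flag as the main obstacle is exactly the content of that projection lemma and checks out as you describe.
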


Actually, \cite{MR0087249} proved more: he showed that $p$ is strongly
unimodal (i.e. $X+Y\sim p\star q$ with $X,Y$ independent is unimodal for
every unimodal $q$ on $\mathbb{R}$) if and only if $X$ is log-concave.

\begin{proof}
(a) From Proposition~\ref{EquivalencesLogCon} log-concavity of $p$ and $q$
is equivalent to monotonicity of their score functions $\varphi _{p}^{\prime
}=(-\log p)^{\prime }=-p^{\prime }/p$ $a.e.$ and $\varphi _{q}^{\prime
}=(-\log q)^{\prime }=-q^{\prime }/q$ $a.e.$ respectively. From the
approximation scheme described in 
Proposition~\ref{Lemma_convol_Gauss_multidim} above, 
we can assume that both $p$ and $q$ are absolutely
continuous. Indeed, Efron's theorem applied to formula (\ref{conv_gauss_score}) 
of Proposition~\ref{Lemma_convol_Gauss_multidim} with $m=2$ and 
$\Phi (x,y)=\rho _{\sigma G}(x)$, gives that the convolution with a
Gaussian variable preserves log-concavity. Then, from Lemma 3.1 of 
\cite{MR2128239}, 
\begin{equation*}
E\left\{ \rho _{X}(X)\bigg |X+Y=z\right\} =\rho _{X+Y}(z).
\end{equation*}
%
%
%
%
%
%
%
%
%
%
%%%
%Since convolution is symmetric in $X$ and $Y$ we also have
%\begin{eqnarray}
%E \left \{ \rho_Y (Y) \bigg | X + Y = z \right \} = \rho_{X+Y} (z) .
%\label{BarronJohnsonProjectionFormulaConvolutionVersionTwo}
%\end{eqnarray}
%Thus for any $\lambda \in [0,1]$ and  expressing everything in terms of $\varphi_{\#}^{\prime}$ 
%for $\# \in \{ p, q , p\star q \}$ it follows that 
%\begin{eqnarray*}
%E \{ \lambda \varphi_p^{\prime} (X) + (1-\lambda ) \varphi_q^{\prime} (Y) \big | X+Y = z \} 
%= \varphi_{p\star q}^{\prime} (z) .
%\end{eqnarray*}
%Thus by Efron's theorem with $m=2$ and 
%$$
%\Phi (x,y) = \lambda \varphi_p^{\prime} (x) + (1-\lambda ) \varphi_q^{\prime} (y) ,
%$$
%we see that $E \{ \Phi (X,Y) | X+Y = z \} = \varphi_{p\star q}^{\prime} (z) $ is a monotone function of $z$,
%%%
Thus by Efron's theorem with $m=2$ and 
\begin{equation*}
\Phi (x,y)=\rho _{Y}(y),
\end{equation*}%
we see that $E\{\Phi (X,Y)|X+Y=z\}=\varphi _{p\star q}^{\prime }(z)$ is a
monotone function of $z$, and hence by Proposition~\ref{EquivalencesLogCon},
(a) if and only if (b), log-concavity of the convolution $p\star q=p_{X+Y}$
holds. \medskip

(b) %%%
%The proof of preservation of strong log-concavity under convolution for $p$ and $q$ 
%strong log-concave on $\RR$ is similar to the proof of (a), but with scores replaced by relative scores
%%%
The proof of preservation of strong log-concavity under convolution for $p$
and $q$ strong log-concave on $\mathbb{R}$ is similar to the proof of (a),
but with scores replaced by relative scores, but it is interesting to note
that a symmetry argument is needed. From Proposition~\ref%
{EquivalencesStrongLogConTry3} strong log-concavity of $p$ and $q$ is
equivalent to monotonicity of their relative score functions $\rho
_{p}(x)\equiv \varphi _{p}^{\prime }(x)-x/\sigma ^{2}$ and $\rho
_{q}(x)\equiv \varphi _{q}^{\prime }(x)-x/\tau ^{2}$ respectively. Now we
take $m=2$, $\lambda \equiv \sigma ^{2}/(\sigma ^{2}+\tau ^{2})$, and define 
\begin{equation*}
\Phi (x,y)=\lambda \rho _{p}(x)+(1-\lambda )\rho _{q}(y).
\end{equation*}%
Thus $\Phi $ is coordinate-wise monotone and by using Lemma \ref%
{lem:ScoreProjectionMultivariate} with $d=1$ we find that 
\begin{equation*}
E\{\Phi (X,Y)|X+Y=z\}=\varphi _{p\star q}^{\prime }(z)-\frac{z}{\sigma
^{2}+\tau ^{2}}=\rho _{p\star q}(z).
\end{equation*}%
Hence it follows from Efron's theorem that the relative score $\rho _{p\star
q}$ of the convolution $p\star q$, is a monotone function of $z$. By
Proposition~\ref{EquivalencesStrongLogConTry3}(b) it follows that $p\star
q\in SLC_{1}(\sigma ^{2}+\tau ^{2},1)$.
\end{proof}

\subsection{A special case of Efron's theorem via symmetrization}

We now consider a particular case of Efron's theorem. Our motivation is as
follows: in order to prove that strong log-concavity is preserved under
convolution, recall that we need to show monotonicity in $z$ of 
%We can notice that, in order to follow the proof of stability under
%convolution for strong-log-concave distributions based on scores and the use
%of Efron's theorem, we only need a particular case of the latter theorem.
%Indeed, as we are interested by the monotonicity in $z$ of%
\begin{equation*}
\rho _{X+Y}(z)=E\left\{ \frac{\sigma ^{2}}{\sigma ^{2}+\tau ^{2}}\rho
_{X}(X)+\frac{\tau ^{2}}{\sigma ^{2}+\tau ^{2}}\rho _{Y}(Y)\bigg | %
X+Y=z\right\} \text{.}
\end{equation*}
Thus we only need to consider functions $\Phi $ of the form 
\begin{equation*}
\Phi (X,Y)=\Psi \left( X\right) +\Gamma \left( Y\right),
\end{equation*}
where $\Psi $ and $\Gamma $ are non-decreasing, and show the monotonicity of 
\begin{equation*}
E\left\{ \Phi (X,Y)\bigg |X+Y=z\right\} 
\end{equation*}
for functions $\Phi$ of this special form.
By symmetry between $X$ and $Y$, this reduces to the study of the
monotonicity of 
\begin{equation*}
E\left\{ \Psi \left( X\right) \bigg |X+Y=z\right\} \text{.}
\end{equation*}
We now give a simple proof of this monotonicity in dimension $1$.

\begin{proposition}
\label{prop_efron_spe_dim_1} Let $\Psi :\mathbb{R\rightarrow R}$ be
non-decreasing and suppose that $X \sim f_X$, $Y \sim f_Y$ are independent
and that $f_X, f_Y$ are log-concave. If the function $\eta : \mathbb{R}
\rightarrow \mathbb{R}$ given by%
\begin{equation*}
\eta(z) \equiv E\left\{ \Psi \left( X\right) \bigg |X+Y=z\right\}
\end{equation*}%
is well-defined ($\Psi $ integrable with respect to the conditional law of $%
X+Y$), %and if the function 
%$x\mapsto \left\vert \Psi \left( x\right) f_{Y}^{\prime }\left( z-x\right) f_{X}\left( x\right) \right\vert $ is
%integrable (w.r.t. the Lebesgue measure) 
then it is non-decreasing.
\end{proposition}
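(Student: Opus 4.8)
The plan is to write $\eta$ explicitly as a ratio of integrals and then establish $\eta(z_1)\le\eta(z_2)$ for arbitrary $z_1<z_2$ by a symmetrization of a double integral, in the spirit of the title of this subsection. Since $X$ and $Y$ are independent, the conditional law of $X$ given $X+Y=z$ has Lebesgue density proportional to $x\mapsto f_X(x)f_Y(z-x)$, so
\[
\eta(z)=\frac{\int \Psi(x)f_X(x)f_Y(z-x)\,dx}{\int f_X(x)f_Y(z-x)\,dx},
\]
where the denominator is the convolution density $f_{X+Y}(z)$, strictly positive on the interior of $\supp(X+Y)$ (an interval, since $\supp f_X$ and $\supp f_Y$ are intervals); we restrict attention to such $z$. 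Fix $z_1<z_2$, put $N_i=\int \Psi(x)f_X(x)f_Y(z_i-x)\,dx$ and $D_i=\int f_X(x)f_Y(z_i-x)\,dx>0$, and note that clearing denominators gives $\eta(z_2)-\eta(z_1)=(N_2D_1-N_1D_2)/(D_1D_2)$. Thus it suffices to prove $A\ge B$, where $A=N_2D_1$ and $B=N_1D_2$ are expressed as double integrals over $(x,y)\in\mathbb{R}^2$. To avoid integrability bookkeeping at this stage, I would first carry out the argument for bounded non-decreasing $\Psi$, where Fubini's theorem applies with no comment.

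The symmetrization step comes next. Relabelling $x\leftrightarrow y$ in the double integral defining $B$ yields
\[
A-B=\int\!\!\int\big[\Psi(x)-\Psi(y)\big]\,f_X(x)f_X(y)\,f_Y(z_2-x)f_Y(z_1-y)\,dx\,dy,
\]
and averaging this with its own image under $x\leftrightarrow y$ gives
\[
2(A-B)=\int\!\!\int\big[\Psi(x)-\Psi(y)\big]f_X(x)f_X(y)\big[f_Y(z_2-x)f_Y(z_1-y)-f_Y(z_2-y)f_Y(z_1-x)\big]\,dx\,dy.
\]
The integrand here is symmetric under $x\leftrightarrow y$, so it is enough to show it is nonnegative on $\{x>y\}$; there $\Psi(x)-\Psi(y)\ge0$ and $f_X(x)f_X(y)\ge0$, so everything reduces to the pointwise inequality
\[
f_Y(z_2-x)f_Y(z_1-y)\ \ge\ f_Y(z_2-y)f_Y(z_1-x)\qquad\text{whenever }x>y,\ z_1<z_2.
\]

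This is exactly where log-concavity of $f_Y$ is used. Set $a=z_1-x$, $d=z_2-y$, $b=z_2-x$, $c=z_1-y$; then $a<b<d$, $a<c<d$, and $a+d=b+c$, so $b$ and $c$ are convex combinations $b=\theta a+(1-\theta)d$ and $c=(1-\theta)a+\theta d$ for one and the same $\theta\in(0,1)$. Concavity of $\log f_Y$ then gives $\log f_Y(b)\ge\theta\log f_Y(a)+(1-\theta)\log f_Y(d)$ and $\log f_Y(c)\ge(1-\theta)\log f_Y(a)+\theta\log f_Y(d)$; adding and exponentiating yields $f_Y(b)f_Y(c)\ge f_Y(a)f_Y(d)$, which is the desired inequality. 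Hence $2(A-B)\ge0$, so $\eta(z_2)\ge\eta(z_1)$ for bounded $\Psi$. Finally I would remove boundedness by applying this to the truncations $\Psi_N=(\Psi\vee(-N))\wedge N$ and letting $N\to\infty$, using the standing hypothesis that $\Psi$ is integrable against the relevant conditional laws together with dominated convergence. The only genuine obstacle is this last limiting argument together with the (routine) justification of Fubini; the heart of the matter is the two-line convexity computation above, and I note that it uses only log-concavity of $f_Y$ — mere nonnegativity of $f_X$ suffices. (Alternatively, one can observe that $\{x\mapsto f_X(x)f_Y(z-x)\}$ is a family with monotone likelihood ratio in $z$ by Proposition~\ref{LogConcaveEqualsPF2}(b) and invoke the standard stochastic-ordering consequence, but the symmetrization route is more elementary and self-contained.)
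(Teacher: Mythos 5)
Your proof is correct, but it follows a genuinely different route from the paper's. The paper first reduces to bounded $\Psi$ and to a smooth $f_Y$ with finite Fisher information (via Gaussian convolution, Proposition~\ref{Lemma_convol_Gauss_multidim}), then differentiates under the integral sign to get $\eta'(z)=-\cov\left\{ \Psi(X),\varphi_Y'(Y)\,\big|\,X+Y=z\right\}$, and finally shows this conditional covariance is nonpositive by symmetrizing with an independent copy $(\tilde X,\tilde Y)$ on the event $\{X\ge\tilde X\}$, using that the score $\varphi_Y'$ is nondecreasing. You instead prove the two-point inequality $\eta(z_1)\le\eta(z_2)$ directly: after clearing denominators and symmetrizing the double integral in $x\leftrightarrow y$, everything reduces to the four-point inequality $f_Y(z_2-x)f_Y(z_1-y)\ge f_Y(z_2-y)f_Y(z_1-x)$ for $x>y$, $z_1<z_2$, which is exactly the $TP_2$/MLR property of the kernel $(z,x)\mapsto f_Y(z-x)$ and follows from concavity of $\log f_Y$ by your two-line convex-combination computation (the degenerate cases $f_Y(a)=0$ or $f_Y(d)=0$ being trivial). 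Your version buys several things: it needs no differentiation, no regularization step, and in particular no appeal to Proposition~\ref{Lemma_convol_Gauss_multidim} (which itself rests on preservation of log-concavity under convolution), so it is more self-contained within the logical scheme of Section~\ref{sec:EfronsTheoremOneDimension}; it also makes explicit that only log-concavity of $f_Y$ is used, whereas the paper's argument additionally invokes boundedness of $f_X$ and finiteness of the Fisher information of $Y$ to justify the interchange of derivative and integral. What the paper's calculus route buys in exchange is the derivative identity $\eta'(z)=-\cov\left\{\Psi(X),\varphi_Y'(Y)\,\big|\,X+Y=z\right\}$, which is reused later (e.g.\ in the asymmetric Brascamp--Lieb proof of Efron's theorem and in Remark~\ref{remark_link_cov_Efron_multidim}), so it integrates better with the rest of the paper even though your argument is the more elementary proof of the proposition itself. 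Your truncation-plus-dominated-convergence step to remove boundedness of $\Psi$ is fine and parallels the paper's truncation.
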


\begin{proof}
First notice that by truncating the values of $\Psi $ and using the monotone
convergence theorem, we assume that $\Psi $ is bounded. Moreover, by 
Proposition~\ref{Lemma_convol_Gauss_multidim}, we may assume that $f_{Y}$ is 
$\mathcal{C}^{1}$ with finite Fisher information, thus justifying the
following computations. We write 
\begin{equation*}
E\left\{ \Psi \left( X\right) \bigg |X+Y=z\right\} =\int_{\mathbb{R}}\Psi
\left( x\right) \frac{f_{X}\left( x\right) f_{Y}\left( z-x\right) }{F_{z}}dx%
\text{ ,}
\end{equation*}%
where 
\begin{equation*}
F_{z}=\int_{\mathbb{R}}f_{X}\left( x\right) f_{Y}\left( z-x\right) dx>0\text{
.}
\end{equation*}%
Moreover, with $f_{X}=\exp (-\varphi _{X})$ and $f_{Y}=\exp (-\varphi _{Y})$%
, 
\begin{eqnarray*}
\lefteqn{\frac{\partial }{\partial z}\left( \Psi \left( x\right) \frac{%
f_{X}\left( x\right) f_{Y}\left( z-x\right) }{F_{z}}\right) } \\
&=&-\Psi \left( x\right) \varphi _{Y}^{\prime }\left( z-x\right) \frac{%
f_{X}\left( x\right) f_{Y}\left( z-x\right) }{F_{z}} \\
&&\ \ +\ \Psi \left( x\right) \frac{f_{X}\left( x\right) f_{Y}\left(
z-x\right) }{F_{z}}\int_{\mathbb{R}}\varphi _{Y}^{\prime }\left( z-x\right) 
\frac{f_{X}\left( x\right) f_{Y}\left( z-x\right) }{F_{z}}dx\text{ ,}
\end{eqnarray*}%
where $\varphi _{Y}^{\prime }\left( y\right) =-f_{Y}^{\prime }\left(
y\right) /f_{Y}\left( y\right) $. As $f_{X}$ is bounded (see Section \ref%
{section_reg}) and $Y$ has finite Fisher information, we deduce that $\int_{%
\mathbb{R}}\left\vert \varphi _{Y}^{\prime }\left( z-x\right) \right\vert
f_{X}\left( x\right) f_{Y}\left( z-x\right) dx$ is finite. Then, 
\begin{eqnarray*}
\lefteqn{\frac{\partial }{\partial z}\left( E\left\{ \Psi \left( X\right)
\bigg|X+Y=z\right\} \right) } \\
&=&-E\left\{ \Psi \left( X\right) \cdot \varphi _{Y}^{\prime }\left(
Y\right) \bigg |X+Y=z\right\} +E\left\{ \Psi \left( X\right) \bigg |%
X+Y=z\right\} E\left\{ \varphi _{Y}^{\prime }\left( Y\right) \bigg |%
X+Y=z\right\} \\
&=&-\cov\left\{ \Psi \left( X\right) ,\varphi _{Y}^{\prime }\left( Y\right) %
\bigg |X+Y=z\right\} \text{ .}
\end{eqnarray*}%
If we show that the latter covariance is negative, the result will follow.
Let $\left( \tilde{X},\tilde{Y}\right) $ be an independent copy of 
$\left( X,Y\right) $. Then 
\begin{eqnarray*}
\lefteqn{E\left\{ \left( \Psi \left( X\right) -\Psi \left( \tilde{X}\right)
                         \right) \left( \varphi _{Y}^{\prime }\left( Y\right) 
                         -\varphi _{Y}^{\prime }
                \left( \tilde{Y}\right) \right) \bigg|\tilde{X}+\tilde{Y}=z,X+Y=z\right\} } \\
& = & 2\cov\left\{ \Psi \left( X\right) ,\varphi _{Y}^{\prime }\left( Y\right) \bigg |X+Y=z\right\} \text{ .}
\end{eqnarray*}
Furthermore, since $X\geq \tilde{X}$ implies $Y\leq \tilde{Y}$ under the
given condition $[X+Y=z, \tilde{X}+\tilde{Y} = z]$, 
\begin{eqnarray*}
\lefteqn{E\left\{ \left( \Psi \left( X\right) -\Psi \left( \tilde{X}\right)
             \right) \left( \varphi _{Y}^{\prime }\left( Y\right) -\varphi _{Y}^{\prime} \left( 
              \tilde{Y}\right) \right) \bigg|\tilde{X}+\tilde{Y}=z,X+Y=z\right\} } \\
&=&2E\left\{ \underset{\geq 0}{\underbrace{\left( \Psi \left( X\right) -\Psi
               \left( \tilde{X}\right) \right) }}\underset{\leq 0}{\underbrace{\left(
               \varphi _{Y}^{\prime }\left( Y\right) -\varphi _{Y}^{\prime }\left( \tilde{Y}%
               \right) \right) }}\mathbf{1}_{\left\{ X\geq \tilde{X}\right\} }
               \bigg |X+Y=z,\tilde{X}+\tilde{Y}=z\right\} \\
&\leq &0.
\end{eqnarray*}
This proves Proposition~\ref{prop_efron_spe_dim_1}.
\end{proof}

\subsection{Alternative proof of Efron's theorem via asymmetric
Brascamp-Lieb inequalities}

\label{subsec:AlternativePrfOfEfronViaAsymmBrascampLieb}

Now our goal is to give a new proof of Efron's 
Theorem~\ref{EfronMonotonicityThm} in the case $m=2$ using results related to recent
asymmetric Brascamp-Lieb inequalities and covariance formulas due to 
\cite{Menz-Otto:2013}.

\begin{theorem}[Efron]
Suppose that %Consider two functions $g:\mathbb{R\rightarrow R}$ and 
$\Phi :\mathbb{R}^{2}\rightarrow \mathbb{R}$, such that $\Phi $ is
coordinatewise non-decreasing and let 
\begin{equation*}
g(z)\equiv E\left\{ \Phi (X,Y)\bigg |X+Y=z\right\} \text{ },
\end{equation*}%
where $X$ and $Y$ are independent and log-concave. Then $g$ is
non-decreasing.
\end{theorem}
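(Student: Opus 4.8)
The plan is to localize on the one-dimensional conditional slice $\{X+Y=z\}$, differentiate the conditional expectation in $z$ explicitly, and control the resulting conditional covariance through the one-dimensional covariance representation that underlies the asymmetric Brascamp--Lieb inequality of \cite{Menz-Otto:2013} (see also \cite{CarlenCordero-ErausquinLieb}). First I would dispose of regularity issues: replacing $\Phi$ by a two-sided monotone truncation (monotone convergence) and then convolving with a smooth nonnegative mollifier --- both operations preserving coordinatewise monotonicity --- we may assume $\Phi\in C^{1}$ is bounded with $\partial_{1}\Phi,\partial_{2}\Phi\ge 0$; and by Proposition~\ref{Lemma_convol_Gauss_multidim}, after convolving $Y$ with a small Gaussian we may assume $f_{Y}=e^{-\varphi_{Y}}\in C^{1}$ with finite Fisher information, which makes the differentiations below legitimate. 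Monotonicity of $g$ passes to the limit when these approximations are removed, a pointwise limit of nondecreasing functions being nondecreasing.

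Conditionally on $X+Y=z$ the conditional law $\nu_{z}$ of $X$ has density proportional to $e^{-\psi_{z}}$ with $\psi_{z}(x)=\varphi_{X}(x)+\varphi_{Y}(z-x)$ convex, so $g(z)=E_{\nu_{z}}[\Phi(\cdot,z-\cdot)]$. Differentiating under the integral sign, using $f_{Y}'(z-x)=-\varphi_{Y}'(z-x)f_{Y}(z-x)$ and collecting the contribution of the normalizing constant, one gets
\begin{equation*}
g'(z)=E_{\nu_{z}}\bigl[\partial_{2}\Phi(\cdot,z-\cdot)\bigr]-\cov_{\nu_{z}}\bigl(\Phi(\cdot,z-\cdot),\,\varphi_{Y}'(z-\cdot)\bigr).
\end{equation*}
The first term is nonnegative since $\Phi$ is coordinatewise nondecreasing, so it remains to show $\cov_{\nu_{z}}(\Phi(\cdot,z-\cdot),\varphi_{Y}'(z-\cdot))\le E_{\nu_{z}}[\partial_{2}\Phi]$.

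For this I would use the elementary one-dimensional covariance identity for $\nu_{z}$,
\begin{equation*}
\cov_{\nu_{z}}(\varphi,h)=-\int\varphi'(x)\,W_{h}(x)\,d\nu_{z}(x),\qquad W_{h}(x)=e^{\psi_{z}(x)}\int_{-\infty}^{x}\bigl(h(t)-E_{\nu_{z}}[h]\bigr)e^{-\psi_{z}(t)}\,dt,
\end{equation*}
which is precisely the representation from which the asymmetric Brascamp--Lieb inequality is derived. The heart of the argument is the two-sided pointwise bound $0\le W_{h}\le 1$ for $h(x)=\varphi_{Y}'(z-x)$. Nonnegativity uses log-concavity of $Y$: $h$ is nonincreasing, hence $h-E_{\nu_{z}}[h]$ changes sign once from $+$ to $-$ while integrating to zero against $\nu_{z}$, so the inner integral is $\ge 0$. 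The upper bound uses log-concavity of $X$ through $h=\varphi_{X}'-\psi_{z}'$ together with $\int_{-\infty}^{x}\psi_{z}'e^{-\psi_{z}}\,dt=-e^{-\psi_{z}(x)}$, which rewrites the inner integral as $e^{-\psi_{z}(x)}+\int_{-\infty}^{x}(\varphi_{X}'-E_{\nu_{z}}[\varphi_{X}'])e^{-\psi_{z}}\,dt$; the last integral is $\le 0$ because $\varphi_{X}'$ is nondecreasing and $\varphi_{X}'-E_{\nu_{z}}[\varphi_{X}']$ has zero $\nu_{z}$-mean.

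Finally, writing $\tfrac{d}{dx}\Phi(x,z-x)=\partial_{1}\Phi(x,z-x)-\partial_{2}\Phi(x,z-x)$ with both summands nonnegative and splitting the covariance integral, the $\partial_{1}\Phi$-part contributes $-\int\partial_{1}\Phi\,W_{h}\,d\nu_{z}\le 0$ (since $W_{h}\ge 0$) and the $\partial_{2}\Phi$-part contributes $\int\partial_{2}\Phi\,W_{h}\,d\nu_{z}\le E_{\nu_{z}}[\partial_{2}\Phi]$ (since $W_{h}\le 1$); hence $g'(z)\ge 0$. I expect the main obstacle to be exactly the two-sided bound $0\le W_{h}\le 1$: a generic asymmetric Brascamp--Lieb estimate gives only one-sided control, and the matching bound --- equivalently, that the complementary score $h=\varphi_{Y}'(z-\cdot)$ differs from the true score $\psi_{z}'$ of $\nu_{z}$ by the \emph{monotone} amount $\varphi_{X}'$ --- is the one place where log-concavity of both $X$ and $Y$ is genuinely needed; everything else is bookkeeping, modulo the routine but slightly delicate justification that the conditional laws converge under the Gaussian and mollifier approximations.
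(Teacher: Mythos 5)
Your proof is correct, and it takes a genuinely different route from the paper's. Both arguments begin identically (truncate and mollify $\Phi$, regularize the densities, differentiate under the integral to write $g'(z)$ as a conditional expectation of a partial derivative of $\Phi$ minus a conditional covariance against a score; your formula is the mirror image of the paper's (\ref{derivative_I}), with the derivative falling on the $Y$-slot rather than the $X$-slot). From there the paper symmetrizes the covariance with an independent copy $(\tilde X,\tilde Y)$, uses coordinatewise monotonicity to discard the $\partial_2\Phi$ contribution, and then invokes the asymmetric Brascamp--Lieb inequality of \cite{Menz-Otto:2013} for the conditional density $h_z\propto f_X(\cdot)f_Y(z-\cdot)$, which requires second derivatives $\varphi_X'',\varphi_Y''$ and strict log-concavity. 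You instead keep the full covariance and control it through the elementary one-dimensional representation $\cov_{\nu_z}(\varphi,h)=-E_{\nu_z}[\varphi'\,W_h]$ together with the two-sided pointwise bound $0\le W_h\le 1$ for $h=\varphi_Y'(z-\cdot)$: the lower bound comes from the single sign change of $h-E_{\nu_z}[h]$ (log-concavity of $Y$), the upper bound from the decomposition $h=\varphi_X'-\psi_z'$ and monotonicity of $\varphi_X'$ (log-concavity of $X$). This amounts to a self-contained derivation of exactly the asymmetric Brascamp--Lieb estimate needed here, and it buys something real: only monotonicity of the first-derivative scores is used, no $C^2$ smoothness or strict convexity of the potentials, no independent-copy symmetrization, and the $\partial_1\Phi$ and $\partial_2\Phi$ terms are handled in one stroke; indeed your computation yields the quantitative bound $g'(z)\ge E_{\nu_z}\bigl[\partial_1\Phi\,W_h+\partial_2\Phi\,(1-W_h)\bigr]\ge 0$. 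One small repair is needed: regularize $X$ as well as $Y$ (as the paper does via Proposition~\ref{lemma_approx}), or otherwise ensure that $e^{-\psi_z}$ vanishes at the endpoints of its support. If $f_X$ jumps at a support endpoint (e.g.\ a uniform density), the identity $\int_{-\infty}^{x}\psi_z'\,e^{-\psi_z}\,dt=-e^{-\psi_z(x)}$ acquires a boundary term, $E_{\nu_z}[\psi_z']\neq 0$, and the step $E_{\nu_z}[h]=E_{\nu_z}[\varphi_X']$ in your upper bound fails as stated; smoothing both densities removes this, and monotonicity of $g$ survives the passage to the limit exactly as in your closing remark.
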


\begin{proof}
Notice that by truncating the values of $\Phi $ and using the monotone
convergence theorem, we may assume that $\Phi $ is bounded. Moreover, by
convolving $\Phi $ with a positive kernel, we preserve coordinatewise
monotonicity of $\Phi $ and we may assume that $\Phi $ is $\mathcal{C}^{1}$.
As $\Phi $ is taken to be bounded, choosing for instance a Gaussian kernel,
it is easily seen that we can ensure that $\nabla \Phi $ is uniformly
bounded on $\mathbb{R}^{2}$. Indeed, if 
\begin{equation*}
\Psi _{\sigma ^{2}}\left( a,b\right) =\int_{\mathbb{R}^{2}}\Phi (x,y)\frac{1%
}{2\pi \sigma ^{2}}e^{-\left\Vert \left( a,b\right) -\left( x,y\right)
\right\Vert ^{2}/2\sigma ^{2}}dxdy\text{ ,}
\end{equation*}%
then 
\begin{equation*}
\nabla \Psi _{\sigma ^{2}}\left( a,b\right) =-\int_{\mathbb{R}^{2}}\Phi (x,y)%
\frac{\left\Vert \left( a,b\right) -\left( x,y\right) \right\Vert }{2\pi
\sigma ^{4}}e^{-\left\Vert \left( a,b\right) -\left( x,y\right) \right\Vert
^{2}/2\sigma ^{2}}dxdy\text{ ,}
\end{equation*}%
which is uniformly bounded in $\left( a,b\right) $ whenever $\Phi $ is
bounded. Notice also that by Lemma \ref{lemma_approx},\ it suffices to prove
the result for strictly (or strongly) log-concave variables that have $%
\mathcal{C}^{\infty }$ densities and finite Fisher information. We write%
\begin{equation*}
N\left( z\right) =\int_{\mathbb{R}}f_{X}\left( z-y\right) f_{Y}\left(
y\right) dy
\end{equation*}%
and 
\begin{equation*}
g(z)=\int_{\mathbb{R}}\Phi \left( z-y,y\right) \frac{f_{X}\left( z-y\right)
f_{Y}\left( y\right) }{N\left( z\right) }dy\text{ ,}
\end{equation*}%
with $f_{X}=\exp \left( -\varphi _{X}\right) $ and $f_{Y}=\exp \left(
-\varphi _{Y}\right) $ the respective strictly log-concave densities of $X$
and $Y$. We note $\mu _{X}$ and $\mu _{Y}$ respectively the distributions of 
$X$ and $Y$. Since $\varphi _{X}^{\prime }$ is $L_{2}\left( \mu _{X}\right) $
(which means that $\mu _{X}$ has finite Fisher information) and $f_{Y}$ is
bounded (see Theorem \ref{theorem_exp_tails}), we get that $f_{X}^{\prime
}\left( z-y\right) f_{Y}\left( y\right) =-\varphi _{X}^{\prime }\left(
z-y\right) f_{X}\left( z-y\right) f_{Y}\left( y\right) $ is integrable and
so $N$ is differentiable with gradient given by%
\begin{equation*}
N^{\prime }\left( z\right) =-\int_{\mathbb{R}}\varphi _{X}^{\prime }\left(
z-y\right) f_{X}\left( z-y\right) f_{Y}\left( y\right) dy\text{ .}
\end{equation*}%
By differentiating with respect to $z$ inside the integral defining $g$ we
get 
\begin{eqnarray}
\lefteqn{\frac{d}{dz}\left( \Phi \left( z-y,y\right) \frac{f_{X}\left( z-y\right)
              f_{Y}\left( y\right) }{\int_{\mathbb{R}}f_{X}\left( z-y^{\prime }\right)
              f_{Y}\left( y^{\prime }\right) dy^{\prime }}\right)}
             \label{first-derivative_inside} \\
&=&  \left( \partial _{1}\Phi \right) 
         \left( z-y,y\right) \frac{f_{X}\left( z-y\right) f_{Y}\left( y\right) }
                                          {N\left( z\right) }-\Phi \left( z-y,y\right) 
                                          \varphi _{X}^{\prime }\left( z-y\right) 
                              \frac{f_{X}\left( z-y\right) f_{Y}\left( y\right) }{N\left( z\right) }  \notag \\
&& \ + \ \Phi \left( z-y,y\right) \frac{f_{X}\left( z-y\right) f_{Y}\left( y\right) }
                                                     {N\left( z\right) }\int_{\mathbb{R}}\varphi _{X}^{\prime }
                    \left( z-y\right) \frac{f_{X}\left( z-y\right) f_{Y}\left( y\right) }
                                                  {N\left( z\right) }dy\text{ .}  \notag
\end{eqnarray}%
We thus see that the quantity in (\ref{first-derivative_inside}) is
integrable (with respect to Lebesgue measure) and we get
\begin{equation}
g^{\prime }\left( z\right) =\mathbb{E}\left[ \left( \partial _{1}\Phi
\right) \left( X,Y\right) \left\vert X+Y=z\right. \right] -\cov\left[ \Phi
\left( X,Y\right) ,\varphi _{X}^{\prime }\left( X\right) \left\vert
X+Y=z\right. \right] \text{ .}  \label{derivative_I}
\end{equation}%
Now, by symmetrization we have 
\begin{eqnarray*}
\lefteqn{\cov\left[ \Phi \left( X,Y\right) ,\varphi _{X}^{\prime }\left(
X\right) \left\vert X+Y=z\right. \right] } \\
&=&\mathbb{E}\left[ \left( \Phi \left( X,Y\right) -\Phi \left( \tilde{X},%
\tilde{Y}\right) \right) \left( \varphi _{X}^{\prime }\left( X\right)
-\varphi _{X}^{\prime }\left( \tilde{X}\right) \right) \mathbf{1}_{\left\{
X\geq \tilde{X}\right\} }\left\vert X+Y=z,\tilde{X}+\tilde{Y}=z\right. %
\right] \\
&=&\mathbb{E}\left[ \left( \int_{\tilde{X}}^{X}\left( \partial _{1}\Phi
-\partial _{2}\Phi \right) \left( u,z-u\right) du\right) \underset{\geq 0}{%
\underbrace{\left( \varphi _{X}^{\prime }\left( X\right) -\varphi
_{X}^{\prime }\left( \tilde{X}\right) \right) }}\mathbf{1}_{\left\{ X\geq 
\tilde{X}\right\} }\left\vert X+Y=z,\tilde{X}+\tilde{Y}=z\right. \right] \\
&\leq &\mathbb{E}\left[ \left( \int_{\tilde{X}}^{X}\left( \partial _{1}\Phi
\right) \left( u,z-u\right) du\right) \left( \varphi _{X}^{\prime }\left(
X\right) -\varphi _{X}^{\prime }\left( \tilde{X}\right) \right) \mathbf{1}%
_{\left\{ X\geq \tilde{X}\right\} }\left\vert X+Y=z,\tilde{X}+\tilde{Y}%
=z\right. \right] \\
&=&\cov\left[ \Phi _{1}\left( X\right) ,\varphi _{X}^{\prime }\left(
X\right) \left\vert X+Y=z\right. \right] \text{ ,}
\end{eqnarray*}%
where $\Phi _{1}\left( x\right) =\int_{0}^{x}\left( \partial _{1}\Phi
\right) \left( u,z-u\right) du$. We denote $\eta $ the distribution of $X$
given $X+Y=z$. The measure $\eta $ has density $h_{z}\left( x\right)
=N^{-1}\left( z\right) f_{X}\left( x\right) f_{Y}\left( z-x\right) ,$ $y\in 
\mathbb{R}$. Notice that $h_{z}$ is strictly log-concave on $\mathbb{R}$ and
that for all $x\in \mathbb{R}$,
\begin{equation*}
\left( -\log h_{z}\right) ^{\prime \prime }\left( x\right) 
= \varphi_{X}^{\prime \prime }\left( x\right) +\varphi _{Y}^{\prime \prime }\left( z-x\right) \text{ .}
\end{equation*}
Now we are able to use the asymmetric Brascamp and Lieb inequality
of \cite{Menz-Otto:2013} (Lemma 2.11, page 2190, with their $\delta \psi
\equiv 0$ so their $\psi =\psi _{c}$ with $\psi ^{\prime \prime }>0$) or 
\cite{CarlenCordero-ErausquinLieb} ((1.2), page 2); 
see Proposition~\ref{prop:MenzOtto} below. This yields
\begin{eqnarray*}
\lefteqn{\cov\left[ \Phi _{1}\left( X\right) ,\varphi _{X}^{\prime }\left(
X\right) \left\vert X+Y=z\right. \right] } \\
&=&\int_{\mathbb{R}}\left( \Phi _{1}\left( x\right) -\mathbb{E}\left[ \Phi
_{1}\left( X,Y\right) \left\vert X+Y=z\right. \right] \right) \left( \varphi
_{X}^{\prime }\left( x\right) -\mathbb{E}\left[ \varphi _{X}^{\prime }\left(
X\right) \left\vert X+Y=z\right. \right] \right) h_{z}\left( x\right) dx \\
&\leq &\sup_{x\in \mathbb{R}}\left\{ \frac{\varphi _{X}^{\prime \prime
}\left( x\right) }{\left( -\log h_{z}\right) ^{\prime \prime }\left(
x\right) }\right\} \int_{\mathbb{R}}\Phi _{1}^{\prime }\left( x\right)
h_{z}\left( x\right) dx \\
&=&\sup_{x\in \mathbb{R}}\left\{ \frac{\varphi _{X}^{\prime \prime }\left(
x\right) }{\varphi _{X}^{\prime \prime }\left( x\right) +\varphi
_{Y}^{\prime \prime }\left( z-x\right) }\right\} \mathbb{E}\left[ \left(
\partial _{1}\Phi \right) \left( X,Y\right) \left\vert X+Y=z\right. \right]
\\
&\leq &\mathbb{E}\left[ \left( \partial _{1}\Phi \right) \left( X,Y\right)
\left\vert X+Y=z\right. \right] \text{ .}
\end{eqnarray*}%
Using the latter bound in (\ref{derivative_I}) then gives the result.
\end{proof}

%----------------------------------------------------------

\section{Preservation of log-concavity and strong log-concavity under
convolution in $\mathbb{R}^d$ via Brascamp-Lieb inequalities and towards a
proof via scores}

\label{sec:StrongLogConcavityPreservMultivariateCase}

%----------------------------------------------------------

In Sections~\ref{sec:EfronsTheoremOneDimension} and~\ref{Sec:DiscreteLogConc},
we used Efron's monotonicity theorem~\ref{EfronMonotonicityThm} to give
alternative proofs of the preservation of log-concavity and strong
log-concavity under convolution in the cases of continuous or discrete
random variables on $\mathbb{R}$ or $\mathbb{Z}$ respectively. In the former
case, we also used asymmetric Brascamp-Lieb inequalities to give a new proof
of Efron's monotonicity theorem. In this section we look at preservation of
log-concavity and strong log-concavity under convolution in $\mathbb{R}^{d}$
via:\newline
(a) the variance inequality due to \cite{MR0450480};\newline
(b) scores and potential (partial) generalizations of Efron's monotonicity
theorem to $\mathbb{R}^{d}$.\newline

While point (a) gives a complete answer (Section 
\ref{ssec:BrascampLiebPfSLCpreservedByConv}), the aim of point (b) is to give an
interesting link between preservation of (strong) log-concavity in 
$\mathbb{R}^{d}$ and a (guessed) monotonicity property in $\mathbb{R}^{d}$ 
(Section \ref{ssection:SLC_Efron_multidim}). This latter property would be a partial
generalization of Efron's monotonicity theorem to the multi-dimensional case
and further investigations remain to be accomplished in order to prove such
a result.

We refer to Section~\ref{sec:AppendixA}\  (Appendix A) for further comments about the
Brascamp-Lieb inequalities and related issues, as well as a recall of
various functional inequalities.

\subsection{Strong log-concavity is preserved by convolution (again): proof
via second derivatives and a Brascamp-Lieb inequality}

\label{ssec:BrascampLiebPfSLCpreservedByConv}

We begin with a different proof of the version of Theorem~\ref%
{StrongLogConPreservOne}(b) corresponding to our first definition of strong
log-concavity, Definition~\ref{strongLogConcaveDefn1}, which proceeds via
the Brascamp-Lieb variance inequality as given in part (a) of Proposition~%
\ref{BLInequalityPlus}:

\begin{proposition}
\label{prop:MultSLCpreservedByConvViaBL} If $X \sim p \in SLC_1 (\sigma^2,
d) $ and $Y\sim q \in SLC_1 (\tau^2,d)$ are independent, then $Z=X+Y \sim
p\star q \in SLC_1( \sigma^2 + \tau^2, d)$.
\end{proposition}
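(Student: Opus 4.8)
The plan is to verify characterization (c) of Proposition~\ref{EquivalencesStrongLogConTry3} for $p\star q$: writing $p_Z=p\star q$ and $\varphi_Z=-\log p_Z$, I want to show $\nabla^2\varphi_Z(z)\geq I/(\sigma^2+\tau^2)$ for every $z$. First I would reduce to the regular case. Convolving $p\in SLC_1(\sigma^2,d)$ and $q\in SLC_1(\tau^2,d)$ with small Gaussians (Proposition~\ref{Lemma_convol_Gauss_multidim}, parts (ii)--(iv)) produces $C^\infty$, strictly log-concave densities with finite Fisher information, at the cost of enlarging the variance parameters to $\sigma^2+\epsilon^2$ and $\tau^2+\epsilon^2$; since $(p\star\epsilon G)\star(q\star\epsilon G)=(p\star q)\star(\sqrt2\,\epsilon G)$, proving the Hessian bound for these and letting $\epsilon\downarrow0$ recovers the claim, because $-\log p_Z(z)-|z|^2/(2(\sigma^2+\tau^2))$ is then an (a.e.) pointwise limit of convex functions. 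So from now on assume $p=e^{-\varphi_X}$, $q=e^{-\varphi_Y}$ smooth, $\nabla^2\varphi_X\geq I/\sigma^2$, $\nabla^2\varphi_Y\geq I/\tau^2$, with finite Fisher information, which legitimizes the differentiations under the integral sign below.

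Next, fix $z$ and let $h_z$ be the conditional density of $X$ given $X+Y=z$, namely $h_z(x)=p(x)q(z-x)/p_Z(z)$. This is strictly log-concave with $(-\log h_z)''(x)=A(x)+B(x)$, where $A(x):=\nabla^2\varphi_X(x)\geq I/\sigma^2$ and $B(x):=\nabla^2\varphi_Y(z-x)\geq I/\tau^2$. Differentiating $\varphi_Z(z)=-\log\int p(x)q(z-x)\,dx$ once gives the multivariate score identity $\nabla\varphi_Z(z)=\mathbb{E}_{h_z}[\nabla\varphi_Y(z-X)]$, the analogue of~(\ref{conv_gauss_score}); differentiating a second time yields the Hessian--covariance formula
\[
\nabla^2\varphi_Z(z)=\mathbb{E}_{h_z}\!\big[B(X)\big]-\cov_{h_z}\!\big[\nabla\varphi_Y(z-X)\big].
\]
The task is to bound the conditional covariance from above. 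Here I would apply the Brascamp--Lieb variance inequality of Proposition~\ref{BLInequalityPlus}(a) to $h_z=e^{-V}$ with $V=\varphi_X(\cdot)+\varphi_Y(z-\cdot)$, so $\nabla^2V=A+B$: testing the scalar inequality against the linear functionals $v\mapsto\langle v,G(x)\rangle$ of $G(x):=\nabla\varphi_Y(z-x)$, whose Jacobian is $DG(x)=-B(x)$, upgrades it to the Loewner bound $\cov_{h_z}[G]\leq\mathbb{E}_{h_z}[\,B(A+B)^{-1}B\,]$. Consequently
\[
\nabla^2\varphi_Z(z)\;\geq\;\mathbb{E}_{h_z}\!\big[\,B-B(A+B)^{-1}B\,\big]\;=\;\mathbb{E}_{h_z}\!\big[\,(A^{-1}+B^{-1})^{-1}\,\big],
\]
the last equality being the identity $B-B(A+B)^{-1}B=B(A+B)^{-1}A=(A^{-1}+B^{-1})^{-1}$ for symmetric positive definite $A,B$. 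Finally $A\geq I/\sigma^2$ and $B\geq I/\tau^2$ give $A^{-1}\leq\sigma^2I$, $B^{-1}\leq\tau^2I$, hence $A^{-1}+B^{-1}\leq(\sigma^2+\tau^2)I$ and $(A^{-1}+B^{-1})^{-1}\geq I/(\sigma^2+\tau^2)$ pointwise; integrating against $h_z$ gives $\nabla^2\varphi_Z(z)\geq I/(\sigma^2+\tau^2)$, which is exactly Proposition~\ref{EquivalencesStrongLogConTry3}(c) for the parameter $\sigma^2+\tau^2$. (As a check, both the covariance bound and the final inequality are equalities for Gaussians.)

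The hard part is not the closing algebra but the two intertwined technical points that make the chain of displays rigorous: (i) justifying the two differentiations under the integral and the resulting covariance formula — this is precisely what the reduction to smooth densities with finite Fisher information is for — and (ii) passing from the scalar Brascamp--Lieb inequality of Proposition~\ref{BLInequalityPlus}(a) to the matrix bound on $\cov_{h_z}[G]$, which requires applying the scalar version to every linear functional of $G$ and recognizing that the relevant Jacobian is exactly the Hessian block $B$ entering $\nabla^2V$. Once those are in place the proof closes itself: the Brascamp--Lieb bound collapses to $B(A+B)^{-1}B$ because $DG=-B$, and the harmonic-mean identity converts $\mathbb{E}_{h_z}[B-B(A+B)^{-1}B]$ into $\mathbb{E}_{h_z}[(A^{-1}+B^{-1})^{-1}]$, from which the bound $I/(\sigma^2+\tau^2)$ is immediate.
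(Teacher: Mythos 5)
Your proof is correct and follows essentially the same route as the paper: reduce to smooth, strictly log-concave densities with finite Fisher information via Gaussian convolution, differentiate the convolution twice to obtain the Hessian--covariance formula, bound the conditional covariance matrix by the Brascamp--Lieb inequality of Proposition~\ref{BLInequalityPlus}(a), and conclude from the harmonic-mean bound $E_{h_z}\big[(A^{-1}+B^{-1})^{-1}\big]\ge I/(\sigma^2+\tau^2)$. The only difference is cosmetic: you reach the harmonic mean from a single Brascamp--Lieb bound via the identity $B-B(A+B)^{-1}B=B(A+B)^{-1}A=(A^{-1}+B^{-1})^{-1}$, whereas the paper first averages the two symmetric bounds with weights $\sigma^2/(\sigma^2+\tau^2)$ and $\tau^2/(\sigma^2+\tau^2)$ before arriving at the same expression.
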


\begin{proof}
Now $p_{Z}=p_{X+Y}=p\star q$ is given by 
\begin{equation}
p_{Z}(z)=\int p(x)q(z-x)dx=\int p(z-y)q(y)dy.  \label{form_conv}
\end{equation}%
Now $p=\exp (-\varphi _{p})$ and $q=\exp (-\varphi _{q})$ where we may
assume (by (b) of Proposition \ref{Lemma_convol_Gauss_multidim})
that $\varphi _{p},\varphi _{q}\in C^{2}$ and that $p$ and $q$ have finite
Fisher information. Then, by Proposition~\ref{EquivalencesStrongLogConTry3}, 
\begin{equation*}
{\nabla ^{2}\mathstrut }(\varphi _{p})(x)\geq \frac{1}{\sigma ^{2}}I,\ \ %
\mbox{and}\ \ {\nabla ^{2}\mathstrut }(\varphi _{q})(x)\geq \frac{1}{\tau
^{2}}I.
\end{equation*}%
As we can interchange differentiation and integration in (\ref{form_conv})
(see for instance the detailed arguments for a similar situation in the
proof of Proposition \ref{prop_efron_spe_dim_1}), we find that 
\begin{equation*}
\nabla (-\log p_{Z})(z)=-\frac{\nabla p_{Z}}{p_{Z}}\left( z\right)
=E\{\nabla \varphi _{q}(Y)|X+Y=z\}=E\{\nabla \varphi _{p}(X)|X+Y=z\}.
\end{equation*}%
Then 
\begin{eqnarray*}
\lefteqn{\MyHess(-\log p_{Z})(z)} \\
&=&\nabla \left\{ E[q(z-X)\nabla (-\log q)(z-X)]\cdot \frac{1}{p_{Z}(z)}%
\right\}  \\
&=&E\{-\nabla q(Y)(\nabla \log q(Y))^{T}|X+Y=z\} \\
&&\qquad +\ E\{{\nabla ^{2}\mathstrut }(-\log q)(Y)|X+Y=z\}+\left( E\{\nabla
\log q(Y)|X+Y=z\}\right) ^{\otimes 2} \\
&=&-Var(\nabla \varphi _{q}(Y)|X+Y=z)+E\{{\nabla ^{2}\mathstrut }\,\varphi
_{q}(Y)|X+Y=z\} \\
&=&-Var(\nabla \varphi _{p}(X)|X+Y=z)+E\{{\nabla ^{2}\mathstrut }\,\varphi
_{p}(X)|X+Y=z\}.
\end{eqnarray*}%
Now we apply \cite{MR0450480} Theorem 4.1 (see Proposition~\ref%
{BLInequalityPlus}(a)) with 
\begin{eqnarray}
&&h(x)=\nabla _{z}\varphi _{q}(z-x), \\
&&F(x)=p(x)q(z-x),  \label{NotationConvSpecialCase}
\end{eqnarray}%
to obtain 
\begin{eqnarray*}
\lefteqn{Var(\nabla _{z}\varphi _{q}(Y)|Z+Y=z)} \\
&\leq &\int_{\mathbb{R}^{d}}{\nabla ^{2}\mathstrut }\,\varphi
_{g}(z-x)\left\{ {\nabla ^{2}\mathstrut }\,\varphi _{p}(x)+{\nabla
^{2}\mathstrut }\,\varphi _{q}(z-x)\right\} ^{-1}\cdot {\nabla
^{2}\mathstrut }\,\varphi _{q}(z-x)\frac{F(x)}{\int_{\mathbb{R}%
^{d}}F(x^{\prime })dx^{\prime }}dx.
\end{eqnarray*}%
This in turn yields 
\begin{eqnarray}
\lefteqn{\MyHess(-\log p_{Z})(z)}  \label{BCLspecialCase1} \\
&\geq &E\left\{ {\nabla ^{2}\mathstrut }\,\varphi _{q}(Y)-{\nabla
^{2}\mathstrut }\,\varphi _{q}(Y)\big [{\nabla ^{2}\mathstrut }\,\varphi
_{p}(X)+{\nabla ^{2}\mathstrut }\,\varphi _{q}(Y)\big ]^{-1}{\nabla
^{2}\mathstrut }\,\varphi _{q}(Y)|X+Y=z\right\} .  \notag
\end{eqnarray}%
By symmetry between $X$ and $Y$ we also have 
\begin{eqnarray}
\lefteqn{\MyHess(-\log p_{Z})(z)}  \label{BCLspecialCase2} \\
&\geq &E\left\{ {\nabla ^{2}\mathstrut }\,\varphi _{p}(X)-{\nabla
^{2}\mathstrut }\,\varphi _{p}(X)\big [{\nabla ^{2}\mathstrut }\,\varphi
_{p}(X)+{\nabla ^{2}\mathstrut }\,\varphi _{q}(Y)\big ]^{-1}{\nabla
^{2}\mathstrut }\,\varphi _{p}(X)|X+Y=z\right\} .  \notag
\end{eqnarray}%
In proving the inequalities in the last two displays we have in fact
reproved Theorem 4.2 of \cite{MR0450480} in our special case given by (\ref%
{NotationConvSpecialCase}). Indeed, Inequality (4.7) of Theorem 4.2 in \cite%
{MR0450480} applied to our special case is the first of the two inequalities
displayed above.

Now we combine (\ref{BCLspecialCase1}) and (\ref{BCLspecialCase2}). We set 
\begin{eqnarray*}
&& \alpha \equiv \frac{\sigma^2}{\sigma^2+\tau^2}, \ \ \ \beta \equiv
1-\alpha = \frac{\tau^2}{\sigma^2 + \tau^2} , \\
&& A \equiv \big [ {\nabla^2 \mathstrut} \, \varphi_p (X) + {\nabla^2
\mathstrut} \, \varphi_q (Y) \big ]^{-1} , \\
&& s = s(X) \equiv {\nabla^2 \mathstrut} \, \varphi_p (X), \ \ \ t = t(X)
\equiv {\nabla^2 \mathstrut} \, \varphi_q (Y) .
\end{eqnarray*}
We get from (\ref{BCLspecialCase1}) and (\ref{BCLspecialCase2}): 
\begin{eqnarray*}
\lefteqn{\MyHess (-\log p_{Z}) (z) }  \label{BCLspecialCaseSymmetricOne} \\
& \ge & E \left \{ \alpha s + \beta t - \alpha s A s - \beta t A t \big | %
X+Y = z \right \} \\
& = & E \left \{ (\alpha s + \beta t) A (s+t) - \alpha s A s - \beta t A t %
\big | X+Y = z \right \} \\
&& \qquad \mbox{since} \ A (s+t) = I \equiv \mbox{identity} \\
& = & E \left \{ \alpha s A t + \beta t A s \big | X+Y = z \right \} .
\end{eqnarray*}
Now 
\begin{eqnarray*}
\alpha s A t & = & \frac{\sigma^2}{\sigma^2 + \tau^2} {\nabla^2 \mathstrut}
\, \varphi_p \big [ {\nabla^2 \mathstrut} \, \varphi_p (X) + {\nabla^2
\mathstrut} \, \varphi_q (Y) \big ]^{-1} {\nabla^2 \mathstrut} \, \varphi_q
(Y) \\
& = & \frac{\sigma^2}{\sigma^2 + \tau^2} \big [ ({\nabla^2 \mathstrut} \,
\varphi_p )^{-1} (X) + ( {\nabla^2 \mathstrut} \, \varphi_q)^{-1} (Y) \big ]%
^{-1} .
\end{eqnarray*}
By symmetry 
\begin{eqnarray*}
\beta t A s & = & \frac{\tau^2}{\sigma^2+\tau^2} \big [ ({\nabla^2 \mathstrut%
} \, \varphi_p )^{-1} (X) + ( {\nabla^2 \mathstrut} \, \varphi_q)^{-1} (Y) %
\big ]^{-1}
\end{eqnarray*}
and we therefore conclude that 
\begin{eqnarray*}
\lefteqn{\MyHess (-\log p_{Z}) (z) }  \label{BCLspecialCaseSymmetricTwo} \\
& \ge & \frac{\sigma^2+\tau^2}{\sigma^2 + \tau^2} E \left \{ \big [ ({%
\nabla^2 \mathstrut} \, \varphi_p )^{-1} (X) + ( {\nabla^2 \mathstrut} \,
\varphi_q)^{-1} (Y) \big ]^{-1} \big | X+Y = z \right \} \\
& \ge & \frac{1}{\sigma^2 + \tau^2} I .
\end{eqnarray*}
Note that the resulting inequality 
\begin{eqnarray*}
\MyHess (-\log p_{Z}) (z)  % \label{BCLspecialCaseSymmetricThree} 
 \ge  E \left \{ \big [ ({\nabla^2 \mathstrut} \, \varphi_p )^{-1} (X) + ( 
{\nabla^2 \mathstrut} \, \varphi_q)^{-1} (Y) \big ]^{-1} \big | X+Y = z \right \}
\end{eqnarray*}
also gives the right lower bound for convolution of strongly log-concave
densities in the definition of $SLC_2( \mu, \Sigma,d)$, namely 
\begin{eqnarray*}
{\nabla^2 \mathstrut} (-\log p_{Z}) (z) \ge ( \Sigma_X + \Sigma_Y )^{-1} .
\label{BCLspecialCaseSymmetricThree}
\end{eqnarray*}
\end{proof}

\subsection{Strong log-concavity is preserved by convolution (again):
towards a proof via scores and a multivariate Efron inequality\label%
{ssection:SLC_Efron_multidim}}

We saw in the previous sections that Efron's monotonicity theorem allows to
prove stability under convolution for (strongly) log-concave measures on $%
\mathbb{R}$. However, the stability holds also in $\mathbb{R}^{d}$, $d>1$.
This gives rise to the two following natural questions: does a
generalization of Efron's theorem in higher dimensions exist? 
Does it allow recovery stability under convolution for 
log-concave measures in $\mathbb{R}^{d}$?

Let us begin with a projection formula for scores in dimension $d$.

\begin{lemma}
\label{lem:ScoreProjectionMultivariate} %\noindent \textbf{Lemma 1.} 
(Projection) Suppose that $X$ and $Y$ are $d-$dimensional independent random
vectors with log-concave densities $p_{X}$ and $q_{Y}$ respectively on $%
\mathbb{R}^{d}$. Then $\nabla \varphi_{X+Y}$ and $\rho _{X+Y}:\mathbb{R}%
^{d}\rightarrow \mathbb{R}^{d}$ are given by 
\begin{eqnarray*}
\nabla \varphi_{X+Y} (z) = E \left \{ \lambda \nabla \varphi_X (X) +
(1-\lambda) \nabla \varphi_{Y} (Y) | X+Y = z \right \}
\end{eqnarray*}
for each $\lambda \in [0,1]$, and, if $p_X \in SLC_1 (\sigma^2, d)$ and $p_Y
\in SLC_1 (\tau^2, d)$, then 
\begin{eqnarray*}
\rho _{X+Y}(z) = E\left\{ \frac{\sigma ^{2}}{\sigma ^{2}+\tau ^{2}}%
\rho_{X}(X) +\frac{\tau ^{2}}{\sigma ^{2}+\tau ^{2}}\rho _{Y}(Y)\bigg | %
X+Y=z\right\} .
\end{eqnarray*}
\end{lemma}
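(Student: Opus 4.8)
The plan is to differentiate under the integral sign in the two representations of the convolution density. Write $p_{X+Y}=p_X\star q_Y=\exp(-\varphi_{X+Y})$ and
$$p_{X+Y}(z)=\int_{\mathbb{R}^d}p_X(z-y)\,q_Y(y)\,dy=\int_{\mathbb{R}^d}p_X(x)\,q_Y(z-x)\,dx,$$
with $p_X=\exp(-\varphi_X)$ and $q_Y=\exp(-\varphi_Y)$. First I would use Propositions~\ref{Lemma_convol_Gauss_multidim} and~\ref{lemma_approx} to reduce to the case in which $p_X$ and $q_Y$ are $C^\infty$ log-concave densities with finite Fisher information, so that $\nabla p_X=-(\nabla\varphi_X)\,p_X$ exists and is continuous, and likewise for $q_Y$; the general case then follows by approximating $X$ and $Y$ (for instance by convolution with vanishingly small independent Gaussians), applying the identity, and passing to the limit.

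In the smooth case the domination needed to differentiate under the integral is available: since $q_Y$ is bounded (Theorem~\ref{theorem_exp_tails}) and $X$ has finite Fisher information, Cauchy--Schwarz gives $\int_{\mathbb{R}^d}|\nabla\varphi_X(z-y)|\,p_X(z-y)\,q_Y(y)\,dy<\infty$, locally uniformly in $z$, exactly as in the proof of Proposition~\ref{prop_efron_spe_dim_1}. Hence
$$\nabla p_{X+Y}(z)=\int_{\mathbb{R}^d}\nabla_z\bigl[p_X(z-y)\bigr]\,q_Y(y)\,dy=-\int_{\mathbb{R}^d}\nabla\varphi_X(z-y)\,p_X(z-y)\,q_Y(y)\,dy,$$
and since $p_{X+Y}(z)>0$ (using Proposition~\ref{Lemma_convol_Gauss_multidim}(iii) after the Gaussian smoothing, or restricting to the interior of the support otherwise), dividing through yields
$$\nabla\varphi_{X+Y}(z)=-\frac{\nabla p_{X+Y}(z)}{p_{X+Y}(z)}=\int_{\mathbb{R}^d}\nabla\varphi_X(x)\,\frac{p_X(x)\,q_Y(z-x)}{p_{X+Y}(z)}\,dx=E\bigl\{\nabla\varphi_X(X)\bigm|X+Y=z\bigr\},$$
since the conditional density of $X$ given $X+Y=z$ is $x\mapsto p_X(x)\,q_Y(z-x)/p_{X+Y}(z)$. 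Differentiating the other representation in the same way gives $\nabla\varphi_{X+Y}(z)=E\{\nabla\varphi_Y(Y)\mid X+Y=z\}$, and taking the convex combination with weights $\lambda$ and $1-\lambda$ proves the first identity for every $\lambda\in[0,1]$.

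For the relative-score identity, recall from Proposition~\ref{prop:MultSLCpreservedByConvViaBL} that $X+Y\in SLC_1(\sigma^2+\tau^2,d)$, so $\rho_{X+Y}(z)=\nabla\varphi_{X+Y}(z)-z/(\sigma^2+\tau^2)$. With $\alpha=\sigma^2/(\sigma^2+\tau^2)$ and $\beta=1-\alpha=\tau^2/(\sigma^2+\tau^2)$ one has $\alpha/\sigma^2=\beta/\tau^2=1/(\sigma^2+\tau^2)$, whence
$$\alpha\rho_X(X)+\beta\rho_Y(Y)=\alpha\nabla\varphi_X(X)+\beta\nabla\varphi_Y(Y)-\frac{X+Y}{\sigma^2+\tau^2}.$$
Conditioning on $X+Y=z$ and invoking the first identity with $\lambda=\alpha$ turns the right-hand side into $\nabla\varphi_{X+Y}(z)-z/(\sigma^2+\tau^2)=\rho_{X+Y}(z)$, which is the claim. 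The one genuinely delicate point is the interchange of gradient and integral for general log-concave densities (without the smoothness and finite-Fisher-information hypotheses) together with the convergence of the scores along the Gaussian-regularization approximation; once those are granted, everything else is a direct computation.
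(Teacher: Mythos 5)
Your proof is correct and is essentially the paper's own route: the paper simply says the identity is proved "as in the one-dimensional case, coordinate by coordinate," i.e.\ exactly the differentiation-under-the-integral projection argument you carry out (with the same regularization via Propositions~\ref{Lemma_convol_Gauss_multidim} and~\ref{lemma_approx}), and your relative-score algebra using $\alpha/\sigma^2=\beta/\tau^2=1/(\sigma^2+\tau^2)$ is the intended computation. One small caution: do not invoke Proposition~\ref{prop:MultSLCpreservedByConvViaBL} to interpret $\rho_{X+Y}$ --- the lemma is used in the Efron-based proof of Theorem~\ref{LCandSLCpreservedByConvOneDim}(b), whose whole point is to give a proof of preservation of strong log-concavity independent of the Brascamp--Lieb argument, and nothing is needed there anyway: just take $\rho_{X+Y}(z):=\nabla\varphi_{X+Y}(z)-z/(\sigma^{2}+\tau^{2})$ as the definition of the relative score with parameter $\sigma^{2}+\tau^{2}$, without presupposing that $X+Y$ is strongly log-concave.
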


\smallskip

\begin{proof}
This can be proved just as in the one-dimensional case, much as in \cite%
{MR659464}, but proceeding coordinate by coordinate. %\hfill $\Box$
\end{proof}

Since we know from Propositions~\ref{EquivalencesLogCon} and~\ref%
{EquivalencesStrongLogConTry3} that the scores $\nabla \varphi_X$ and $%
\nabla \varphi_Y$ and the relative scores $\rho_X$ and $\rho_Y$ are
multivariate monotone, the projection Lemma~\ref%
{lem:ScoreProjectionMultivariate} suggests that proofs of preservation of
multivariate log-concavity and strong log-concavity might %(possibly) 
be possible via a multivariate generalization of Efron's monotonicity Theorem~
\ref{EfronMonotonicityThm} to $d\ge 2$ along the following lines: 
%\noindent \textbf{Lemma 2.} (multivariate version of Efron's theorem).
Suppose that $\Phi :(\mathbb{R}^{d})^{n}\rightarrow \mathbb{R}^{d}$ is
coordinatewise multivariate monotone: for each fixed $j\in \{1,\ldots ,n\}$
the function $\Phi _{j}:\mathbb{R}^{d}\rightarrow \mathbb{R}^{d}$ defined by 
\begin{equation*}
\Phi _{j}(x)=\Phi (x_{1},\ldots ,x_{j-1},x,x_{j+1},\ldots ,x_{n})
\end{equation*}%
is multivariate monotone: that is 
\begin{equation*}
\langle \Phi _{j}(x_{1})-\Phi _{j}(x_{2}),x_{1}-x_{2}\rangle \geq 0\ \ %
\mbox{for all}\ \ x_{1},x_{2}\in \mathbb{R}^{d}.
\end{equation*}%
If $X_{1},\ldots ,X_{m}$ are independent with $X_{j}\sim f_{j}$ log-concave
on $\mathbb{R}^{d}$, then it might seem natural to conjecture that the
function $g$ defined by 
\begin{equation*}
g(z)\equiv E\left\{ \Phi (X_{1},\ldots ,X_{n})\bigg |X_{1}+\cdots
+X_{m}=z\right\}
\end{equation*}%
is a monotone function of $z\in \mathbb{R}^{d}$: 
\begin{equation*}
\langle g(z_{1})-g(z_{2}),z_{1}-z_{2}\rangle \geq 0\ \ \mbox{for all}\ \
z_{1},z_{2}\in \mathbb{R}^{d}.
\end{equation*}
Unfortunately, this seemingly natural generalization of Efron's theorem does
not hold without further assumptions. In fact, it fails for $m=2$ and $%
\mathbf{X}_1, \mathbf{X}_2$ Gaussian with covariances $\Sigma_1$ and $%
\Sigma_2$ sufficiently different. For an explicit example see \cite%
{Saumard-Wellner:13}. \smallskip

Again, the result holds for $m$ random vectors if it holds for 
$2,\ldots ,m-1 $ random vectors. It suffices to prove the theorem for $m=2$ random
vectors. Since everything reduces to the case where $\Phi $ is a function of
two variables (either for Efron's theorem or for a multivariate
generalization), we will restrict ourselves to this situation.

Thus if we define 
\begin{equation*}
g(s)\equiv E\left\{ \Phi (X_{1},X_{2})\bigg |X_{1}+X_{2}=s\right\} \text{ },
\end{equation*}%
then we want to show that 
\begin{equation*}
\langle g(s_{1})-g(s_{2}),s_{1}-s_{2}\rangle \geq 0\ \ \mbox{for all}\ \
s_{1},\ s_{2}\in \mathbb{R}^{d}\text{ }.
\end{equation*}

Finally, our approach to Efron's monotonicity theorem in dimension $d\geq 2$
is based on the following remark.

\begin{remark}
\label{remark_link_cov_Efron_multidim}For suitable regularity of 
$\Phi :(\mathbb{R}^{d})^{2}\rightarrow \mathbb{R}^{d}$ 
and 
$\rho _{X}:\mathbb{R}^{d}\rightarrow \mathbb{R}$, we have 
\begin{eqnarray*}
\left( \nabla g\right) \left( z\right) 
& = &E\left\{ \left( \nabla _{1}\Phi \right) (X,Y)\bigg |X+Y=z\right\} \\
&& \ \ -\ \cov\left\{ \Phi (X,Y),\rho _{X}\left( X\right) \bigg |X+Y=z\right\} \ \ \left( \in \mathbb{R}^{d\times d}\right) 
\text{ .}
\end{eqnarray*}
Recall that 
$\nabla _{1}\Phi \equiv \nabla \Phi _{1}:(\mathbb{R}^{d})^{2}\rightarrow \mathbb{R}^{d\times d}$. 
Furthermore, the matrix $\left( \nabla g\right) \left( z\right) $ is positive semi-definite if for
all $a\in \mathbb{R}^{d}$, $a^{T}\nabla g(z)a^{T}\geq 0$, which leads to
leads to asking if the following covariance inequality holds: 
\begin{equation}
\cov\left\{ a^{T}\Phi (X,Y),\rho _{X}^{T}(X)a\bigg |X+Y=z\right\} \leq
E\left\{ a^{T}\nabla _{1}\Phi (X,Y)a\bigg |X+Y=z\right\} \text{?}
\label{cov_ineq_Efron_multidim}
\end{equation}%
Covariance inequality (\ref{cov_ineq_Efron_multidim}) would imply a
multivariate generalization of Efron's theorem (under sufficient regularity).
\end{remark}

%\subsection{Around Efron's monotonicity theorem for some particular functions}

%----------------------------------------------------------
\section{Peakedness and log-concavity}
\label{sec:peakedness}
%----------------------------------------------------------

Here is a summary of the results of \cite{MR0187269}, %Proschan 
\cite{MR927147}, %Olkin and Tong (1988) and 
\cite{MR2095937}, %Harg\'e (2004)
and \cite{MR994278}. %Kelley ?

First \cite{MR2095937}. %Harg\'e (2004).  
Let $f$ be log-concave, and let $g$ be convex. Then if $X \sim N_d (\mu,
\Sigma) \equiv \gamma$, 
\begin{eqnarray}
E \{ g(X+\mu-\nu) f(X) \} \le E f(X) \cdot Eg(X)
\label{HargeInequalityConvexGfirstForm}
\end{eqnarray}
where $\mu = E(X)$, $\nu = E( X f(X) )/E ( f(X))$. Assuming that $f \ge 0$,
and writing $\tilde{f} d \gamma \equiv f d \gamma / \int f d \gamma $, $%
\tilde{g}(x - \mu) \equiv g(x)$, and $\tilde{X} \sim \tilde{f} d \gamma$ so
that $\tilde{X}$ is strongly log-concave, this can be rewritten as 
\begin{eqnarray}
E \tilde{g} (\tilde{X} - E(\tilde{X} )) \le E \tilde{g}(X- \mu) .
\label{HargeInequalityConvexGsecondForm}
\end{eqnarray}
In particular, for $\tilde{g}(x) = | x |^r$ with $r \ge 1$, 
\begin{equation*}
E | \tilde{X} - \tilde{\mu} |^r \le E | X - \mu |^r ,
\end{equation*}
and for $\tilde{g}(x) = | a^T x |^r$ with $a \in \mathbb{R}^d$, $r\ge 1$, 
\begin{equation*}
E | a^T(\tilde{X} - \tilde{\mu} ) |^r \le E | a^T ( X- \mu ) |^r,
\end{equation*}
which is Theorem 5.1 of \cite{MR0450480}. Writing (\ref%
{HargeInequalityConvexGfirstForm}) as (\ref{HargeInequalityConvexGsecondForm}%
) makes it seem more related to the ``peakedness'' results of \cite{MR927147}
to which we now turn. %Olkin and Tong (1988).
%Note that if $g = -1_A$ with $A \in {\cal A}_n$ as defined below, then 
%$g$ is convex
\medskip

An $n$-dimensional random vector $Y$ is said to be \textsl{more peaked} than
a vector $X$ if they have densities and if 
\begin{equation*}
P(Y \in A) \ge P(X \in A)
\end{equation*}
holds for all $A \in \mathcal{A}_n$, the class of compact, convex, symmetric
(about the origin) Borel sets in $\mathbb{R}^n$. When this holds we will
write $Y \overset{p}{\ge} X$. A vector $a $ \textsl{majorizes} the vector $b$
(and we write $a \succ b$) if $\sum_{i=1}^k b_{[i]} \le \sum_{i=1}^k a_{[i]}$
for $k=1, \ldots , n-1$ and $\sum_{i=1}^n b_{[i]} = \sum_{i=1}^n a_{[i]}$
where $a_{[1]} \ge a_{[2]} \ge \cdots \ge a_{[n]}$ and similarly for $b$.
(In particular $b = (1, \ldots , 1)/n \prec (1,0, \ldots , 0) = a$.) \medskip

%\par\noindent

\begin{proposition}
(Sherman, 1955; see \cite{MR927147}) \label%
{PeakednessPlusSymmetryPreservedByConvolution} %{\bf Proposition 1:}
Suppose that $f_1, f_2, g_1, g_2$ are log-concave densities on $\mathbb{R}^n$
which are symmetric about $0$. Suppose that $X_j \sim f_j$ and $Y_j \sim g_j 
$ for $j=1,2$ are independent. Suppose that $Y_1 \overset{p}{\ge} X_1$ and $%
Y_2 \overset{p}{\ge} X_2$. Then $Y_1 + Y_2 \overset{p}{\ge} X_1 + X_2$.
\end{proposition}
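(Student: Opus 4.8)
The plan is to follow the classical route and reduce the passage from $(X_1,X_2)$ to $(Y_1,Y_2)$ to a single lemma: convolving a peakedness comparison with a fixed symmetric log-concave density preserves it. Concretely, I would first prove the following. \emph{Lemma.} If $W$ has a symmetric log-concave density on $\mathbb{R}^n$ and $U,V$ are random vectors with densities, independent of $W$, satisfying $U\overset{p}{\ge}V$, then $U+W\overset{p}{\ge}V+W$. Granting the lemma, the proposition is immediate: apply it with $W=Y_2$, $U=Y_1$, $V=X_1$ to obtain $Y_1+Y_2\overset{p}{\ge}X_1+Y_2$; apply it again with $W=X_1$, $U=Y_2$, $V=X_2$ to obtain $X_1+Y_2\overset{p}{\ge}X_1+X_2$; and chain the two by transitivity of $\overset{p}{\ge}$, which is immediate from the definition. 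In both applications $W$ indeed has a symmetric log-concave density by hypothesis, namely $g_2$ and then $f_1$, and independence of all four vectors is assumed.

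For the lemma I would fix a compact convex symmetric set $A\in\mathcal{A}_n$ and write $P(U+W\in A)=E[\psi(U)]$, where $\psi(y)\equiv P(W\in A-y)=\int_{\mathbb{R}^n}\mathbf{1}_A(y+w)\,g_W(w)\,dw$; using the symmetry of $g_W$ one checks that $\psi=\mathbf{1}_A\ast g_W$. The structural claim I would establish is that $\psi$ is a bounded, continuous, symmetric, quasi-concave function vanishing at infinity. Quasi-concavity follows from Pr\'ekopa's Theorem~\ref{LCpreservedByMargin} applied to $h(x,y)=\mathbf{1}_A(x-y)\,g_W(y)$ on $\mathbb{R}^{2n}$: $h$ is log-concave, being a product of log-concave functions of $(x,y)$ (the set $\{(x,y):x-y\in A\}$ is convex), and $\int h(x,y)\,dy=\psi(x)$, so $\psi$ is log-concave, hence quasi-concave; this is the same mechanism that underlies Proposition~\ref{LCpreservedByConv}. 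Symmetry of $\psi$ is the routine change of variables $w\mapsto -w$ using $-A=A$ and $g_W(-\cdot)=g_W$. Boundedness ($\psi\le1$ trivially) and decay at infinity ($\psi(y)\to0$ as $\|y\|\to\infty$, since $A$ is bounded and $g_W$ is integrable with exponentially small tails by Theorem~\ref{theorem_exp_tails}) ensure that for each $c\in(0,1]$ the superlevel set $\{\psi\ge c\}$ is compact, convex and symmetric, i.e. a member of $\mathcal{A}_n$.

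I would then finish the lemma by the layer-cake identity: since $\psi\ge0$,
\[
E[\psi(U)]=\int_0^\infty P\big(\psi(U)>c\big)\,dc=\int_0^1 P\big(U\in\{\psi>c\}\big)\,dc ,
\]
and since $\{\psi>c\}=\bigcup_{c'>c}\{\psi\ge c'\}$ is an increasing union of members of $\mathcal{A}_n$, the hypothesis $U\overset{p}{\ge}V$ gives $P(U\in\{\psi>c\})\ge P(V\in\{\psi>c\})$ for every $c$. Integrating yields $E[\psi(U)]\ge E[\psi(V)]$, i.e. $P(U+W\in A)\ge P(V+W\in A)$; as $A\in\mathcal{A}_n$ was arbitrary, $U+W\overset{p}{\ge}V+W$, which proves the lemma.

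The hard part is the lemma, and within it the observation that $\psi=\mathbf{1}_A\ast g_W$ is at once symmetric, quasi-concave, and vanishing at infinity, so that its superlevel sets lie in $\mathcal{A}_n$ and the scalar comparison $U\overset{p}{\ge}V$ can be integrated over these level sets to compare $U+W$ with $V+W$. The quasi-concavity is exactly Pr\'ekopa's theorem (equivalently, the mechanism behind Proposition~\ref{LCpreservedByConv}), the symmetry is a change of variables exploiting $-A=A$ and the symmetry of $g_W$, and the relevant compactness rests on the tail bound of Theorem~\ref{theorem_exp_tails}; everything past that, including the two-step telescoping, is routine bookkeeping.
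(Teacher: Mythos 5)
Your proof is correct. The paper does not prove Proposition~\ref{PeakednessPlusSymmetryPreservedByConvolution} at all --- it only cites Sherman (1955) via Olkin--Tong --- and your argument is essentially the classical one: reduce to a single convolution step, observe that $y\mapsto P(W\in A-y)=(\mathbf{1}_A\ast g_W)(y)$ is symmetric, continuous and log-concave (Pr\'ekopa's Theorem~\ref{LCpreservedByMargin}) with compact superlevel sets because it vanishes at infinity, and integrate the peakedness comparison over these level sets by the layer-cake formula; the two-step telescoping plus transitivity then finishes. One small observation: your lemma places no hypotheses on $U$ and $V$ beyond $U\overset{p}{\ge}V$, so the chaining in fact only uses symmetry and log-concavity of $g_2$ and $f_1$ (the two densities playing the role of $W$); this is a mild strengthening of the stated proposition and is entirely consistent with the classical formulation, where the full symmetry/log-concavity hypothesis is what makes the statement symmetric in the two pairs.
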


%\medskip 

%\par\no indent

\begin{proposition}
\label{PeakednessPreservedWeightedSumsOnRealLine} %{\bf Proposition 2:}  
If $X_1, \ldots X_n$ are independent random variables with log-concave
densities symmetric about $0$, and $Y_1, \ldots , Y_n$ are independent with
log-concave densities symmetric about $0$, and $Y_j \overset{p}{>} X_j$ for $%
j = 1, \ldots , n$, then 
\begin{equation*}
\sum_{j=1}^n c_j Y_j \overset{p}{\ge} \sum_{j=1}^n c_j X_j
\end{equation*}
for all real numbers $\{ c_j \}$.
\end{proposition}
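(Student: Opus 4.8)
The plan is to reduce the claim to the two-summand statement already recorded in Proposition~\ref{PeakednessPlusSymmetryPreservedByConvolution}, by induction on $n$. The three properties feeding that induction --- the peakedness relation $\overset{p}{\ge}$, log-concavity of the densities, and symmetry about $0$ --- are all stable under the two operations needed here: multiplication of a summand by a nonzero scalar, and convolution of independent summands. So the real content is just assembling these stability facts in the right order.

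First I would dispose of any index with $c_j=0$: then $c_jX_j=c_jY_j=0$ is a point mass at the origin on both sides and may be deleted from both sums (if every $c_j$ vanishes the assertion is trivial), so we may assume $c_j\neq 0$ for all $j$. Next I would check that peakedness survives scaling: for $c\neq 0$ the map $x\mapsto cx$ is a bijection of $\mathbb{R}^n$ carrying the class $\mathcal{A}_n$ of compact convex symmetric sets onto itself, so $P(cY_j\in A)=P(Y_j\in c^{-1}A)\ge P(X_j\in c^{-1}A)=P(cX_j\in A)$ for every $A\in\mathcal{A}_n$; hence $c_jY_j\overset{p}{\ge}c_jX_j$ (weakening the hypothesis $\overset{p}{>}$ to $\overset{p}{\ge}$ once and for all). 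I would also record that $c_jX_j$ and $c_jY_j$ retain log-concave densities (Proposition~\ref{prop:AffinePreservLogCon}) that remain symmetric about $0$.

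Then I would run the induction. Writing $S_k=\sum_{j=1}^k c_jX_j$ and $T_k=\sum_{j=1}^k c_jY_j$, the base case $k=1$ is the previous step. For the inductive step I would use that $S_k$ and $T_k$ have log-concave densities (log-concavity is preserved by convolution, Proposition~\ref{LCpreservedByConv}) that are symmetric about $0$ (a convolution of symmetric measures is symmetric), and that the term $c_{k+1}X_{k+1}$, resp.\ $c_{k+1}Y_{k+1}$, is independent of $S_k$, resp.\ $T_k$, carries a log-concave symmetric density, and satisfies $c_{k+1}Y_{k+1}\overset{p}{\ge}c_{k+1}X_{k+1}$; then Proposition~\ref{PeakednessPlusSymmetryPreservedByConvolution}, applied with the four densities being those of $S_k$, $c_{k+1}X_{k+1}$, $T_k$, $c_{k+1}Y_{k+1}$, yields $T_{k+1}\overset{p}{\ge}S_{k+1}$, closing the induction.

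The step I expect to need the most care is the bookkeeping forced by Proposition~\ref{PeakednessPlusSymmetryPreservedByConvolution} being stated for random vectors that actually possess densities: one must make sure each partial sum $S_k$, $T_k$ is genuinely absolutely continuous, which holds as soon as at least one of $c_1,\dots,c_k$ is nonzero, since a convolution having one absolutely continuous factor is itself absolutely continuous. Everything else --- that log-concavity and central symmetry propagate through the induction, and that scaling by a nonzero constant is harmless --- is routine.
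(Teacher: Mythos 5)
Your argument is correct. Note that the paper itself offers no proof of this proposition: Section~\ref{sec:peakedness} simply records it as part of a summary of known results (it is essentially Olkin and Tong's observation, built on Sherman's theorem), so there is nothing internal to compare against; your reduction is precisely the standard route. The three ingredients you assemble all check out: scaling by $c\neq 0$ preserves the peakedness order because $A\mapsto c^{-1}A$ is a bijection of the compact convex symmetric sets, and it preserves log-concavity and symmetry of the density; the partial sums $S_k$, $T_k$ have log-concave, symmetric densities by Proposition~\ref{LCpreservedByConv} and the fact that a convolution with an absolutely continuous factor is absolutely continuous; and then Proposition~\ref{PeakednessPlusSymmetryPreservedByConvolution} applied to the pairs $(S_k, c_{k+1}X_{k+1})$ and $(T_k, c_{k+1}Y_{k+1})$ closes the induction. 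The only cosmetic quibbles: the variables here are one-dimensional, so the relevant class is $\mathcal{A}_1$ (symmetric compact intervals) rather than $\mathcal{A}_n$, and the all-$c_j=0$ case strictly falls outside the definition of $\overset{p}{\ge}$ (which requires densities), though it is of course vacuous; neither affects the argument.
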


%\par\noindent

\begin{proposition}
\label{PeakednessPreservedUnderConvergenceInDistr} %{\bf Proposition 3:}  
If $\{ X_m \}$ and $\{ Y_m \}$ are two sequences of $n-$dimensional random
vectors with $Y_m \overset{p}{>} X_m$ for each $m$ and $X_m \rightarrow_d X$%
, $Y_m \rightarrow_d Y$, then $Y \overset{p}{>} X$.
\end{proposition}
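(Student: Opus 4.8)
The plan is to verify directly the defining inequality: for every $A \in \mathcal{A}_n$ (compact, convex, symmetric about the origin) one must show $P(Y \in A) \ge P(X \in A)$, knowing that $P(Y_m \in A) \ge P(X_m \in A)$ for all $m$ and all such $A$. The only real subtlety is that the limiting laws of $X$ and $Y$ may be degenerate, so $A$ itself need not be a continuity set and the continuity-set form of the portmanteau theorem cannot be applied to $A$ directly; I would get around this by inflating $A$ slightly.

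Fix $A \in \mathcal{A}_n$, let $B$ (resp.\ $B^\circ$) denote the closed (resp.\ open) Euclidean unit ball in $\mathbb{R}^n$, and for $\epsilon > 0$ put $A_\epsilon := A + \epsilon B$. I would first record the elementary facts: $A_\epsilon$ is compact, convex and symmetric about the origin (a Minkowski sum of compact, convex, symmetric sets), so $A_\epsilon \in \mathcal{A}_n$; the inclusions $A \subseteq A + \epsilon B^\circ \subseteq A_\epsilon$ hold with $A + \epsilon B^\circ$ open, hence $A \subseteq \mathrm{int}(A_\epsilon) \subseteq A_\epsilon$; and the family $\{A_\epsilon\}$ decreases to $\bigcap_{\epsilon>0} A_\epsilon = \bar A = A$ as $\epsilon \downarrow 0$. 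Then, fixing $\epsilon > 0$, I would run the chain: since $\mathrm{int}(A_\epsilon)$ is open and $X_m \rightarrow_d X$, the portmanteau theorem gives $\liminf_m P(X_m \in \mathrm{int}(A_\epsilon)) \ge P(X \in \mathrm{int}(A_\epsilon)) \ge P(X \in A)$; since $\mathrm{int}(A_\epsilon) \subseteq A_\epsilon \in \mathcal{A}_n$ and $Y_m \overset{p}{\ge} X_m$, one has $P(X_m \in \mathrm{int}(A_\epsilon)) \le P(X_m \in A_\epsilon) \le P(Y_m \in A_\epsilon)$ for every $m$, so $\liminf_m P(Y_m \in A_\epsilon) \ge P(X \in A)$; and finally $A_\epsilon$ is closed and $Y_m \rightarrow_d Y$, so $P(Y \in A_\epsilon) \ge \limsup_m P(Y_m \in A_\epsilon) \ge \liminf_m P(Y_m \in A_\epsilon) \ge P(X \in A)$.

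This gives $P(Y \in A_\epsilon) \ge P(X \in A)$ for every $\epsilon > 0$; letting $\epsilon \downarrow 0$ and using continuity of $P(Y \in \cdot)$ along the decreasing family $A_\epsilon$ yields $P(Y \in A) = \lim_{\epsilon \downarrow 0} P(Y \in A_\epsilon) \ge P(X \in A)$. Since $A \in \mathcal{A}_n$ was arbitrary, $Y \overset{p}{\ge} X$. The one place where care is genuinely needed is the choice of enlargement: the Minkowski inflation $A + \epsilon B$ always produces full-dimensional members of $\mathcal{A}_n$ having $A$ in their interior — in particular it behaves well even when $A$ is lower-dimensional, unlike the dilations $tA$ — and it is exactly this that lets the open-set half and the closed-set half of the portmanteau theorem be applied to $\{X_m\}$ and $\{Y_m\}$ respectively. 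I would also note that the definition of \emph{more peaked} recorded in this section asks in addition that the vectors have densities; that requirement belongs to Sherman's setup and plays no role in — and is not recovered by — the inequality argument above, so the statement is most naturally read as an assertion about the probabilities $P(\,\cdot \in A)$.
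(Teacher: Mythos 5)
Your argument is correct, and it is complete as written. Note that the paper itself gives no proof of this proposition: Section~\ref{sec:peakedness} is explicitly a summary of results from \cite{MR0187269}, \cite{MR927147}, \cite{MR2095937} and \cite{MR994278}, and Proposition~\ref{PeakednessPreservedUnderConvergenceInDistr} is simply recorded there with a citation, so there is no in-paper proof to compare against. Your route — inflate $A$ to $A_\epsilon = A+\epsilon B \in \mathcal{A}_n$, apply the open-set half of the portmanteau theorem to $\{X_m\}$ on $\mathrm{int}(A_\epsilon)\supseteq A$, pass the inequality through $Y_m \overset{p}{\ge} X_m$ applied to $A_\epsilon$, apply the closed-set half to $\{Y_m\}$ on $A_\epsilon$, and finish by continuity from above along $A_\epsilon \downarrow A$ — is sound at every step; in particular the choice of Minkowski inflation (rather than dilation $tA$) correctly handles lower-dimensional $A$, and $\bigcap_{\epsilon>0}A_\epsilon = A$ because $A$ is closed. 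The standard textbook proof (as in the sources the paper cites) runs slightly differently: one observes that the boundaries $\partial A_\epsilon$ are disjoint for distinct $\epsilon$, so all but countably many $A_\epsilon$ are continuity sets of both limit laws, and one then applies the continuity-set form of the portmanteau theorem along such a sequence $\epsilon_k \downarrow 0$. Your two-sided portmanteau argument avoids that selection step and is, if anything, cleaner. Your closing caveat is also apt: the definition of ``more peaked'' quoted in this section requires densities, which weak limits need not possess, so the conclusion should indeed be read as the inequality $P(Y\in A)\ge P(X\in A)$ for all $A\in\mathcal{A}_n$; this is how the result is used in the peakedness literature.
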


%\medskip

% \par\noindent

\begin{proposition}
$Y \overset{p}{\ge} X$ if and only if $C Y \overset{p}{\ge } CX$ for all $%
k\times n$ matrices $C$ with $k\le n$.
\end{proposition}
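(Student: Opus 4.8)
The plan is to prove the two implications separately; the ``if'' direction is immediate and the ``only if'' direction is the substantive one. For ``if'', simply specialize to $C = I_n$, the $n\times n$ identity matrix, which satisfies $k = n \le n$: then $CY = Y$ and $CX = X$, so $CY \overset{p}{\ge} CX$ gives $Y \overset{p}{\ge} X$ directly.

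For ``only if'', assume $Y \overset{p}{\ge} X$, fix a $k\times n$ matrix $C$ with $k \le n$, and fix $B \in \mathcal{A}_k$; the goal is $P(CY \in B) \ge P(CX \in B)$. The key identity is $\{CY \in B\} = \{Y \in C^{-1}(B)\}$, where $C^{-1}(B) := \{x \in \mathbb{R}^n : Cx \in B\}$ is the linear preimage. One checks that $C^{-1}(B)$ is closed (by continuity of $x\mapsto Cx$), convex (preimage of a convex set under a linear map), and symmetric about the origin (if $Cx\in B$ then $C(-x)=-Cx\in B$ since $B$ is symmetric). The only subtlety is that $C^{-1}(B)$ need not be bounded: as soon as $C$ has a nontrivial kernel it contains an affine copy of $\ker C$, so it need not itself lie in $\mathcal{A}_n$, and the peakedness hypothesis cannot be applied to it directly.

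To repair this, approximate from inside: for $R>0$ set $A_R := C^{-1}(B)\cap\{x:\|x\|\le R\}$. Each $A_R$ is compact, convex, and symmetric, hence $A_R\in\mathcal{A}_n$, and $A_R\uparrow C^{-1}(B)$ as $R\to\infty$. Applying the hypothesis $Y \overset{p}{\ge} X$ to each $A_R$ gives $P(Y\in A_R)\ge P(X\in A_R)$, and letting $R\to\infty$ with continuity from below of probability measures yields $P(Y\in C^{-1}(B))\ge P(X\in C^{-1}(B))$, i.e. $P(CY\in B)\ge P(CX\in B)$. Since $B\in\mathcal{A}_k$ was arbitrary, $CY \overset{p}{\ge} CX$. (If $C$ does not have full row rank, $CX$ and $CY$ are concentrated on a proper subspace of $\mathbb{R}^k$ and have no densities there; the conclusion should then be read simply as the family of inequalities $P(CY\in B)\ge P(CX\in B)$, $B\in\mathcal{A}_k$, which is exactly what the argument produces.) The main obstacle is precisely the non-compactness of the preimage $C^{-1}(B)$; once one observes that inner approximation by balls fixes this and that the peakedness inequality is stable under increasing limits of sets, the remainder is routine.
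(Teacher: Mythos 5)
Your argument is correct. Note, however, that the paper itself gives no proof of this proposition: it appears only in the summary list of peakedness results quoted from \cite{MR927147} (Olkin and Tong) and related work, so there is no internal proof to compare against; the comparison can only be with the cited literature, and your route is the standard one. The ``if'' direction via $C=I_n$ is immediate, and in the ``only if'' direction you correctly identify the one genuine issue: for $B\in\mathcal{A}_k$ the preimage $C^{-1}(B)$ is closed, convex and symmetric but generally unbounded whenever $\ker C\neq\{0\}$, so it need not lie in $\mathcal{A}_n$. Your repair --- truncating to $A_R=C^{-1}(B)\cap\{\|x\|\le R\}\in\mathcal{A}_n$, applying $P(Y\in A_R)\ge P(X\in A_R)$, and letting $R\to\infty$ using continuity from below, since $A_R\uparrow C^{-1}(B)$ --- is valid, and your closing remark about the degenerate case (when $C$ lacks full row rank, $CX$ and $CY$ have no densities on $\mathbb{R}^k$, so the conclusion must be read as the family of inequalities $P(CY\in B)\ge P(CX\in B)$ for $B\in\mathcal{A}_k$) is the right way to reconcile the statement with the paper's definition of $\overset{p}{\ge}$, which formally presupposes densities. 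Nothing further is needed.
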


\medskip

\begin{proposition}
(\cite{MR0187269}) \label{ProschanMajorizationAndIIDLogConcaveOnR} Suppose
that $Z_1, \ldots , Z_n $ are i.i.d. random variables with log-concave
density symmetric about zero. Then if $a , b \in \mathbb{R}_+^n$ with $a
\succ b$ ($a$ majorizes $b$), then 
\begin{equation*}
\sum_{j=1}^n b_j Z_j \overset{p}{\ge } \sum_{j=1}^n a_j Z_j \ \ \ \mbox{in}
\ \ \mathbb{R}
\end{equation*}
\end{proposition}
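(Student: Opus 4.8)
The plan is to reduce the statement to a single elementary two-coordinate step via the characterization of majorization by \emph{Robin Hood transfers} ($T$-transforms): $a\succ b$ holds if and only if $b$ is reached from $a$ by finitely many steps, each of which takes the current vector $c$, picks $i\neq j$ with $c_i>c_j$, and replaces $(c_i,c_j)$ by $(c_i-\e,c_j+\e)$ for some $0<\e\le(c_i-c_j)/2$; each such step preserves membership in $\mathbb{R}_+^n$. Since $\overset{p}{\ge}$ is plainly reflexive and transitive, it suffices to treat one transfer. After relabeling, suppose it affects coordinates $1,2$, and set $m=(a_1+a_2)/2$, $\nu=(a_1-a_2)/2\ge 0$, $\mu=\nu-\e\in[0,\nu]$, so that $a_1=m+\nu,\ a_2=m-\nu,\ b_1=m+\mu,\ b_2=m-\mu$. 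Introducing $W=m(Z_1+Z_2)$, $D=Z_1-Z_2$, and $S=\sum_{j\ge 3}a_jZ_j$ (independent of $(Z_1,Z_2)$; $S=0$ if those coefficients all vanish), we have
\[
\textstyle\sum_{j=1}^n a_jZ_j=S+W+\nu D,\qquad \sum_{j=1}^n b_jZ_j=S+W+\mu D,
\]
so the goal becomes $S+W+\mu D\overset{p}{\ge}S+W+\nu D$ in $\mathbb{R}$ with $0\le\mu\le\nu$. If $m=0$ then $\nu=0$ and the transfer is trivial, so assume $m>0$.

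The heart of the argument is to condition on $D$. The pair $(Z_1,Z_2)$ has joint density $x\mapsto f(x_1)f(x_2)$, which is log-concave on $\mathbb{R}^2$; hence the image of this density under the invertible linear map $(Z_1,Z_2)\mapsto\big(m(Z_1+Z_2),Z_1-Z_2\big)$ is again a log-concave density $p(w,d)$ on $\mathbb{R}^2$. For each fixed $d$ the slice $w\mapsto p(w,d)$ is log-concave on $\mathbb{R}$ (restriction of a concave potential to a line), and since $f$ is even one checks directly that $p(-w,d)=p(w,d)$. Therefore, for a.e.\ $d$, the conditional law of $W$ given $D=d$ has a density that is log-concave and symmetric about $0$, i.e.\ is symmetric unimodal. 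As $S$ is independent of $(W,D)$ with a symmetric log-concave density (a positively scaled convolution of copies of $f$: log-concave by preservation under convolution, Proposition~\ref{LCpreservedByConv}, and manifestly symmetric — or $S=0$), the conditional law of $S+W$ given $D=d$ is a convolution of symmetric log-concave densities, hence symmetric unimodal; denote its density $h_d$.

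It remains to apply the one-dimensional fact that for a symmetric unimodal density $h$ on $\mathbb{R}$ and $t\ge 0$, the map $s\mapsto\int_{-t}^{t}h(w-s)\,dw$ is nonincreasing in $|s|$ (an interval of fixed length collects the most mass when centered at $0$; this follows since $\frac{d}{ds}\int_{-t}^{t}h(w-s)\,dw=h(t+s)-h(t-s)\le 0$ for $s\ge0$ by evenness and unimodality). Since $|\mu d|\le|\nu d|$, for a.e.\ $d$,
\[
P\big(|S+W+\mu D|\le t\mid D=d\big)=\int_{-t}^{t}h_d(w-\mu d)\,dw\ \ge\ \int_{-t}^{t}h_d(w-\nu d)\,dw=P\big(|S+W+\nu D|\le t\mid D=d\big).
\]
Integrating over the law of $D$ gives $P\big(|\sum b_jZ_j|\le t\big)\ge P\big(|\sum a_jZ_j|\le t\big)$ for all $t\ge 0$; since in $\mathbb{R}$ the class $\mathcal{A}_1$ consists exactly of the symmetric intervals $[-t,t]$, this is precisely $\sum b_jZ_j\overset{p}{\ge}\sum a_jZ_j$, which completes the induction step and the proof.

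The only genuinely delicate point is the conditioning step: one must ensure the relevant densities exist (this is why we pass to $(W,D)$ through an \emph{invertible} linear map and use that $f$ is a true density), that slices of log-concave functions are log-concave, and that the symmetries — evenness of $f$ in each of $Z_1,Z_2$ — survive the change of variables; none of this is deep but it is what makes the argument run. An alternative to absorbing $S$ by conditioning is to peel it off first via Sherman's Proposition~\ref{PeakednessPlusSymmetryPreservedByConvolution} applied with the trivial pair $(S,S)$, reducing to the two-variable statement $b_1Z_1+b_2Z_2\overset{p}{\ge}a_1Z_1+a_2Z_2$ on $\mathbb{R}$, and then to run the same conditioning argument with $S\equiv 0$.
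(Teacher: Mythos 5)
Your proof is correct, and there is in fact nothing in the paper to compare it with: Section~\ref{sec:peakedness} only records this proposition with a citation to Proschan (1965) and gives no argument. Judged on its own, your route --- reduction to a single transfer, the change of variables to $(W,D)=(m(Z_1+Z_2),\,Z_1-Z_2)$, and the one-dimensional Anderson-type lemma applied to the symmetric log-concave conditional density $h_d$ --- is complete and clean; it is in the spirit of Proschan's reduction to two coefficients, but replaces a direct monotonicity computation by conditioning on $D$, which parallels the symmetrization/conditioning arguments used elsewhere in the paper (e.g.\ the proof of Proposition~\ref{prop_efron_spe_dim_1}) and makes transparent where symmetry and log-concavity enter.

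Two small points to tighten, neither of which affects validity. First, your ``if and only if'' characterization of majorization by transfers with $\varepsilon\le(c_i-c_j)/2$ is not literally correct as a statement about reaching the vector $b$ itself: for example $(1,0)\succ(0,1)$, but no sequence of such non-crossing transfers leaves the larger coordinate in second position. Use the standard Hardy--Littlewood--P\'olya / Marshall--Olkin lemma, which produces finitely many transfers with $\varepsilon\le c_i-c_j$; this costs nothing, since your one-step argument only uses $|\mu|\le\nu$, and if a transfer crosses over, the conclusion for the permuted pair carries over to $b$ because the $Z_j$ are i.i.d., hence exchangeable. Second, the computation $\frac{d}{ds}\int_{-t}^{t}h_d(w-s)\,dw=h_d(t+s)-h_d(t-s)$ tacitly assumes $h_d$ is continuous at the relevant points; either note that $s\mapsto\int_{-t-s}^{t-s}h_d(u)\,du$ is Lipschitz (log-concave densities are bounded) with this as an a.e.\ derivative, or avoid differentiation by comparing, for $0\le s_1\le s_2$, the two boundary strips $[t-s_2,\,t-s_1]$ and $[-t-s_2,\,-t-s_1]$ pointwise, using that $h_d$ is nonincreasing in $|w|$.
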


\begin{proposition}
(\cite{MR927147}) \label{OlkinTongMajorizationAndIIDLogConcaveOnRd} Suppose
that $Z_1, \ldots , Z_n $ are i.i.d. $d-$dimensional random vectors with
log-concave density symmetric about zero. Then 
if $a_j , b_j \in \mathbb{R}^1 $ with $a \succ b$ ($a$ majorizes $b$), then 
\begin{equation*}
\sum_{j=1}^n b_j Z_j \overset{p}{>} \sum_{j=1}^n a_j Z_j \ \ \ \mbox{in} \ \ 
\mathbb{R}^d .
\end{equation*}
\end{proposition}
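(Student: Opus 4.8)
The plan is to reduce the statement to a two--summand estimate, using the combinatorial structure of majorization together with preservation of peakedness under convolution, Proposition~\ref{PeakednessPlusSymmetryPreservedByConvolution}. First I would pass to nonnegative coefficients: since each $Z_j$ is symmetric about the origin, the joint law of $(\mathrm{sgn}(c_1)Z_1,\dots,\mathrm{sgn}(c_n)Z_n)$ coincides with that of $(Z_1,\dots,Z_n)$, whence $\sum_j a_jZ_j\overset{d}{=}\sum_j|a_j|Z_j$ and likewise for $b$, so we may assume $a,b\in\mathbb{R}_+^n$. By the Hardy--Littlewood--P\'olya characterization of majorization (see \cite{MR552278}), $a\succ b$ means that $b$ is obtained from $a$ by finitely many $T$--transforms, each affecting only two coordinates $i<j$ via $(a_i,a_j)\mapsto(\lambda a_i+(1-\lambda)a_j,\,(1-\lambda)a_i+\lambda a_j)$ for some $\lambda\in[0,1]$. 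As $\overset{p}{\ge}$ is transitive, it suffices to treat a single $T$--transform, i.e.\ the case where $a$ and $b$ agree outside two coordinates, say $\{1,2\}$.

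In that case write $W=\sum_{j\ge 3}a_jZ_j$; then $\sum_j a_jZ_j=(a_1Z_1+a_2Z_2)+W$ and $\sum_j b_jZ_j=(b_1Z_1+b_2Z_2)+W$, and $W$ is symmetric log-concave by Propositions~\ref{prop:AffinePreservLogCon} and~\ref{LCpreservedByConv} (the case $n=2$, where $W\equiv0$, being handled directly). By Proposition~\ref{PeakednessPlusSymmetryPreservedByConvolution} it is then enough to establish the two--variable claim: if $U,V$ are i.i.d.\ with common symmetric log-concave density $f=e^{-\varphi}$ on $\mathbb{R}^d$ and $(a_1,a_2)\succ(b_1,b_2)$ have nonnegative entries, then $b_1U+b_2V\overset{p}{\ge}a_1U+a_2V$. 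Rescaling (which preserves symmetric log-concavity and the peakedness order) we may assume $a_1+a_2=b_1+b_2=1$ and, ordering decreasingly, parametrize $a_1=\tfrac{1+p}{2},a_2=\tfrac{1-p}{2}$ and $b_1=\tfrac{1+q}{2},b_2=\tfrac{1-q}{2}$ with $0\le q\le p\le 1$. Thus the claim reduces to: for every compact convex symmetric Borel set $A\subset\mathbb{R}^d$, the function $t\mapsto P\big(\tfrac{U+V}{2}+t\,\tfrac{U-V}{2}\in A\big)$ is nonincreasing on $[0,\infty)$.

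The heart of the argument is this monotonicity, which I would prove by slicing and Anderson's lemma. A change of variables shows that $(W_1,W_2):=(\tfrac{U+V}{2},\tfrac{U-V}{2})$ has joint density proportional, for fixed $w_2$, to $\phi_{w_2}(w_1):=f(w_1+w_2)f(w_1-w_2)$; and $w_1\mapsto\phi_{w_2}(w_1)$ is even in $w_1$ (because $f$ is even) and log-concave in $w_1$ (because $w_1\mapsto\varphi(w_1+w_2)+\varphi(w_1-w_2)$ is convex), hence symmetric unimodal. Writing $P(W_1+tW_2\in A)=\int\!\big(\int_{A-tw_2}\phi_{w_2}(w_1)\,dw_1\big)\,dw_2$ and, using evenness of $\phi_{w_2}$, rewriting the inner integral as $\int_A\phi_{w_2}(w_1+tw_2)\,dw_1$, Anderson's lemma applied to the symmetric convex set $A$ and the symmetric unimodal density $\phi_{w_2}$ shows the inner integral is nonincreasing in $t\ge0$ for each $w_2$; integrating in $w_2$ gives the desired monotonicity. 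Hence $P(b_1U+b_2V\in A)\ge P(a_1U+a_2V\in A)$ for all such $A$, i.e.\ $b_1U+b_2V\overset{p}{\ge}a_1U+a_2V$, and combined with the reductions above this proves Proposition~\ref{OlkinTongMajorizationAndIIDLogConcaveOnRd}.

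The step I expect to be the main obstacle is precisely the two--variable monotonicity: $W_1=(U+V)/2$ and $W_2=(U-V)/2$ are not independent unless $U,V$ are Gaussian, so one cannot shortcut the slicing argument and one must invoke Anderson's lemma (or an equivalent rearrangement/convexity input) rather than a product computation. A secondary nuisance is keeping the bookkeeping honest in the sign reduction and in degenerate cases (e.g.\ $n=2$, or some $a_j=0$), so that Proposition~\ref{PeakednessPlusSymmetryPreservedByConvolution} is only ever applied to bona fide symmetric log-concave densities.
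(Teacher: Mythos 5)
The paper does not prove this proposition at all (Section~\ref{sec:peakedness} states it as a citation to \cite{MR927147}), so the comparison is with the classical Proschan/Olkin--Tong argument, and your architecture is essentially that argument: reduce $a\succ b$ to a chain of $T$--transforms, absorb the untouched coordinates through Proposition~\ref{PeakednessPlusSymmetryPreservedByConvolution}, and settle the two--coefficient case via the change of variables $(W_1,W_2)=\bigl(\tfrac{U+V}{2},\tfrac{U-V}{2}\bigr)$ and Anderson's lemma applied, slice by slice in $w_2$, to the even, log-concave (hence convex-level-set) function $w_1\mapsto f(w_1+w_2)f(w_1-w_2)$. That central computation is correct, Anderson's lemma is exactly the right external ingredient (it is not in the paper, but it is classical), and your attention to the degenerate cases ($n=2$, vanishing coefficients, normalization of the sum) is the right bookkeeping.

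The one genuine gap is your opening reduction to nonnegative coefficients. Replacing $(a,b)$ by $(|a|,|b|)$ does leave the laws of $\sum_j a_jZ_j$ and $\sum_j b_jZ_j$ unchanged, but it destroys the hypothesis: $a\succ b$ does \emph{not} imply $|a|\succ|b|$ (e.g.\ $a=(1,-1)\succ(0,0)=b$, yet $|a|=(1,1)$ and $|b|=(0,0)$ do not even have equal sums). Since the proposition as stated allows $a_j,b_j\in\mathbb{R}$, your proof as written only covers the nonnegative case. The repair is standard and short, but it must be said: by Hardy--Littlewood--P\'olya write $b=Da$ with $D$ doubly stochastic; then $|b|\le D|a|$ coordinatewise and $D|a|\prec|a|$, i.e.\ $|b|\prec_{w}|a|$. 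The majorization step $D|a|\prec|a|$ is exactly the nonnegative case you proved, while the coordinatewise comparison $|b|\le D|a|$ is handled by the scaling observation $A\subseteq cA$ for $c\ge1$ and $A$ symmetric convex, combined with Proposition~\ref{PeakednessPlusSymmetryPreservedByConvolution} (or Anderson's lemma directly when some coordinates vanish). With that inserted, your proof is complete.
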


Now let 
$\mathcal{K}_n \equiv \{ x \in \mathbb{R}^n : \ x_1 \le x_2 \le \ldots \le x_n \}$. 
For any 
$y \in \mathbb{R}^n$, let $\hat{y} = (\hat{y}_1, \ldots, \hat{y}_n )$ 
denote the projection of $y$ onto $\mathcal{K}_n$. 
Thus $| y - \hat{y} |^2 = \mbox{min}_{x \in \mathcal{K}} | y - x |^2$.

\begin{proposition}
\label{KellyOrderedMeansPeakednessForGaussian} (\cite{MR994278}). Suppose
that $\underline{Y} = (Y_1, \ldots , Y_n)$ where $Y_j \sim N(\mu_j, \sigma^2) $ are independent and 
$\mu_1\le \mu_2 \le \ldots \le \mu_n$. Thus $\underline{\mu} \in {\mathcal K}_n$ 
and $\underline{\hat{\mu}} \equiv \underline{\hat{Y}} \in {\mathcal K}_n$.
Then $\hat{\mu}_k - \mu_k \overset{p}{\ge} Y_k - \mu_k$ for each $k \in \{ 1, \ldots , n \}$; i.e. 
\begin{eqnarray*}
P( | \hat{\mu}_k - \mu_k | \le t ) \ge P( | Y_k - \mu_k | \le t ) \ \ \ 
\mbox{for all} \ \ t>0, \ \ k \in \{ 1, \ldots, n \}.
\end{eqnarray*}
\end{proposition}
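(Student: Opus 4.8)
The statement is a univariate \emph{peakedness} comparison: writing $\hat{\underline\mu}=\hat{\underline Y}$ for the Euclidean projection of the observed $\underline Y=(Y_1,\dots,Y_n)$ onto the monotone cone $\mathcal{K}_n$ — that is, the isotonic (least–squares) regression of $\underline Y$ onto $1\le\cdots\le n$ — we must show $P(|\hat\mu_k-\mu_k|\le t)\ge P(|Y_k-\mu_k|\le t)$ for every $t>0$ and every $k$. By rescaling we may take $\sigma^2=1$ and write $\underline Y=\underline\mu+\underline Z$ with $\underline Z\sim N_n(0,I)$; the decisive hypothesis is $\underline\mu\in\mathcal{K}_n$, so that $\underline\mu$ is its own projection. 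The plan is: expose the block structure of $\hat{\underline Y}$, condition on it, reduce to a statement about conditioned Gaussians, and finish by induction on the number of blocks.

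First I would record the pool–adjacent–violators description: there is a (data–dependent) partition of $\{1,\dots,n\}$ into maximal consecutive blocks $B_1,\dots,B_r$ on which $\hat{\underline Y}$ is constant, with $\hat Y_k=\bar Y_{B_{\ell(k)}}$ the within–block average and $\bar Y_{B_1}<\cdots<\bar Y_{B_r}$; equivalently $\hat Y_k=\max_{i\le k}\min_{j\ge k}\bar Y_{[i,j]}$. Note that inside a fixed block the average is more peaked about $\bar\mu_B$ than $Y_k$ is about $\mu_k$, since $(1/|B|,\dots,1/|B|)\prec(1,0,\dots,0)$, so Proposition~\ref{ProschanMajorizationAndIIDLogConcaveOnR} applies to the centred Gaussians; the difficulties are that the block is random and that re-centring at $\mu_k$ rather than $\bar\mu_B$ introduces a nonzero shift $\bar\mu_B-\mu_k$. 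I would then decompose the probability space by the random partition $\pi$, using the Gaussian-specific facts that for a fixed block $B$ the average $\bar Y_B\sim N(\bar\mu_B,1/|B|)$ is independent of the within–block shape $(Y_j-\bar Y_B)_{j\in B}$, and that the event $\{\pi=\{B_1,\dots,B_r\}\}$ factors into within–block shape events (independent of all the block averages) and the inter–block ordering $\bar Y_{B_1}<\cdots<\bar Y_{B_r}$. Hence, conditionally on $\pi$ and on the shape events, the block averages are independent Gaussians conditioned to lie in strictly increasing order.

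This reduces matters to the following claim, to be proved by induction on the number of blocks $r$: if $V_1,\dots,V_r$ are independent with $V_\ell\sim N(a_\ell,\tau_\ell^2)$ and $a_1\le\cdots\le a_r$ — here $a_\ell=\bar\mu_{B_\ell}$, nondecreasing precisely because $\underline\mu\in\mathcal{K}_n$ and the blocks are consecutive — and we condition on $\{V_1<\cdots<V_r\}$, then after averaging over $\pi$ the conditioned coordinate $V_{\ell(k)}$, re-centred at $\mu_k$, is more peaked than $N(0,1)$, using that $\bar\mu_{B_{\ell(k)}}-\mu_k$ lies in the bounded interval between $\mu_{\min B_{\ell(k)}}-\mu_k$ and $\mu_{\max B_{\ell(k)}}-\mu_k$ (an interval containing $0$, since $k$ lies in that block). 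Peeling off the last block reduces the conditioning to a single one–sided truncation $\{V_\ell>V_{\ell-1}\}$ whose threshold is independent of $V_\ell$, i.e. a \emph{mixture} of one–sided truncations of a symmetric unimodal law; the nondecreasing arrangement of the $a_\ell$ keeps this threshold favourably placed relative to the target $\mu_k$, and an Anderson-type unimodality estimate then shows the mixture cannot lose peakedness.

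The main obstacle is exactly this last step — controlling the joint effect of the inter–block ordering constraint and of the within–block shift $\bar\mu_B-\mu_k$, and then re-assembling the conditional bounds into the unconditional one over all PAV partitions. Because $\hat{\underline Y}$ is a genuinely nonlinear (min–max) function of $\underline Y$ whose effective linearisation (block averaging) depends on the data, none of the preservation results for convolution (Proposition~\ref{PeakednessPlusSymmetryPreservedByConvolution}), linear images, or weak limits applies directly; Proschan-type majorisation survives only inside a fixed block, and the real content is that conditioning on, and mixing over, the random block structure never destroys the gain it produces. I expect a careful induction on $n$ (equivalently on $r$), combined with monotonicity of univariate peakedness under the relevant symmetric truncations, to close the argument.
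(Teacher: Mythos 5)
First, a point of reference: the paper does not prove Proposition~\ref{KellyOrderedMeansPeakednessForGaussian} at all; it is stated as a summary item with a citation to Kelly (1989), so your argument has to stand on its own rather than be compared to an in-paper proof. As written, it does not: it is a strategy outline whose decisive step is announced but not carried out, and you say so yourself (``I expect a careful induction \ldots to close the argument''). The sound parts are the PAVA block description, the Gaussian factorization of the partition event into within-block shape events (independent of the block averages) and the ordering event on the block averages, and the observation that mixing over partitions is harmless, since peakedness is a pointwise inequality in $t$ and is preserved by averaging over the conditioning variable provided each conditional law satisfies the bound.

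The genuine gap is precisely the conditional claim you reduce to: that for each fixed PAVA partition, the order-constrained block average $V_{\ell(k)}$, re-centred at $\mu_k$, satisfies $P(|V_{\ell(k)}-\mu_k|\le t\mid V_1<\cdots<V_r)\ge P(|Z|\le t)$ for all $t$. Three separate difficulties are glossed over here. (i) The shift $\bar\mu_{B}-\mu_k$ is only constrained to lie in an interval containing $0$; its magnitude can be large, and a variance-reduced Gaussian shifted by a large amount is \emph{not} more peaked about $\mu_k$ than $N(0,\sigma^2)$ --- the claim can only be rescued by exploiting that such partitions are correspondingly unlikely or that the ordering constraint compensates, and no such quantitative mechanism is given. (ii) ``Peeling off the last block'' does not reduce the conditioning to a single one-sided truncation with a threshold independent of $V_{\ell(k)}$: a middle coordinate is squeezed between two random neighbours, and after conditioning on the full ordering the remaining coordinates are no longer independent, so the inductive hypothesis would have to be reformulated (and it is not clear it closes in the form you state). (iii) The appeal to ``an Anderson-type unimodality estimate'' is not available off the shelf: Anderson's lemma and Proposition~\ref{ProschanMajorizationAndIIDLogConcaveOnR} require symmetry about the centring point, which is destroyed by one-sided truncation and by the shift; peakedness comparisons for asymmetric laws need a different tool. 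Until the conditional claim is actually proved with these issues addressed, the argument establishes nothing beyond the trivial within-block majorization remark, so the proposition remains unproven by this route.
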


% \bigskip

%\noindent \textbf{Question 3:} \ \ Does Kelly's Proposition~\ref%
%{KellyOrderedMeansPeakednessForGaussian} continue to hold if the normal
%distributions of the $Y_i$'s is replaced some other centrally-symmetric
%log-concave distribution, for example Chernoff's distribution (see \cite%
%{Balabdaoui-Wellner:13}).

%----------------------------------------------------------
\section{Some open problems and further connections with log-concavity}
\label{sec:OpenProblemsAndFurtherConnections}

%----------------------------------------------------------

\subsection{Two questions}

\noindent \textbf{Question 1:} \ \ 
Does Kelly's Proposition~\ref{KellyOrderedMeansPeakednessForGaussian} 
continue to hold if the normal
distributions of the $Y_i$'s is replaced some other centrally-symmetric
log-concave distribution, for example Chernoff's distribution 
(see \cite{Balabdaoui-Wellner:13})?
\medskip 

\noindent \textbf{Question 2:} \ \ 
\cite{Balabdaoui-Wellner:13} show that Chernoff's distribution is
log-concave. Is it strongly log-concave? A proof would probably give a way
of proving strong log-concavity for %showing that 
a large class of functions of the form $f(x)=g(x)g(-x)$ where $g\in
PF_{\infty }$ is the density of the sum $\sum_{j=1}^{\infty }(Y_{j}-\mu
_{j}) $ where $Y_{j}$'s are independent exponential random variables with
means $\mu _{j}$ satisfying $\sum_{j=1}^{\infty }\mu _{j}=\infty $ and $%
\sum_{j=1}^{\infty }\mu _{j}^{2}<\infty $.

\subsection{Cross-connections with the families of hyperbolically monotone
densities}

A theory of hyperbolically monotone and completely monotone densities has
been developed by \cite{MR1224674}, %Bondesson 1992
\cite{MR1481175}.

\begin{definition}
A density $f$ on $\mathbb{R}^+$ is hyperbolically completely monotone if $%
H(w) \equiv f(uv) f(u/v)$ is a completely monotone function of $w =
(v+1/v)/2 $. A density $f$ on $\mathbb{R}^+$ is hyperbolically monotone of
order $k$, or $f \in HM_k$ if the function $H$ satisfies $(-1)^j H^{(j)} (w)
\ge 0$ for $j = 0, \ldots , k-1$ and $(-1)^{k-1} H^{(k-1)} (w) $ is
right-continuous and decreasing.
\end{definition}

For example, the exponential density $f(x) = e^{-x} 1_{(0,\infty)} (x)$ is
hyperbolically completely monotone, while the half-normal density $f(x) = 
\sqrt{2/\pi} \exp (- x^2/2) 1_{(0,\infty)(x) }$ is $HM_1$ but not $HM_2$.

\cite{MR1481175} page 305 shows that if $X\sim f\in HM_{1}$, then $\log
X\sim e^{x}f(e^{x})$ is log-concave. Thus $HM_{1}$ is closed under the
formation of products: if $X_{1},\ldots ,X_{n}\in HM_{1}$, then $Y\equiv
X_{1}\cdots X_{n}\in HM_{1}$.

\subsection{Suprema of Gaussian processes\label{ssec_suprema}}

\cite{MR2375638} use log-concavity of Gaussian measures to show that the
supremum of an arbitrary non degenerate Gaussian process has a continuous
and strictly increasing distribution function. This is useful for bootstrap
theory in statistics. The methods used by \cite{MR2375638} originate in \cite%
{MR0388475} % C. Borell
and \cite{MR745081}; %  A Erhard
see \cite{MR1642391} chapters 1 and 4 for an exposition.

Furthermore, in relation to Example \ref{SupremumBrownianBridge}\ above, one
can wonder what is the form of the density of the maximum of a Gaussian
process in general? \cite{MR2415134}  actually gives a complete
characterization of the distribution of suprema of Gaussian processes.
Indeed, the author proves that $F$ is the distribution of the supremum of a\
general Gaussian process if and only if $\Phi ^{-1}\left( F\right) $ is
concave, where $\Phi ^{-1}$ is the inverse of the standard normal
distribution function on the real line. Interestingly, the \textquotedblleft
only if\textquotedblright\ part is a direct consequence the Brunn-Minkowski
type inequality for the standard Gaussian measure $\gamma _{d}$ on $\mathbb{R%
}^{d}$ due to \cite{MR745081}: for any $A$ and $B\in \mathbb{R}^{d}$ of
positive measure and for all $\lambda \in \left( 0,1\right) $,%
\begin{equation*}
\Phi ^{-1}\left( \gamma _{d}\left( \lambda A+\left( 1-\lambda \right)
B\right) \right) \geq \lambda \Phi ^{-1}\left( \gamma _{d}\left( A\right)
\right) +\left( 1-\lambda \right) \Phi ^{-1}\left( \gamma _{d}\left(
B\right) \right) \text{ .}
\end{equation*}

\subsection{Gaussian correlation conjecture}

The Gaussian correlation conjecture, first stated by \cite{MR0413364}, is as
follows. Let $A$ and $B$ be two symmetric convex sets. If $\mu $ is a
centered, Gaussian measure on $\mathbb{R}^{n}$, then%
\begin{equation}
\mu \left( A\cap B\right) \geq \mu \left( A\right) \mu \left( B\right) \text{
.}  \label{Gaussian_corr}
\end{equation}%
In other words, the correlation between the sets $A$ and $B$ under the
Gaussian measure $\mu $ is conjectured to be nonnegative. As the indicator
of a convex set is log-concave, the Gaussian correlation conjecture
intimately related to log-concavity.

In \cite{MR1742895}, the author gives an elegant partial answer to 
Problem (\ref{Gaussian_corr}), using semigroup techniques. The Gaussian correlation
conjecture has indeed been proved to hold when $d=2$ by \cite{MR0448705} 
%Pitt (reference??) 
and by \cite{MR1742895} when one of the sets is a symmetric ellipsoid and
the other is convex symmetric. \cite{MR1894593} gave another proof of
Harg\'{e}'s result, as a consequence of Caffarelli's Contraction Theorem
(for more on the latter theorem, see Section \ref{ssection_opt_transp}\
below). Extending Caffarelli's Contraction Theorem, \cite{MR2983070} also
extended the result of Harg\'{e} and Cordero-Erausquin, but without proving
the full Gaussian correlation conjecture.

\cite{MR1742895} gives some hints towards a complete solution of Problem (%
\ref{Gaussian_corr}). Interestingly, a sufficient property would be the
preservation of log-concavity along a particular family of semigroups. More
precisely, let $A(x)$ be a positive definite matrix for each 
$x\in \mathbb{R}^{d}$ and define 
\begin{equation*}
Lf(x)=(1/2)(\mbox{div}(A(x)^{-1}\nabla f)-(\nabla f(x)^{T}A^{-1}(x)x).
\end{equation*}%
The operator $L$ is the infinitesimal generator of an associated semigroup.
The question is: does $L$ preserve log-concavity? See \cite{MR1742895} 
%Harge 1999
and \cite{MR1863297}. % Kolesnikov (2001).  
For further connections involving the semi-group approach to correlation
inequalities, see \cite{MR1307413}, %  Bakry (1992)
\cite{MR1334608}, %Ledoux (1995)
\cite{MR2376572}, % Harge (2008)
and \cite{CattiauxGuillin:13}.

Further connections in this direction involve the theory of parabolic and
heat-type partial differential equations; see e.g. \cite{MR1033179}, 
%Keady (1990)
\cite{MR1863297}, %Kolesnikov (2001)
\cite{MR2462697}, % Andreu, F. and Caselles, V. and Maz{\'o}n (2008)
\cite{MR684757}, %Korevaaar  
\cite{MR692284}. % Korevaar (1983(b))

\subsection{Further connections with Poincar\'e, Sobolev, and log-Sobolev
inequalities}

For a very nice paper with interesting historical and expository passages,
see \cite{MR1800062}. %Bobkov and Ledoux
Among other things, these authors establish an entropic or log-Sobolev
version of the Brascamp-Lieb type inequality under a concavity assumption on 
$h^{T}\varphi ^{\prime \prime }(x)h$ for every $h$. The methods in the
latter paper build on \cite{Maurey:91}. See \cite{bakry2014analysis}\ for a
general introduction to these analytic inequalities from a Markov diffusion
operator viewpoint.%Maurey

\subsection{Further connections with entropic central limit theorems}

This subject has its beginnings in the work of \cite{Linnik:59}, %Linnik,
\cite{MR659464}, %Brown
and \cite{MR815975}, %Barron
but has interesting cross-connections with log-concavity in the more recent
papers of \cite{MR2128239}, % Barron and Johnson
\cite{MR1124273}, %Carlen and Soffer
\cite{MR1991646}, %Ball, Barthe, and Naor
\cite{MR2083473}, %Artstein, Ball, Barthe, and Naor I
and \cite{MR2128238}. %Artstein, Ball, Barthe, and Naor II
More recently still, further results have been obtained by: \cite{MR2077162}%
, %Carlen,  Lieb, and Loss 2004
\cite{MR2545242}, %Carlen, and Cordero-Erausquin  2009
and \cite{MR2644890}. %Cordero-Erausquin and Ledoux 2010 

\subsection{Connections with optimal transport and Caffarelli's contraction
theorem\label{ssection_opt_transp}}

\cite{MR2895086} give a nice survey about advances in transport
inequalities, with Section 7 devoted to strongly log-concave measures
(called measures with \textquotedblleft uniform convex
potentials\textquotedblright\ there). The theory of optimal transport is
developed in \cite{MR1964483} and \cite{MR2459454}. See also \cite{MR1127042}%
, %Cafferilli , 
\cite{MR1124980}, %Caffarelli 1992
\cite{MR1800860}, %Caffarelli 2000
and \cite{MR2983070} for results on (strongly) log-concave measures. The
latter authors extend the results of \cite{MR1800860} under a third
derivative hypothesis on the \textquotedblleft potential\textquotedblright\ $%
\varphi $.

In the following, we state the celebrated Caffarelli's Contraction Theorem (%
\cite{MR1800860}). Let us recall some related notions. A Borel map $T$ is
said to \textit{push-forward} $\mu $ onto $\nu $, for two Borel probability
measures $\mu $ and $\nu $, denoted $T_{\ast }\left( \mu \right) =\nu $, if
for all Borel sets $A$, $\nu \left( A\right) =\mu \left( T^{-1}\left(
A\right) \right) $. Then the Monge-Kantorovich problem (with respect to the
quadratic cost) is to find a map $T_{opt}$ such that%
\begin{equation*}
T_{opt}\in \arg \min_{T\text{ s.t.}T_{\ast }\left( \mu \right) =\nu }\left\{
\int_{\mathbb{R}^{d}}\left\vert T\left( x\right) -x\right\vert ^{2}d\mu
\left( x\right) \right\} \text{ .}
\end{equation*}%
The map $T_{opt}$ (when it exists) is called the Brenier map and it is $\mu $%
-a.e. unique. Moreover, \cite{MR1100809} showed that Brenier maps are
characterized to be gradients of convex functions (see also \cite{MR1369395}%
). See \cite{MR2087149} for a very nice elementary introduction to monotone
transportation. We are now able to state Caffarelli's Contraction Theorem.

\begin{theorem}[\protect\cite{MR1800860}]
Let $b\in \mathbb{R}^{d}$, $c\in \mathbb{R}$ and $V$ a convex function on 
$\mathbb{R}^{d}$. Let $A$ be a positive definite matrix in 
$\mathbb{R}^{d\times d}$ and $Q$ be the following quadratic function,
\begin{equation*}
Q\left( x\right) =\left\langle Ax,x\right\rangle +\left\langle
b,x\right\rangle +c,\text{ \ }x\in \mathbb{R}^{d}\text{ .}
\end{equation*}
Let  $\mu $ and $\nu $ denote two probability measures on $\mathbb{R}^{d}$ with
respective densities $\exp \left( -Q\right) $ and 
$\exp \left( -\left( Q+V\right) \right) $ 
with respect to Lebesgue measure. Then the Brenier map 
$T_{opt}$ pushing $\mu$ forward onto $\nu $ is a contraction:
\begin{equation*}
\left\vert T\left( x\right) -T\left( y\right) \right\vert \leq \left\vert
x-y\right\vert \text{ \ \ for all }x,y\in \mathbb{R}^{d}\text{ .}
\end{equation*}
\end{theorem}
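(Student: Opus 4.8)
The plan is to follow Caffarelli's original argument, which rests on the Monge--Amp\`ere equation satisfied by the Brenier potential together with the maximum principle. By Brenier's theorem the optimal map has the form $T=\nabla\phi$ for a convex function $\phi$, and the constraint $T_{\ast}(\mu)=\nu$ becomes, via the change-of-variables formula (legitimate since $\mu,\nu$ have smooth positive densities), the Monge--Amp\`ere equation
\[
\det D^2\phi(x)=\frac{e^{-Q(x)}}{e^{-(Q+V)(\nabla\phi(x))}}=\exp\!\big(Q(\nabla\phi(x))+V(\nabla\phi(x))-Q(x)\big).
\]
First I would record that, since the densities $e^{-Q}$ and $e^{-(Q+V)}$ are smooth and locally bounded away from $0$ and $\infty$, Caffarelli's interior regularity theory for Monge--Amp\`ere gives $\phi\in C^{2,\alpha}_{\mathrm{loc}}$, and then $\phi\in C^{\infty}$ by bootstrapping, which justifies the differentiations below; one should also first reduce to $V\in C^{\infty}$ and remove this at the end by mollifying $V$ (keeping it convex) and passing to the limit, the Lipschitz bound being stable under such limits.

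Next I would take logarithms in the Monge--Amp\`ere equation and differentiate twice in a fixed unit direction $e$. Writing $\phi^{ij}$ for the inverse Hessian and using the summation convention, this yields a linearized identity of the schematic form
\[
\phi^{ij}\partial_{ij}(\phi_{ee})=\phi^{ik}\phi^{lj}\phi_{kle}\phi_{ije}+\big\langle(D^2Q+D^2V)(\nabla\phi)\,D^2\phi\,e,\,D^2\phi\,e\big\rangle+\big\langle\nabla(Q+V)(\nabla\phi),\,\nabla(\phi_{ee})\big\rangle-\big\langle D^2Q(x)\,e,\,e\big\rangle,
\]
in which the first two terms on the right are nonnegative (because $D^2\phi>0$, $D^2Q=2A>0$ and $D^2V\ge0$) and $D^2Q$ is the constant matrix $2A$. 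Let $\Lambda(x)$ denote the largest eigenvalue of $D^2\phi(x)$. Assuming $\Lambda$ attains its supremum at some point $x_0$ and taking $e$ to be a unit eigenvector of $D^2\phi(x_0)$ for $\Lambda(x_0)$, the function $w(x)=\phi_{ee}(x)=\langle D^2\phi(x)e,e\rangle$ satisfies $w(x)\le\Lambda(x)\le\Lambda(x_0)=w(x_0)$, so $x_0$ is an interior maximum of $w$; hence $\nabla w(x_0)=0$ and $\phi^{ij}\partial_{ij}w(x_0)\le0$. Substituting $x=x_0$, discarding the nonnegative first term on the right, using $D^2\phi(x_0)e=\Lambda(x_0)e$ and $D^2V\ge0$, the identity collapses to $\Lambda(x_0)^2\langle 2Ae,e\rangle\le\langle 2Ae,e\rangle$, and positive-definiteness of $A$ forces $\Lambda(x_0)\le1$. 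Since $x_0$ realizes the supremum, $D^2\phi\le I$ on all of $\mathbb{R}^d$, i.e.\ $\nabla\phi$ — hence $T$ — is a $1$-Lipschitz map, which is the assertion.

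The hard part is not this formal computation but making the maximum-principle step rigorous: (i) upgrading the regularity of $\phi$ to hold globally despite the densities degenerating at infinity — the standard fix is to run the a priori estimate on large balls with appropriate boundary data, or to approximate $\mu,\nu$ by compactly supported log-concave data and pass to the limit; and (ii) guaranteeing that $\Lambda$ actually attains a maximum, for which one either proves a priori growth control on $|\nabla\phi(x)|$ (a Gaussian-tailed measure can only be pushed onto a measure with at-least-as-light tails, which caps the spread of $T$) or adds a barrier term $\varepsilon|x|^{2}$ to $w$, argues on a large ball, and lets $\varepsilon\to0$. I would also note the alternative semigroup proof of Cordero-Erausquin \cite{MR1894593}, which bypasses Monge--Amp\`ere regularity by exploiting the preservation of log-concavity along the Ornstein--Uhlenbeck-type interpolation between $\mu$ and $\nu$, but would carry out the PDE argument as the primary one.
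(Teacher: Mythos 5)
The paper does not prove this theorem; it is stated with a citation to \cite{MR1800860}, so there is no internal proof to compare against. Your sketch is a faithful rendition of Caffarelli's own argument: the Monge--Amp\`ere equation for the Brenier potential is written correctly, the linearized identity obtained by differentiating $\log\det D^2\phi$ twice in a fixed direction is right, and the maximum-principle step at an interior maximum of $\phi_{ee}$ (with $e$ the top eigenvector there, using $\nabla\phi_{ee}(x_0)=0$, $\phi^{ij}\partial_{ij}\phi_{ee}(x_0)\le 0$, $D^2V\ge 0$ and $D^2Q\equiv 2A>0$) does collapse to $\Lambda(x_0)^2\langle 2Ae,e\rangle\le\langle 2Ae,e\rangle$ and hence $D^2\phi\le I$. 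You also correctly identify where the real work lies --- global regularity of $\phi$ and attainment of the supremum of the largest eigenvalue --- and the remedies you name (mollifying $V$, approximating by nicer measures, running the estimate on large balls or adding an $\varepsilon|x|^2$ barrier and letting $\varepsilon\to 0$) are exactly the devices used to make the argument rigorous; carrying them out in detail is the substance of \cite{MR1800860}. One attribution should be fixed: the semigroup/heat-flow proof that bypasses Monge--Amp\`ere regularity is due to Kim and Milman \cite{MR2983070}; Cordero-Erausquin \cite{MR1894593} goes in the other direction, using Caffarelli's contraction theorem to rederive Harg\'e's Gaussian correlation inequality.
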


Notice that Caffarelli's Contraction Theorem is in particular valid when 
$\mu $ is a Gaussian measure and that case, $\nu $ is a strongly log-concave
measure.

\subsection{Concentration and convex geometry}

\cite{Guedon:12} %Guedon (2012) 
gives a nice survey, explaining the connections between the Hyperplane
conjecture, the KLS conjecture, the Thin Shell conjecture, the Variance
conjecture and the Weak and Strong moments conjecture. Related papers
include \cite{MR2846382} and %Guedon and Milman (2011)
\cite{FradelizaGuedonPajor:13}. %Bobkov and Madiman (2011).  

It is well-known that concentration properties are linked the behavior of moments.
\cite{MR2857249} prove that if $\eta >0$ is log-concave
then the function 
\begin{equation*}
\bar{\lambda}_{p}=\frac{1}{\Gamma \left( p+1\right) }\mathbb{E}\left[ \eta
^{p}\right] ,\text{ \ \ \ }p\geq 0,
\end{equation*}%
is also log-concave, where $\Gamma $ is the classical Gamma function. This
is equivalent to having a so-called "reverse Lyapunov's inequality",%
\begin{equation*}
\bar{\lambda}_{a}^{b-c}\bar{\lambda}_{c}^{a-b}\leq \bar{\lambda}_{b}^{a-c},%
\text{ \ \ \ }a\geq b\geq c\geq 0.
\end{equation*}%
Also, \cite{MR2083386} proves that log-concavity of $\tilde{\lambda}%
_{p}=\mathbb{E}\left[ \left( \eta /p \right) ^{p}\right] $ holds
(this is a consequence of the Pr\'{e}kopa-Leindler inequality).These results
allow for instance \cite{MR2857249} %BobkovMadiman:11a} 
to prove sharp concentration
results for the information of a log-concave vector.

\subsection{Sampling from log concave distributions; convergence of Markov
chain Monte Carlo algorithms}

Sampling from log-concave distributions has been studied by \cite{MR773927}, 
%devroye (1984). 
\cite{MR2910053} %devroye (2012)
for log-concave densities on $\mathbb{R}$, and by \cite{MR1284987, MR1304785}%
, % Frieze, Kannan, Polson 1994
\cite{MR1682608}, % Frieze, Kannan (1999)
and \cite{MR2309621} %Lov{\'a}sz, L{\'a}szl{\'o} and Vempala
for log-concave densities on $\mathbb{R}^d$; see also \cite{lovaszVempala:03},
\cite{LovaszVempala:06}, \cite{MR1318794}, and %MR1318794
\cite{MR1608200}.

Several different types of algorithms have been proposed: the rejection
sampling algorithm of \cite{MR773927} %Devroye (1984) 
requires knowledge of the mode; see % while a recent improvement
\cite{MR2910053} for some improvements. 
%is straightforward rejection sampling, 
The algorithms proposed by \cite{Gilk:Wild:adap:1992} %Gilks and Wild 
are based on adaptive rejection sampling. The algorithms of \cite{MR1994729} 
%Neal (2003)
and \cite{MR1904830} %Roberts and rosenthal (2002)
involve ``slice sampling''; and the algorithms of \cite{lovaszVempala:03}, 
%Lovasz and Vempala (2003)
\cite{LovaszVempala:06}, % Lovasz and Vempala (2006)
\cite{MR2309621} %Lovasz and vempala (2007)
are based on random walk methods.

Log-concavity and bounds for log-concave densities play an important role in
the convergence properties of MCMC algorithms. For entry points to this
literature, see \cite{Gilk:Wild:adap:1992}, % Gilks and Wild (1992)
\cite{MR1425412}, %Polson 1996
\cite{MR1608152}, %Brooks 1998
\cite{MR1904830}, %Roberts and Rosenthal 2002
\cite{MR1953771}, %Fort, Moulines, roberts, (2003)
\cite{MR2877599}, 
%Jyl{\"a}nki, Pasi and Vanhatalo, Jarno and Vehtari, Aki (2011)
and \cite{MR2977521}. %Rudolf 2012.

\subsection{Laplace approximations}

Let $X_{1},...,X_{n}$ be i.i.d. real-valued random variables with density $q$
and Laplace transform 
\begin{equation*}
\phi \left( s\right) =\mathbb{E}\left[ \exp \left( \theta X_{1}\right) %
\right] \text{ .}
\end{equation*}%
Let $x^{\ast }$ be the upper limit of the support of $q$ and let $\tau >0$
be the upper limit of finiteness of $\phi $. Let us assume that $q$ is
almost log-concave (see \cite{MR1354837} p155) on 
$\left( x_{0},x^{\ast}\right) $ for some $x_{0}<x^{\ast}$. 
This means that there exist two constants $c_{1}>c_{2}>0$ and two functions 
$c$ and $h$ on $\mathbb{R}$ such that 
\begin{equation*}
q\left( x\right) =c\left( x\right) \exp \left( -h\left( x\right) \right) 
\text{, \ \ }x<x^{\ast }\text{ ,}
\end{equation*}%
where $c_{2}<c\left( x\right) <c_{1}$ whenever $x>x_{0}$ and $h$ is convex.
In particular, log-concave functions are almost log-concave for $%
x_{0}=-\infty $. Now, fix $y\in \mathbb{R}$. The saddlepoint $s$ associated
to $y$ is defined by
\begin{equation*}
\left( \frac{d}{dt}\log \phi \right) \left( s\right) =y
\end{equation*}%
and the variance $\sigma ^{2}\left( s\right) $ is defined to be%
\begin{equation*}
\sigma ^{2}\left( s\right) =\left( \frac{d^{2}}{dt^{2}}\log \phi \right)
\left( s\right) \text{ .}
\end{equation*}%
Let us write $f_{n}$ the density of the empirical mean $\overline{X}%
=\sum_{i=1}^{n}X_{i}/n$. By Theorem 1\ of \cite{MR1094278}, if $q\in
L^{\zeta }\left( \lambda \right) $ for $1<\zeta <2$, then the following
saddlepoint approximations hold uniformly for $s_{0}<s<\tau $ for any $%
s_{0}>0$: 
\begin{equation*}
f_{n}\left( y\right) =\sqrt{\frac{n}{2\pi \sigma ^{2}\left( s\right) }}\phi
\left( s\right) ^{n}\exp \left( -nsy\right) \left\{ 1+O\left( \frac{1}{n}%
\right) \right\}
\end{equation*}%
and 
\begin{equation*}
\mathbb{P}\left( \overline{X}>y\right) =\frac{\phi \left( s\right) ^{n}\exp
\left( -nsy\right) }{s\sigma \left( s\right) \sqrt{n}}\left\{ B_{0}\left(
s\sigma (s)\sqrt{n}\right) +O\left( \frac{1}{n}\right) \right\}
\end{equation*}%
where $B_{0}(z)=z\exp \left( z^{2}/2\right) \left( 1-\Phi \left( z\right)
\right) $ with $\Phi $ the standard normal distribution function. According
to \cite{MR1094278}, this result extends to the multidimensional setting
where almost log-concavity is required on the entire space (and not just on
some directional tails). As detailed in \cite{MR1354837}, saddlepoint
approximations have many applications in statistics, such as in testing or
Markov chain related estimation problems.

As Bayesian methods are usually expensive in practice, approximations of
quantities linked to the prior/posterior densities are usually needed. In
connection with Laplace's method, log-quadratic approximation of densities
are especially suited when considering log-concave functions, see \cite%
{MR1354837}, %Jensen 1995
\cite{BarberWilliams:97}, \cite{Minka:01}, and references therein.

\subsection{Machine learning algorithms and Gaussian process methods}

%Boughorbel et al. 
\cite{BougTarBouj:05} used the radius margin bound of \cite{MR1719582}\ on
the performance of a Support Vector Machine (SVM) in order to tune
hyper-parameters of the kernel. More precisely they proved that for a
weighted $L^{1}$ -distance kernel the radius is log-convex while the margin
is log-concave. Then they used this fact to efficiently tune the
multi-parameter of the kernel through a direct application of the Convex
ConCave Procedure (or CCCP) due to \cite{yuilRanga:03}. In contrast to the
gradient descent technique (\cite{ChapVapBouMukh:02}), \cite{BougTarBouj:05}
show that a variant of the CCCP which they call Log Convex ConCave Procedure
(or LCCP) ensures that the radius margin bound decreases monotonically and
converges to a local minimum without a search for the size step.

Bayesian methods based on Gaussian process priors have become popular in
statistics and machine learning: see, for example \cite{Seeger:04}, \cite%
{MR2773550}, \cite{MR2418663}, and \cite{MR2819028}. These methods require
efficient computational techniques in order to be scalable, or even
tractable in practice. Thus, log-concavity of the quantities of interest
becomes important in this area, since it allows efficient optimization
schemes.

In this context, \cite{paninski:04} shows that the predictive densities
corresponding to either classification, regression, density estimation or
point process intensity estimation models, are log-concave given any
observed data. Furthermore, in the density and point process intensity
estimation, the likelihood is log-concave in the hyperparameters controlling
the mean function of the Gaussian prior. In the classification and
regression settings, the mean, covariance and observation noise parameters
are log-concave. As noted in \cite{paninski:04}, the results still hold for
much more general prior distributions than Gaussian: it suffices that the prior
and the noise (in models where a noise appears) are jointly log-concave. The
proofs are based on preservation properties for log-concave functions such
as pointwise limit or preservation by marginalization.

\subsection{Compressed sensing and random matrices}

Compressed sensing, aiming at reconstructing sparse signals from incomplete
measurement, is extensively studied since the seminal works of 
\cite{MR2241189}, \cite{MR2230846}\ and \cite{MR2300700}. As detailed
in \cite{MR3113826}, %chafai2011interactions},\ 
compressed sensing is intimately linked
to the theory of random matrices. The matrices ensembles that are most
frequently used and studied are those linked to Gaussian matrices, Bernoulli
matrices and Fourier (sub-)matrices.

By analogy with the Wishart Ensemble, the Log-concave Ensemble is defined in 
\cite{MR2601042} to be the set of squared $n\times n$
matrices equipped with the distribution of $AA^{\ast }$, where $A$ is a $%
n\times N$ matrix with i.i.d. columns that have an isotropic log-concave
distribution. \cite{MR2601042} show\ that the Log-concave
Ensemble satisfies a sharp Restricted Isometry Property (RIP), see also 
\cite{MR3113826} Chapter 2.

\subsection{Log-concave and s-concave as nonparametric function classes in
statistics}

\label{subsec:NonparametricStatistics}

Nonparametric estimation of log-concave densities was initiated by 
\cite{MR1941467} %Walther 
in the context of testing for unimodality. For log-concave densities on
$\mathbb{R}$ it has been explored in more detail by \cite{MR2546798}, 
%Duembgen and Rufibach 
\cite{MR2509075}, %Balabdoui Rufibach and Wellner
and recent results for estimation of log-concave densities on $\mathbb{R}%
^{d} $ have been obtained by \cite{MR2645484}, %Cule and Samworth (2010
\cite{MR2758237}, %  Cule Sawmworth & Stewart 2010
\cite{MR2816336}. \cite{MR2758237} formulate the problem of computing the
maximum likelihood estimator of a multidimensional log-concave density as a
non-differentiable convex optimization problem and propose an algorithm that
combines techniques of computational geometry with Shor's r-algorithm to
produce a sequence that converges to the estimator. An R version of the
algorithm is available in the package LogConcDEAD: \textsl{Log-Concave
Density Estimation in Arbitrary Dimensions}, with further description of the 
algorithm given in \cite{CuleGramacySamworth:2008}. 
Nonparametric estimation of $s-$%
concave densities has been studied by \cite{MR2766867}. 
%Seregin and Wellner
They show that the MLE exists and is Hellinger consistent. 
\cite{Doss-Wellner:2013} have obtained Hellinger rates of convergence for the
maximum likelihood estimators of log-concave and $s-$concave densities on 
$\mathbb{R}$, while 
\cite{Kim-Samworth:14} study Hellinger rates of convergence for the MLEs of log-concave 
densities on $\RR^d$.   
\cite{hen+ast06} %Henn and ast ??  
consider replacement of Gaussian errors by log-concave error distributions
in the context of the Kalman filter.

\cite{MR2757433} gives a review of some of the recent progress.

%----------------------------------------------------------
\section{Appendix A: Brascamp-Lieb inequalities and more}
\label{sec:AppendixA}
%----------------------------------------------------------

Let $\mathbf{X}$ have distribution $P$ with density $p=\exp (-\varphi )$ on $%
\mathbb{R}^{d}$ where $\varphi $ is strictly convex and $\varphi \in C^{2}(%
\mathbb{R}^{d});$ thus ${\nabla ^{2}\mathstrut }(\varphi )\left( x\right)
=\varphi ^{\prime \prime }(x)>0,$ $x\in \mathbb{R}^{d}$ as symmetric
matrices. Let $G,H$ be real-valued functions on $\mathbb{R}^{d}$ with 
$G,H\in C^{1}(\mathbb{R}^{d})\cap L_{2}(P)$. 
We let $H_{1} (P) $ denote
the set of functions $f$ in $L_{2}\left( P\right) $ such that $\nabla f$ (in
the distribution sense) is in $L_{2}\left( P\right) $.

Let $\mathbf{Y}$ have distribution $Q$ with density $q=\psi ^{-\beta }$ on
an open, convex set $\Omega \subset \mathbb{R}^{d}$ where $\beta >d$ and $%
\psi $ is a positive, strictly convex and twice continuously differentiable
function on $\Omega $. In particular, $Q$ is $s=-1/\left( \beta -d\right) $%
-concave (see Definition \ref{defn:Sconcave} and \cite{MR0388475}, \cite%
{MR0404559}). Let $T$ be a real-valued function on $\mathbb{R}^{d}$ with $%
T\in C^{1}(\Omega )\cap L_{2}(Q)$. The following Proposition summarizes a
number of analytic inequalities related to a Poincar\'{e}-type inequality
from \cite{MR0450480}. Such inequalities are deeply connected to
concentration of measure and isoperimetry, as exposed in \cite%
{bakry2014analysis}. Concerning log-concave measures, these inequalities are
also intimately linked to the geometry of convex bodies. Indeed, as noted by 
\cite{CarlenCordero-ErausquinLieb} page 9,

\begin{quote}
\textquotedblleft The Brascamp-Lieb inequality (1.3), as well as inequality
(1.8), have connections with the geometry of convex bodies. It was observed
in [2] (\cite{MR1800062}) %Bobkov and Ledoux
that (1.3) (\textsl{see Proposition~\ref{BLInequalityPlus}, (a)}) can be
deduced from the Pr\'{e}kopa-Leindler inequality (which is a functional form
of the Brunn-Minkowski inequality). But the converse is also true: the Pr%
\'{e}kopa theorem follows, by a local computation, from the Brascamp-Lieb
inequality (see [5] (\cite{MR2115450}) where the procedure is explained in
the more general complex setting). To sum up, the Brascamp-Lieb inequality
(1.3) can be seen as the \textsl{local} form of the Brunn-Minkowski
inequality for convex bodies.\textquotedblright
\end{quote}

\begin{proposition}
\label{BLInequalityPlus} $\phantom{bla}$\newline
(a) \cite{MR0450480}: If $p$ is strictly log-concave, then 
\begin{equation*}
\var(G(\mathbf{X}))\leq E\left[ \nabla G(\mathbf{X})^{T}(\varphi ^{\prime \prime
}(\mathbf{X}))^{-1}\nabla G(\mathbf{X})\right] \text{ }.
\end{equation*}%
(b) If $p=\exp (-\varphi )$ where $\varphi ^{\prime \prime }\geq cI$ with $c>0$, then 
\begin{equation*}
\var (G(\mathbf{X}))\leq \frac{1}{c}E|\nabla G(\mathbf{X})|^{2}.
\end{equation*}%
(c) \cite{MR2376572}: If $\varphi \in L_{2}\left( P\right) $, then for all $f\in H_{1} (P)$, 
\begin{equation*}
\var
\left( f\left( \mathbf{X}\right) \right) 
\leq E\left[ \nabla f(\mathbf{X})^{T}(\varphi ^{\prime \prime }(\mathbf{X}))^{-1}\nabla f(\mathbf{X})\right]
         -\frac{1+a/b}{d}\left( \cov
          \left( \varphi \left( \mathbf{X}\right) ,f\left( \mathbf{X}\right) \right) \right) ^{2}\text{ ,}
\end{equation*}
where 
\begin{equation*}
a=\inf_{x\in \mathbb{R}^{d}}\min \left\{ \lambda \text{ eigenvalue of }%
\varphi ^{\prime \prime }\left( x\right) \right\}
\end{equation*}
and
\begin{equation*}
b=\sup_{x\in \mathbb{R}^{d}}\max \left\{ \lambda \text{ eigenvalue of }%
\varphi ^{\prime \prime }\left( x\right) \right\} \text{ .}
\end{equation*}
Notice that $0\leq a\leq b\leq +\infty $ and $b>0$.

(d) \cite{MR2510011}: If $U=\psi T$, then 
\begin{equation*}
\left( \beta +1\right) 
%TCIMACRO{\TeXButton{var}{\var}}%
%BeginExpansion
\var%
%EndExpansion
(T(\mathbf{Y}))\leq E\left[ \frac{1}{V\left( \mathbf{Y}\right) }\nabla U(%
\mathbf{Y})^{T}(\varphi ^{\prime \prime }(\mathbf{Y}))^{-1}\nabla U(\mathbf{Y%
})\right] +\frac{n}{\beta -n}E\left[ T(\mathbf{Y})\right] ^{2}\text{ .}
\end{equation*}%
Taking $\psi =\exp \left( \varphi /\beta \right) $ and setting $R_{\varphi
,\beta }\equiv \varphi ^{\prime \prime }+\beta ^{-1}\nabla \varphi \otimes
\nabla \varphi $, this implies that for any $\beta \geq d$,%
\begin{equation*}
%TCIMACRO{\TeXButton{var}{\var}}%
%BeginExpansion
\var%
%EndExpansion
(G(\mathbf{X}))\leq C_{\beta }E\left[ \nabla G(\mathbf{X})^{T}(R_{\varphi
,\beta }(\mathbf{X}))^{-1}\nabla G(\mathbf{X})\right] \text{ ,}
\end{equation*}%
where $C_{\beta }=\left( 1+\sqrt{\beta +1}\right) ^{2}\left/ \beta \right. $%
. Notice that $1\leq C_{\beta }\leq 6$.

(e) \cite{MR1307413}: If $p=\exp (-\varphi )$ where $\varphi ^{\prime \prime
}\geq cI$ with $c>0$, then 
\begin{equation*}
\mbox{Ent}_{P}\left( G^{2}(\mathbf{X})\right) \leq \frac{1}{c}E|\nabla G(%
\mathbf{X})|^{2}.
\end{equation*}%
where 
\begin{equation*}
\mbox{Ent}_{P}(Y^{2})=E\left[ Y^{2}\log (Y^{2})\right] -E\left[ Y^{2}\right]
\log (E\left[ Y^{2}\right] ).
\end{equation*}%
(f) \cite{MR1600888}, \cite{MR1849347}: % Ledoux 
If the conclusion of (e) holds for all smooth $G$, then $E_{P}\exp (\alpha |%
\mathbf{X}|^{2})<\infty $ for every $\alpha <1/(2c)$.\newline
(g) \cite{MR1742893}: If $E_{P}\exp (\alpha |\mathbf{X}|^{2})<\infty $ for a
log-concave measure $P$ and some $\alpha >0$, then the conclusion of (e)
holds for some $c=c_{d}$.\newline
(h) \cite{MR1800062}: If $\varphi $ is strongly convex with respect to a
norm $\Vert \cdot \Vert $ (so $p$ is strongly log-concave with respect to $%
\Vert \cdot \Vert $), then 
\begin{equation*}
\mbox{Ent}_{P}(G^{2}(\mathbf{X}))\leq \frac{2}{c}E_{P}\Vert \nabla G(\mathbf{%
X})\Vert _{\ast }^{2}
\end{equation*}%
for the dual norm $\Vert \cdot \Vert _{\ast }$.
\end{proposition}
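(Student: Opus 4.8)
The plan is to treat the eight assertions as two linked chains and to establish the anchor of each, deducing or citing the rest. Parts (a)--(d) form a \emph{variance/Poincar\'e} chain whose anchor is the Brascamp--Lieb inequality (a): (b) is an immediate weakening of (a), (c) is a sharpened rerun of the proof of (a), the first display in (d) is the $s$-concave (here $s=-1/(\beta-d)$) analogue from \cite{MR2510011}, and its stated corollary is an algebraic specialization of that display. Parts (e)--(h) form an \emph{entropy/log-Sobolev} chain: (e) is the log-Sobolev inequality under the curvature lower bound $\varphi''\ge cI$, (f)--(g) say that for a log-concave $P$ it is essentially equivalent to Gaussian integrability, and (h) is its anisotropic version. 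Throughout I would first invoke the regularization results Proposition~\ref{Lemma_convol_Gauss_multidim}, Proposition~\ref{prop_approx_SLC} and Proposition~\ref{lemma_approx} to reduce to the case in which $\varphi\in C^{\infty}$ is strongly convex, $P$ has finite Fisher information, and the test functions are smooth and bounded with bounded gradient; the general statements then follow by monotone/dominated convergence once the inequalities are proved on this class.

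For (a) I would use the $L^2$ (Bochner) method. With $Lu\equiv -p^{-1}\nabla\cdot(p\,\nabla u)$, solve $Lu=G-E[G(\mathbf X)]$ for the unique mean-zero corrector $u$; integration by parts gives $\var(G(\mathbf X))=E[\nabla u\cdot\nabla G]$. Differentiating the equation and pairing against $p\,\nabla u$ yields a Bochner-type identity in which the term built from the Hessian of $u$ is non-negative and may be dropped, leaving $E[\nabla u^{T}\varphi''(\mathbf X)\nabla u]\le E[\nabla u\cdot\nabla G]$; a Cauchy--Schwarz step for the quadratic form $\varphi''$ then produces (a). (Alternatively, as the quotation from \cite{CarlenCordero-ErausquinLieb} recalls, (a) can be obtained from Pr\'ekopa's Theorem~\ref{LCpreservedByMargin} by a local second-order expansion of the Pr\'ekopa--Leindler inequality.) Part (b) follows at once, since $\varphi''\ge cI$ forces $(\varphi'')^{-1}\le c^{-1}I$ as symmetric matrices. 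For (c) I would follow \cite{MR2376572} and rerun the $L^2$ argument without discarding the part of $\nabla u$ aligned with $\nabla\varphi$: retaining that contribution and sharpening the final Cauchy--Schwarz step by means of the eigenvalue bounds $aI\le\varphi''(\mathbf X)\le bI$ produces the extra correction $-\tfrac{1+a/b}{d}\,\cov(\varphi(\mathbf X),f(\mathbf X))^2$.

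For (d): the first display I would simply cite from \cite{MR2510011}. For the stated corollary, set $\psi=\exp(\varphi/\beta)$, so that $q=\psi^{-\beta}=p$ and a short computation gives $\psi''/\psi=\beta^{-1}(\varphi''+\beta^{-1}\nabla\varphi\otimes\nabla\varphi)=\beta^{-1}R_{\varphi,\beta}$; substituting $U=\psi T$ (with $G=T$ the test function) into the first display and simplifying yields $\var(G(\mathbf X))\le C_\beta\,E[\nabla G(\mathbf X)^{T}R_{\varphi,\beta}(\mathbf X)^{-1}\nabla G(\mathbf X)]$ with $C_\beta=(1+\sqrt{\beta+1})^2/\beta$, and $1\le C_\beta\le 6$ for $\beta\ge d\ge 1$ is a one-variable calculus check. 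One notes that letting $\beta\to\infty$ gives $R_{\varphi,\beta}\to\varphi''$ and $C_\beta\to 1$, so the corollary interpolates between the log-concave inequality (a) and its $s$-concave analogue.

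For (e)--(h): (e) is the Bakry--\'Emery log-Sobolev inequality, for which I would give (or cite \cite{MR1307413} for) the semigroup interpolation proof along the flow with generator $\Delta-\nabla\varphi\cdot\nabla$, using the curvature bound $\Gamma_2\ge c\,\Gamma$. Part (f) is the Herbst argument: applying (e) to $G=e^{\lambda f/2}$ with $f$ $1$-Lipschitz gives sub-Gaussian tails with proxy $1/c$, and taking $f(x)=|x|$ (truncated to stay Lipschitz, then removing the truncation) yields $E_P\exp(\alpha|\mathbf X|^2)<\infty$ for $\alpha<1/(2c)$; cite \cite{MR1600888}, \cite{MR1849347}. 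Part (g) is Bobkov's converse \cite{MR1742893} for log-concave $P$, and (h) is the anisotropic refinement of \cite{MR1800062}, obtained by redoing the Pr\'ekopa--Leindler derivation of the log-Sobolev inequality with $\varphi$ strongly convex for a general norm $\|\cdot\|$, the Legendre transform in Pr\'ekopa--Leindler producing the dual norm $\|\cdot\|_\ast$ on $\nabla G$; these I would cite. The main obstacle is not algebraic but analytic: making the $L^2$ proof of (a)---and hence (b), (c)---rigorous off the smooth, exponentially integrable class, that is, the existence, regularity and integrability of the corrector $u$ and the passage to the limit, all of which rest on the approximation apparatus of Section~\ref{section_ap}; together with the genuine (rather than routine) analysis behind the Herbst step in (f) and the converse (g). For a review the honest course is to prove (a), (b) and the specialization in (d) in detail and to cite (c), (e), (f), (g), (h) to the indicated sources.
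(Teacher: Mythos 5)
Your proposal is sound, but it does substantially more than the paper does: in the text this proposition is treated purely as a compendium of known results, with every part attributed to its source and no proofs given — the discussion following the statement only records that (a) was originally proved by Brascamp and Lieb via dimensional induction, that Bobkov–Ledoux later gave a non-inductive proof from Pr\'ekopa–Leindler, that (b) is an easy consequence of (a), that Harg\'e's reinforcement (c) rests on semigroup techniques, that (d) is Bobkov–Ledoux's weighted Poincar\'e inequality together with its consequence, and that (e)–(h) are the cited log-Sobolev results. So your plan to cite (c), (e)–(h) matches the paper exactly, while your anchor argument for (a) — solving $Lu=G-E[G(\mathbf X)]$ for the generator $L=-p^{-1}\nabla\cdot(p\,\nabla\cdot)$, using the Bochner-type identity $E[(Lu)^2]=E[\|\mathrm{Hess}\,u\|^2_{HS}]+E[\nabla u^{T}\varphi''\nabla u]$, discarding the Hessian term, and closing with Cauchy–Schwarz for the form $\varphi''$ — is a genuinely different route from both methods the paper mentions; it is a correct and standard $L^2$/duality proof (essentially the H\"ormander-type argument also underlying Carlen–Cordero-Erausquin–Lieb), and combined with the paper's own regularization apparatus (Propositions~\ref{Lemma_convol_Gauss_multidim}, \ref{prop_approx_SLC}, \ref{lemma_approx}) it buys a self-contained proof of (a)–(b) where the paper only points to the literature. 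Two caveats, neither fatal since you cite the relevant parts anyway: your sketch for (c) (``retain the part of $\nabla u$ aligned with $\nabla\varphi$ and sharpen Cauchy–Schwarz'') is speculative and is not obviously how Harg\'e's constant $\frac{1+a/b}{d}$ arises — his proof is by semigroup interpolation — and in (d) the passage from the weighted inequality to the bound with $C_\beta=(1+\sqrt{\beta+1})^2/\beta$ is not mere ``substitution and simplification'': one must split $\nabla U=\psi\nabla T+T\nabla\psi$ and optimize over the splitting parameter, which is where the specific constant comes from (your computation $\psi''/\psi=\beta^{-1}R_{\varphi,\beta}$ and the check $1\le C_\beta\le 6$ are correct).
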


Inequality (a) originated in \cite{MR0450480} and the original proof of the
authors is based on a dimensional induction. For more details about the
induction argument used by \cite{MR0450480}, see \cite%
{CarlenCordero-ErausquinLieb}. Building on \cite{Maurey:91}, \cite{MR1800062}
give a non-inductive proof of (a) based on the Pr\'{e}kopa-Leindler theorem 
\cite{MR0315079}, \cite{MR0404557}, \cite{MR2199372} which is the
functional form of the celebrated Brunn-Minkowski inequality. The converse
is also true in the sense that the Brascamp-Lieb inequality (a) implies the
Pr\'{e}kopa-Leindler inequality, see \cite{MR2115450}. Inequality (b) is an
easy consequence of (a) and is referred to as a Poincar\'{e} inequality for
strongly log-concave measures.

Inequality (c) is a reinforcement of the Brascamp-Lieb inequality (a) due to 
\cite{MR2376572}. The proof is based on (Marvovian) semi-group techniques,
see \cite{bakry2014analysis} for a comprehensive introduction to these
tools. In particular, \cite{MR2376572}, Lemma 7, gives a variance
representation for strictly log-concave measures that directly implies the
Brascamp-Lieb inequality (a).

The first inequality in (d) is referred in \cite{MR2510011} 
as a ``weighted Poincar\'{e}-type inequality" for convex (or $s-$concave with negative
parameter $s$) measures. 
It implies the second inequality of (d) which is a
quantitative refinement of the Brascamp-Lieb inequality (a). Indeed,
Inequality (a) may be viewed as the limiting case in the second inequality
of (d) for $\beta \rightarrow +\infty $ (as in this case 
$C_{\beta }\rightarrow 1$ and 
$R_{\varphi ,\beta }\rightarrow \varphi ^{\prime \prime } $). 
As noted in \cite{MR2510011}, for finite $\beta $ the second inequality of
(d) may improve the Brascamp-Lieb inequality in terms of the decay of the
weight. For example, when $Y$ is a random variable with exponential
distribution with parameter $\lambda >0$ 
($q\left( y\right) =\lambda e^{-\lambda y}$ on $\Omega =\left( 0,\infty \right) $), 
the second inequality in (d) gives the usual Poincar\'{e}-type inequality,%
\begin{equation*}
\var (G(Y))\leq \frac{6}{\lambda ^{2}}E\left[ \left( G^{\prime }(Y)\right) ^{2}
\right] \text{,}
\end{equation*}
which cannot be proved as an direct application of the Brascamp-Lieb
inequality (a).  
\cite{MR799431} %Klaassen (1985) 
shows that the inequality in the last display holds (in the exponential case) with 
$6$ replaced by $4$, and establishes similar results for other distributions.  
The exponential and two-sided exponential (or Laplace) distributions are also 
treated by \cite{MR1440138}.

Points (e) to (h) deal, in the case of (strongly) log-concave measures, with
the so-called logarithmic-Sobolev inequality, which is known to strengthen
the Poincar\'{e} inequality (also called spectral gap inequality) (see for
instance Chapter 5 of \cite{bakry2014analysis}). Particularly, 
\cite{MR1800062} proved their result in point (d), \textit{via} 
the use of the Pr\'{e}kopa-Leindler inequality. In their survey on optimal transport, 
\cite{MR2895086} show how to obtain the result of Bobkov and Ledoux from some
transport inequalities.

We give now a simple application of the Brascamp-Lieb inequality (a), that
exhibits its relation with the Fisher information for location.

\begin{example}
\label{LinearG} Let $G(x)=a^{T}x$ for $a\in \mathbb{R}^{d}$. Then the
inequality in (a) becomes 
\begin{equation}
a^{T}\cov(\mathbf{X})a\leq a^{T}E\{[\varphi ^{\prime \prime }(\mathbf{X}%
)]^{-1}\}a  \label{CovMatrixBound}
\end{equation}%
or equivalently 
\begin{equation*}
\cov(\mathbf{X})\leq E\{[\varphi ^{\prime \prime }(\mathbf{X})]^{-1}\}
\end{equation*}%
with equality if $\ \mathbf{X}\sim N_{d}(\mu ,\Sigma )$ with $\Sigma $
positive definite. When $d=1$ (\ref{CovMatrixBound}) becomes 
\begin{equation}
\cov(\mathbf{X})\leq E[(\varphi ^{\prime \prime })^{-1}(\mathbf{X}%
)]=E[((-\log p)^{\prime \prime })^{-1}(\mathbf{X})]  \label{VarianceBound}
\end{equation}%
while on the other hand 
\begin{equation}
\cov(\mathbf{X})\geq \lbrack E(\varphi ^{\prime \prime })(\mathbf{X}%
)]^{-1}\equiv I_{loc}^{-1}(\mathbf{X})  \label{CRVarianceLowerBound}
\end{equation}%
where $I_{loc}(X)=E(\varphi ^{\prime \prime })$ denotes the Fisher
information for location (in $X$ or $p$); in fact for $d\geq 1$ 
\begin{equation*}
\cov(\mathbf{X})\geq \lbrack E(\varphi ^{\prime \prime })(\mathbf{X}%
)]^{-1}\equiv I_{loc}^{-1}(\mathbf{X})
\end{equation*}%
where $I_{loc}(X)\equiv E(\varphi ^{\prime \prime })$ is the Fisher
information matrix (for location). If $X\sim N_{d}(\mu ,\Sigma )$ then
equality holds (again). On the other hand, when $d=1$ and $p$ is the
logistic density given in Example~\ref{LogisticDensity}, then $\varphi
^{\prime \prime }=2p$ so the right side in (\ref{VarianceBound}) becomes $%
E\{(2p(\mathbf{X}))^{-1}\}=\int_{\mathbb{R}}(1/2)dx=\infty $ while $Var(%
\mathbf{X})=\pi ^{2}/3$ and $I_{loc}(\mathbf{X})=1/3$ so the inequality (\ref%
{VarianceBound}) holds trivially, while the inequality (\ref%
{CRVarianceLowerBound}) holds with strict inequality: 
\begin{equation*}
3=I_{loc}^{-1}(\mathbf{X})<\frac{\pi ^{2}}{3}=Var(\mathbf{X})<E[(\varphi
^{\prime \prime })^{-1}(\mathbf{X})]=\infty .
\end{equation*}%
(Thus while $X$ is slightly inefficient as an estimator of location for $p$,
it is not drastically inefficient.)
\end{example}

Now we briefly summarize the asymmetric Brascamp - Lieb inequalities of \cite%
{Menz-Otto:2013} and \cite{CarlenCordero-ErausquinLieb}.

\begin{proposition}
\label{prop:MenzOtto} $\phantom{bla}$\newline
(a) \cite{Menz-Otto:2013}: Suppose that $d=1$ and $G, H \in C^{1} (\mathbb{R}%
) \cap L^2 ( P)$. If $p$ is strictly log-concave and $1/r + 1/s = 1$ with $%
r\ge 2$, then \newline
\begin{eqnarray*}
| \cov ( G(X), H(X)) | \le \sup_x \left \{ \frac{ | H^{\prime}(x)|}{%
\varphi^{\prime \prime} (X) } \right \} E \{ | G^{\prime}(X) | \} .
\end{eqnarray*}
(b) \cite{CarlenCordero-ErausquinLieb}: If $p$ is strictly log-concave on $%
\mathbb{R}^d$ and $\lambda_{min}(x)$ denotes the smallest eigenvalue of $%
\varphi^{\prime \prime}$, then 
\begin{eqnarray*}
| \cov (G(X), H(X) ) | \le \| ( \varphi^{\prime \prime} )^{-1/r} \nabla G
\|_s \cdot \| \lambda_{min}^{(2-r)/r} (\varphi^{\prime \prime})^{-1/r}
\nabla H \|_r .
\end{eqnarray*}
\end{proposition}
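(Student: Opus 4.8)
Both inequalities rest on the same classical device --- the Hörmander ``auxiliary equation'' proof of the Brascamp--Lieb inequality of Proposition~\ref{BLInequalityPlus}(a) --- with the closing Cauchy--Schwarz step replaced by Hölder's inequality. So the first thing I would do is set up the common machinery. For $P = e^{-\varphi}\lambda$ log-concave with $\varphi$ strictly convex and smooth, let $\mathcal{L} = \Delta - \nabla\varphi\cdot\nabla$ be the generator of the associated diffusion, and for $G \in C^1 \cap L^2(P)$ pick $u$ with $-\mathcal{L}u = G - E\,G$ and $\nabla u \in L^2(P)$ (this exists because $P$ has a spectral gap). One integration by parts against $P$ gives the representation
\begin{equation*}
\cov(G(X), H(X)) = E\big[\nabla u(X) \cdot \nabla H(X)\big],
\end{equation*}
so everything reduces to a weighted bound on $\nabla u$ in terms of $\nabla G$; Hölder applied to the representation then produces an asymmetric estimate. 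I would also note that, in dimension one, the inequality displayed in (a) is exactly the endpoint $r = \infty$, $s = 1$ of (b): take $\lambda_{\min} = \varphi''$ and let $r \to \infty$. This is why I would prove (a) in full and obtain (b) by interpolation.

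For (a), $\mathcal{L}u = u'' - \varphi' u'$, so $-\mathcal{L}u = G - E\,G$ reads $(e^{-\varphi}u')' = -(G - E\,G)e^{-\varphi}$, whence $w := u'$ is well defined and $\cov(G,H) = E[w(X)H'(X)]$. Differentiating the equation for $u$ gives $w'' - \varphi' w' - \varphi'' w = -G'$. The crux is the $L^1(P)$ estimate $E[\varphi''(X)\,|w(X)|] \le E[|G'(X)|]$, which I would get by a maximum-principle argument: multiply the equation for $w$ by $w_\varepsilon := w/\sqrt{w^2 + \varepsilon^2}$ and by $e^{-\varphi}$, integrate over $\mathbb{R}$, and observe that the first two terms assemble into $\int (e^{-\varphi}w')'\,w_\varepsilon\,dx = -\int e^{-\varphi}(w')^2\,\varepsilon^2 (w^2+\varepsilon^2)^{-3/2}\,dx \le 0$ (boundary terms vanish by the exponential decay of $e^{-\varphi}$ from Theorem~\ref{theorem_exp_tails}), so that $\int \varphi''\,w\,w_\varepsilon\, e^{-\varphi}\,dx \le \int G'\,w_\varepsilon\,e^{-\varphi}\,dx \le E|G'|$; letting $\varepsilon \downarrow 0$ and using $\varphi'' > 0$ gives the claim by monotone convergence. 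Finally $|\cov(G,H)| \le E[|w|\,|H'|] \le \sup_x\{|H'(x)|/\varphi''(x)\}\cdot E[\varphi''(X)|w(X)|] \le \sup_x\{|H'(x)|/\varphi''(x)\}\cdot E|G'(X)|$, which is (a). (The full $(r,s)$-version of \cite{Menz-Otto:2013} then follows by replacing the crude factoring of the supremum with Hölder's inequality and testing against a suitable power of $|w|$ in the key step.)

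For (b), the representation $\cov(G,H) = E[\nabla u \cdot \nabla H]$ still holds, and $v := \nabla u$ satisfies $\mathcal{L}v - (\MyHess\varphi)\,v = -\nabla G$ (apply $\nabla$ to the equation and commute it past $\mathcal{L}$). Integrating the energy identity $\int \langle v, \mathcal{L}v\rangle\,dP = -\int \|\MyHess u\|_{HS}^2\,dP$ recovers the $s = 2$ case $\int v^T (\MyHess\varphi)\,v\,dP \le \int \nabla G^T (\MyHess\varphi)^{-1}\nabla G\,dP$, i.e. the symmetric Brascamp--Lieb inequality of Proposition~\ref{BLInequalityPlus}(a). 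To reach the asymmetric $L^r$--$L^s$ form I would interpolate between this $L^2$ estimate on $v$ and a pointwise bound, with the smallest eigenvalue $\lambda_{\min}(\MyHess\varphi)$ supplying the correct interpolation weight $\lambda_{\min}^{(2-r)/r}$, and then apply Hölder with the conjugate exponents $(r,s)$ to the representation; this yields inequality (1.2) of \cite{CarlenCordero-ErausquinLieb}. The step I expect to be the real obstacle is exactly this matrix-weighted $L^s(P)$ control of $v = \nabla u$ with the sharp power of $\lambda_{\min}$: in dimension one it is the $\varepsilon$-regularized maximum-principle computation above, but in $\mathbb{R}^d$ it is the technical heart of the Carlen--Cordero-Erausquin--Lieb argument, which I would reproduce rather than reprove.
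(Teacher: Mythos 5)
The paper itself offers no proof of Proposition~\ref{prop:MenzOtto}: it is quoted from \cite{Menz-Otto:2013} (their Lemma 2.11) and \cite{CarlenCordero-ErausquinLieb} (their inequality (1.2)), followed only by a remark on the $r=2$ and $(r,s)=(\infty,1)$ endpoints, so there is no in-paper argument to compare against and I judge your proposal on its own. Your part (a) is correct and self-contained, and is the natural dual-equation route: with $w=u'$ and $(e^{-\varphi}u')'=-(G-E[G])e^{-\varphi}$ one gets $\cov(G(X),H(X))=E[w(X)H'(X)]$, and your $\varepsilon$-regularized sign argument correctly delivers the key bound $E[\varphi''(X)\,|w(X)|]\le E[|G'(X)|]$ (the monotone-convergence passage $w\,w_\varepsilon\uparrow|w|$ is fine since $\varphi''>0$), after which factoring out $\sup_x\{|H'(x)|/\varphi''(x)\}$ gives the stated inequality. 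Two routine points should be made explicit: in $d=1$ you can avoid the spectral-gap existence argument altogether by taking $w(x)=e^{\varphi(x)}\int_x^{\infty}(G-E[G])\,e^{-\varphi}\,dt$ directly, and the vanishing of the boundary term $e^{-\varphi}w'w_\varepsilon$ is not automatic from $G\in L^2(P)$ alone, so one should first treat compactly supported smooth $G,H$ and pass to the limit.

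Part (b) is where the genuine gap lies. The plan ``prove (a) in full and obtain (b) by interpolation'' runs in the wrong direction: the endpoint relation recorded in the paper's remark deduces (a) \emph{from} (b) at $r=\infty$, $s=1$, $d=1$, and no interpolation from that single one-dimensional endpoint can produce the $d$-dimensional matrix-weighted $(r,s)$ family; likewise your energy identity only recovers the symmetric $r=2$ case, which is just Cauchy--Schwarz plus the Brascamp--Lieb inequality of Proposition~\ref{BLInequalityPlus}(a), as the paper's remark (i) already notes. The entire content of (b) is the weighted $L^p$ control of the gradient of the auxiliary function carrying the sharp factor $\lambda_{\min}^{(2-r)/r}(\varphi'')^{-1/r}$, and your proposal explicitly defers exactly that step to \cite{CarlenCordero-ErausquinLieb} (``reproduce rather than reprove''). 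Since the paper also merely cites that inequality, the deferral is consistent with the paper's treatment, but as a blind proof attempt your part (b) is an outline plus a citation rather than an argument.
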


\begin{remark}
(i) When $r=2$, the inequality in (b) yields 
\begin{equation*}
\left ( \cov (G(X), H(X) ) \right )^2 \le E \{ \nabla G^T (
\varphi^{\prime\prime})^{-1} \nabla G \} \cdot E \{ \nabla G^T (
\varphi^{\prime\prime})^{-1} \nabla G \}
\end{equation*}
which can also be obtained from the Cauchy-Schwarz inequality and the
Brascamp-Lieb inequality (a) of Proposition~\ref{BLInequalityPlus}.\newline
(ii) The inequality (b) also implies that 
\begin{eqnarray*}
| \cov (G(X), H(X) ) | \le \| ( \lambda_{min}^{-1/r} \nabla G \|_s \cdot \|
\lambda_{min}^{-1/s} \nabla H \|_r ;
\end{eqnarray*}
taking $r=\infty$ and $s=1$ yields 
\begin{equation*}
| \cov (G(X), H(X) ) | \le \| \nabla G \|_1 \cdot \| \lambda_{min}^{-1}
\nabla H \|_\infty
\end{equation*}
which reduces to the inequality in (a) when $d=1$.
\end{remark}

%----------------------------------------------------------
\section{Appendix B: some further proofs}
\label{sec:Proofs}
%---------------------------------------------------------- 

\begin{proof}
Proposition~\ref{LogConcaveEqualsPF2}: \textbf{(b):} $p_{\theta} (x) =
f(x-\theta)$ has MLR if and only if 
\begin{eqnarray*}
\frac{f(x-\theta^{\prime})}{f(x-\theta)} \le \frac{f(x^{\prime}-\theta^{%
\prime})}{f(x^{\prime}-\theta)} \ \ \mbox{for all} \ \ x < x^{\prime}, \
\theta < \theta^{\prime}
\end{eqnarray*}
This holds if and only if 
\begin{eqnarray}
\log f(x-\theta^{\prime}) + \log f(x^{\prime}-\theta) \le \log f(x^{\prime}-
\theta^{\prime}) + \log f (x-\theta) .  \label{MLR1stIFF}
\end{eqnarray}
Let $t = (x^{\prime}-x)/(x^{\prime}-x + \theta^{\prime}-\theta)$ and note
that %\pagebreak
% \vs{-1}
\begin{eqnarray*}
&& x-\theta = t(x-\theta^{\prime}) + (1-t) (x^{\prime}- \theta), \\
&& x^{\prime}- \theta^{\prime}= (1-t) (x - \theta^{\prime}) + t (x^{\prime}-
\theta)
\end{eqnarray*}
Hence log-concavity of $f$ implies that 
\begin{eqnarray*}
&& \log f(x-\theta) \ge t \,\log f(x-\theta^{\prime}) + (1-t) \log
f(x^{\prime}- \theta) , \\
&& \log f(x^{\prime}- \theta^{\prime}) \ge (1-t) \log f(x-\theta^{\prime}) +
t\, \log f(x^{\prime}-\theta) .
\end{eqnarray*}
Adding these yields (\ref{MLR1stIFF}); i.e. $f$ log-concave implies $%
p_{\theta} (x)$ has MLR in $x$.

Now suppose that $p_{\theta} (x)$ has MLR so that (\ref{MLR1stIFF}) holds.
In particular that holds if $x,x^{\prime},\theta, \theta^{\prime}$ satisfy $%
x-\theta^{\prime}=a < b = x^{\prime}-\theta$ and $t =
(x^{\prime}-x)/(x^{\prime}-x+\theta^{\prime}-\theta) =1/2$, so that $%
x-\theta = (a+b)/2 = x^{\prime}-\theta^{\prime}$. Then (\ref{MLR1stIFF})
becomes 
\begin{equation*}
\log f(a) + \log f(b) \le 2 \log f ((a+b)/2) .
\end{equation*}
This together with measurability of $f$ implies that $f$ is log-concave. 
%\pagebreak
%------------------------------------------------------------------------------------

\noindent \textbf{(a):} \ Suppose $f$ is $PF_2$. Then for $x< x^{\prime}$, $%
y< y^{\prime}$, 
\begin{eqnarray*}
\lefteqn{\det \left ( \begin{array}{l l} f(x-y) & f(x- y') \\ f(x' - y ) &
f(x' - y') \end{array} \right ) } \\
&& = f(x-y) f(x^{\prime}- y^{\prime}) - f(x - y^{\prime}) f(x^{\prime}- y)
\ge 0
\end{eqnarray*}
if and only if 
\begin{eqnarray*}
&& f(x-y^{\prime}) f(x^{\prime}-y) \le f(x-y) f(x^{\prime}-y^{\prime}) ,
\end{eqnarray*}
or, if and only if 
\begin{eqnarray*}
&& \frac{f(x - y^{\prime})}{f(x-y)} \le \frac{f(x^{\prime}-y^{\prime})}{%
f(x^{\prime}-y)} .
\end{eqnarray*}
That is, $p_{y} (x)$ has MLR in $x$. By (b) this is equivalent to $f$
log-concave.
\end{proof}

\begin{proof}
Proposition~\ref{EquivalencesStrongLogConTry3}:  
To prove Proposition~\ref{EquivalencesStrongLogConTry3} it suffices to note
the log-concavity of $g(x)=p(x)/\prod_{j=1}^{d}\phi (x_{j}/\sigma )$ and to
apply Proposition~\ref{EquivalencesLogCon} (which holds as well for
log-concave functions). The claims then follow by basic calculations.

Here are the details. Under the assumption that $\varphi \in C^{2}$ (and
even more generally) the equivalence between (a) and (b) follows from \cite%
{MR1491362}, Exercise 12.59, page 565. The equivalence between (a) and (c)
follows from the corresponding proof concerning the equivalence of (a) and
(c) in Proposition~\ref{EquivalencesLogCon}; see e.g. \cite{MR2061575}, page
71.

(a) implies (d): this follows from the corresponding implication in
Proposition~\ref{EquivalencesLogCon}. Also note that 
%The second claim follows by direct calculation:  
for $x_{1},x_{2}\in \mathbb{R}^{d}$ we have 
\begin{eqnarray*}
\lefteqn{\langle \nabla \varphi _{J_{a}}(x_{2})-x_{2}/c-\left( \nabla
\varphi _{J_{a}}(x_{1})-x_{1}/c\right) ,x_{2}-x_{1}\rangle } \\
&=&\langle \nabla \varphi (a+x_{2})-\nabla \varphi (a-x_{2})-x_{2}/c-\left(
\nabla \varphi (a+x_{1})-\nabla \varphi (a-x_{1})-x_{1}/c\right)
,x_{2}-x_{1}\rangle \\
&=&\langle \nabla \varphi (a+x_{2})-\nabla \varphi
(a+x_{1})-(x_{2}-x_{1})/(2c),x_{2}-x_{1}\rangle \\
&&\ \ -\ \langle \nabla \varphi (a-x_{2})-\nabla \varphi
(a-x_{1})+(x_{2}-x_{1})/(2c),x_{2}-x_{1}\rangle \\
&=&\langle \nabla \varphi (a+x_{2})-\nabla \varphi
(a+x_{1})-(a+x_{2}-(a+x_{1}))/(2c),x_{2}-x_{1}\rangle \\
&&\ \ +\ \langle \nabla \varphi (a-x_{1})-\nabla \varphi
(a-x_{2})-(a-x_{1}-(a-x_{2}))/(2c),a-x_{1}-(a-x_{2})\rangle \\
&\geq &0\ \ \ 
\end{eqnarray*}%
if $c=\sigma ^{2}/2$.

(d) implies (e): this also follows from the corresponding implication in
Proposition~\ref{EquivalencesLogCon}. Also note that when $\varphi \in C^2$
so that $\nabla^2 \varphi $ exists, 
\begin{eqnarray*}
\nabla^2 \varphi_{J_a} (x) -2 I/\sigma^2 & = & \nabla^2 \varphi (a+x) +
\nabla^2 \varphi (a-x) - 2I/\sigma^2 \\
& = & \nabla^2 \varphi (a+x) - I/\sigma^2 + \nabla^2 \varphi (a-x) -
I/\sigma^2 \\
& \ge & 0 + 0 = 0.
\end{eqnarray*}

To complete the proof when $\varphi \in C^2$ we show that (e) implies (c).
Choosing $a=x_0 $ and $x=0$ yields 
\begin{eqnarray*}
0 & \le & \nabla^2 \varphi_{J_a} (0) -2 I/\sigma^2 \\
& = & \nabla^2 \varphi (x_0) + \nabla^2 \varphi (x_0) - 2I/\sigma^2 \\
& = & 2 \left ( \nabla^2 \varphi (x_0) - I/\sigma^2 \right ) ,
\end{eqnarray*}
and hence (c) holds.

To complete the proof more generally, we proceed as in \cite{MR2814377},
page 199: to see that (e) implies (f), let $a = (x_1 + x_2)/2$, $x =
(x_1-x_2)/2$. Since $J_a (\cdot ; g)$ is even and radially monotone, $J_a
(0; g)^{1/2} \ge J_a (x; g)^{1/2}$; that is, 
\begin{equation*}
\{ g(a+0) g(a-0) \}^{1/2} \ge \{ g(a+x) g(a-x) \}^{1/2},
\end{equation*}
or 
\begin{equation*}
g((x_1 +x_2)/2) \ge g(x_1)^{1/2} g(x_2)^{1/2} .
\end{equation*}
Finally (f) implies (a): as in \cite{MR2814377}, page 199 (with ``convex''
changed to ``concave'' three times in the last three lines there): midpoint
log-concavity of $g$ together with lower semicontinuity implies that $g$ is
log-concave, and hence $p$ is strongly log-concave, so (a) holds.
\end{proof}

\begin{proof}
Proposition~\ref{prop_I_SLC}:  
First notice that by Proposition \ref{Lemma_convol_Gauss_multidim}, we may
assume that $f$ is $\mathcal{C}^{\infty }$ 
(so $\varphi $ is also $\mathcal{C}^{\infty }$).

\textbf{(i) }As $I$ is $\mathcal{C}^{\infty }$, we differentiate $I$ twice. We
have $I^{\prime }\left( p\right) =f^{\prime }\left( F^{-1}\left( p\right)
\right) /I\left( p\right) =-\varphi ^{\prime }\left( F^{-1}\left( p\right)
\right) $ and%
\begin{equation}
I^{\prime \prime }\left( p\right) =-\varphi ^{\prime \prime }\left(
F^{-1}\left( p\right) \right) /I\left( p\right) \leq -c^{-1}\left\Vert
f\right\Vert _{\infty }^{-1}\text{ .}  \label{I_second}
\end{equation}%
This gives the first part of \textbf{(i). }The second part comes from the
fact that 
$\left\Vert f\right\Vert _{\infty }^{-1}\geq \sqrt{ \var \left( X\right) }$ 
by Proposition \ref{prop_pointwise_bounds}\ below.

\textbf{(ii)} It suffices to exhibit an example. We take $X\geq 0$, with
density
\begin{equation*}
f\left( x\right) =xe^{-x}\mathbf{1}_{\left( 0, \infty \right) }\left( x\right) \text{ }.
\end{equation*}
Then $f=e^{-\varphi }$ is log-concave (in fact, $f$ log-concave of order $2$%
, see Definition \ref{def_LC_order_alpha}) and not strongly log-concave as,
on the support of $f$, $\varphi ^{\prime \prime }\left( x\right)
=x^{-2}\rightarrow  0 $ as ${x\rightarrow \infty }0$. 
By the equality in (\ref{I_second}) we have
\begin{equation*}
I^{\prime \prime }\left( p\right) =-\frac{\varphi ^{\prime \prime }}{f}%
\left( F^{-1}\left( p\right) \right) \text{ .}
\end{equation*}%
Hence, to conclude it suffices to show that 
$\inf_{x>0}\left\{ \varphi^{\prime \prime }/f\right\} >0$. 
By simple calculations, we have
\begin{equation*}
\varphi ^{\prime \prime }\left( x\right) /f\left( x\right) =x^{-3}e^{x}%
\mathbf{1}_{\left( 0, \infty \right) }\left( x\right) \geq e^3/27 >0\text{ ,}
\end{equation*}
so \textbf{(ii)} is  proved.

\textbf{(iii) }We take 
$f\left( x\right) =\exp \left( -\varphi \right) 
=\alpha ^{-1}\exp \left( -\exp \left( x\right) \right) 1_{\left\{ x>0\right\} }$ 
where $\alpha =\int_{0}^{\infty }\exp \left( -\exp \left( x\right) \right) dx$. 
Then the function $R_{h}$ is $\mathcal{C}^{\infty }$
on $\left( 0,1\right) $ and we have by basic calculations, 
for any $p\in \left( 0,1\right) $,
\begin{equation*}
R_{h}^{\prime }\left( p\right) =f\left( F^{-1}\left( p\right) +h\right)
/f\left( F^{-1}\left( p\right) \right)
\end{equation*}
and%
\begin{equation*}
R_{h}^{\prime \prime }\left( p\right) =\frac{f\left( F^{-1}\left( p\right)
+h\right) }{f\left( F^{-1}\left( p\right) \right) ^{2}}\left( \varphi
^{\prime }\left( F^{-1}\left( p\right) \right) -\varphi ^{\prime }\left(
F^{-1}\left( p\right) +h\right) \right) \text{ .}
\end{equation*}%
Now, for any $x>0$, taking $p=F\left( x\right) $ in the previous identity
gives%
\begin{eqnarray}
R_{h}^{\prime \prime }\left( F\left( x\right) \right) &=&\frac{f\left(
x+h\right) }{f\left( x\right) ^{2}}\left( \varphi ^{\prime }\left( x\right)
-\varphi ^{\prime }\left( x+h\right) \right)  \label{Rh_second} \\
&=&\alpha ^{-1}\exp \left( \exp \left( x\right) \left( 2-\exp \left(
h\right) \right) \right) \cdot \exp \left( x\right) \left( 1-\exp \left(
h\right) \right) \text{ .}  \notag
\end{eqnarray}%
We deduce that if $h>\log 2$ then $R_{h}^{\prime \prime }\left( F\left(
x\right) \right) \rightarrow 0$ whenever $x\rightarrow +\infty $. Taking $%
h_{0}=1$ gives point \textbf{(iii)}.

\textbf{(iv)} For $X$ of density $f\left( x\right) =xe^{-x}\mathbf{1}%
_{\left( 0,+\infty \right) }\left( x\right) $, we have 
$\inf R_{h}^{\prime \prime }\leq -he^{-h}<0$. 
Our proof of the previous fact is based on
identity (\ref{Rh_second})\ and left to the reader.
\end{proof}

\begin{proof}
Proposition~\ref{EquivalencesStrongLogConDef2Try2}:  
Here are the details.  
Under the assumption that $\varphi \in C^2$ (and even more generally) the equivalence of (a) and (b) 
follows from \cite{MR1491362}, Exercise 12.59, page 565.
The equivalence of (a) and (c) follows from the corresponding proof concerning the equivalence of (a) and (c) 
 in Proposition~\ref{EquivalencesLogCon};
see e.g. \cite{MR2061575}, page 71.

That (a) implies (d):  this follows from the 
corresponding implication in Proposition~\ref{EquivalencesLogCon}.
Also note that %The second claim follows by direct calculation:  
for $x_1, x_2 \in \RR^d$ we have
\begin{eqnarray*}
\lefteqn{\langle \nabla \varphi_{J_a} (x_2) - x_2/c - \left ( \nabla \varphi_{J_a} (x_1) - x_1/c\right ) , x_2-x_1 \rangle } \\
& = & \langle \nabla \varphi (a+x_2) - \nabla \varphi(a-x_2) - x_2/c 
                 - \left (\nabla \varphi (a+x_1) - \nabla \varphi(a-x_1) - x_1/c \right ) , x_2 - x_1 \rangle \\
& = & \langle \nabla \varphi (a+x_2) - \nabla \varphi (a+x_1) - (x_2-x_1)/(2c), x_2 - x_1 \rangle \\
&& \ \ - \ \langle \nabla \varphi (a-x_2) - \nabla \varphi (a-x_1) + (x_2-x_1)/(2c), x_2 - x_1 \rangle \\
& = & \langle \nabla \varphi (a+x_2) - \nabla \varphi (a+x_1) - (a+x_2-(a+x_1))/(2c), x_2 - x_1 \rangle \\
&& \ \ + \ \langle \nabla \varphi (a-x_1) - \nabla \varphi (a-x_2) -  (a-x_1- (a-x_2))/(2c), a - x_1 - (a-x_2)  \rangle \\
& \ge & 0 \ \ \ 
\end{eqnarray*}
if $c = \sigma^2/2$.  

(d) implies (e):  this also follows from the corresponding 
implication in Proposition~\ref{EquivalencesLogCon}.  
Also note that when $\varphi \in C^2$ so that $\nabla^2 \varphi $ exists,
\begin{eqnarray*}
\nabla^2 \varphi_{J_a} (x) -2 I/\sigma^2 
& = & \nabla^2 \varphi (a+x) + \nabla^2 \varphi (a-x) - 2I/\sigma^2 \\
& = & \nabla^2 \varphi (a+x) - I/\sigma^2 + \nabla^2 \varphi (a-x) - I/\sigma^2 \\
& \ge & 0 + 0 = 0.
\end{eqnarray*}

To complete the proof when $\varphi \in C^2$ we show that 
(e) implies (c).  Choosing $a=x_0 $ and $x=0$ yields 
\begin{eqnarray*}
0 & \le & \nabla^2 \varphi_{J_a} (0) -2 I/\sigma^2 \\
& = & \nabla^2 \varphi (x_0) + \nabla^2 \varphi (x_0) - 2I/\sigma^2 \\
& = & 2 \left ( \nabla^2 \varphi (x_0) - I/\sigma^2 \right ) ,
\end{eqnarray*}
and hence (c) holds.  

To complete the proof more generally, we proceed as in 
\cite{MR2814377}, page 199:  to see that (e) implies (f), let $a = (x_1 + x_2)/2$, $x = (x_1-x_2)/2$.
Since $J_a (\cdot ; g)$ is even and radially monotone, 
$J_a (0; g)^{1/2} \ge J_a (x; g)^{1/2}$; that is,
$$
\{ g(a+0) g(a-0) \}^{1/2} \ge \{ g(a+x) g(a-x) \}^{1/2},
$$
or
$$
g((x_1 +x_2)/2) \ge g(x_1)^{1/2} g(x_2)^{1/2} .
$$
Finally (f) implies (a):  as in \cite{MR2814377}, page 199 
(with ``convex'' changed to ``concave'' three times in the last three lines there):  
midpoint log-concavity of $g$ together with lower semicontinuity implies that 
$g$ is log-concave, and hence $p$ is strongly log-concave, so (a) holds.
\end{proof}

\begin{proof}
Proposition~\ref{Lemma_convol_Gauss_multidim}: 
\textbf{(i)} This is given by the stability of log-concavity through
convolution.

\textbf{(ii)}This is point \textbf{(b)} of Theorem \ref{StrongLogConPreservOne}.

\textbf{(iii)} We have 
\begin{equation*}
\varphi _{Z}\left( z\right) =-\log \int_{y\in \mathbb{R}^{d}}p\left(
y\right) q\left( z-y\right) dy
\end{equation*}%
and%
\begin{equation*}
\int_{y\in \mathbb{R}^{d}}\left\Vert \nabla q\left( z-y\right) \right\Vert
p\left( y\right) dy=\int_{y\in \mathbb{R}^{d}}\left\Vert z-y\right\Vert
q\left( z-y\right) p\left( y\right) dy<\infty
\end{equation*}%
since $y\mapsto \left\Vert z-y\right\Vert q\left( z-y\right) $ is bounded.
This implies that $p_{Z}>0$ on $\mathbb{R}^{d}$ and%
\begin{eqnarray*}
\nabla \varphi _{Z}\left( z\right) &=&\int_{y\in \mathbb{R}^{d}}\frac{z-y}{%
\sigma ^{2}}\frac{p\left( y\right) q\left( z-y\right) }{\int_{y\in \mathbb{R}%
^{d}}p\left( u\right) q\left( z-u\right) du}dy \\
&=&\sigma ^{-2}\mathbb{E}\left[ \sigma G\left\vert X+\sigma G=z\right. %
\right] \\
&=&\mathbb{E}\left[ \rho _{\sigma G}\left( \sigma G\right) \left\vert
X+\sigma G=z\right. \right] \text{ .}
\end{eqnarray*}%
In the same manner, successive differentiation inside the integral shows
that $\varphi _{Z}$ is $\mathcal{C}^{\infty }$, which gives (\textbf{iii}).

\textbf{(iv)} Notice that%
\begin{eqnarray*}
\lefteqn{\left\vert \left\Vert \int_{z,y\in \mathbb{R}^{d}}\sigma
^{-4}\left( z-y\right) \left( z-y\right) ^{T}p\left( y\right) q\left(
z-y\right) dydz\right\Vert \right\vert } \\
&\leq &\sigma ^{-4}\int_{y\in \mathbb{R}^{d}}\int_{z\in \mathbb{R}%
^{d}}\left\Vert z-y\right\Vert ^{2}q\left( z-y\right) p\left( y\right)
dy<\infty
\end{eqnarray*}%
as $y\mapsto \left\Vert z-y\right\Vert ^{2}q\left( z-y\right) $ is bounded.
Hence the Fisher information $J(Z)$ of $Z$ is finite and we have%
\begin{eqnarray*}
J\left( Z\right) &=&\sigma ^{-4}\int_{z,y\in \mathbb{R}^{d}}\mathbb{E}\left[
\sigma G\left\vert X+\sigma G=z\right. \right] \mathbb{E}\left[ \left(
\sigma G\right) ^{T}\left\vert X+\sigma G=z\right. \right] p\left( y\right)
q\left( z-y\right) dydz \\
&\leq &\sigma ^{-4}\int_{z,y\in \mathbb{R}^{d}}\mathbb{E}\left[ \sigma
G\left( \sigma G\right) ^{T}\left\vert X+\sigma G=z\right. \right] p\left(
y\right) q\left( z-y\right) dydz \\
&=&\sigma ^{-4}\int_{z,y\in \mathbb{R}^{d}}\left( \int_{u\in \mathbb{R}%
^{d}}\left( z-u\right) \left( z-u\right) ^{T}\frac{p\left( u\right) q\left(
z-u\right) }{\int_{y\in \mathbb{R}^{d}}p\left( v\right) q\left( z-v\right) dv%
}du\right) p\left( y\right) q\left( z-y\right) dydz \\
&=&\sigma ^{-4}\int_{z,y\in \mathbb{R}^{d}}\left( \int_{u\in \mathbb{R}%
^{d}}\left( z-u\right) \left( z-u\right) ^{T}p\left( u\right) q\left(
z-u\right) du\right) \frac{p\left( y\right) q\left( z-y\right) }{\int_{v\in 
\mathbb{R}^{d}}p\left( v\right) q\left( z-v\right) dv}dydz \\
&=&\sigma ^{-4}\int_{z\in \mathbb{R}^{d}}\int_{u\in \mathbb{R}^{d}}\left(
z-u\right) \left( z-u\right) ^{T}p\left( u\right) q\left( z-u\right) dudz \\
&=&\int_{u\in \mathbb{R}^{d}}\left( \underset{J\left( \sigma G\right) }{%
\underbrace{\sigma ^{-4}\int_{z\in \mathbb{R}^{d}}\left( z-u\right) \left(
z-u\right) ^{T}q\left( z-u\right) dz}}\right) p\left( u\right) du \\
&=&J\left( \sigma G\right) \text{ ,}
\end{eqnarray*}%
which is (\textbf{iv}).
\end{proof}

\begin{proof}  Proposition~\ref{prop_approx_SLC}:  
The fact that $h_{c}\in SLC_{1}\left( c^{-1},d\right) $ is obvious due to
Definition \ref{strongLogConcaveDefn1}. By Theorem \ref{theorem_exp_tails}\
above, there exist $a>0$ and $b\in \mathbb{R}$ such that%
\begin{equation*}
f\left( x\right) \leq e^{-a\left\Vert x\right\Vert +b},\text{ \ \ }x\in 
\mathbb{R}^{d}.
\end{equation*}%
We deduce that if $X$ is a random vector with density $f$ on $\mathbb{R}^{d}$, 
then $\mathbb{E}\left[ e^{\left( a/2\right) \left\Vert X\right\Vert } \right] < \infty $ and so, for any $\beta >0$, 
\begin{equation*}
\mathbb{P}\left( \left\Vert X\right\Vert >2\beta \right) \leq Ae^{-a\beta }%
\text{ ,}
\end{equation*}%
where $A=\mathbb{E}\left[ e^{\left( a/2\right) \left\Vert X\right\Vert }%
\right] >0$. Take $\varepsilon \in \left( 0,1\right) $. We have$\medskip $

\noindent $%
\begin{array}{l}
\left\vert \int_{\mathbb{R}^{d}}f\left( v\right) e^{-c\left\Vert
v\right\Vert ^{2}/2}dv-1\right\vert  \\ 
=\int_{\mathbb{R}^{d}}f\left( v\right) \left( 1-e^{-c\left\Vert v\right\Vert
^{2}/2}\right) dv \\ 
=\int_{\mathbb{R}^{d}}f\left( v\right) \left( 1-e^{-c\left\Vert v\right\Vert
^{2}/2}\right) \mathbf{1}_{\left\{ \left\Vert v\right\Vert \leq
2c^{-\varepsilon /2}\right\} }dv \\ 
\qquad +\int_{\mathbb{R}^{d}}f\left( v\right) \left( 1-e^{-c\left\Vert
v\right\Vert ^{2}/2}\right) \mathbf{1}_{\left\{ \left\Vert v\right\Vert
>2c^{-\varepsilon /2}\right\} }dv \\ 
\leq \left( 1-e^{-2c^{1-\varepsilon }}\right) \int_{\mathbb{R}^{d}}f\left(
v\right) \mathbf{1}_{\left\{ \left\Vert v\right\Vert \leq \sqrt{2}%
c^{-\varepsilon /2}\right\} }dv+\mathbb{P}\left( \left\Vert X\right\Vert
>2c^{-\varepsilon /2}\right)  \\ 
\leq \left( 1-e^{-2c^{1-\varepsilon }}\right) +Ae^{-ac^{-\varepsilon /2}}%
\text{ .}%
\end{array}%
\medskip $

\noindent We set $B_{\alpha }=\left( 1-e^{-2\alpha ^{1-\varepsilon }}\right)
+Ae^{-a\alpha ^{-\varepsilon /2}}$ and we then have 
\begin{equation*}
\left\vert \int_{\mathbb{R}^{d}}f\left( v\right) e^{-c\left\Vert
v\right\Vert ^{2}/2}dv-1\right\vert \leq B_{c}=O_{c\rightarrow 0}\left(
c^{1-\varepsilon }\right) \rightarrow _{c\rightarrow 0}0\text{ .}
\end{equation*}%
Now, for $x\in \mathbb{R}^{d}$, we have, for all $c>0$ such that $B_{c}<1$,
\begin{eqnarray*}
\lefteqn{ \left\vert h_{c}\left( x\right) -f\left( x\right) \right\vert } \\ 
& = & \left\vert \frac{f\left( x\right) e^{-c\left\Vert x\right\Vert ^{2}/2}}
                              {\int_{\mathbb{R}^{d}}f\left( v\right) e^{-c\left\Vert v\right\Vert ^{2}/2}dv}
              - f\left( x\right) \right\vert  \\ 
& \leq & \left\vert \frac{f\left( x\right) e^{-c\left\Vert x\right\Vert ^{2}/2}}
                                  {\int_{\mathbb{R}^{d}}f\left( v\right) e^{-c\left\Vert v\right\Vert ^{2}/2}dv}
                -f\left( x\right) e^{-c\left\Vert x\right\Vert ^{2}/2}\right\vert
                +\left\vert f\left( x\right) e^{-c\left\Vert x\right\Vert ^{2}/2}-f\left(
                     x\right) \right\vert  \\ 
& \leq & f\left( x\right) \left\vert \left( \int_{\mathbb{R}^{d}}f\left(v\right) 
                e^{-c\left\Vert v\right\Vert ^{2}/2}dv\right) ^{-1}-1\right\vert
               +    f\left( x\right) \left( 1-e^{-c\left\Vert x\right\Vert ^{2}/2}\right)  \\ 
& \leq & f\left( x\right) \left( \frac{B_{c}}{1-B_{c}}+1-e^{-c\left\Vert x\right\Vert ^{2}/2}\right) \text{ .}%
\end{eqnarray*}
Hence, for all $c>0$ such that $B_{c}<1$,%
\begin{eqnarray*}
\lefteqn{\sup_{x\in \mathbb{R}^{d}}\left\vert h_{c}\left( x\right) -f\left( x\right) \right\vert } \\
& \leq &     \sup_{\left\{ x;\left\Vert x\right\Vert \leq 2c^{-\varepsilon/2}\right\} }
                \left\vert h_{c}\left( x\right) -f\left( x\right) \right\vert 
                + \sup_{\left\{ x;\left\Vert x\right\Vert >2c^{-\varepsilon /2}\right\} }\left\vert h_{c}\left( x\right) -f\left( x\right) \right\vert  \\
&\leq & e^{b}\left( \frac{B_{c}}{1-B_{c}}+1-e^{-2c^{1-\varepsilon }}\right)
             +e^{-2ac^{-\varepsilon /2}+b}     \left( \frac{B_{c}}{1-B_{c}}+1\right)  \\
&=& O\left( c^{1-\varepsilon }\right) \text{ as }c\rightarrow 0\text{ .}
\end{eqnarray*}
Furthermore, for $p\in \left[ 1, \infty \right) $,
\begin{eqnarray*}
\lefteqn{ \int_{\mathbb{R}^{d}}\left\vert h_{c}\left( x\right) -f\left( x\right)
           \right\vert ^{p}dx }\\
& = & \int_{\mathbb{R}^{d}}\left\vert h_{c}\left( x\right) -f\left( x\right) \right\vert ^{p}
            \mathbf{1}_{\left\{ \left\Vert x\right\Vert \leq 2c^{-\varepsilon /2}\right\} }dx
            +\int_{\mathbb{R}^{d}}\left\vert h_{c}\left( x\right) -f\left( x\right) \right\vert ^{p}\mathbf{1}_{\left\{ \left\Vert
              x\right\Vert >2c^{-\varepsilon /2}\right\} }dx \\ 
& \leq & \sup_{\left\{ x;\left\Vert x\right\Vert \leq 2c^{-\varepsilon/2}\right\} }\left\vert h_{c}\left( x\right) 
             -f\left( x\right) \right\vert^{p}
             +\int_{\mathbb{R}^{d}}f\left( x\right) ^{p}\left( \frac{B_{c}}{1-B_{c}} +1\right) 
                   \mathbf{1}_{\left\{ \left\Vert x\right\Vert >2c^{-\varepsilon/2}\right\} }dx \\ 
& \leq & e^{pb}\left( \frac{B_{c}}{1-B_{c}}+1-e^{-2c^{1-\varepsilon }}\right)^{p}
             +\left( \frac{B_{c}}{1-B_{c}}+1\right) e^{\left( p-1\right) b}\mathbb{P}
               \left( \left\Vert X\right\Vert >2c^{-\varepsilon /2}\right)  \\ 
& \leq & e^{pb}\left( \frac{B_{c}}{1-B_{c}}+1-e^{-2c^{1-\varepsilon }}\right)^{p}
            +A\left( \frac{B_{c}}{1-B_{c}}+1\right) e^{\left( p-1\right) b}e^{-ac^{-\varepsilon /2}} \\ 
& = & O\left( c^{p\left( 1-\varepsilon \right) }\right) \text{ as }c\rightarrow 0 \text{.}%
\end{eqnarray*}
The proof is now complete.
\end{proof}

%----------------------------------------------------------
\section*{Acknowledgments}
%----------------------------------------------------------

We owe thanks to Michel Ledoux for a number of pointers to the literature and 
for sending us a pre-print version of \cite{BobkovLedoux:14}.  

\bigskip %\nocite{*}
\def\cprime{$'$} \def\cprime{$'$}

%\begin{thebibliography}{24}
%\begin{bibliography}
%\bibliographystyle{imsart-nameyear}
%\bibliography{chern}

\end{document}